\newcommand{\arxiv}[1]{\href{http://arxiv.org/abs/#1}{\tt arXiv:\nolinkurl{#1}}}
\newcommand{\arXiv}[1]{\href{http://arxiv.org/abs/#1}{\tt arXiv:\nolinkurl{#1}}}
\newcommand{\googlebooks}[1]{(preview at \href{http://books.google.com/books?id=#1}{google books})}
\definecolor{dark-red}{rgb}{0.7,0.25,0.25}
\definecolor{dark-blue}{rgb}{0.15,0.15,0.55}
\definecolor{medium-blue}{rgb}{0,0,.8}
\definecolor{DarkGreen}{RGB}{0,150,0}
\definecolor{rho}{named}{red}
\theoremstyle{plain}
\newtheorem{thm}{Theorem}[section]
\newtheorem*{thm*}{Theorem}
\newtheorem{thmalpha}{Theorem}
\newtheorem{cor}[thm]{Corollary}
\newtheorem{coralpha}[thmalpha]{Corollary}
\newtheorem*{cor*}{Corollary}
\newtheorem*{conj*}{Conjecture}
\newtheorem{lem}[thm]{Lemma}
\newtheorem{lemalpha}[thmalpha]{Lemma}
\newtheorem{prop}[thm]{Proposition}
\newtheorem{quest}[thm]{Question}
\newtheorem*{quest*}{Question}
\newtheorem*{claim*}{Claim}
\theoremstyle{definition}
\newtheorem{defn}[thm]{Definition}
\newtheorem{facts}[thm]{Facts}
\newtheorem{construction}[thm]{Construction}
\newtheorem{assumption}[thm]{Assumption}
\newtheorem{nota}[thm]{Notation}
\newtheorem{ex}[thm]{Example}
\newtheorem{sub-ex}[thm]{Sub-Example}
\newtheorem{counter-ex}[thm]{Counter-Example}
\newtheorem{rem}[thm]{Remark}
\newtheorem*{rem*}{Remark}
\DeclareMathOperator{\coev}{coev}
\DeclareMathOperator{\End}{End}
\DeclareMathOperator{\ev}{ev}
\DeclareMathOperator{\Hom}{Hom}
\DeclareMathOperator{\op}{op}
\DeclareMathOperator{\spann}{span}
\DeclareMathOperator{\id}{id}
\DeclareMathOperator{\ind}{ind}
\DeclareMathOperator{\im}{im}
\DeclareMathOperator{\Irr}{Irr}
\DeclareMathOperator{\FPdim}{FPdim}
\DeclareMathOperator{\Spec}{Spec}
\DeclareMathOperator{\tr}{tr}
\newcommand{\comment}[1]{}
\newcommand{\set}[2]{\left\{#1 \middle| #2\right\}}
\newcommand{\rmB}{\mathrm{B}}
\newcommand\longmapsfrom{\mathrel{\reflectbox{\ensuremath{\longmapsto}}}}
\newcommand{\noshow}[1]{}
\def\semicolon{;}
\def\applytolist#1{
    \expandafter\def\csname multi#1\endcsname##1{
        \def\multiack{##1}\ifx\multiack\semicolon
            \def\next{\relax}
        \else
            \csname #1\endcsname{##1}
            \def\next{\csname multi#1\endcsname}
        \fi
        \next}
    \csname multi#1\endcsname}
\def\calc#1{\expandafter\def\csname c#1\endcsname{{\mathcal #1}}}
\def\bbc#1{\expandafter\def\csname bb#1\endcsname{{\mathbb #1}}}
\def\bfc#1{\expandafter\def\csname bf#1\endcsname{{\mathbf #1}}}
\def\sfc#1{\expandafter\def\csname s#1\endcsname{{\sf #1}}}
\def\fc#1{\expandafter\def\csname f#1\endcsname{{\mathfrak #1}}}
\newcommand{\Mod}{{\sf Mod}}
\newcommand{\Bim}{{\sf Bim}}
\newcommand{\fgprCorr}{{\mathsf{C^*Alg_{fgp}}}}
\newcommand{\Hilb}{{\sf Hilb}}
\newcommand{\fdHilb}{{\sf Hilb_{fd}}}
\newcommand{\rCorr}{{\mathsf{C^*Alg}}}
\newcommand{\WStarRCorr}{{\mathsf{W^*Alg}}}
\newcommand{\QSys}{{\mathsf{QSys}}}
\newcommand{\vNA}{{\sf vNA}}
\newcommand{\xz}{\boxtimes}
\newcommand{\xzq}{\otimes}
\tikzset{vertex/.style = {shape=circle,draw,fill=black,inner sep=0pt,minimum size=5pt}}
\tikzset{edge/.style = {->,> = latex', bend right}}
\tikzset{
	super thick/.style={line width=3pt}
}
\tikzset{
    quadruple/.style args={[#1] in [#2] in [#3] in [#4]}{
        #1,preaction={preaction={preaction={draw,#4},draw,#3}, draw,#2}
    }
}
\tikzstyle{shaded}=[fill=red!10!blue!20!gray!30!white]
\tikzstyle{unshaded}=[fill=white]
\tikzstyle{empty box}=[circle, draw, thick, fill=white, opaque, inner sep=2mm]
\tikzstyle{annular}=[scale=.7, inner sep=1mm, baseline]
\tikzstyle{rectangular}=[scale=.75, inner sep=1mm, baseline=-.1cm]
\tikzstyle{mid>}=[decoration={markings, mark=at position 0.5 with {\arrow{>}}}, postaction={decorate}]
\tikzstyle{mid<}=[decoration={markings, mark=at position 0.5 with {\arrow{<}}}, postaction={decorate}]
\tikzstyle{over}=[double, draw=white, super thick, double=]
\tikzstyle{primedregion}[none]=[
\tikzstyle{primedregion2}[none]=[
\newcommand{\roundNbox}[6]{
	\draw[rounded corners=5pt, very thick, #1] ($#2+(-#3,-#3)+(-#4,0)$) rectangle ($#2+(#3,#3)+(#5,0)$);
	\coordinate (ZZa) at ($#2+(-#4,0)$);
	\coordinate (ZZb) at ($#2+(#5,0)$);
	\node at ($1/2*(ZZa)+1/2*(ZZb)$) {#6};
}
\newcommand{\halfRoundBox}[5]{
    \fill[rounded corners=5pt, #1] ($#2+(-#3,0)$) -- ($#2+(-#3,-#3)-(0,#4)$) -- ($#2+(#3,-#3)-(0,#4)$) -- ($#2+(#3,0)$);
    \fill[#1] ($#2+(-#3,0)$) -- ($#2+(-#3,0)+.5*(0,-#3)$) -- ($#2+(#3,0)+.5*(0,-#3)$) -- ($#2+(#3,0)$) arc (0:180:{#3});
	\draw[rounded corners=5pt, very thick] ($#2+(-#3,0)$) -- ($#2+(-#3,-#3)-(0,#4)$) -- ($#2+(#3,-#3)-(0,#4)$) -- ($#2+(#3,0)$);
	\draw[very thick] ($#2+(#3,0)$) arc (0:180:{#3});
	\node at #2 {#5};
}
\newcommand{\halfRoundBoxDag}[5]{
    \fill[rounded corners=5pt, #1] ($#2+(-#3,0)$) -- ($#2+(-#3,#3)+(0,#4)$) -- ($#2+(#3,#3)+(0,#4)$) -- ($#2+(#3,0)$);
    \fill[#1] ($#2+(-#3,0)$) -- ($#2+(-#3,0)+.5*(0,#3)$) -- ($#2+(#3,0)+.5*(0,#3)$) -- ($#2+(#3,0)$) arc (0:-180:{#3});
	\draw[rounded corners=5pt, very thick] ($#2+(-#3,0)$) -- ($#2+(-#3,#3)+(0,#4)$) -- ($#2+(#3,#3)+(0,#4)$) -- ($#2+(#3,0)$);
	\draw[very thick] ($#2+(#3,0)$) arc (0:-180:{#3});
	\node at #2 {#5};
}
\newcommand{\tikzmath}[2][]
     {\vcenter{\hbox{\begin{tikzpicture}[#1]#2
                     \end{tikzpicture}}}
     }
\newcommand{\DoubleStrand}[4]{
\draw[ultra thick, dash pattern=on .7pt off 2pt] #1 -- #2;
\draw[thick, #3] ($ #1 + (-.04,0)$) -- ($ #2 + (-.04,0)$);
\draw[thick, #4] ($ #1 + (.04,0)$) -- ($ #2 + (.04,0)$);;
}
\newcommand{\TripleStrand}[5]{
\draw[ultra thick, dash pattern=on .7pt off 2pt] ($ #1 + (-.04,0)$) -- ($ #2 + (-.04,0)$);
\draw[ultra thick, dash pattern=on .7pt off 2pt] ($ #1 + (.04,0)$) -- ($ #2 + (.04,0)$);
\draw[thick, #3] ($ #1 + (-.08,0)$) -- ($ #2 + (-.08,0)$);
\draw[thick, #4] #1 -- #2;
\draw[thick, #5] ($ #1 + (.08,0)$) -- ($ #2 + (.08,0)$);;
}
\newcommand{\MColor}{red}
\newcommand{\XColor}{red}
\newcommand{\YColor}{orange}
\newcommand{\ZColor}{blue}
\newcommand{\PsColor}{brown!80} % string color
\newcommand{\QsColor}{DarkGreen}
\newcommand{\RsColor}{cyan!90}
\newcommand{\AColor}{gray!30}
\newcommand{\BColor}{gray!55}
\newcommand{\CColor}{gray!80}
\newcommand{\DColor}{gray!95}
\newcommand{\PrColor}{yellow!50} % region color
\newcommand{\QrColor}{green!30}
\newcommand{\RrColor}{cyan!30}
\begin{document}
\title{Q-system completion for \texorpdfstring{$\rm C^*$}{C*/W*} 2-categories}
\author{Quan Chen, Roberto Hern\'{a}ndez Palomares, Corey Jones, and David Penneys}
\date{\today}
\maketitle
\begin{abstract}
A Q-system in a $\rm C^*$ 2-category 
is 
a unitary version of a separable Frobenius algebra object
and can be viewed as a unitary version
of a higher idempotent.
We define a higher unitary idempotent completion for $\rm C^*$ 2-categories called Q-system completion and study its properties.
We show that the $\rm C^*$ 2-category of right correspondences of unital $\rm C^*$-algebras is Q-system complete
by constructing an inverse realization $\dag$-2-functor.
We use this result to construct induced actions of 
group theoretical unitary fusion categories on continuous trace $\rm C^*$-algebras with connected spectra.
%
%This is the submitted version of \arxiv{???}.
\end{abstract}

\tableofcontents

%%%%%%%%%%%%%%%%%%%%%%%%%%%%%%%%%%%%%%%%%%%%%%%
%%%%%%%%%%%%%%%%%%%%%%%%%%%%%%%%%%%%%%%%%%%%%%%
%%%%%%%%%%%%%%%%%%%%%%%%%%%%%%%%%%%%%%%%%%%%%%%
\section{Introduction}

A \emph{Q-system} is a unitary version of a Frobenius algebra object in a $\rm C^*$ tensor category or $\rm C^*$ 2-category.
Q-systems were first introduced in \cite{MR1257245} to characterize the canonical endomorphism associated to a finite index subfactor of an infinite factor \cite{MR739630,MR1027496}. 
Following \cite{MR1966524}, a Q-system in a unitary tensor category (a semisimple rigid $\rm C^*$ tensor category with simple unit) is an alternative axiomatization of the standard invariant of a finite index subfactor \cite{MR996454,MR1334479,MR4374438}. 
This viewpoint has been fruitful for classification of small index subfactors \cite{MR3166042,MR3345186,1509.00038} and constructing new subfactors from existing examples \cite{MR2909758,MR3859276,1810.06076}.

Given a unitary tensor category $\cC$, an indecomposable Q-system $Q\in \cC$ ($\End_{Q-Q}(Q)=\bbC$), and a fully-faithful unitary tensor functor $H: \cC \to \Bim(N)$ for some $\rm II_1$ factor $N$, we can perform the \emph{realization} procedure to reconstruct a $\rm II_1$-factor $M$ containing $N$ as a generalized crossed product $M= N\rtimes_H Q$ \cite{MR3948170,MR4079745}. Furthermore, \textit{every} irreducible, finite index extension of $N$ is of this form. Therefore, this technique splits the problem of classifying finite index extensions of a $\rm{II}_1$ factor $N$ into two parts:
\begin{enumerate}[label=(P\arabic*)]
    \item 
    \label{problem:analytic}
    The \emph{analytic} problem of building and classifying actions $H$ of unitary tensor categories $\cC$ on $N$, and 
    \item
    \label{problem:algebraic}
    The \emph{algebraic} problem of classifying Q-systems $Q$ in unitary tensor categories $\cC$.
\end{enumerate}

The first part \ref{problem:analytic} is a generalization of the notoriously difficult problem of classifying group actions on $\rm{II}_1$ factors up to cocycle conjugacy \cite{MR394228,MR448101,MR587749,MR596082} \cite{MR2386109,MR2370283,MR2409162}, while the second part \ref{problem:algebraic} can be viewed as a non-abelian cohomology problem internal to $\cC$. 
In the case that $N=R$ is hyperfinite and $\cC$ is (strongly) amenable, a deep result of Popa shows that there exists a unique action of $\cC$ on $R$ \cite{MR1278111,MR1339767} (cf.~\cite{MR4236062}). 
Thus finite index extensions of $R$ with amenable standard invariants correspond to Q-systems in amenable unitary tensor categories.
For finite depth, this a fundamentally algebraic problem which has seen tremendous success in the aforementioned small index subfactor classification results.

Beyond the hyperfinite and amenable case, little is known about classification of unitary tensor category actions, with some exceptions \cite{MR2504433,MR2471930,MR2838524,MR3028581}.
However, the algebraic understanding obtained from studying the second part \ref{problem:algebraic} immediately carries over to new situations, since it is completely independent from $N$. 
Thus whenever a unitary tensor category $\cC$ acts on a $\rm{II}_{1}$ factor $N$, we can automatically \emph{realize} the algebraic theory internal to $\cC$ to build finite index extensions of $N$ and bimodules between them. 
In particular, we can construct new actions of categories Morita equivalent to $\cC$ on finite index extensions of $N$ \emph{for free}. 

Recently there has been significant interest in classifying actions of groups and unitary tensor categories on (unital) $\rm C^*$-algebras \cite{MR2053753,MR4015345,MR3572256,1810.05850,1906.03818} \cite{MR4328058,2105.05587,AranoRokhlin}.
One of the main goals of this article is to perform realization in the $\rm C^*$ setting to demonstrate how to apply the subfactor techniques discussed above directly to the $\rm C^*$ context.
That is,  whenever a unitary tensor category $\cC$ acts on a unital $\rm C^*$-algebra $A$, we can automatically \emph{realize} solutions to \ref{problem:algebraic} to build finite (Watatani \cite{MR996807}) index extensions of $A$ and bimodules between them.

Realization is not only valuable as a method to construct inclusions of operator algebras, but it is also inverse to a \emph{higher idempotent completion}. % for $\rCorr$.
Idempotents in a category can replicate freely.
$$
\tikzmath{
\draw (0,0)node[below]{$\scriptstyle a$} -- (1,0)node[below]{$\scriptstyle a$} ;
\filldraw (.5,0) node[above]{$\scriptstyle e$} circle (.05cm);
}
=
\tikzmath{
\draw (0,0)node[below]{$\scriptstyle a$} --node[below]{$\scriptstyle a$} (2,0)node[below]{$\scriptstyle a$} ;
\filldraw (.5,0) node[above]{$\scriptstyle e$} circle (.05cm);
\filldraw (1.5,0) node[above]{$\scriptstyle e$} circle (.05cm);
}
=
\tikzmath{
\draw (0,0)node[below]{$\scriptstyle a$} -- (2,0)node[below]{$\scriptstyle a$} ;
\filldraw (.5,0) node[above]{$\scriptstyle e$} circle (.05cm);
\filldraw (.75,0) node[above]{$\scriptstyle e$} circle (.05cm);
\node at (1.03,.1) {$\scriptstyle \cdots$};
\filldraw (1.25,0) node[above]{$\scriptstyle e$} circle (.05cm);
\filldraw (1.5,0) node[above]{$\scriptstyle e$} circle (.05cm);
}
=
\tikzmath{
\draw (0,0)node[below]{$\scriptstyle a$} -- (2,0)node[below]{$\scriptstyle a$} ;
\draw[very thick] (.5,0) node[below]{$\scriptstyle r$} --node[above]{$\scriptstyle e$} (1.5,0) node[below]{$\scriptstyle s$};
}
$$
Densely packing the idempotent $e$, we see that $e$ behaves like the identity of some new object in our category.
This directly leads to the notion of idempotent splitting.

One level higher, a Q-system is a 1-morphism $Q\in \End_\cC(c)$ in a $\rm C^*/W^*$ 2-category $\cC$ equipped with a multiplication $m: Q\xz Q \to Q$ and unit $i: 1_c \to Q$ which are denoted pictorially by a trivalent and univalent vertex respectively.
$$
\tikzmath{
\fill[\AColor, rounded corners=5pt] (-.3,0) rectangle (.9,.6);
\draw (0,0) arc (180:0:.3cm);
\draw (.3,.3) -- (.3,.6);
\filldraw (.3,.3) circle (.05cm);
}=m
\qquad\qquad
\tikzmath{
\fill[\AColor, rounded corners=5pt] (-.3,0) rectangle (.9,-.6);
\draw (0,0) arc (-180:0:.3cm);
\draw (.3,-.3) -- (.3,-.6);
\filldraw (.3,-.3) circle (.05cm);
}=m^\dag
\qquad\qquad
\tikzmath{
\fill[\AColor, rounded corners=5pt] (0,0) rectangle (.6,.6);
\draw (.3,.3) -- (.3,.6);
\filldraw (.3,.3) circle (.05cm);
}=i
\qquad\qquad
\tikzmath{
\fill[\AColor, rounded corners=5pt] (0,0) rectangle (.6,-.6);
\draw (.3,-.3) -- (.3,-.6);
\filldraw (.3,-.3) circle (.05cm);
}=i^\dag
\qquad\qquad
\tikzmath{
\fill[\AColor, rounded corners=5pt] (0,0) rectangle (.5,.5); 
}
=c
$$
These 2-morphisms satisfy certain associativity, Frobenius, and separability relations (and more)
$$
\tikzmath{
\fill[\AColor, rounded corners=5pt] (-.3,-.3) rectangle (1.2,.6);
\draw (0,-.3) -- (0,0) arc (180:0:.3cm);
\draw (.3,-.3) arc (180:0:.3cm);
\draw (.3,.3) -- (.3,.6);
\filldraw (.3,.3) circle (.05cm);
\filldraw (.6,0) circle (.05cm);
}
=
\tikzmath{
\fill[\AColor, rounded corners=5pt] (-.6,-.3) rectangle (.9,.6);
\draw (0,0) arc (180:0:.3cm) -- (.6,-.3);
\draw (-.3,-.3) arc (180:0:.3cm);
\draw (.3,.3) -- (.3,.6);
\filldraw (.3,.3) circle (.05cm);
\filldraw (0,0) circle (.05cm);
}
\qquad\qquad
\tikzmath{
\fill[\AColor, rounded corners=5pt] (-.3,-.6) rectangle (1.5,.6);
\draw (0,-.6) -- (0,0) arc (180:0:.3cm) arc (-180:0:.3cm) -- (1.2,.6);
\draw (.3,.3) -- (.3,.6);
\draw (.9,-.3) -- (.9,-.6);
\filldraw (.3,.3) circle (.05cm);
\filldraw (.9,-.3) circle (.05cm);
}
=
\tikzmath{
\fill[\AColor, rounded corners=5pt] (-.3,0) rectangle (.9,1.2);
\draw (0,0) arc (180:0:.3cm);
\draw (0,1.2) arc (-180:0:.3cm);
\draw (.3,.3) -- (.3,.9);
\filldraw (.3,.3) circle (.05cm);
\filldraw (.3,.9) circle (.05cm);
}
=
\tikzmath{
\fill[\AColor, rounded corners=5pt] (-.3,.6) rectangle (1.5,-.6);
\draw (0,.6) -- (0,0) arc (-180:0:.3cm) arc (180:0:.3cm) -- (1.2,-.6);
\draw (.3,-.3) -- (.3,-.6);
\draw (.9,.3) -- (.9,.6);
\filldraw (.3,-.3) circle (.05cm);
\filldraw (.9,.3) circle (.05cm);
}
\qquad\qquad
\tikzmath{
\fill[\AColor, rounded corners=5pt] (-.3,0) rectangle (.9,1.2);
\draw (0,.6) arc (180:-180:.3cm);
\draw (.3,1.2) -- (.3,.9);
\draw (.3,0) -- (.3,.3);
\filldraw (.3,.3) circle (.05cm);
\filldraw (.3,.9) circle (.05cm);
}
=
\tikzmath{
\fill[\AColor, rounded corners=5pt ] (0,0) rectangle (.6,1.2);
\draw (.3,0) -- (.3,1.2);
}
$$
which allow us to change the connectivity of a trivalent graph whose edges are labelled by $Q$ and whose vertices are labelled by $m,m^\dag$.
The separability condition allows these 2D meshes to replicate in 2D, so that the trivalent graph depends only on the connectivity, and not on the genus.
$$
\tikzmath{
\filldraw[rounded corners=5pt, \AColor] (-.8,-.6) rectangle (.8,.6);
\draw (0,-.6) -- (0,0);
\draw (-.6,.6) -- (0,0) -- (.6,.6);
\filldraw (0,0) circle (.03cm);
}
=
\tikzmath{
\filldraw[rounded corners=5pt, \AColor] (-.8,-.6) rectangle (.8,.6);
\draw (0,-.6) -- (0,0);
\draw (-.6,.6) -- (0,0) -- (.6,.6);
\draw (-.2,.2) -- (.2,.2);
\filldraw (0,0) circle (.03cm);
\filldraw (-.2,.2) circle (.03cm);
\filldraw (.2,.2) circle (.03cm);
}
=
\tikzmath{
\filldraw[rounded corners=5pt, \AColor] (-.8,-.6) rectangle (.8,.6);
\draw (0,-.6) -- (0,0);
\draw (-.6,.6) -- (0,0) -- (.6,.6);
\draw (-.2,.2) -- (.2,.2);
\draw (-.4,.4)  .. controls ++(-120:.3cm) and ++(180:.3cm) .. (0,.-.4); 
\draw (0,-.2)  .. controls ++(0:.3cm) and ++(-30:.3cm) .. (.4,.4); 
\filldraw (0,-.2) circle (.03cm);
\filldraw (0,0) circle (.03cm);
\filldraw (-.2,.2) circle (.03cm);
\filldraw (0,-.4) circle (.03cm);
\filldraw (-.4,.4) circle (.03cm);
\filldraw (.2,.2) circle (.03cm);
\filldraw (.4,.4) circle (.03cm);
}
=
\cdots
=
\tikzmath{
\filldraw[rounded corners=5pt, \AColor] (-.8,-.6) rectangle (.8,.6);
\fill[\BColor] (-.4,.4)  .. controls ++(-120:.3cm) and ++(180:.3cm) .. (0,.-.4) -- (0,-.2)  .. controls ++(0:.3cm) and ++(-30:.3cm) .. (.4,.4) -- (.2,.2) -- (-.2,.2);
\draw (0,-.6) -- (0,0);
\draw (-.6,.6) -- (0,0) -- (.6,.6);
\draw (-.2,.2) -- (.2,.2);
\draw (-.4,.4)  .. controls ++(-120:.3cm) and ++(180:.3cm) .. (0,.-.4); 
\draw (0,-.2)  .. controls ++(0:.3cm) and ++(-30:.3cm) .. (.4,.4); 
\filldraw (0,-.2) circle (.03cm);
\filldraw (0,0) circle (.03cm);
\filldraw (-.2,.2) circle (.03cm);
\filldraw (0,-.4) circle (.03cm);
\filldraw (-.4,.4) circle (.03cm);
\filldraw (.2,.2) circle (.03cm);
\filldraw (.4,.4) circle (.03cm);
}
$$
Densely packing these strings, we see that $Q$ behaves like an identity 1-morphism and $m$ behaves like a unitor 2-morphism for a new object in our $\rm C^*/W^*$ 2-category $\cC$.
This directly leads to the notion of split higher idempotent.
This story is a unitary version of that for separable monads in \cite{1812.11933} and unital condensation monads in \cite{1905.09566}.

Given a $\rm C^*/W^*$ 2-category $\cC$, its \emph{Q-system completion} is the 2-category $\QSys(\cC)$ of Q-systems, bimodules, and intertwiners in $\cC$, which has been studied previously in \cite{MR3509018,MR3308880,2010.01072}, building on algebraic versions in \cite{MR2075605,MR3459961,1812.11933,1905.09566}.
This is one version of a higher idempotent completion for $\rm C^*/W^*$ 2-categories in comparison with 2-categories of separable monads in \cite{1812.11933} and condensation monads in \cite{1905.09566}.
We have a canonical inclusion $\dag$ 2-functor $\iota_\cC: \cC \hookrightarrow \QSys(\cC)$ which is always an equivalence on all hom categories.
We call $\cC$ \emph{Q-system complete} if $\iota_\cC$ is a $\dag$-equivalence of $\dag$ 2-categories.
In Theorem \ref{Thm:QSystemSplit} below, we show $\cC$ is Q-system complete if and only if all Q-systems split (cf.~\cite[Prop.~A.4.2]{1812.11933}).

After establishing these basic general results for Q-system completion for $\rm C^*/W^*$ 2-categories, our first main theorem analyzes $\rCorr$, the $\rm C^*$ 2-category of unital $\rm C^*$-algebras, right correspondences, and intertwiners.

\begin{thmalpha}
\label{thm:QSysComplete}
The $\rm C^*$ 2-category $\rCorr$ is Q-system complete.
\end{thmalpha}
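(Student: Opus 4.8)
The plan is to prove Theorem~\ref{thm:QSysComplete} by constructing an explicit \emph{realization} $\dag$-2-functor $|\,\cdot\,|\colon\QSys(\rCorr)\to\rCorr$ and then exhibiting natural unitary isomorphisms $|\,\cdot\,|\circ\iota_{\rCorr}\cong\id_{\rCorr}$ and $\iota_{\rCorr}\circ|\,\cdot\,|\cong\id_{\QSys(\rCorr)}$. Since $\iota_{\rCorr}$ is already an equivalence on every hom category, producing such a pseudo-inverse is exactly what is needed to upgrade it to a $\dag$-equivalence (equivalently, by Theorem~\ref{Thm:QSystemSplit}, to show that every Q-system in $\rCorr$ splits); the realization functor is the concrete device that witnesses the splitting.

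First I would define $|\,\cdot\,|$ on a Q-system $\cQ=(A,Q,m,i)$. A Q-system is in particular a dualizable $1$-morphism, so $Q$ is a bifinite $A$--$A$ correspondence, finitely generated projective (hence self-dual) as a right Hilbert $A$-module. I turn the underlying space $Q$ into a unital $*$-algebra $B:=|\cQ|$ with multiplication $m$, unit $i(1_A)$, and involution the conjugate-linear map assembled from $m^\dag$ and a solution to the conjugate equations for $Q$; the associativity, Frobenius, and separability relations displayed in the introduction are precisely what make $(Q,m,i,*)$ an associative unital $*$-algebra. Left multiplication gives a unital $*$-homomorphism $L\colon B\to\End_A(Q)=\cK_A(Q)$ into the $\rm C^*$-algebra of adjointable operators on the Hilbert module $Q_A$; using the unit $i$ and counit $i^\dag$ one checks $L$ is injective, so $B$ inherits a $\rm C^*$-norm and is complete. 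Thus $|\cQ|$ is a genuine unital $\rm C^*$-algebra, the counit $i^\dag$ is a conditional expectation, and $i$ realizes a finite (Watatani) index inclusion $A\hookrightarrow B$.

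Next I would extend $|\,\cdot\,|$ to higher morphisms: a Q-system bimodule ${}_{Q_1}X_{Q_2}$ is sent to $X$ with the right $|\cQ_2|$- and left $|\cQ_1|$-actions induced by its module maps and the $|\cQ_2|$-valued inner product built from the Frobenius structure of $Q_2$, while a bimodule intertwiner is sent to itself. The main content at this stage is checking that $|\,\cdot\,|$ is a $\dag$-2-functor: it manifestly preserves $\dag$ and identities, and one must verify that it sends the relative tensor product $X\xz_{\cQ_2}Y$ of Q-system bimodules (the balanced product over the \emph{algebra} $Q_2$) to the $\rm C^*$ relative tensor product $|X|\xz_{|\cQ_2|}|Y|$. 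This is a comparison of two balanced tensor products, and it follows from the separability of $Q_2$, which makes the $Q_2$-balancing a complemented projection and identifies it with balancing over the $\rm C^*$-algebra $|\cQ_2|$; the associator and unitor coherences are then routine diagrammatics.

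Finally I would produce the two natural unitary isomorphisms. The composite $|\,\cdot\,|\circ\iota_{\rCorr}$ is naturally isomorphic to $\id_{\rCorr}$ because the trivial Q-system on $A$ realizes by construction to $A$ itself, so this direction is immediate. The essential step is $\iota_{\rCorr}\circ|\,\cdot\,|\cong\id_{\QSys(\rCorr)}$: I must show that each $\cQ$ is unitarily equivalent in $\QSys(\rCorr)$ to the trivial Q-system on $B=|\cQ|$. The equivalence is implemented by $P:={}_AQ_B$, the space $Q$ regarded as the free rank-one right Hilbert $B$-module with its left $A$-action and its left $Q$-action by multiplication, together with the conjugate $\bar P:={}_BQ_A$. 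Composing over the trivial Q-system recovers $P\xz_{\iota(B)}\bar P\cong Q=\id_{(A,\cQ)}$ at once, so the crux is the reverse composite $\bar P\xz_{\cQ}P\cong 1_B$, a relative tensor product balanced over the Q-system $\cQ$. I expect this to be the main obstacle: it is precisely here that separability and the Frobenius relations are used in full, both to collapse the $\cQ$-balanced product onto $B$ (fullness of $P$) and to match the $A$- and $B$-valued inner products so that the resulting map is a unitary isomorphism of $B$--$B$ correspondences carrying the canonical Q-system structure to the trivial one. Checking naturality in $\cQ$ and compatibility with the local equivalence of $\iota_{\rCorr}$ then assembles these data into a $\dag$-equivalence, proving that $\rCorr$ is Q-system complete.
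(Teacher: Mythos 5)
Your proposal is correct and takes essentially the same route as the paper: the paper likewise constructs a realization $\dag$ 2-functor $|\cdot|\colon\QSys(\rCorr)\to\rCorr$ (with $|Q|$ canonically identified with $Q$ itself via Construction \ref{construction:BoundedVectorModule}), notes that $|\cdot|\circ\iota\cong\id_{\rCorr}$ is immediate, and reduces the theorem via Remark \ref{rem:QSysCompleteIffESon0} to showing every Q-system is equivalent in $\QSys(\rCorr)$ to the trivial Q-system on its realization (Proposition \ref{prop:IotaEssSurj}). Your crux computation --- that $\overline{P}\xzq_{\cQ}P\cong 1_B$ because the multiplication is a coisometry whose defect $m^\dag\circ m$ equals the separability projector (Frobenius) while $m\circ m^\dag=\id$ (separability) --- is exactly the splitting mechanism of Theorem \ref{Thm:QSystemSplit} and Example \ref{ex:EquivalentRestrictedQSystem}; the only real difference is that the paper packages the final equivalence through the conditional-expectation Examples \ref{ex:RestrictByExpectation}--\ref{ex:EquivalenceOfRestrictByExpectationExamples}, including the corner $s_QBs_Q$ needed there because $E_B$ is onto only that corner when $d_Q$ is not invertible, a degenerate case your direct bimodule construction handles without the detour.
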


Essentially, $\rCorr$ being Q-system complete allows for the straightforward adaptation of subfactor results to the $\rm C^*$ setting.
This theorem also holds for the 2-categories $\WStarRCorr$ of $\rm W^*$-correspondences and $\vNA$ of von Neumann algebras, but we focus on $\rCorr$ to streamline the exposition and to make subfactor results more easily adapted to $\rm C^*$-algebraists.
These ideas have been implicit in subfactor theory since Q-systems were introduced, and one purpose of this paper is to make these implicit techniques explicit and easily accessible for applications.

To prove that $\rCorr$ is Q-system complete, we extend realization to a 2-functor taking values in $\rCorr$, and prove it is inverse to Q-system completion.

% to a 2-functor from the $\rm W^*$ 2-category of $Q$-systems 
% to a \emph{receptacle} $\rm C^*/W^*$ 2-category of $\rm C^*$-algebras and right correspondences, $\rm W^*$-algebras and right $\rm W^*$-correspondences, or von Neumann algebras with Hilbert space bimodules.
% While our theorems here apply to all these settings, we focus primarily on the $\rm C^*$ setting.
% Subfactor techniques are now being used to classify actions of unitary tensor categories on classifiable $\rm C^*$-algebras.
%Applying subfactor techniques to this $\rm C^*$ classification program will produce many new existing actions from existing ones.

\begin{lemalpha}
Realization extends to a $\dag$ 2-functor $|\cdot|:\QSys(\rCorr) \to \rCorr$ which is inverse to the inclusion $\dag$ 2-functor $\iota_{\rCorr} : \rCorr \to \QSys(\rCorr)$.
\end{lemalpha}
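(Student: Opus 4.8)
The plan is to define $|\cdot|$ explicitly on objects, $1$-morphisms, and $2$-morphisms, check it is a $\dag$ $2$-functor, and then produce pseudonatural $\dag$-equivalences realizing $|\cdot|\circ\iota_{\rCorr}\cong\id_{\rCorr}$ and $\iota_{\rCorr}\circ|\cdot|\cong\id_{\QSys(\rCorr)}$. First I would recall realization on an object: given a Q-system $Q$ based at a unital $\rm C^*$-algebra $A$, I equip the underlying right Hilbert $A$-module $Q$ with the product $m$, unit $i$, and the involution determined by the $\dag$ and Frobenius structure, making it a unital $*$-algebra; left multiplication embeds this $*$-algebra faithfully into the adjointable operators $\End_A(Q)$, and pulling back the operator norm endows $|Q|$ with a $\rm C^*$-norm in which it is complete, so that $|Q|$ is a genuine unital $\rm C^*$-algebra. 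On a $1$-morphism of $\QSys(\rCorr)$, which is a $P$--$Q$ bimodule internal to $\rCorr$, realization produces the correspondence carried by the same underlying space with the residual $|P|$- and $|Q|$-actions and induced inner product; on a $2$-morphism (an intertwiner) it is the identity on the underlying map.

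Verifying that $|\cdot|$ is a $\dag$ $2$-functor is then routine: compatibility with the relative tensor product $\xz$ holds because the relative tensor product of bimodules over a Q-system is computed by the same balanced tensor product and separability projection that also computes composition of the realized correspondences, and compatibility with $\dag$ and with identity $1$-morphisms is immediate. The composite $|\cdot|\circ\iota_{\rCorr}$ is the easy direction: $\iota_{\rCorr}$ sends $A$ to the trivial Q-system $1_A=A$ (with $m$ the algebra multiplication and $i$ its unit), sends a correspondence to itself viewed as a bimodule between trivial Q-systems, and sends an intertwiner to itself; realizing the trivial Q-system returns $A$, so this composite is canonically---indeed strictly---the identity $\dag$ $2$-functor on $\rCorr$.

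The substance lies in the other composite. For each object $(A,Q)$ of $\QSys(\rCorr)$ I would exhibit an invertible $1$-morphism witnessing the equivalence $(A,Q)\simeq(|Q|,1_{|Q|})=\iota_{\rCorr}|Q|$: namely $Q$ itself, regarded as a bimodule whose left and right module structures are both given by the multiplication $m$ and whose inner product comes from the Hilbert module structure, with candidate inverse the conjugate correspondence $\overline{Q}$. Invertibility is exactly where the Q-system axioms do the work: the separability relation $m\circ m^\dag=\id_Q$ collapses the composite balanced over $Q$ to the identity $1$-morphism $1_{|Q|}$ on $(|Q|,1_{|Q|})$, while the unit and Frobenius relations identify the composite balanced over $1_{|Q|}$ with the identity $1$-morphism $Q$ on $(A,Q)$. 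Assembling these equivalences over all objects into a single pseudonatural $\dag$-equivalence $\iota_{\rCorr}\circ|\cdot|\Rightarrow\id_{\QSys(\rCorr)}$ then requires checking naturality against an arbitrary bimodule $1$-morphism, which is a diagram chase using associativity of $\xz$ and the intertwining conditions.

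I expect the main obstacle to be precisely this last composite: proving invertibility of the equivalence bimodule from the Frobenius and separability relations and then verifying the coherence of the resulting pseudonatural transformation, all while controlling the analytic point---special to the $\rm C^*$ rather than the purely algebraic setting---that $|Q|$ is norm-complete and that the induced $|Q|$-valued inner products are positive and complete. Since $\iota_{\rCorr}$ is already an equivalence on every hom-category, these two composites upgrade it to a $\dag$-equivalence of $\dag$ $2$-categories with inverse $|\cdot|$, which is the assertion of the lemma (and yields Theorem \ref{thm:QSysComplete}).
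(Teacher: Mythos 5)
Your overall architecture coincides with the paper's: realize objects, $1$-morphisms and $2$-morphisms; note that $|\cdot|\circ\iota_{\rCorr}\cong\id_{\rCorr}$ is essentially immediate; and witness $(A,Q)\simeq(|Q|,1_{|Q|})$ by $Q$ itself viewed as a bimodule, with invertibility coming from separability and Frobenius (this is exactly the mechanism of Theorem \ref{Thm:QSystemSplit} and Example \ref{ex:EquivalentRestrictedQSystem}). One structural difference is worth noting: the paper never assembles the objectwise equivalences into a pseudonatural transformation $\iota_{\rCorr}\circ|\cdot|\Rightarrow\id_{\QSys(\rCorr)}$. Because $\iota_{\rCorr}$ is automatically fully faithful on $2$-morphisms and essentially surjective on each hom-category (Remark \ref{rem:QSysCompleteIffESon0}), it is a $\dag$ $2$-equivalence as soon as it is essentially surjective on objects (Proposition \ref{prop:IotaEssSurj}), and then $|\cdot|$ is an inverse formally, given $|\cdot|\circ\iota_{\rCorr}\cong\id$. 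This makes the coherence checks you defer to the last step unnecessary; your route is viable but strictly harder.

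The genuine gap is the analytic point you yourself flag as ``the main obstacle'' but never resolve, and it is the real content of the paper's proof. For $|X|$ (with ${}_PX_Q$ a bimodule) to be a $1$-morphism of $\rCorr$ one must prove (i) the $|Q|$-valued sesquilinear form is positive and definite, and (ii) $|X|$ is complete in the induced norm. The paper proves (i) by transporting the form through the $*$-isomorphism $\Phi$ of \eqref{eq:CStarIso} onto $\End_{-Q}({}_\bbC B\boxtimes_B Q_B)$, and proves (ii) by showing that the conditional expectation $E_B:|Q|\to B$ of Definition \ref{defn:ConditionalExpectation} is faithful, cp, and of finite Pimsner--Popa index with constant $\|d_Q\|^2$ (Proposition \ref{prop:EBCPandFiniteIndex}, resting on the bound \ref{Z:JonesProjBound}); this forces the $|Q|$-norm and the $B$-norm on $|X|$ to be equivalent, and completeness in the $B$-norm holds by construction since $|X|$ is a hom-space of adjointables. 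No estimate of this kind appears in your proposal. Relatedly, ``pulling back the operator norm'' on $|Q|$ silently assumes the image of $|Q|$ in the adjointables on $Q_B$ is norm-closed; the paper gets this by identifying the image with the right $Q$-linear operators, a manifestly closed condition. Two smaller points: there is no conjugate correspondence $\overline{Q}$ available in $\rCorr$ in general, so the inverse bimodule must be built as the dual ${}_{1_{|Q|}}|Q|_Q$ whose $Q$-valued inner product comes from the expectation (it exists precisely because of the Frobenius/separability structure); and when $d_Q$ is not invertible the equivalence has to be routed through the corner $B_0=s_QBs_Q$ as in Examples \ref{ex:EquivalentRestrictedQSystem} and \ref{ex:EquivalenceOfRestrictByExpectationExamples}, a degeneracy your argument does not address.
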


For a unital $\rm C^*$-algebra $B$ and a Q-system $Q\in \rCorr(B\to B)$, the realization is fairly straightforward to describe.
Our method follows \cite{MR2097363}, which is expressed diagrammatically in \cite[\S4.1]{MR3221289}.
We define
$|Q|:=\Hom_{\bbC-B}(B \to {}_\bbC B \boxtimes_B Q_B)$
with multiplication, unit, and $*$-structure given by 
$$
q_1\cdot q_2
:=
\tikzmath{
\begin{scope}
\clip[rounded corners=5pt] (-.4,-2) rectangle (1.2,1);
\fill[\BColor] (.6,-2) -- (.6,-1.3) -- (.5,-1) .. controls ++(90:.3cm) and ++(270:.3cm) .. (0,-.3) -- (-.1,.3) -- (-.1,1) -- (1.2,1) -- (1.2,-2);
\end{scope}
\draw (.4,.6) -- (.4,1);
\filldraw (.4,.6) circle (.05cm);
\draw (.1,.3) arc (180:0:.3cm) -- (.7,-1);
\draw[dashed] (.6,-2) -- (.6,-1.3) -- (.5,-1) .. controls ++(90:.3cm) and ++(270:.3cm) .. (0,-.3) -- (-.1,.3) -- (-.1,1);
\roundNbox{unshaded}{(0,0)}{.3}{0}{0}{$q_1$};
\roundNbox{unshaded}{(.6,-1.3)}{.3}{0}{0}{$q_2$};
}\,,
\qquad
1_{|Q|}
:=
\tikzmath{
\begin{scope}
\clip[rounded corners=5pt] (-.2,0) rectangle (.6,1);
\fill[\BColor] (0,0) rectangle (.6,1);
\end{scope}
\draw[dashed] (0,0) -- (0,1);
\filldraw (.3,.5) circle (.05cm);
\draw (.3,.5) -- (.3,1);
}\,,
\qquad\text{and}\qquad
q^*:=\ 
\tikzmath{
\begin{scope}
\clip[rounded corners=5pt] (-.4,-1) rectangle (1,.7);
\fill[\BColor] (-.1,-1) -- (-.1,0) -- (0,0) -- (0,.7) -- (1,.7) -- (1,-1);
\end{scope}
\draw[dashed] (-.1,-1) -- (-.1,0) -- (0,0) -- (0,.7);
\draw (.1,-.3) arc (-180:0:.3cm) -- (.7,.7);
\draw (.4,-.6) -- (.4,-.8);
\filldraw (.4,-.6) circle (.05cm);
\filldraw (.4,-.8) circle (.05cm);
\roundNbox{unshaded}{(0,0)}{.3}{0}{0}{$q^\dag$};
}\,.
\qquad\qquad
\begin{aligned}
\tikzmath{
\filldraw[dotted,thin,fill=white, rounded corners=5pt] (0,0) rectangle (.5,.5); 
}
&=
\bbC
\\
\tikzmath{
\fill[\BColor, rounded corners=5pt] (0,0) rectangle (.5,.5); 
}
&=B
\end{aligned}
$$
Similarly, we define the realization of a $P-Q$ bimodule using the hom spaces in $\rCorr$, along with the realization of a $P-Q$ bimodule map.
We refer the reader to \S\ref{sec:C*AlgQSysComplete} for more details.

In \cite{MR4369356}, we will prove that Q-system completion is a 3-endofunctor on the 3-category (algebraic tricategory \cite{MR3076451}) of $\rm C^*/W^* $ 2-categories.
For the purposes of this article, we show that any $\dag$ 2-functor $F: \cC \to \cD$ between $\rm C^*/W^* $ 2-categories induces a $\dag$ 2-functor $\QSys(F): \QSys(\cC) \to \QSys(\cD)$.
Using Theorem \ref{thm:QSysComplete}, we get the following immediate corollary.

\begin{coralpha}\label{cor: induced action}
Suppose $\cC$ is a unitary tensor category, $A$ is a unital $\rm C^*$-algebra, and $F: \cC \to \rCorr(A\to A)$ is a $\dag$-tensor functor.
We have a composite $\dag$ 2-functor
$$
\QSys(\cC) 
\xrightarrow{\QSys(F)} 
\QSys(\rCorr(A\to A)) 
\xrightarrow{|\,\cdot\,|} 
\rCorr.
$$
Moreover, when $F$ is fully faithful, then so is our composite 2-functor on 2-morphisms (see Remark \ref{rem:2FullyFaithful}).
\end{coralpha}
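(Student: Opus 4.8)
The plan is to handle the two assertions separately. For the first, I would note that the displayed map is literally a composite of three $\dag$ 2-functors: the induced functor $\QSys(F)$, the evident inclusion $\QSys(\rCorr(A\to A)) \hookrightarrow \QSys(\rCorr)$ given by regarding a Q-system in the endomorphism category $\rCorr(A\to A)$ as a Q-system in $\rCorr$ based at the object $A$, and the realization functor $|\cdot|$ from the preceding Lemma. The first is a $\dag$ 2-functor by the general fact (established earlier) that any $\dag$ 2-functor between $\rm C^*/W^*$ 2-categories induces one on Q-system completions, applied to $F$ viewed as a $\dag$ 2-functor between one-object 2-categories; the second is manifestly one; and the third is one by the Lemma. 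Since $\dag$ 2-functors compose to a $\dag$ 2-functor, the first assertion follows.

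For the second assertion I would reduce the fully-faithfulness of the composite on 2-morphisms to that of $\QSys(F)$. By the Lemma, $|\cdot|$ is inverse to $\iota_{\rCorr}$, hence a $\dag$-equivalence and in particular fully faithful on 2-morphisms. The inclusion $\QSys(\rCorr(A\to A)) \hookrightarrow \QSys(\rCorr)$ is the full sub-2-category on the Q-systems based at $A$: for two such Q-systems the bimodules and the intertwiners between them are computed identically in either 2-category, so it too is fully faithful on 2-morphisms. As this property is stable under composition, it suffices to prove that $\QSys(F)$ is fully faithful on 2-morphisms whenever $F$ is.

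To prove that last claim, fix $P$-$Q$-bimodules $M,N$ in $\QSys(\cC)$; a 2-morphism $M\to N$ there is a 2-morphism $f\colon M\to N$ in $\cC$ (a morphism of $\cC$, viewing $\cC$ as a one-object 2-category) intertwining the left $P$- and right $Q$-actions. Faithfulness is immediate, since the action of $\QSys(F)$ on such an $f$ is $F(f)$ and $F$ is faithful. For fullness, given an $F(P)$-$F(Q)$-bimodule intertwiner $\tilde f\colon F(M)\to F(N)$, fullness of $F$ yields a unique $\cC$-morphism $f\colon M\to N$ with $F(f)=\tilde f$. The intertwiner conditions on $f$ are equations of 2-morphisms in $\cC$ built from $f$ and the action maps of $M,N$; applying the faithful $F$ sends them to the corresponding equations for $\tilde f$ (using that the action maps of $F(M),F(N)$ are $F$ of those of $M,N$ precomposed with the compositors of $F$), which hold as $\tilde f$ is an intertwiner. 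Faithfulness of $F$ then gives the conditions for $f$, so $f$ is a bimodule intertwiner with $\QSys(F)(f)=\tilde f$.

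The main obstacle I anticipate is the bookkeeping in this fullness step: one must check that the bimodule structure maps of $F(M)$ really are $F$ of those of $M$ up to the coherence isomorphisms (tensorators/compositors) of the monoidal functor $F$, so that the intertwiner equations for $\tilde f$ pull back along the faithful $F$ precisely to those for $f$, and not merely up to these coherences. This compatibility is exactly what is encoded in the construction of $\QSys(F)$, so once that construction is in hand the verification is a routine diagram chase; the remaining conceptual care is simply in the identification $\QSys(\rCorr(A\to A))\hookrightarrow\QSys(\rCorr)$ that lets us apply $|\cdot|$.
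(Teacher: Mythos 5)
Your proof is correct and takes essentially the same route as the paper: factor the composite through $\QSys(F)$ (Construction \ref{construction:Qsys(F)}), the evident inclusion of Q-systems based at $A$, and the realization $\dag$ 2-functor, then deduce fully-faithfulness on 2-morphisms from the fact that $|\cdot|$ is an equivalence together with Remark \ref{rem:2FullyFaithful}. The only difference is that you supply the proof of that remark (faithfulness is immediate, and fullness follows from naturality and unitarity of $F^2$), which the paper states as an observation without proof.
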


We remark that the same strategy as Corollary \ref{cor: induced action} was recently used to induce actions of unitary multitensor categories on $\rm II_1$ multifactors in \cite{2010.01072}. (The multifactors there are hyperfinite when the categories are multifusion.)

There are natural K-theory obstructions to the existence of fusion category actions on $\rm C^*$-algebras. 
In particular, 
in Corollary \ref{non-integral}, we show that if a fusion category $\cC$ admits an action on a continuous trace $\rm C^*$-algebra with compact connected spectrum $X$, then $\cC$ must be integral, i.e., all objects of $\cC$ have integral Frobenius-Perron dimension.
This naturally leads to the question: which integral fusion categories admit such actions? 
It is \emph{a priori} plausible (but not the case) that such a fusion category might necessarily admit a fiber functor, where the space $X$ is a point. 

We use Corollary \ref{cor: induced action} to address the question in the previous paragraph. 
A large class integral fusion categories are the \textit{group theoretical} fusion categories, which are unitary fusion categories Morita equivalent to $\fdHilb(G,\omega)$ for a finite group $G$ and $\omega\in Z^{3}(G, \omega)$.
In general, these fusion categories do not admit fiber functors. 
In \cite{MR4328058} it was shown that the unitary fusion categories $\fdHilb(G, \omega)$ always admit an action on some $C(X)$ for a closed connected manifold $X$. 
Combining this fact with the above Corollary \ref{cor: induced action}, we obtain the following result.

% % Constructing new subfactors is hard;
% % not only does it require technical operator algebraic analysis,
% % but also
% % brute force linear algebra 
% % and nontrivial combinatorics.
% % But once one has a subfactor, we obtain many more from algebraic methods.
% % (Unfortunately, identifying them can often rely on more brute force linear algebra and hard combinatorics.)

% \nn{describe how one gets new subfactors from existing.
% really this works since hyperfinite $\rm II_1$ multifactors is unitarily 2-Cauchy complete \cite{2010.01072}.}

% A conceptual way to understand why Q-systems have been so valuable to subfactor construction techniques is via higher idempotent completion \cite{1812.11933,1905.09566}.
% The hard part is constructing an initial subfactor, which requires hard brute force linear algebraic and combinatorial techniques, along with Popa's deep subfactor reconstruction theorem.
% But once you have one subfactor, you can get a bunch more from algebraic methods.
% The reason this works is that the $\rm W^*$ 2-category $\vNA$ of von Neumann algebras, Hilbert space bimodules, and bounded intertwiners is unitarily 2-Cauchy complete.

\begin{coralpha}
\label{cor:MainC}
Let $\mathcal{C}$ be a group theoretical fusion category and $n\ge 2$. 
There exists a closed, connected manifold $X$ of dimension $n$ and an action of $\mathcal{C}$ on a unital continuous trace $\rm C^*$-algebra with spectrum $X$. 
\end{coralpha}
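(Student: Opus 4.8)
The plan is to reduce the statement to the already-constructed action of $\fdHilb(G,\omega)$ and then transport it across the Morita equivalence using realization, i.e.\ Theorem~\ref{thm:QSysComplete} packaged as Corollary~\ref{cor: induced action}. Since $\mathcal{C}$ is group theoretical, it is Morita equivalent to $\mathcal{D} := \fdHilb(G,\omega)$ for a finite group $G$ and a $3$-cocycle $\omega$. In the unitary setting this Morita equivalence is implemented by a connected (haploid) Q-system $Q \in \mathcal{D}$: there is a unitary equivalence of fusion categories $\mathcal{C} \cong \End_{\QSys(\mathcal{D})}(Q)$, the endomorphism category of the object $Q$ inside the Q-system completion of $\mathcal{D}$ (equivalently, the category of $Q$-$Q$-bimodules in $\mathcal{D}$). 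This exhibits $\mathcal{C}$ concretely as a hom-category inside $\QSys(\mathcal{D})$, which is exactly the input Corollary~\ref{cor: induced action} is designed to consume.

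First I would invoke \cite{2011.13898} to obtain a $\dag$-tensor functor (an action) $F: \mathcal{D} \to \rCorr(C(X) \to C(X))$ for a closed connected manifold $X$. To arrange $\dim X = n$ for an arbitrary $n \ge 2$, I would feed the construction an $n$-dimensional input, e.g.\ a free $G$-action on a closed connected $n$-manifold; such a manifold exists for every $n \ge 2$ (for instance $G$ acts freely on a closed orientable surface $\Sigma$ via a surjection from a surface group, and then diagonally and freely on $\Sigma \times W$ for any closed connected $(n-2)$-manifold $W$), and alternatively one can start from a single example of \cite{2011.13898} and inflate by tensoring with $C(W)$. Note that $F$ cannot be fully faithful, since $\End_{\mathcal{D}}(\bbOne) = \mathbb{C}$ while the unit of $\rCorr(C(X)\to C(X))$ has endomorphism algebra $Z(C(X)) = C(X)$; this is the expected behaviour of an action on an algebra with positive-dimensional spectrum, and is why the fully-faithful clause of Corollary~\ref{cor: induced action} is not used here.

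Next I would apply Corollary~\ref{cor: induced action} to $F$, producing the composite $\dag$ $2$-functor $G := |\,\cdot\,| \circ \QSys(F) : \QSys(\mathcal{D}) \to \rCorr$. Restricting $G$ to the object $Q$ yields a unital $\rm C^*$-algebra $B := |F(Q)|$ together with the induced $\dag$-tensor functor on endomorphism categories
\[
\mathcal{C} \cong \End_{\QSys(\mathcal{D})}(Q) \xrightarrow{\ G\ } \End_{\rCorr}(B) = \rCorr(B \to B),
\]
which is precisely an action of $\mathcal{C}$ on $B$. Thus the categorical content of the corollary is immediate; what remains is purely a question about the algebra $B$.

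The main obstacle, and the technical heart of the proof, is to identify $B = |F(Q)|$ as a unital continuous-trace algebra whose spectrum is a closed connected $n$-manifold. Here I would use that $F(Q)$ is a finitely generated projective $C(X)$-$C(X)$-correspondence (it realizes a Q-system), so that $B$ is a finite Watatani-index extension of $A = C(X)$ which is a continuous field of finite-dimensional $\rm C^*$-algebras over $X$; since $X$ is connected this field is locally trivial, whence $B$ is continuous trace and the natural map $\widehat{B} \to X$ is a finite covering. In particular $\widehat{B}$ is a closed $n$-manifold, and $B$ is unital by the explicit unit in the realization formula. The delicate point is connectedness of $\widehat{B}$: it amounts to $Z(B)$ having no nontrivial projections, and I would deduce it from $Q$ being a \emph{connected} Q-system together with the connectedness of $X$ and the faithfulness of the action $F$ built in \cite{2011.13898} (equivalently, by arranging the underlying cover to be connected). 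Once $\widehat{B}$ is seen to be a closed connected $n$-manifold, $B$ is the desired unital continuous-trace algebra with spectrum $\widehat{B}$ on which $\mathcal{C}$ acts, completing the proof. Everything upstream of this spectral analysis is formal and powered by Theorem~\ref{thm:QSysComplete}; the only genuinely nontrivial work is in pinning down $\widehat{B}$.
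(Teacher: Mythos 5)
Your categorical skeleton is the same as the paper's: implement the Morita equivalence by a Q-system $Q$ in $\fdHilb(G,\omega)$, take the action of $\fdHilb(G,\omega)$ on a continuous trace algebra from \cite{2011.13898}, and push $\mathcal{C}\cong \QSys(\rmB\fdHilb(G,\omega))(Q\to Q)$ through Corollary \ref{cor: induced action} to obtain an action on $B:=|F(Q)|$. The genuine gap is in what you yourself flag as the technical heart: your argument that $B$ is continuous trace with spectrum a closed connected $n$-manifold does not work. Since $C(X)$ is not central in $B$, the algebra $B$ is \emph{not} a continuous field over $X$, and there is no natural map $\widehat{B}\to X$ at all, let alone a finite covering. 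Worse, a finite Watatani-index extension of $C(X)$ with $X$ connected need not be continuous trace: $C(S^{1})\rtimes \mathbb{Z}/2$ for the reflection action is a finite-index extension of $C(S^{1})$ whose spectrum fails to be Hausdorff at the two fixed points, hence is not continuous trace. Freeness of the induced action on the spectrum is the essential hypothesis, and your argument never uses it at the step where it is needed.

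What the paper actually does (in the unlabeled lemma immediately preceding the corollary) is use the classification of Q-systems in $\fdHilb(G,\omega)$ as pairs $(H,\mu)$ with $H\le G$ and $d\mu=\omega|_{H}$ to identify the realization $|(H,\mu)|_{F}$ of an \emph{automorphic} action $F$ with a Busby--Smith twisted crossed product $A\rtimes_{F,u}H$; the Packer--Raeburn stabilization trick \cite{MR1002543} together with the theorem on crossed products by actions that are free on the spectrum \cite{MR920145} then shows $|(H,\mu)|_{F}$ is continuous trace with spectrum $\widehat{A}/H$. Note the direction: the spectrum is a \emph{quotient} of $X$, covered by $X$, not a cover of $X$. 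This also dissolves your ``delicate point'': connectedness of $\widehat{B}$ is automatic, being a continuous image of the connected space $X$, and requires no appeal to connectedness of $Q$ or faithfulness of $F$ (had your covering picture been correct, connectedness could genuinely fail, and your sketch would not settle it). Finally, your paragraph engineering $\dim X=n$ is unnecessary, since \cite[Thm.~1.3]{2011.13898} already produces, for every $n\ge 2$, the required action with free $G$-action on a closed connected $n$-manifold.
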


\subsection*{Acknowledgements.}
We would like to thank
Marcel Bischoff,
Luca Giorgetti,
Andr\'e Henriques,
David Reutter,
Jan Steinebrunner, and
Christoph Weis
for helpful discussions.
Quan Chen, Roberto Hern\'{a}ndez Palomares, and David Penneys were supported by NSF grants DMS 1654159, 1927098, and 2051170.
Quan Chen was partially supported by NSF DMS grant 1936283.
Roberto Hern\'{a}ndez Palomares was partially financially supported by 
CONACyT grant \emph{Becas al Extranjero Demanda Libre 2018.}
Corey Jones was supported by NSF grant DMS 1901082/2100531.

%%%%%%%%%%%%%%%%%%%%%%%%%%%%%%%%%%%%%%%%%%%%%%%
%%%%%%%%%%%%%%%%%%%%%%%%%%%%%%%%%%%%%%%%%%%%%%%
%%%%%%%%%%%%%%%%%%%%%%%%%%%%%%%%%%%%%%%%%%%%%%%
\section{Background}

In this section, we recall the notions of $\rm C^*/W^*$ 2-category, and we define the particular examples most relevant to this article, i.e., $\rCorr$, $\WStarRCorr$, and $\vNA$.

%%%%%%%%%%%%%%%%%%%%%%%%%%%%%%%%%%%%%%%%%%%%%%%
\subsection{\texorpdfstring{$\rm C^*$}{C*}-2-categories and unitary tensor categories}

\begin{nota}
In this article, \emph{2-category} means a weak 2-category, also known as a bicategory.
We refer the reader to \cite{MR4261588} for background on 2-categories.

Given a 2-category $\cC$, the objects are denoted by lower case letters $a,b,c,\dots$,
1-morphisms $a\to b$ are denoted using notation ${}_aX_b$ similar to bimodules, and 2-morphisms are denoted by $f,g,h,\dots$.
We write 1-composition $\xz$ from \emph{left to right}, e.g.~${}_aX\xz_b Y_c$,
and 2-composition $\circ$ from \emph{right to left}.
This notation is consistent with the tensor product of bimodules and composition of intertwiners in the examples in this article.
\end{nota}

To the best of our knowledge, 
the notion of a $\rm C^*$ 2-category first appeared in \cite{MR1444286} building on the notion of $\rm C^*$ tensor category,
and
the notions of $\rm W^*$ 2-category and $\rm W^*$ tensor category first appeared in \cite{MR2325696}.
We refer the reader to \cite{MR2298822}, \cite{MR3994584}, and \cite{1705.05600} for further discussion and applications.
The notion of $\rm W^*$ category was studied in detail in \cite{MR808930}.

\begin{defn}
A dagger structure on a 2-category $\cC$ consists of an anti-linear map $\dag: \cC({}_aX_b \Rightarrow {}_aY_b) \to \cC({}_aY_b \Rightarrow {}_aX_b)$ for all 1-morphisms ${}_aX_b,{}_aY_b\in \cC(a\to b)$ for all objects $a,b\in\cC$ satisfying the following conditions:
\begin{enumerate}[label=($\dag$\arabic*)]
\item 
For all $f\in \cC({}_aX_b \Rightarrow {}_aY_b)$, $f^{\dag\dag}=f$.
\item
For all $f\in \cC({}_aX_b \Rightarrow {}_aY_b)$ and $g\in \cC({}_aY_b \Rightarrow {}_aZ_b)$, $(g\circ f)^\dag = f^\dag \circ g^\dag$.
\item
For all 
$f\in \cC({}_aW_b \Rightarrow {}_aX_b)$ 
and 
$g\in \cC({}_bY_c \Rightarrow {}_bZ_c)$,
$(f\xz_b g)^\dag = f^\dag \xz_b g^\dag$.
\item
All unitors and associators in $\cC$ are \emph{unitary} ($u^\dag=u^{-1}$).
\end{enumerate}
A $\dag$ 2-category is a 2-category equipped with a dagger structure.

We call a $\dag$ 2-category $\cC$ a \emph{$\rm C^*$ 2-category} if every hom 1-category is a 
$\rm C^*$ category.
This means:
\begin{enumerate}[label=\textup{($\rm C^*$\arabic*)}]
\item 
\label{C*:Positive}
For all $f\in \cC({}_aX_b \Rightarrow {}_aY_b)$, there is a $g\in \cC({}_aX_b \Rightarrow {}_aX_b)$ such that $f^\dag \circ f = g^\dag \circ g$.
\item
\label{C*:Norm}
For each ${}_aX_b,{}_aY_b\in\cC(a\to b)$, the function $\|\cdot\|: \cC({}_aX_b \Rightarrow {}_aY_b)\to [0,\infty]$ given by
$$
\|f\|^2 := \sup\set{|\lambda|\geq 0\,}{\,f^\dag\circ f - \lambda\id_{X} \text{ is not invertible}}
$$
gives a complete norm on $\cC({}_aX_b \Rightarrow {}_aY_b)$ which is:
\begin{itemize}
\item 
(sub-multiplicative) 
For all $f\in \cC({}_aX_b \Rightarrow {}_aY_b)$ and $g\in \cC({}_aY_b \Rightarrow {}_aZ_b)$,
$\|g\circ f\|\leq \|g\|\cdot\|f\|$, and
\item
($\rm C^*$)
For all $f\in \cC({}_aX_b \Rightarrow {}_aY_b)$,
$\|f^\dag \circ f\| = \|f\|^2$.
\end{itemize}
\end{enumerate}
If $\cC$ admits direct sums of 1-morphisms, then by Roberts' $2\times 2$ trick \cite[Lem.~2.6]{MR808930},
\ref{C*:Positive} and \ref{C*:Norm} are equivalent to:
\begin{enumerate}[label=\textup{($\rm C^*$)}]
\item
\label{C*:2x2}
For all ${}_aX_b, {}_aY_b$, $\End_\cC({}_aX_b \oplus {}_aY_b)$ is a $\rm C^*$ algebra.
\end{enumerate}

We call a $\rm C^*$ 2-category a \emph{$\rm W^*$ 2-category} if 
\begin{enumerate}[label=\textup{($\rm W^*$\arabic*)}]
\item
\label{W*:Predual}
each hom 1-category is a $\rm W^*$ category, i.e., for each ${}_aX_b,{}_aY_b\in\cC(a\to b)$, the Banach space $\cC({}_aX_b \Rightarrow {}_aY_b)$ has a predual, and
\item
\label{W*:TensorIsSeparatelyNormal}
1-composition $\xz$ is separately normal (weak* continuous) in each variable.
\end{enumerate}
\end{defn}

\begin{lem}
\label{lem:CompositionNormContinuous}
Suppose $\cC$ is a $\rm C^*$ 2-category,
${}_aX_b, {}_aY{}_b, {}_aZ_b$ are 1-morphisms in $\cC$, and $f\in \cC({}_aY{}_b\Rightarrow {}_aZ_b)$.
The map 
$$
f\circ - :
\cC({}_aX{}_b\Rightarrow {}_aY_b)
\longrightarrow
\cC({}_aX{}_b\Rightarrow {}_aZ_b)
$$
is norm continuous.
If $\cC$ is $\rm W^*$, then $f\circ -$ is normal (weak*-continuous).
\end{lem}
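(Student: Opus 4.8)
The plan is to handle the two assertions separately: norm continuity is immediate from the $\rm C^*$ axioms, while normality in the $\rm W^*$ case is best seen as a statement internal to a single hom $\rm W^*$ category.

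First I would dispose of norm continuity. The 2-morphism spaces are complex Banach spaces and 2-composition $\circ$ is bilinear, so $f\circ -$ is a $\bbC$-linear map between Banach spaces. By the sub-multiplicativity clause of \ref{C*:Norm} we have $\|f\circ g\|\le \|f\|\cdot\|g\|$ for every $g\in \cC({}_aX_b\Rightarrow {}_aY_b)$, so $f\circ -$ is bounded with operator norm at most $\|f\|$; a bounded linear map between Banach spaces is continuous. This settles the first claim with essentially no computation.

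Next I would treat the $\rm W^*$ case. The key observation is that all three 1-morphisms ${}_aX_b,{}_aY_b,{}_aZ_b$ lie in the single hom 1-category $\cC(a\to b)$, and $f$, $g$, and their composite are morphisms of this category; thus $f\circ -$ is nothing but left composition inside $\cC(a\to b)$. By \ref{W*:Predual} this hom category is a $\rm W^*$ category in the sense of \cite{MR808930}, and the structural theory there shows that composition in a $\rm W^*$ category is separately weak*-continuous and that the predual (hence the weak* topology) is unique; applying this with $f$ in the left slot gives that $f\circ -$ is normal. I would stress explicitly that one cannot simply invoke \ref{W*:TensorIsSeparatelyNormal}, since that axiom concerns 1-composition $\xz$ rather than 2-composition $\circ$; the normality of $\circ$ must instead be extracted from the $\rm W^*$-category structure on each hom category.

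The main obstacle is precisely this separate weak*-continuity of composition in a $\rm W^*$ category. If I wanted a self-contained argument rather than citing \cite{MR808930}, I would reduce to the von Neumann algebra case: form the von Neumann algebra $\End_\cC({}_aX_b\oplus {}_aY_b\oplus {}_aZ_b)$ via an iterate of Roberts' $2\times 2$ trick \cite[Lem.~2.6]{MR808930}, realize $f$ and $g$ as matrix corners, and use that left multiplication by a fixed element of a von Neumann algebra is weak*-continuous, together with the fact that compression to a corner by a projection is weak*-continuous. The one delicate point here is the existence of the relevant direct sum of 1-morphisms; absent that hypothesis I would instead pass to the additive (direct-sum) completion, which is again a $\rm W^*$ 2-category inducing the same weak* topologies on the original hom spaces, or simply appeal to \cite{MR808930} directly.
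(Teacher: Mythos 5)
Your proposal is correct and takes essentially the same route as the paper: norm continuity from sub-multiplicativity in \ref{C*:Norm}, and normality by realizing $f\circ -$ as multiplication in the $3\times 3$ $\rm W^*$ linking algebra $\End_\cC({}_a(X\oplus Y\oplus Z)_b)$, where multiplication is separately weak*-continuous. The existence of the needed direct sum, which you flag as the delicate point, is already guaranteed by the paper's standing Assumption that all hom 1-categories are unitarily Cauchy complete, so no passage to a completion or external citation is required.
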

\begin{proof}
Norm continuity follows from submultiplicativity of the norm.
When $\cC$ is $\rm W^*$, composition is multiplication in the $3\times 3$ $\rm W^*$ linking algebra $\End_\cC({}_a(X\oplus Y\oplus Z)_b)$, which is separately weak*-continuous.
\end{proof}

\begin{prop}
Suppose $\cC$ is a $\rm C^*$ 2-category that satisfies \ref{W*:Predual}.
Condition \ref{W*:TensorIsSeparatelyNormal}
is equivalent to:
\begin{enumerate}[label=\textup{($\rm W^*2'$)}]
\item 
\label{W*:TensorIsNormal}
For every 
${}_aW_b,{}_aX_b, {}_bY_c, {}_bZ_c$, the following maps are normal (weak* continuous):
\begin{itemize}
\item 
$
\id_X \xz -:
\Hom({}_bY_c \Rightarrow {}_bZ_c)
\to
\Hom({}_aX\xz_b Y_c \Rightarrow {}_a X\xz_b Z_c)
$
given 
by
$
f\mapsto \id_X\xz f
$
\item
$
- \xz \id_Z:
\Hom({}_aW_b \Rightarrow {}_aX_b)
\to
\Hom({}_aW\xz_b Z_c \Rightarrow {}_a X\xz_b Z_c)
$
given by
$f\mapsto f\xz \id_Z$
\end{itemize}
\end{enumerate}
\end{prop}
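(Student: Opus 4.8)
The plan is to prove the two implications separately, with the reverse implication being immediate and the forward implication carrying the content. First I would note that \ref{W*:TensorIsSeparatelyNormal} $\Rightarrow$ \ref{W*:TensorIsNormal} requires no work: the two maps in \ref{W*:TensorIsNormal} are exactly the horizontal composition maps $-\xz g$ and $f\xz -$ specialized to the identity 2-morphisms $f=\id_X$ and $g=\id_Z$, hence are instances of separate normality of $\xz$.

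For the substantive direction \ref{W*:TensorIsNormal} $\Rightarrow$ \ref{W*:TensorIsSeparatelyNormal}, I would use the interchange law. Fixing objects $a,b,c$, horizontal composition restricts to a functor $\cC(a\to b)\times \cC(b\to c)\to \cC(a\to c)$, so for $f\in\cC({}_aW_b\Rightarrow {}_aX_b)$ and $g\in\cC({}_bY_c\Rightarrow {}_bZ_c)$, writing $f=\id_X\circ f=f\circ\id_W$ and $g=g\circ\id_Y=\id_Z\circ g$ and applying functoriality of $\xz$ yields the two factorizations
$$
f\xz g=(\id_X\xz g)\circ(f\xz\id_Y)=(f\xz\id_Z)\circ(\id_W\xz g).
$$
Because we work at a single fixed triple of objects, no associators intervene. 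To see that $f\mapsto f\xz g$ is normal for fixed $g$, I would use the first factorization: the whiskering $f\mapsto f\xz\id_Y$ is normal by \ref{W*:TensorIsNormal}, and post-composition with the fixed 2-morphism $\id_X\xz g$ is normal by Lemma \ref{lem:CompositionNormContinuous}; since a composite of weak*-continuous maps is weak*-continuous, the claim follows. Symmetrically, $g\mapsto f\xz g$ is normal for fixed $f$ via the second factorization, the whiskering $\id_W\xz -$, and post-composition with $f\xz\id_Z$.

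The one point demanding care, and the main obstacle to a clean argument, is the appeal to Lemma \ref{lem:CompositionNormContinuous}: its normality conclusion is stated for $\rm W^*$ 2-categories, that is, under both \ref{W*:Predual} and \ref{W*:TensorIsSeparatelyNormal}, yet \ref{W*:TensorIsSeparatelyNormal} is precisely what we are trying to establish. I would therefore need to check that no circularity arises. This is indeed the case, because the normality of \emph{vertical} composition invoked there depends only on \ref{W*:Predual}: vertical composition is multiplication in the linking von Neumann algebra $\End_\cC({}_a(W\xz Y\oplus X\xz Y\oplus X\xz Z)_c)$, and multiplication in a von Neumann algebra is automatically separately weak*-continuous, independently of the behavior of horizontal composition. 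With this observation the argument closes, and it is exactly the separation of vertical-composition normality (a consequence of \ref{W*:Predual} alone) from the horizontal normality being proved that makes the equivalence go through.
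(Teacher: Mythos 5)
Your proof is correct and is essentially the paper's own argument: the paper disposes of the forward direction as immediate and proves the converse by exactly this appeal to Lemma \ref{lem:CompositionNormContinuous}, with the interchange factorization $f\xz g=(\id_X\xz g)\circ(f\xz\id_Y)$ left implicit. Your observation resolving the apparent circularity — that the normality in Lemma \ref{lem:CompositionNormContinuous} rests only on \ref{W*:Predual}, since vertical composition is multiplication in a linking von Neumann algebra — is precisely the point the paper's terse proof takes for granted, and you are right that it is what makes the argument non-circular.
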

\begin{proof}
It is immediate that \ref{W*:TensorIsSeparatelyNormal} implies \ref{W*:TensorIsNormal}.
The converse follows immediately from Lemma \ref{lem:CompositionNormContinuous}.
\end{proof}

\begin{rem}
\label{rem:TensorIdNormContinuous}
Observe that for a $\rm C^*$ 2-category $\cC$, the analogous norm continuous version of \ref{W*:TensorIsNormal} follows automatically from \ref{C*:2x2}.
For example, the map
$$
\id_X \xz -:
\Hom({}_bY_c \Rightarrow {}_bZ_c)
\longrightarrow
\Hom({}_aX\xz_b Y_c \Rightarrow {}_a X\xz_b Z_c)
\qquad\qquad
f\mapsto \id_X \xz f
$$
is a unital $*$-homomorphism on the $2\times 2$ $\rm C^*$ linking algebra $\End_\cC({}_b(Y\oplus Z)_c)$, and thus norm contractive.
\end{rem}

The following counter-example shows that \ref{C*:2x2} and \ref{W*:Predual} do not imply \ref{W*:TensorIsNormal}.

%The following counter-example shows why a result similar (1) for weak* continuity does not hold if the underlying dagger category of a dagger tensor category is $\rm W^*$.

\begin{counter-ex}
Let $A=L^\infty[0,1]$ with Lebesgue measure, and consider the dagger tensor category $\End(A)$ whose objects are (not necessarily normal) unital $*$-endomorphisms of $A$ and whose morphisms
$$
\Hom(\sigma \to \tau)
:=
\set{a\in A}{a\sigma(b) = \tau(b)a \text{ for all }b\in A}.
$$
For $\sigma, \tau\in \End(A)$, we define $\sigma \xz \tau := \sigma \circ \tau$
and for $a\in \Hom(\sigma_1 \to \sigma_2)$ and $b\in\Hom(\tau_1\to \tau_2)$, we set
\begin{equation}
\label{eq:TensorProductInEnd(A)}
a\xz b := a\sigma_1(b) = \sigma_2(b)a.
\end{equation}
Now $\End(A)$ is a dagger category with $a^\dag := a^*$, and the underlying dagger category is $\rm W^*$ as every $2\times 2$ linking algebra is a von Neumann algebra:
$$
\begin{pmatrix}
\Hom(\sigma \to \sigma) 
&
\Hom(\tau \to \sigma)
\\
\Hom(\sigma \to \tau) 
&
\Hom(\tau \to \tau)
\end{pmatrix}
=
\set{
\begin{pmatrix}
\sigma(a)&0
\\
0& \tau(a)
\end{pmatrix}
}{a\in A}'\cap M_2(A).
$$
By \cite[Thm.~1.18]{MR1873025}, $A\cong C(X)$ for some hyperstonean space $X$, and point evaluation $\ev_x: A \to \bbC \subset A$ is a non-normal unital $*$-endomorphism.
(Indeed, the kernel projection $p$ of a normal unital $*$-homomorphism $A\to \bbC$ must satisfy $1-p$ is minimal, and $A$ has no minimal projections.)
Observe that by \eqref{eq:TensorProductInEnd(A)}, the map
$$
\ev_x \xz - : \Hom(\id\to \id)=A \to \Hom(\ev_x \to \ev_x)=A
$$
is exactly $\ev_x$, which is not normal.
\end{counter-ex}

\begin{assumption}
We will further assume all hom 1-categories in our $\rm C^*/W^*$ 2-categories are \emph{unitarily Cauchy complete}, i.e., 
for each $a,b\in \cC$, the $\rm C^*/W^*$ category $\cC(a\to b)$ admits all orthogonal direct sums, and all orthogonal projections split via an isometry.
We provide the definitions of the relevant terms below.
\begin{itemize}
\item 
An object $\bigoplus_{i=1}^n X_i$ is the orthogonal direct sum of $X_1,\dots, X_n$ if there are isometries $v_j: X_j \Rightarrow \bigoplus X_i$ such that $\sum_j v_jv_j^\dag = \id$.
\item
An orthogonal splitting of an orthogonal projection $p\in \End({}_aX_b)$ is a 1-morphism ${}_aY_b$ and an isometry $v: Y \Rightarrow X$ such that $vv^\dag = p$.
\end{itemize}
\end{assumption}

\begin{rem}
\label{rem:IsoIffUnitarilyIso}
In a $\rm C^*$ 1-category, two objects $X,Y$ are isomorphic if and only if they are unitarily isomorphic.
Indeed, if $f: X\to Y$ is an isomorphism, then $f^\dag f: X\to X$ is invertible, and thus so is $|f|:=\sqrt{f^\dag f}$.
Setting $u:=f |f|^{-1}: X\to Y$, we calculate that $u$ is a unitary isomorphism:
\begin{align*}
u^\dag u 
&= 
|f|^{-1} f^\dag f |f|^{-1} = |f|^{-1} |f|^2|f|^{-1} = \id_X
\\
u u^\dag 
&= 
f |f|^{-2} f^\dag = f(f^\dag f)^{-1} f^\dag = f f^{-1}(f^\dag)^{-1}f^\dag = \id_Y.
\end{align*}
\end{rem}

In this article we will make heavy use of the graphical calculus of string diagrams for 2-categories \cite[\S8.1.2]{MR3971584}, which are dual to pasting diagrams.
% In such a string diagram, shaded regions correspond to objects, 1-morphisms correspond to strings, and 2-morphisms correspond to coupons.
In a pasting diagram, one represents objects as vertices, 1-morphisms as arrows, and 2-morphisms as 2-cells.
In the string diagram calculus, we represent objects by shaded regions, 1-morphisms by (oriented) strands between these regions, and 2-morphisms by coupons.
$$
f: {}_aX\xz_b Y_c \Rightarrow {}_aZ_c
\qquad\rightsquigarrow\qquad
\begin{tikzcd}[row sep=0]
&\mbox{}
\\
a
\arrow[rr, bend left = 30, "Z"]
\arrow[dr, swap, "X"]
&
\mbox{}
&
c
\\
&b
\arrow[uu, Rightarrow, swap, "f"]
\arrow[ur, swap, "Y"]
\end{tikzcd}
\qquad\rightsquigarrow\qquad
\tikzmath{
\begin{scope}
\clip[rounded corners=5pt] (-.7,-.7) rectangle (.7,.7);
\fill[\AColor] (-.7,-.7) -- (-.2,-.7) -- (-.2,0) -- (0,0) -- (0,.7) -- (-.7,.7);
\fill[\CColor] (.7,-.7) -- (.2,-.7) -- (.2,0) -- (0,0) -- (0,.7) -- (.7,.7);
\fill[\BColor] (-.2,-.7) rectangle (.2,0);
\end{scope}
\draw (0,0) -- (0,.7) node[above]{$\scriptstyle Z$};
\draw (-.2,-.7) node[below]{$\scriptstyle X$} -- (-.2,0);
\draw (.2,-.7) node[below]{$\scriptstyle Y$} -- (.2,0);
\roundNbox{fill=white}{(0,0)}{.3}{.1}{.1}{$f$}
\node at (-.55,0) {$\scriptstyle a$};
\node at (.55,0) {$\scriptstyle c$};
\node at (0,-.55) {$\scriptstyle b$};
}
$$
Horizontal 1-composition is read from \emph{left to right}, and vertical 2-composition is read from \emph{bottom to top}.

\begin{defn}
A 2-category $\cC$ is called \emph{rigid} if for every 1-morphism ${}_aX_b\in \cC(a\to b)$, there is a ${}_bX_a^\vee \in \cC(b\to a)$ together with maps $\ev_X \in \cC({}_bX^\vee \xz_a X_b \Rightarrow 1_b)$ and $\coev_X\in \cC(1_a \Rightarrow {}_aX\xz_b X^\vee_a)$ satisfying the zig-zag/snake equations, which are best depicted in the graphical calculus for 2-categories:
$$
\tikzmath{
\begin{scope}
\clip[rounded corners=5pt] (-.7,-.6) rectangle (.7,.6);
\fill[\AColor] (-.7,.6) -- (-.4,.6) -- (-.4,0) arc (-180:0:.2cm) arc (180:0:.2cm) -- (.4,-.6) -- (-.7,-.6);
\fill[\BColor] (.7,.6) -- (-.4,.6) -- (-.4,0) arc (-180:0:.2cm) arc (180:0:.2cm) -- (.4,-.6) -- (.7,-.6);
\end{scope}
\draw (-.4,.6) -- (-.4,0) arc (-180:0:.2cm) arc (180:0:.2cm) -- (.4,-.6);
}
=
\tikzmath{
\begin{scope}
\clip[rounded corners=5pt] (-.3,-.6) rectangle (.3,.6);
\fill[\AColor] (-.3,-.6) rectangle (0,.6);
\fill[\BColor] (.3,-.6) rectangle (0,.6);
\end{scope}
\draw (0,-.6) -- (0,.6);
}
\qquad\qquad
\tikzmath{
\begin{scope}
\clip[rounded corners=5pt] (-.7,-.6) rectangle (.7,.6);
\fill[\AColor] (-.7,-.6) -- (-.4,-.6) -- (-.4,0) arc (180:0:.2cm) arc (-180:0:.2cm) -- (.4,.6) -- (-.7,.6);
\fill[\BColor] (.7,-.6) -- (-.4,-.6) -- (-.4,0) arc (180:0:.2cm) arc (-180:0:.2cm) -- (.4,.6) -- (.7,.6);
\end{scope}
\draw (-.4,-.6) -- (-.4,0) arc (180:0:.2cm) arc (-180:0:.2cm) -- (.4,.6);
}
=
\tikzmath{
\begin{scope}
\clip[rounded corners=5pt] (-.3,-.6) rectangle (.3,.6);
\fill[\AColor] (-.3,-.6) rectangle (0,.6);
\fill[\BColor] (.3,-.6) rectangle (0,.6);
\end{scope}
\draw (0,-.6) -- (0,.6);
}\,.
$$
Moreover, we assume for each 1-morphism ${}_aX_b\in \cC(a\to b)$, there is a \emph{predual} object ${}_b(X_\vee)_a \in \cC(b\to a)$ such that $(X_\vee)^\vee \cong X$ in $\cC(a\to b)$.
Observe that being rigid is a property of $\cC$, and not extra 
structure.

Now suppose $\cC$ is $\rm C^*/W^*$.
A \emph{unitary dual functor} \cite{MR4133163} on $\cC$ consists of a choice of dual $({}_bX^\vee_a, \ev_X, \coev_X)$ for each 1-morphism ${}_aX_b\in \cC(a\to b)$ such that
\begin{enumerate}[label=($\vee$\arabic*)]
\item 
For each ${}_aX_b, {}_bY_c$, the canonical tensorator
$$
\nu_{X,Y}:=
\tikzmath{
\begin{scope}
\clip[rounded corners=5pt] (-1.2,-.6) rectangle (1.1,.7);
\fill[\AColor] (-1.2,.7) -- (.7,.7) -- (.7,0) arc (0:-180:.3cm) arc (0:180:.5cm) -- (-.9,-.6) -- (-1.2,-.6);
\fill[\BColor] (.8,.7) -- (.8,0) arc (0:-180:.4cm) arc (0:180:.2cm) -- (-.4,-.6) -- (-.9,-.6) -- (-.9,0) arc (180:0:.5cm) arc (-180:0:.3cm) -- (.7,.7);
\fill[\CColor] (1.1,.7) -- (.8,.7) -- (.8,0) arc (0:-180:.4cm) arc (0:180:.2cm) -- (-.4,-.6) -- (1.1,-.6);
\end{scope}
\draw (.8,.7) -- (.8,0) arc (0:-180:.4cm) arc (0:180:.2cm) -- (-.4,-.6) node[below]{$\scriptstyle Y^\vee$};
\draw (.7,.7) node[above]{$\scriptstyle (Y\xz X)^\vee$} -- (.7,0) arc (0:-180:.3cm) arc (0:180:.5cm) -- (-.9,-.6) node[below]{$\scriptstyle X^\vee$};
}
=
\begin{aligned}
(\ev_X\xz \id_{(Y\xz X)^\vee})
&\circ 
(\id_{X^\vee}\xz \ev_Y \xz \id_X \xz \id_{(Y\xz X)^\vee})
\\&\circ
(\id_{X^\vee\xz Y^\vee} \xz \coev_{Y\xz X})
\end{aligned}
$$
is a unitary 2-morphism, and
\item
for each $f\in \cC({}_aX_b\Rightarrow {}_aY_b)$, we have $f^{\vee\dag}=f^{\dag\vee}$ where 
$
f^\vee:=
\tikzmath{
\begin{scope}
\clip[rounded corners=5pt] (-.9,-.8) rectangle (.9,.8);
\fill[\AColor] (-.6,-.8) -- (-.6,.3) arc (180:0:.3cm) -- (0,-.3) arc (-180:0:.3cm) -- (.6,.8) -- (.9,.8) -- (.9,-.8);
\fill[\BColor] (-.6,-.8) -- (-.6,.3) arc (180:0:.3cm) -- (0,-.3) arc (-180:0:.3cm) -- (.6,.8) -- (-.9,.8) -- (-.9,-.8);
\end{scope}
\draw (0,.3) arc (0:180:.3cm) -- (-.6,-.8);
\draw (0,-.3) arc (-180:0:.3cm) -- (.6,.8);
\roundNbox{fill=white}{(0,0)}{.3}{0}{0}{$f$}
}
$.
\end{enumerate}
In contrast to rigidity being a property, a unitary dual functor is extra structure.
\end{defn}

\begin{rem}\label{rem:delooping}
It is well known that 
($\rm C^*$) tensor categories
are equivalent to ($\rm C^*$) 2-categories with exactly one object \cite[Delooping Hypothesis 22]{MR2664619}; 
in the graphical calculus, this object is represented by the empty (lack of) shading.
Starting with a tensor category $\cT$, we take its \emph{delooping} $\rmB\cT$ to get a 2-category with one object $*$ whose endomorphism tensor category is $\cT$.
Conversely, given a 2-category $\cC$ with one object we take the so-called \emph{loop space} $\Omega\cC$ which ignores the object and considers 1-morphisms as objects and 2-morphisms as 1-morphisms.
(In fact, more is true; while 2-categories form a 3-category and tensor categories form a 2-category, 2-categories with one object considered as pointed 2-categories actually form a 2-category which is equivalent to the 2-category of tensor categories. 
See \cite[\S5.6]{MR2664619} or \cite[\S1.2]{2009.00405} for more details.)
\end{rem}

\begin{defn}
A \emph{unitary multitensor category} is a 
semisimple rigid $\rm C^*$-tensor category.
A \emph{unitary tensor category} is a unitary multitensor category with simple unit object.
\end{defn}

%%%%%%%%%%%%%%%%%%%%%%%%%%%%%%%%%%%%%%%%%%%%%%%
\subsection{Receptacles for actions}

In this section, we define several $\rm C^*$ and $\rm W^*$ 2-categories which admit tensor functors from unitary tensor categories; we call such 2-categories \emph{receptacles} for actions of unitary tensor categories.
One of the main results of this article is that all these receptacles are \emph{Q-system complete}, a notion we define in \S\ref{sec:QSystemCompletion} below.
We will only prove this result in detail for the first $\rm C^*$ 2-category $\rCorr$ defined below, but we will include many remarks along the way on how to adapt our proof to the other 2-categories defined here.

Our treatment below of right $\rm C^*$/$\rm W^*$-correspondences follows \cite[\S8]{MR2111973}.
Other earlier references include \cite{MR355613,MR0367670}.

\begin{defn}[{Right $\rm C^*$-correspondences}]
The $\rm C^*$-2-category $\rCorr$ of right correspondences has objects unital $\rm C^*$-algebras, 1-morphisms ${}_AX_B$ are \emph{right correspondences}, and 2-morphisms ${}_AX_B \Rightarrow {}_AY_B$ are adjointable $A-B$ bimodular maps.

For unital $C^*$-algebras $A,B$, a right correspondence ${}_AX_B\in\rCorr(A\to B)$
is a $\bbC$-vector space $X$ equipped with commuting left $A$- and right $B$-actions, and a right $B$-valued inner product $\langle \,\cdot\,|\,\cdot\,\rangle_B : \overline{X}\times X \to B$
which is:
\begin{itemize}
\item 
right $B$-linear: $\langle \xi_1| \xi_2 \lhd b + \xi_3\rangle_B = \langle \xi_1| \xi_2 \rangle_B b + \langle \xi_1|\xi_3\rangle_B$,
\item
left conjugate $B$-linear: 
$\langle \xi_1\lhd b + \xi_2 | \xi_3\rangle_B = b^*\langle \xi_1| \xi_2 \rangle_B + \langle \xi_2|\xi_3\rangle_B$
\item
positive: $\langle \xi|\xi\rangle_B \geq 0$ in $B$, and
\item
definite: $\langle \xi|\xi\rangle_B = 0$ implies $\xi=0$.
\end{itemize}
Observe that $\langle \eta|\xi\rangle_B^* = \langle \xi|\eta\rangle_B$ by the polarization identity, and we have the \emph{Cauchy-Schwarz inequality}:
$$
\langle \eta|\xi\rangle_B \langle\xi|\eta \rangle_B
\leq
\|\langle \xi|\xi\rangle_B\|_B \cdot \langle \eta|\eta\rangle_B
\qquad\qquad
\forall\, \eta,\xi\in X.
$$
This identity implies that
$$
\|\xi\|_X^2:=\|\langle \xi|\xi\rangle_B\|_B
$$
gives a well-defined norm on $X$. 
We require that $X$ is complete with respect to the metric induced by this norm. 

Moreover, we require the left $A$-action on $X$ is by \emph{adjointable operators}. 
A right $B$-linear map $T:X_B \to Y_B$ between right $B$-modules is \emph{adjointable} if there is a right $B$-linear map $T^\dag:Y_B \to X_B$ such that 
$$
\langle \eta| T\xi\rangle_B = \langle T^\dag \eta|\xi\rangle_B
\qquad\qquad
\forall\, \xi\in X,\,\,
\forall\, \eta\in Y.
$$
Observe that adjointable maps are necessarily bounded by the Closed Graph Theorem.

Suppose 
$A,B,C$ are unital $\rm C^*$-algebras and
${}_AX_B$ and ${}_BY_C$
are right correspondences.
The composition of 1-morphisms is given by the \emph{relative tensor product} $X\boxtimes_B Y$, which is defined as follows.
First, we take the algebraic tensor product
$X\otimes_\bbC Y$
% = 
% X\otimes_\bbC Y/J_B
% $$
% where $J_B$ is the ideal generated by middle-linearity of the $B$-action:
% $$
% J_B
% :=
% \operatorname{span}_{\bbC}\{(x\lhd b)\otimes_\bbC y - x\otimes_\bbC(b\rhd y)\}
% \subset
% X\otimes_\bbC Y
% $$
and consider the right $C$-valued sesquilinear form
$$
\langle \xi_1 \otimes \eta_1 | \xi_2\otimes \eta_2\rangle_C
:=
\langle  \eta_1 | \langle \xi_1|\xi_2\rangle^X_B \rhd \eta_2\rangle^Y_C.
$$
We let $N$ denote the left kernel of this form, i.e.,
$$
N=\set{\zeta \in X\otimes_\bbC Y}{\langle \zeta|\zeta\rangle_C = 0},
$$
which is an algebraic $A-C$ sub-bimodule of $X\otimes_\bbC Y$,
and note
$$
\operatorname{span}_{\bbC}\{(\xi\lhd b)\otimes_\bbC \eta - \xi\otimes_\bbC(b\rhd \eta)\}
\subset N.
$$
Observe that $\langle \,\cdot\,|\,\cdot\,\rangle_C$ descends to a well-defined $C$-valued inner product on $(X\otimes_\bbC Y)/N$.
We define $X\boxtimes_B Y$ to be the completion of $(X\otimes_\bbC Y) / N$
under the norm
$\|\xi\|_{X\boxtimes_B Y}^2:= \|\langle \xi|\xi\rangle_X\|_C$.

The unit $A-A$ correspondence is given by ${}_AA_A$ with $\langle a_1|a_2\rangle_A :=a_1^*a_2$, and the unitors are given by the obvious unitary maps 
$$
\begin{aligned}
\lambda^A_X:A\boxtimes_A X&\to X
\\
a\boxtimes \xi &\mapsto a\xi
\end{aligned}
\qquad\qquad
\begin{aligned}
\rho^B_X:X\boxtimes_B B &\to X
\\
\xi\boxtimes b &\mapsto \xi b
\end{aligned}
$$
To construct the associator
$\alpha_{X,Y,Z}:
{}_A(X\boxtimes_B Y)\boxtimes_C Z_D 
\to
{}_AX\boxtimes_B (Y\boxtimes_C Z)_D 
$, 
we observe the map 
$(\xi\otimes_\bbC \eta) \otimes_\bbC \zeta \mapsto \xi\otimes_\bbC (\eta \otimes_\bbC \zeta)$
on the algebraic tensor product preserves $D$-valued inner products, and thus descends to a unitary isomorphism.
\end{defn}

\begin{ex}[Creation/Annihilation operators]
Suppose ${}_AX_B, {}_BY_C$ are right correspondences.
Every $\xi \in X$ gives rise to a right $C$-linear adjointable operator $L_\xi:Y \to X\boxtimes_B Y$ given by $\eta\mapsto \xi\boxtimes \eta$ with adjoint $L_\xi^\dag(\xi'\boxtimes \eta) = \langle \xi|\xi'\rangle_B \rhd\eta$.
\end{ex}

\begin{rem}
\label{rem:SeparatePoints}
For any right Hilbert $\rm C^*$ $B$-module $X_B$,
the maps $T\mapsto \langle \eta|T\xi\rangle_B$
for $\eta, \xi \in X$ separate points of $\End(X_B)$.
Indeed, observe that if $S,T\in \End(X_B)$, then
$\langle \eta| (S-T)\xi\rangle_B = 0$ for all $\eta, \xi \in X$ implies $\langle (S-T)\xi | (S-T)\xi\rangle_B = 0$ for all $\xi \in X$.
Thus $(S-T)\xi = 0$ for all $\xi \in X$ and $S=T$.
\end{rem}

\begin{defn}[Right $\rm W^*$-correspondences]
The $\rm W^*$ 2-category of right $\rm W^*$-correspondences $\WStarRCorr$
has objects von Neumann algebras and 1-morphisms ${}_AX_B$ are right $\rm C^*$-correspondences 
${}_AX_B$
such that 
\begin{enumerate}[label=(\arabic*)]
\item
\label{W*Corr:SelfDual}
one of the following two equivalent conditions holds \cite[Lem.~8.5.4]{MR2111973}:
\begin{itemize}
\item 
(self-dual)
the map $\overline{X}\to \Hom(X_B \to B_B)$ given by $\overline{\xi}\mapsto L_\xi^\dag$ is an isometric isomorphism, or
\item
(predual)
${}_AX_B$ has a predual, and the $B$-valued inner product $\langle \,\cdot\,|\,\cdot\,\rangle_B$ is separately weak*-continuous.
%\cite[Cor.~3.5]{MR2146046} seems to lift this second property.
%We should investigate further.
\end{itemize}
Under these equivalent conditions, 
all operators in $\End(X_B)$ are adjointable, and
$\End(X_B)$ is a von Neumann algebra.
\item
The left action map $A\to \End(X_B)$ is normal.
\end{enumerate}
The 2-morphisms ${}_AX_B \Rightarrow {}_AY_B$ are normal bounded $A-B$ bimodular maps.
Composition of 1-morphisms and coheretors are defined as in $\rCorr$.

It is easy to see that $\WStarRCorr$ admits direct sums of 1-morphisms, and every endomorphism algebra of a 1-morphism is a $\rm W^*$-algebra.
Thus $\WStarRCorr$ is 
$\rm C^*$ by \ref{C*:2x2}
and \ref{W*:Predual} holds.
To prove $\WStarRCorr$ is $\rm W^*$, we use the following lemma.
\end{defn}

\begin{lem}
\label{lem:RieffelsNormalityCondition}
For all $\eta, \xi\in X$, the map $A\to B$ given by $a\mapsto \langle \eta|a\xi\rangle_B$ is normal (weak* continuous).\footnote{This condition appears in Rieffel's definition of a normal $B$-rigged $A$-module in \cite[Def.~5.1]{MR0367670}.}
%Moreover, if the map $A\to \End(X_B)$ is faithful, then the collection of these maps separates points of $A$.
\end{lem}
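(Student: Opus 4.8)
The plan is to factor the map $a \mapsto \langle \eta|a\xi\rangle_B$ through the von Neumann algebra $\End(X_B)$ and reduce the statement to separate weak*-continuity of multiplication in a linking von Neumann algebra. Write $\pi : A \to \End(X_B)$ for the left action, which is normal by the defining hypotheses of $\WStarRCorr$ (the second condition). Then the map in question is the composite
$$
A \xrightarrow{\ \pi\ } \End(X_B) \xrightarrow{\ \omega_{\eta,\xi}\ } B,
\qquad
\omega_{\eta,\xi}(T) := \langle \eta|T\xi\rangle_B,
$$
so, since a composite of normal maps is normal, it suffices to prove that $\omega_{\eta,\xi}$ is normal.

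Next I would realize $\omega_{\eta,\xi}$ as a two-sided multiplication. Viewing $B$ as the right Hilbert module $B_B$ and using the unitor $\rho^B_X : X\boxtimes_B B \cong X$, the creation operator $L_\xi$ becomes the element of $\Hom(B_B \to X_B)$ sending $b \mapsto \xi b$, with adjoint $L_\eta^\dag \in \Hom(X_B \to B_B)$ given by $\zeta \mapsto \langle\eta|\zeta\rangle_B$. Under the canonical identification $\End(B_B) \cong B$ (sending $S$ to $S(1)$), one has $\omega_{\eta,\xi}(T) = L_\eta^\dag \circ T \circ L_\xi$, since evaluating the right-hand side at $1\in B$ yields $L_\eta^\dag(T\xi) = \langle\eta|T\xi\rangle_B$. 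Thus the problem becomes showing $T \mapsto L_\eta^\dag T L_\xi$ is weak*-continuous.

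This last step is the crux. Because $X_B$ and $B_B$ are self-dual (condition \ref{W*Corr:SelfDual}), so is their orthogonal direct sum $Z := X \oplus B$, and hence the $2\times 2$ linking algebra $\End(Z_B)$ is a von Neumann algebra. Let $p_X, p_B$ be the orthogonal projections onto the two summands, so that $\End(X_B) = p_X\End(Z_B)p_X$ and $\End(B_B) = p_B\End(Z_B)p_B \cong B$ as von Neumann algebras, with matching weak* topologies (the predual of a corner $pMp$ is $pM_*p$). Regarding $T$ as $p_X T p_X$, and $L_\xi = p_X L_\xi p_B$, $L_\eta^\dag = p_B L_\eta^\dag p_X$ as fixed elements of $\End(Z_B)$, the map $T \mapsto L_\eta^\dag T L_\xi$ is the restriction to corners of the separately weak*-continuous multiplication of $\End(Z_B)$, hence normal. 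I expect the only real obstacle to be this identification of corner weak* topologies with the intrinsic ones: it is precisely where self-duality is indispensable, as it is what upgrades $\End(Z_B)$ to a von Neumann algebra. Composing with the normal map $\pi$ then gives the result.
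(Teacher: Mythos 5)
Your proof is correct, but the crux --- normality of the functional $\omega_{\eta,\xi}\colon T \mapsto \langle\eta|T\xi\rangle_B$ on $\End(X_B)$ --- is handled by a genuinely different mechanism than in the paper. Both arguments begin the same way, factoring through the normal left action $\varphi\colon A \to \End(X_B)$, but the paper then proves normality of $T\mapsto\langle\xi|T\xi\rangle_B$ directly: it takes an increasing net $a_i \nearrow a$ in $A^+$, notes the net $(\varphi(a_i))$ is bounded, invokes \cite[Cor.~8.5.5]{MR2111973} (a bounded net in $\End(X_B)$ converges weak* iff it converges pointwise weak*) to get $\varphi(a_i)\xi \to \varphi(a)\xi$ weak*, then \cite[Lem.~8.5.4]{MR2111973} to upgrade this to $\langle\xi|\varphi(a_i)\xi\rangle_B \nearrow \langle\xi|\varphi(a)\xi\rangle_B$, and finishes with polarization. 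You instead embed everything into the linking algebra $\End((X\oplus B)_B)$ and realize $\omega_{\eta,\xi}$ as the two-sided multiplication $T \mapsto L_\eta^\dag\, T\, L_\xi$ between corners, so normality follows from separate weak*-continuity of multiplication in a von Neumann algebra, with the identification of corner weak* topologies automatic from uniqueness of preduals. Your route is the same trick the paper itself uses to prove Lemma \ref{lem:CompositionNormContinuous}, and it buys a uniform treatment of arbitrary $\eta,\xi$ (no polarization step) using only standard von Neumann algebra facts; what it costs is the verification --- routine, but worth recording, since it is exactly where your argument could break --- that self-duality passes to the finite direct sum $X\oplus B$ (restrict a functional on $X\oplus B$ to each summand and use self-duality of each), so that the linking algebra is indeed a von Neumann algebra. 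The paper's route avoids the linking algebra entirely by leaning on the two cited facts about the intrinsic weak* topology of $\End(X_B)$, at the price of an argument that only directly handles the diagonal case and needs polarization at the end.
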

\begin{proof}
Let $\varphi : A \to \End(X_B)$ denote the normal left action unital $*$-homomorphism.
By \cite[Cor.~8.5.5]{MR2111973} a bounded net $(T_i)\subset \End(X_B)$ converges to $T$ weak* if and only if $T_i\xi \to T\xi$ weak* for all $\xi\in X$.
Hence if $(a_i)\subset A^+$ is an increasing net with $a_i \nearrow a$, then $(a_i)$ is bounded, and thus so is $(\varphi(a_i))$.
Then $\varphi(a_i) \xi \to \varphi(a)\xi$ weak*, and thus $\langle \xi | \varphi(a_i) \xi\rangle_B \nearrow \langle \xi|\varphi(a) \xi\rangle_B$ by \cite[Lem.~8.5.4]{MR2111973}, so $a\mapsto \langle \xi|\varphi(a)\xi\rangle_B$ is normal.
Finally, $a\mapsto \langle \eta | \varphi(a) \xi\rangle_B$ is normal by polarization.
%Suppose now $a,b\in A$ such that for all $\eta,\xi\in X$, $\langle \eta | (a-b)\xi\rangle_B = 0$.
%Then for all $\xi \in X$, we have
%$\langle \xi | (a-b)^*(a-b)\xi\rangle_B = 0$, so $(a-b)\xi = 0$ for all $\xi \in X$, and thus $a=b$.
\end{proof}

\begin{prop}
\label{prop:vNAW*2Cat}
The $\rm C^*$ 2-category $\WStarRCorr$ is $\rm W^*$.
\end{prop}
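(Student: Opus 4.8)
The plan is to verify the one remaining axiom, \ref{W*:TensorIsSeparatelyNormal}, since $\WStarRCorr$ is already $\rm C^*$ (by \ref{C*:2x2}) and satisfies \ref{W*:Predual}. By the Proposition preceding Lemma \ref{lem:RieffelsNormalityCondition}, \ref{W*:TensorIsSeparatelyNormal} is equivalent to \ref{W*:TensorIsNormal}, so it suffices to show that $\id_X \boxtimes -$ and $-\boxtimes \id_Z$ are normal. Both maps are contractive (they are unital $*$-homomorphisms of the relevant $2\times 2$ linking algebras, as in Remark \ref{rem:TensorIdNormContinuous}), so by a Krein--\v{S}mulian argument it is enough to check weak*-continuity on bounded nets. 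Throughout I will use \cite[Cor.~8.5.5]{MR2111973}: a \emph{bounded} net $T_i \to T$ weak* in some $\End(V_C)$ if and only if $T_i v \to Tv$ weak* for all $v \in V$. Since the nets in question are bounded, a Cauchy--Schwarz estimate also lets me reduce weak* convergence of a bounded net of vectors to testing against elementary tensors, both in the source and as test vectors.

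For $\id_X \boxtimes -$, fix a bounded net $f_i \to f$ weak* in $\Hom({}_BY_C \Rightarrow {}_BZ_C)$; by \cite[Cor.~8.5.5]{MR2111973} it then suffices to prove $(\id_X \boxtimes f_i)\zeta \to (\id_X \boxtimes f)\zeta$ weak* in $X\boxtimes_B Z$ for every $\zeta$, and by the reduction above I may take $\zeta = \xi \boxtimes \eta$ and test against $\mu = \xi'\boxtimes \eta'$. Expanding the inner product and using that each $f_i$ is left $B$-linear gives
$$
\langle \xi'\boxtimes\eta' | (\id_X\boxtimes f_i)(\xi\boxtimes\eta)\rangle_C
=
\langle \eta' | f_i(\langle \xi'|\xi\rangle_B \rhd \eta)\rangle_C .
$$
Now $f_i(\langle\xi'|\xi\rangle_B \rhd \eta) \to f(\langle\xi'|\xi\rangle_B \rhd \eta)$ weak* in $Z$ (again by \cite[Cor.~8.5.5]{MR2111973}), and $\langle \eta' | - \rangle_C$ is weak*-continuous because the $C$-valued inner product on $Z$ is separately weak*-continuous by \ref{W*Corr:SelfDual}; running the same expansion backwards identifies the limit with $\langle \xi'\boxtimes\eta' | (\id_X\boxtimes f)(\xi\boxtimes\eta)\rangle_C$, as desired.

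For $-\boxtimes\id_Z$, fix a bounded net $f_i \to f$ weak* in $\Hom({}_AW_B \Rightarrow {}_AX_B)$ and reduce as before to $\zeta = \omega\boxtimes\chi$ tested against $\mu = \xi'\boxtimes\chi'$, where now
$$
\langle \xi'\boxtimes\chi' | (f_i\boxtimes\id_Z)(\omega\boxtimes\chi)\rangle_C
=
\langle \chi' | \langle \xi'| f_i(\omega)\rangle_B \rhd \chi\rangle_C .
$$
Weak* convergence $f_i(\omega)\to f(\omega)$ in $X$ together with separate weak*-continuity of the $B$-valued inner product gives $b_i := \langle \xi'|f_i(\omega)\rangle_B \to b := \langle\xi'|f(\omega)\rangle_B$ weak* in $B$. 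The crucial point is then that the map $b \mapsto \langle \chi'| b\rhd\chi\rangle_C$ on ${}_BZ_C$ is weak*-continuous, which is exactly Lemma \ref{lem:RieffelsNormalityCondition} applied to the correspondence ${}_BZ_C$; hence $\langle\chi'|b_i\rhd\chi\rangle_C \to \langle\chi'|b\rhd\chi\rangle_C$ weak* in $C$, completing the verification.

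The main obstacle is this last direction, $-\boxtimes\id_Z$: unlike the case $\id_X\boxtimes-$, the scalar $\langle\xi'|f_i(\omega)\rangle_B$ cannot be absorbed into a bimodule map, so one genuinely needs weak*-continuity of the left $B$-action on $Z$. This is precisely why Lemma \ref{lem:RieffelsNormalityCondition} (Rieffel's normality condition) was isolated beforehand; the remaining steps --- contractivity, reduction to bounded nets via Krein--\v{S}mulian, and reduction to elementary tensors via Cauchy--Schwarz --- are routine but should be spelled out carefully to justify passing weak* limits through the relative tensor product.
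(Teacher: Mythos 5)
Your proof is correct, but it takes a genuinely different technical route from the paper's, and it is worth recording the difference. The paper first applies Roberts' $2\times 2$ trick \cite[Lem.~2.6]{MR808930} to reduce \ref{W*:TensorIsNormal} to normality of the \emph{unital $*$-homomorphism} $\id_X \xz - : \End_{B-C}(Y)\to \End_{A-C}(X\boxtimes_B Y)$ between von Neumann algebras; normality of this positive map is then checked on bounded increasing nets $f_i \nearrow f$, tested against the family $D_*$ of vector functionals $g\mapsto \phi(\langle \eta\xz\xi\,|\,g(\eta\xz\xi)\rangle_C)$ at elementary tensors, with the key order-theoretic computation $\langle \eta\xz\xi\,|\,(\id\xz f_i)\eta\xz\xi\rangle_C = \langle\xi\,|\,\langle\eta|\eta\rangle_B^{1/2}f_i\langle\eta|\eta\rangle_B^{1/2}\xi\rangle_C$ and with Lemma \ref{lem:RieffelsNormalityCondition} entering only to show that the functionals in $D_*$ are themselves normal (together with \cite[Lem.~A.2]{MR3040370} and a separation-of-points argument). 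You instead avoid the order-theoretic machinery entirely: Krein--\v{S}mulian plus contractivity (Remark \ref{rem:TensorIdNormContinuous}) reduces to bounded nets, \cite[Cor.~8.5.5]{MR2111973} reduces to pointwise weak* convergence of vectors, and Cauchy--Schwarz reduces to elementary tensors, after which both maps are handled by direct matrix-coefficient computations. A notable payoff of your version: you treat \emph{both} bullets of \ref{W*:TensorIsNormal} explicitly, whereas the paper only writes out $\id_X\xz -$; and your observation that the asymmetric direction $-\xz\id_Z$ is precisely where Rieffel's normality condition (Lemma \ref{lem:RieffelsNormalityCondition}, normality of $b\mapsto \langle\chi'|b\rhd\chi\rangle_C$ for the left action on $Z$) is genuinely needed is accurate --- in the paper's scheme the same lemma would do the same job in that unwritten case, via $\langle\omega\xz\chi|(f_i\xz\id)\omega\xz\chi\rangle_C = \langle\chi|\langle\omega|f_i\omega\rangle_B\rhd\chi\rangle_C$ --- so your write-up fills in a case the paper leaves implicit. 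The only caution is the density reduction: your Cauchy--Schwarz steps require the span of elementary tensors to be \emph{norm}-dense in $X\boxtimes_B Z$, which holds because the paper defines composition in $\WStarRCorr$ "as in $\rCorr$" (norm completion); if one instead took the self-dual completion, both your argument and the paper's density claim for $D_*$ would need a small additional remark, so this is not a defect relative to the paper.
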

\begin{proof}
It remains to show \ref{W*:TensorIsNormal}.
It suffices to consider the case ${}_BY_C={}_BZ_C$ by Roberts' $2\times 2$ trick \cite[Lem.~2.6]{MR808930}, i.e., we need to prove that the unital $*$-algebra map
$\id_X \xz - :\End_{B-C}(Y) \to \End_{A-C}(X\boxtimes_B Y)$
between von Neumann algebras is normal.
Suppose $f_i \nearrow f$ in $\End_{B-C}(Y)^+$.
We can conclude 
$\id_X\xz f_i \nearrow \id_X\xz f$ in $\End_{A-C}(X\boxtimes_B Y)$
if we can show
$\varphi(\id_X\xz f_i ) \to \varphi(\id_X\xz f)$
for all $\varphi$ in a dense subspace $D_*\subset \End_{A-C}(X\boxtimes_B Y)_*$
(for example, see \cite[Lem.~A.2]{MR3040370}).
Define
$$
D_*:=
\spann
\set{
g\mapsto\phi(\langle \eta\xz \xi | g (\eta\xz \xi)\rangle_C) 
\,}{\, 
\eta\in X, \xi \in Y, \phi\in C_*^+
}.
$$ 
\begin{claim*}
$D_*$ is dense in $\End_{A-C}(X\boxtimes_B Y)_*$.
\end{claim*}
\begin{proof}[Proof of Claim]
By Lemma \ref{lem:RieffelsNormalityCondition}, for all $\eta\in {}_AX_B$, $\xi\in {}_BY_C$, and $\varphi \in C_*^+$, the map
$g\mapsto\varphi(\langle \eta\xz \xi | g (\eta\xz \xi)\rangle_C)$
is normal, i.e., $D_*\subset \End_{A-C}(X\boxtimes_B Y)_*$.
By polarization and taking linear combinations, every map of the form
$$
g
\mapsto 
\sum_{i=1}^n 
\phi_i
\left(\left\langle 
\sum_{j=1}^p\eta_j\xz \xi_j 
\middle| 
g
\sum_{k=1}^q(\eta_k\xz \xi_k)
\right\rangle_C\right)
\qquad\qquad
\phi_i \in C_*, \,
\eta_j, \eta_k \in X,\,
\xi_j,\xi_k\in Y
$$
is in $D_*$.
It remains to show that $D_*$ separates points of $\End_{A-C}(X\boxtimes_B Y)$.
This follows by Remark \ref{rem:SeparatePoints}, approximating arbitrary vectors in $X\boxtimes_B Y$ by finite sums of elementary tensors, and the fact that $C_*$ separates points of $C$.
\end{proof}

Now observe that for all $b\in B$, $b^*f_ib \nearrow bfb$ in $\End(Y_C)$.
Hence for every $\eta\in X$ and $\xi \in Y$,
\begin{align*}
\langle \eta \xz \xi
|
(\id\xz f_i) \eta \xz \xi\rangle_C
&=
\langle \eta \xz \xi
|
\eta \xz f_i\xi\rangle_C
=
\langle \xi
|
\langle \eta|\eta\rangle_B f_i\xi\rangle_C
=
\langle \xi
|
\langle \eta|\eta\rangle_B^{1/2} f_i\langle\eta|\eta\rangle_B^{1/2}\xi\rangle_C
\\&\nearrow
\langle \xi
|
\langle \eta|\eta\rangle_B^{1/2} f\langle\eta|\eta\rangle_B^{1/2}\xi\rangle_C
=
\cdots
=
\langle \eta \xz \xi
|
(\id\xz f) \eta \xz \xi\rangle_C.
\end{align*}
Hence for all $\phi \in C_*^+$,
$$
\phi(
\langle \eta \xz \xi
|
(\id\xz f_i) \eta \xz \xi\rangle_C
)
\nearrow
\phi(
\langle \eta \xz \xi
|
(\id\xz f) \eta \xz \xi\rangle_C
).
$$
This completes the proof.
\end{proof}

\begin{rem}
There are variants of right $\rm C^*$/$\rm W^*$ correspondences which equip ${}_AX{}_B$ simultaneously with a left $A$-valued inner product.
One can perform the same analysis in this article with these 2-categories, but we will not comment further. 
\end{rem}

\begin{defn}[von Neumann algebras]
The $\rm W^*$ 2-category $\vNA$ of von Neumann algebras has objects von Neumann algebras, 1-morphisms ${}_AH_B$ are Hilbert spaces with commuting normal actions of $A$ and $B^{\op}$, and 2-morphisms bounded intertwiners.
Composition of 1-morphisms is given by Connes' fusion relative tensor product.
One proves $\vNA$ is a $\rm W^*$ 2-category similar to Proposition \ref{prop:vNAW*2Cat}, but the result is easier as 1-morphisms are honest Hilbert spaces.
We refer the reader to \cite{MR2325696} for an alternative proof.
Yet another proof follows from the fact that $\vNA$ is $\dag$ 2-equivalent to  $\WStarRCorr$; we refer the reader to \cite[8.5.39]{MR2111973} for more details.
\end{defn}

\begin{rem}
For the $\rm C^*$/$\rm W^*$ 2-categories defined above,
the sub 2-category of dualizble 1-morphisms is equivalent to the 2-category of 
finitely generated projective bimodules (equipped with the appropriate number of inner products) with the algebraic relative tensor product; references are as follows:
\begin{itemize}
\item 
right $\rm C^*$-correspondences: \cite[Lem 1.11, Lem 1.12]{MR1624182}
\item
right $\rm W^*$-correspondences and von Neumann algebras: \cite{MR2771095}
(see also \cite{MR2501843})
\end{itemize}
This offers the advantage that while the analytic relative tensor product $\boxtimes$ does not in general satisfy a universal property, the algebraic relative tensor product does.

The most important step in the proof of this fact is to show that the left kernel $N$ of the right $C$-valued inner product on $X\otimes_\bbC Y$ is equal to 
$$
\operatorname{span}_{\bbC}\{(\xi\lhd b)\otimes_\bbC \eta - \xi\otimes_\bbC(b\rhd \eta)\}.
$$
(For $\vNA$, we must replace $X$ with the \emph{bounded vectors} $X^0:=\set{\xi \in X}{L_\xi\in \Hom(B_B \to X_B)}$.)
Here is a sketch which works simultaneously for all the examples above.
In what follows, $\odot$ denotes the algebraic relative tensor product over $B$.
\begin{enumerate}[label=\underline{Step \arabic*:}]
\item
First, every element of $X\boxtimes_B Y$ can be written as a finite sum $\sum_{i=1}^n \xi_i \boxtimes_B \eta_i$ 
using finite bases for $X_B$ and $Y_C$.
(For $\vNA$, we may assume each $\xi_i\in X^0$, i.e., is right $B$-bounded.)

\item
The space $X\odot_B \overline{X}$ is a unital algebra which acts unitally on $X\odot_B Y$:
$$
(\xi_1\odot \overline{\xi_2})\cdot (\xi_3\odot\overline{\xi_4})
:= \xi_1 \odot \langle \xi_2|\xi_3\rangle_B \rhd\overline{\xi_4}
\qquad\qquad
(\xi_1\odot \overline{\xi_2})\rhd (\xi_3\odot \eta)
:= \xi_1 \odot \langle \xi_2|\xi_3\rangle_B \rhd\eta
$$
The unit $1_{X\odot_B \overline{X}}=\sum_\beta \beta \odot \overline{\beta}$
is given by summing over a finite $X_B$-basis $\{\beta\}$.
(For $\vNA$, we replace $X$ with $X^0$, and observe the $X_B$-basis lies in $X^0_B$.)

\item
If $\sum_{i=1}^n \xi_i \odot_B \eta_i=0$ in $X\odot_B Y$ (again, for $\vNA$, replace $X$ with $X^0$), then
applying the adjointable map $L_\beta^\dag : X\boxtimes_B Y \to Y$ given by $\xi\boxtimes_B \eta \mapsto \langle \beta|\xi\rangle_B \eta$ 
%(for $\vNA$ take $\xi\in X^0$)
yields
$$
\sum_{i=1}^n \langle \beta | \xi_i\rangle_B \rhd \eta_i = 0
\qquad\qquad
\forall\, \beta \in X_B.
$$
We conclude that
$$
\sum_{i=1}^n \xi_i \odot_B\eta_i
=
\left(\sum_\beta \beta \odot_B \overline{\beta}\right)
\rhd
\left(\sum_{i=1}^n \xi_i \odot_B\eta_i\right)
=
\sum_\beta \sum_{i=1}^n \beta \odot \langle \beta|\xi_i\rangle_B \rhd \eta_i
=
0.
$$
\end{enumerate}
\end{rem}

\begin{rem}
While we do not need it for our purposes here, observe all the $\rm C^*$/$\rm W^*$ 2-categories defined above are actually symmetric monoidal 2-categories.
\end{rem}

\begin{rem}
The $\rm W^*$ 2-category $\vNA$ has an extra involution given by complex conjugation, which is a 2-functor $\cC\to \cC$
which is the identity on objects, 
contravariant on 1-morphisms,
and anti-linear and involtutive on 2-morphisms.
In more detail, for ${}_AH_B \in \vNA(A\to B)$,
$\overline{H}$ is the complex conjugate Hilbert space with left $B$-action and right $A$-action given by $b\rhd \overline{\eta}\lhd a := \overline{a^*\rhd \eta\lhd b^*}$.
For a map $T: {}_AH_B \to {}_AK_B$, we get the complex conjugate map $\overline{T}: {}_B\overline{H}_A \to {}_B\overline{K}_A$ by $\overline{T}(\overline{\eta}):= \overline{T\eta}$.
\end{rem}

%%%%%%%%%%%%%%%%%%%%%%%%%%%%%%%%%%%%%%%%%%%%%%%
%%%%%%%%%%%%%%%%%%%%%%%%%%%%%%%%%%%%%%%%%%%%%%%
%%%%%%%%%%%%%%%%%%%%%%%%%%%%%%%%%%%%%%%%%%%%%%%
\section{Q-system completion}
%%%%%%%%%%%%%%%%%%%%%%%%%%%%%%%%%%%%%%%%%%%%%%%

Q-systems were first introduced in \cite{MR1257245} to characterize the canonical endomorphism associated to a subfactor of an infinite factor \cite{MR739630,MR1027496}.
Since, they have been studied in many articles, including \cite{MR1444286,MR1966524,MR2298822,MR3308880,2010.01072}.
In this section, we define the notion of \emph{Q-system completion}, which is a unitary version of the (co)unital higher idempotent completion for 2-categories discussed in \cite{MR3459961} and \cite{1812.11933}.
To do so, we first introduce the $\rm C^*/W^*$ 2-category structure on Q-systems following  \cite{MR2075605,MR3459961,MR3509018,1812.11933,1905.09566}.
We then introduce a graphical calculus for working with Q-systems.
%\roberto{Compare Thm. 5.9 in Yam04 vs QSys Completeness, and Prop 7.4 with realization functor}

%%%%%%%%%%%%%%%%%%%%%%%%%%%%%%%%%%%%%%%%%%%%%%%
\subsection{Q-systems}
\label{sec:QSystems}

For this section, we fix a $\rm C^*/W^*$ 2-category $\cC$.

\begin{defn}
A \emph{Q-system} in $\cC$ is a 1-endomorphism $Q\in \cC(b\to b)$
with a multiplication and unit that satisfy certain properties best represented graphically. 
%a unital $\rm C^*$ Frobenius algebra object $(Q,m,i) \in \cC(b\to b)$ for some $b\in \cC$ which is \emph{unitarily separable} ($m\circ m^\dag = \id_Q$) and \emph{nondegenerate} ($i^\dag \circ i$ is invertible in $\End_\cC(1_{b})$).
%Given a Q-system $(Q,m,i)$, we define $d_Q:= i^\dag \circ i \in \End_\cC(1_{b})$. 
%In more detail, 
We typically represent the object $b\in \cC$ by a shaded region and $Q\in \cC(b\to b)$ by a black strand which is $b$-shaded on either side
$$
\tikzmath{\filldraw[\BColor, rounded corners=5, very thin, baseline=1cm] (0,0) rectangle (.6,.6);}=b
\qquad\qquad
\tikzmath{
\fill[\BColor, rounded corners=5pt ] (0,0) rectangle (.6,.6);
\draw (.3,0) -- (.3,.6);
}={}_bQ_b.
$$
We denote the multiplication $m$ and the unit $i$ by trivalent and univalent vertices respectively, and $m^\dag$ and $i^\dag$ are given by the vertical reflections
$$
\tikzmath{
\fill[\BColor, rounded corners=5pt] (-.3,0) rectangle (.9,.6);
\draw (0,0) arc (180:0:.3cm);
\draw (.3,.3) -- (.3,.6);
\filldraw (.3,.3) circle (.05cm);
}=m
\qquad\qquad
\tikzmath{
\fill[\BColor, rounded corners=5pt] (-.3,0) rectangle (.9,-.6);
\draw (0,0) arc (-180:0:.3cm);
\draw (.3,-.3) -- (.3,-.6);
\filldraw (.3,-.3) circle (.05cm);
}=m^\dag
\qquad\qquad
\tikzmath{
\fill[\BColor, rounded corners=5pt] (0,0) rectangle (.6,.6);
\draw (.3,.3) -- (.3,.6);
\filldraw (.3,.3) circle (.05cm);
}=i
\qquad\qquad
\tikzmath{
\fill[\BColor, rounded corners=5pt] (0,0) rectangle (.6,-.6);
\draw (.3,-.3) -- (.3,-.6);
\filldraw (.3,-.3) circle (.05cm);
}=i^\dag.
$$
We call $Q$ a \emph{Q-system} if $(Q,m,i)$ satisfies:
\begin{enumerate}[label=(Q\arabic*)]
\item 
\label{Q:associativity}
(associativity)
$\tikzmath{
\fill[\BColor, rounded corners=5pt] (-.3,-.3) rectangle (1.2,.6);
\draw (0,-.3) -- (0,0) arc (180:0:.3cm);
\draw (.3,-.3) arc (180:0:.3cm);
\draw (.3,.3) -- (.3,.6);
\filldraw (.3,.3) circle (.05cm);
\filldraw (.6,0) circle (.05cm);
}
=
\tikzmath{
\fill[\BColor, rounded corners=5pt] (-.6,-.3) rectangle (.9,.6);
\draw (0,0) arc (180:0:.3cm) -- (.6,-.3);
\draw (-.3,-.3) arc (180:0:.3cm);
\draw (.3,.3) -- (.3,.6);
\filldraw (.3,.3) circle (.05cm);
\filldraw (0,0) circle (.05cm);
}$
\item
\label{Q:unitality}
(unitality)
$\tikzmath{
\fill[\BColor, rounded corners=5pt] (-.3,-.3) rectangle (.9,.6);
\draw (0,-.1) -- (0,0) arc (180:0:.3cm) -- (.6,-.3);
\draw (.3,.3) -- (.3,.6);
\filldraw (.3,.3) circle (.05cm);
\filldraw (0,-.1) circle (.05cm);
}
=
\tikzmath{
\fill[\BColor, rounded corners=5pt ] (0,-.3) rectangle (.6,.6);
\draw (.3,-.3) -- (.3,.6);
}
=
\tikzmath{
\fill[\BColor, rounded corners=5pt] (-.3,-.3) rectangle (.9,.6);
\draw (0,-.3) -- (0,0) arc (180:0:.3cm) -- (.6,-.1);
\draw (.3,.3) -- (.3,.6);
\filldraw (.3,.3) circle (.05cm);
\filldraw (.6,-.1) circle (.05cm);
}$
\item
\label{Q:Frobenius}
(Frobenius)
$
\tikzmath{
\fill[\BColor, rounded corners=5pt] (-.3,-.6) rectangle (1.5,.6);
\draw (0,-.6) -- (0,0) arc (180:0:.3cm) arc (-180:0:.3cm) -- (1.2,.6);
\draw (.3,.3) -- (.3,.6);
\draw (.9,-.3) -- (.9,-.6);
\filldraw (.3,.3) circle (.05cm);
\filldraw (.9,-.3) circle (.05cm);
}
=
\tikzmath{
\fill[\BColor, rounded corners=5pt] (-.3,0) rectangle (.9,1.2);
\draw (0,0) arc (180:0:.3cm);
\draw (0,1.2) arc (-180:0:.3cm);
\draw (.3,.3) -- (.3,.9);
\filldraw (.3,.3) circle (.05cm);
\filldraw (.3,.9) circle (.05cm);
}
=
\tikzmath{
\fill[\BColor, rounded corners=5pt] (-.3,.6) rectangle (1.5,-.6);
\draw (0,.6) -- (0,0) arc (-180:0:.3cm) arc (180:0:.3cm) -- (1.2,-.6);
\draw (.3,-.3) -- (.3,-.6);
\draw (.9,.3) -- (.9,.6);
\filldraw (.3,-.3) circle (.05cm);
\filldraw (.9,.3) circle (.05cm);
}
$
\item
\label{Q:separable}
(separable)
$
\tikzmath{
\fill[\BColor, rounded corners=5pt] (-.3,0) rectangle (.9,1.2);
\draw (0,.6) arc (180:-180:.3cm);
\draw (.3,1.2) -- (.3,.9);
\draw (.3,0) -- (.3,.3);
\filldraw (.3,.3) circle (.05cm);
\filldraw (.3,.9) circle (.05cm);
}
=
\tikzmath{
\fill[\BColor, rounded corners=5pt ] (0,0) rectangle (.6,1.2);
\draw (.3,0) -- (.3,1.2);
}$
\end{enumerate}
If $(Q,m,i)$ satisfies \ref{Q:associativity},
\ref{Q:unitality},
and
\ref{Q:Frobenius},
we call $Q$ a \emph{$\rm C^*$ Frobenius algebra}.
If $(Q,m)$ satisfies \ref{Q:associativity},
\ref{Q:Frobenius}, and
\ref{Q:separable},
we call $Q$ a \emph{unitary condensation algebra}. 
\end{defn}

\begin{rem}
One can view a Q-system in a $\rm C^*$ tensor category as a $\rm C^*$-algebra object internal to $\mathcal{C}$, with a choice of state.
We refer the reader to \cite{MR3687214, MR4079745} for a comparison between $\rm C^*$-algebra objects and Q-systems in the connected/irreducible setting, where there is a unique choice of state.
\end{rem}

\begin{defn}
Given a Q-system or $\rm C^*$-Frobenius algebra $(Q,m,i)$,
we define
\begin{equation}
\label{eq:dQ}
d_Q:=
\tikzmath{
\fill[\BColor, rounded corners=5pt ] (0,0) rectangle (.6,1);
\draw (.3,.3) -- (.3,.7);
\filldraw (.3,.3) circle (.05cm);
\filldraw (.3,.7) circle (.05cm);
}
\in \End_\cC(1_b)^+.
\end{equation}
\begin{itemize}
\item
If $d_Q$ is invertible, we call $Q$ \emph{non-degenerate} or an \emph{extension} of $1_b$.
\item
If $d_Q$ is an idempotent, we call $Q$ a \emph{summand} of $1_b$.
\end{itemize}
\end{defn}

% \begin{rem}
% Our definition of Q-system differs from \cite[Def.~5.12]{MR2298822} in that we impose the extra nondegeneracy axiom \ref{Q:nondegenerate}.
% \end{rem}

\begin{facts}
We have the following dependencies amongst the Q-system axioms:
\begin{itemize}
\item 
By 
\cite[Eq.~(d)~p.148]{MR1444286},
\cite[Prop.~5.17]{MR2298822},
or
\cite[Lem.~3.7]{MR3308880}
\ref{Q:associativity}+\ref{Q:unitality}+\ref{Q:separable}
$\Rightarrow$
\ref{Q:Frobenius}.
\item
By \cite[Prop.~5.17]{MR2298822},
\ref{Q:Frobenius}+\ref{Q:separable}
$\Rightarrow$
\ref{Q:associativity}
by simplifying
$
\tikzmath{
\fill[\BColor, rounded corners=5pt] (-1.2,-.9) rectangle (1.2,.7);
\draw (-.9,-.9) -- (-.9,-.3) arc (180:0:.3cm) arc (-180:0:.3cm) arc (180:0:.3cm) -- (.9,-.9);
\draw (0,-.6) -- (0,-.9);
\draw (-.6,0) arc (180:0:.6 and .4);
\draw (0,.4) -- (0,.7);
\filldraw (-.6,0) circle (.05cm);
\filldraw (.6,0) circle (.05cm);
\filldraw (0,-.6) circle (.05cm);
\filldraw (0,.4) circle (.05cm);
}
$
in two ways.
\end{itemize}
\end{facts}

% \begin{rem}
% \label{rem:QSystemDualizable}
% Observe that every $\rm C^*$ Frobenius algebra ${}_bQ{}_b\in \cC$ is dualizable with $\ev_Q:=i^\dag\circ m$ and $\coev_Q:=m^\dag\circ i$.
% \end{rem}

\begin{facts}
We have the following facts about a $\rm C^*$ Frobenius algebra ${}_bQ{}_b\in \cC$.
\begin{enumerate}[label=(Z\arabic*)]
\item
\label{Z:Dualizable}
$Q$ is dualizble with 
$\ev_Q
:=
\tikzmath{
\fill[\BColor, rounded corners=5pt] (-.5,-.1) rectangle (.5,.5);
\draw (-.2,.5) arc (-180:0:.2cm);
\draw (0,.3) -- (0,.1);
\filldraw (0,.3) circle (.05cm);
\filldraw (0,.1) circle (.05cm);
}
$ 
and 
$\coev_Q
:=
\tikzmath{
\fill[\BColor, rounded corners=5pt] (-.5,-.5) rectangle (.5,.1);
\draw (-.2,-.5) arc (180:0:.2cm);
\draw (0,-.3) -- (0,-.1);
\filldraw (0,-.1) circle (.05cm);
\filldraw (0,-.3) circle (.05cm);
}$\,.
\item
\label{Z:JonesProjBound}
By \ref{Z:Dualizable} and \cite[Lem.~1.16]{MR2298822},
$
\tikzmath{
\fill[\BColor, rounded corners=5pt] (-.5,-.5) rectangle (.5,.5);
\draw (-.2,-.5) arc (180:0:.2cm);
\draw (-.2,.5) arc (-180:0:.2cm);
\draw (0,-.3) -- (0,-.1);
\draw (0,.3) -- (0,.1);
\filldraw (0,.3) circle (.05cm);
\filldraw (0,.1) circle (.05cm);
\filldraw (0,-.1) circle (.05cm);
\filldraw (0,-.3) circle (.05cm);
}
\leq 
\tikzmath{
\fill[\BColor, rounded corners=5pt] (-.9,-.5) rectangle (.5,.5);
\draw (-.2,-.5) -- (-.2,.5);
\draw (.2,-.5) -- (.2,.5);
\draw (-.6,-.2) -- (-.6,.2);
\filldraw (-.6,.2) circle (.05cm);
\filldraw (-.6,-.2) circle (.05cm);
}
\leq
\|d_Q\|\cdot
\tikzmath{
\fill[\BColor, rounded corners=5pt] (-.5,-.5) rectangle (.5,.5);
\draw (-.2,-.5) -- (-.2,.5);
\draw (.2,-.5) -- (.2,.5);
}
$.
\item
\label{Z:Support}
By \cite[Cor.~1.19]{MR2298822},
either $d_Q$ is invertible, or zero is an isolated point in $\Spec(d_Q)$.
By a slight abuse of notation, set $d_Q^{-1}:=f(d_Q)$ via the continuous functional calculus where $f: \Spec(d_Q) \to \bbC$ is given by
$$
f(x) :=
\begin{cases}
0 & \text{if }x=0
\\
x^{-1} & \text{if }x\neq0,
\end{cases}
$$
and set $s_Q:= d_Qd_Q^{-1}$.
Then
$\tikzmath{
\fill[\BColor, rounded corners=5pt] (-1,-.5) rectangle (1,.5);
\draw (.7,-.5) -- (.7,.5);
\draw (-.7,-.2) -- (-.7,.2);
\filldraw (-.7,.2) circle (.05cm);
\filldraw (-.7,-.2) circle (.05cm);
\roundNbox{fill=white}{(0,0)}{.3}{.1}{.1}{$d_Q^{-1}$}
}
=
\tikzmath{
\fill[\BColor, rounded corners=5pt] (-.6,-.5) rectangle (.9,.5);
\draw (.6,-.5) -- (.6,.5);
\roundNbox{fill=white}{(0,0)}{.3}{0}{0}{$s_Q$}
}
=
\tikzmath{
\fill[\BColor, rounded corners=5pt] (-.3,-.5) rectangle (.3,.5);
\draw (0,-.5) -- (0,.5);
}$.
\end{enumerate}
\end{facts}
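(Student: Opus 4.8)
The plan is to verify \ref{Z:Dualizable} by a direct diagrammatic computation and to reduce \ref{Z:JonesProjBound} and \ref{Z:Support} to positivity facts about $d_Q$ that are either quoted from the cited references or recovered from the self-duality of \ref{Z:Dualizable}. For \ref{Z:Dualizable}, I first record that $\coev_Q=\ev_Q^\dag$, since $\ev_Q=i^\dag\circ m$ and $\coev_Q=m^\dag\circ i$ are vertical reflections of one another; the only real content is then the pair of zig-zag equations. Reading one of them from bottom to top, it expands as
$$
(\id_Q \xz \ev_Q)\circ(\coev_Q \xz \id_Q)
=
(\id_Q \xz i^\dag)\circ(\id_Q\xz m)\circ(m^\dag \xz \id_Q)\circ(i\xz \id_Q).
$$
Applying the Frobenius relation \ref{Q:Frobenius} to the middle composite, $(\id_Q\xz m)\circ(m^\dag\xz\id_Q)=m^\dag\circ m$, and then unitality \ref{Q:unitality} in the form $m\circ(i\xz\id_Q)=\id_Q$ together with its dagger $(\id_Q\xz i^\dag)\circ m^\dag=\id_Q$, collapses the right-hand side to $\id_Q$. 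The mirror-image computation gives the other snake, so $Q$ is self-dual with the stated evaluation and coevaluation.

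For \ref{Z:JonesProjBound}, set $R:=\coev_Q$, so the left-hand endomorphism is $e:=\coev_Q\circ\ev_Q=R\circ R^\dag\ge 0$, while separability \ref{Q:separable} ($m\circ m^\dag=\id_Q$) gives $R^\dag\circ R=\ev_Q\circ\coev_Q=i^\dag\circ(m\circ m^\dag)\circ i=i^\dag\circ i=d_Q$. Throughout write $z\rhd\id_X:=z\xz\id_X$ under $1_b\xz X\cong X$ for the left action of $z\in\End_\cC(1_b)$ (the bubble drawn to the left of the strands). The second inequality is pure monotonicity: $d_Q\le\|d_Q\|\id_{1_b}$ in $\End_\cC(1_b)^+$, and $z\mapsto z\rhd\id_{Q\xz Q}$ is a unital $*$-homomorphism (cf.\ Remark \ref{rem:TensorIdNormContinuous}), hence order preserving, so $d_Q\rhd\id\le\|d_Q\|\id$. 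For the substantive first inequality I would use that $\coev_Q$ intertwines the two left actions, $R\circ z=(z\rhd\id_{Q\xz Q})\circ R$ for $z\in\End_\cC(1_b)$: writing a polar decomposition $R=u|R|$ with $|R|=d_Q^{1/2}$, the partial isometry $u$ intertwines bounded functions of $d_Q$, whence $e=R R^\dag=u\,d_Q\,u^\dag=(d_Q\rhd\id_{Q\xz Q})\,uu^\dag\le d_Q\rhd\id_{Q\xz Q}$, the projection $uu^\dag$ commuting with $d_Q\rhd\id_{Q\xz Q}$. In the $\rm W^*$ case the polar decomposition is available directly; in the $\rm C^*$ case this is exactly \cite[Lem.~1.16]{MR2298822}.

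For \ref{Z:Support}, positivity $d_Q=i^\dag\circ i\in\End_\cC(1_b)^+$ is immediate. In the connected case $\End_\cC(1_b)=\bbC$ the dichotomy ``$d_Q$ invertible or $0$ isolated in $\Spec(d_Q)$'' is trivial; in general I would quote \cite[Cor.~1.19]{MR2298822}. Granting it, $d_Q^{-1}=f(d_Q)$ and $s_Q=d_Qd_Q^{-1}$ are well defined by continuous functional calculus, with $s_Q$ the support projection of $d_Q$. It remains to prove the pictured identity $s_Q\rhd\id_Q=\id_Q$, equivalently $p\rhd\id_Q=0$ for the kernel projection $p:=\id_{1_b}-s_Q$. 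Since $d_Qp=0$, the $\rm C^*$ identity gives $\|i\circ p\|^2=\|p\circ d_Q\circ p\|=0$, so $i\circ p=0$; and as an element of $\End_\cC(1_b)$ slides freely through the shaded region, $(p\rhd\id_Q)\circ i=i\circ p=0$. Unitality \ref{Q:unitality} then yields
$$
p\rhd\id_Q
=p\rhd\big(m\circ(i\xz\id_Q)\big)
=m\circ\big((i\circ p)\xz\id_Q\big)
=m\circ 0=0 ,
$$
as desired.

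The main obstacle is precisely the two analytic $\rm C^*$-positivity inputs: the first inequality of \ref{Z:JonesProjBound} and the spectral dichotomy of \ref{Z:Support}. Neither is a formal consequence of \ref{Q:associativity}--\ref{Q:separable} and their daggers, and away from the connected case (where $d_Q$ is a scalar and both are immediate) one genuinely needs the intertwining property of $\coev_Q$ together with the estimates of \cite[Lem.~1.16, Cor.~1.19]{MR2298822}. Everything else in the statement is formal.
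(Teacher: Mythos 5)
The paper itself supplies no proofs here: \ref{Z:Dualizable} is asserted by exhibiting the two diagrams, and \ref{Z:JonesProjBound}, \ref{Z:Support} are delegated wholesale to \cite[Lem.~1.16, Cor.~1.19]{MR2298822}. So your write-up goes beyond the paper, and two of your three arguments are correct. Your proof of \ref{Z:Dualizable} is exactly the standard computation: the snake equations collapse via \ref{Q:Frobenius} and the two unit laws \ref{Q:unitality} together with their daggers. Your proof of the pictured identity in \ref{Z:Support} is also correct and uses only the $\rm C^*$ Frobenius axioms: $p\,d_Q\,p=0$ forces $i\circ p=0$ by the $\rm C^*$ norm identity, the Eckmann--Hilton sliding of $\End_\cC(1_b)$ across the shading gives $(p\xz\id_Q)\circ i=i\circ p$, and unitality finishes. (One pedantic remark on \ref{Z:Dualizable}: under the paper's bottom-to-top reading, its two pictures actually have source and target exchanged relative to the labels; your formulas $\ev_Q=i^\dag\circ m$ and $\coev_Q=m^\dag\circ i$ are the type-correct reading, and nothing downstream is affected since the operator $m^\dag\circ i\circ i^\dag\circ m$ appearing in \ref{Z:JonesProjBound} is unambiguous.)

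The gap is in \ref{Z:JonesProjBound}. Your key identification $R^\dag\circ R=\ev_Q\circ\coev_Q=i^\dag\circ(m\circ m^\dag)\circ i=d_Q$ invokes separability \ref{Q:separable}, which is \emph{not} among the stated hypotheses: the Facts are asserted for a $\rm C^*$ Frobenius algebra, i.e., only \ref{Q:associativity}--\ref{Q:Frobenius}. This is not cosmetic, because without separability the inequality with $d_Q$ is actually false. Replacing $(m,i)$ by $(\lambda m,\lambda^{-1}i)$ with $\lambda>1$ preserves \ref{Q:associativity}--\ref{Q:Frobenius}, leaves $\ev_Q=i^\dag\circ m$ and $\coev_Q=m^\dag\circ i$ unchanged, but rescales $d_Q=i^\dag\circ i$ by $\lambda^{-2}$; already for the trivial Frobenius algebra $Q=1_b$ this turns the first inequality into $1\le\lambda^{-2}$. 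What your polar-decomposition argument proves from \ref{Q:associativity}--\ref{Q:Frobenius} alone is
$$
\coev_Q\circ\ev_Q\;\le\;(\ev_Q\circ\coev_Q)\xz\id_{Q\xz Q},
$$
which coincides with the paper's picture precisely when $m\circ m^\dag=\id_Q$. So you must either flag that \ref{Z:JonesProjBound} as drawn requires $Q$ to be a Q-system (this is consistent with the paper's only use of it, in \eqref{eq:ExpectationBound} inside Proposition \ref{prop:EBCPandFiniteIndex}, and is arguably an imprecision in the paper's own statement), or prove the displayed general form; as written, the step ``$R^\dag R=d_Q$'' fails under the stated hypotheses. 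Separately, your reliance on polar decomposition confines the argument to the $\rm W^*$ case, but this is avoidable: setting $\phi(z):=z\xz\id_{Q\xz Q}$ and $u_\epsilon:=R\circ(R^\dag R+\epsilon)^{-1/2}$, one has $\|u_\epsilon\|\le 1$, while the intertwining relation and self-adjointness give $u_\epsilon\circ u_\epsilon^\dag=\phi\big((R^\dag R+\epsilon)^{-1/2}\big)\circ RR^\dag\circ\phi\big((R^\dag R+\epsilon)^{-1/2}\big)$, whence $RR^\dag\le\phi(R^\dag R)+\epsilon$ for every $\epsilon>0$; letting $\epsilon\to0$ proves the corrected inequality in any $\rm C^*$ 2-category, with no citation needed. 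Your second inequality (positivity of the unital $*$-homomorphism $\phi$) is fine as stated.
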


\begin{rem}
While \cite[Cor.~5.18]{MR2298822} states that each Frobenius algebra in a tensor $\rm C^*$-category is equivalent to a Q-system satisfying \ref{Q:associativity}-\ref{Q:separable}, the proof (which is omitted and inferred from \cite[Pf.~of (5.8)+(5.9)$\Rightarrow$(5.7) in Prop.~5.17]{MR2298822}) only applies to $\rm C^*$ Frobenius algebras, and not arbitrary Frobenius algebras.
\end{rem}

\begin{quest}
Is every arbitrary Frobenius algebra in a unitary tensor category equivalent to a Q-system?
\end{quest}

\begin{rem}
When $\End(1_b)$ is not finite dimensional, there may not be a good notion of quantum dimension in the monoidal category $\End(b)$, which may even fail to be rigid.
We refer the reader to \cite{MR2298822} for a discussion of rigid $\rm C^*$ tensor categories $\cT$ where $\End(1_\cT)$ is not finite dimensional, where it is not clear how to define a quantum dimension.
For example, it is an open question whether $\rCorr(A\to A)$ for $A$ abelian has a unitary spherical structure.
For the purposes of this article, $d_Q$ for a $Q$-system defined in \eqref{eq:dQ} is a suitable replacement.

When $\End(1_b)$ is finite dimensional and all 1-morphisms in $\End(b)$ are dualizable, then there are various notions of quantum dimension in $\End(b)$, depending on whether $\End(1_b)$ is one dimensional \cite{MR1444286,MR2091457} or finite dimensional \cite{MR3342166,MR3994584,MR4133163}. 
One may then consider \emph{normalized} non-degenerate Q-systems for which $d_Q$ equals this quantum dimension.
\end{rem}

\begin{ex}
\label{ex:XXvQSystem}
Suppose $\cC$ is a $\rm C^*/W^*$ 2-category and ${}_aX_b\in \cC(a\to b)$.
A \emph{unitarily separable left dual} for ${}_aX_b$ is a dual $({}_bX^{\vee}_a , \ev_X, \coev_X)$ such that $\ev_X \circ \ev_X^{\dag} = \id_{1_b}$.\footnote{
For a 1-morphism ${}_aX_b$ in a 2-category $\cC$, $X$ dualizable does not necessarily imply $X$ has a separable left dual cf.~\cite[\S A.3]{1812.11933}.}
There is a similar notion of a unitarily separable right dual.

% \nn{get rid of second non-deg assumption.}
% and $\coev_X^{\dag} \circ \coev_X$ is invertible in $\End_\cC(1_a)$.

Given a unitarily separable left dual for ${}_aX_b$, ${}_aX\xz_b X^\vee_a \in \cC(a\to a)$ is a Q-system with multiplication and unit given by
$$
m:=
\tikzmath{
\begin{scope}
\clip[rounded corners=5pt] (-.7,0) rectangle (.7,.9);
\fill[\AColor] (-.7,0) rectangle (.7,.9);
\fill[\BColor] (-.4,0) -- (-.4,.2) .. controls ++(90:.2cm) and ++(270:.2cm) .. (-.1,.7) -- (-.1,.9) -- (.1,.9) -- (.1,.7)  .. controls ++(270:.2cm) and ++(90:.2cm) .. (.4,.2) -- (.4,0);
\fill[\AColor] (-.2,0) -- (-.2,.2) arc (180:0:.2cm) -- (.2,0);
\end{scope}
\draw (-.2,0) -- (-.2,.2) arc (180:0:.2cm) -- (.2,0);
\draw (-.4,0) -- (-.4,.2) .. controls ++(90:.2cm) and ++(270:.2cm) .. (-.1,.7) -- (-.1,.9);
\draw (.4,0) -- (.4,.2) .. controls ++(90:.2cm) and ++(270:.2cm) .. (.1,.7) -- (.1,.9);
}
=
\id_X\xz \ev_X\xz \id_{X^\vee}
\qquad\qquad
i:=
\tikzmath{
\begin{scope}
\clip[rounded corners=5pt] (-.4,-.4) rectangle (.4,.5);
\fill[\AColor] (-.7,-.4) rectangle (.7,.5);
\fill[\BColor] (-.1,.5) -- (-.1,0) arc (-180:0:.1cm) -- (.1,.5);
\end{scope}
\draw (-.1,.5) -- (-.1,0) arc (-180:0:.1cm) -- (.1,.5);
}
=
\coev_X.
$$
\end{ex}

\begin{ex}
\label{ex:RestrictByExpectation}
Suppose now that $A$ is a unital $\rm C^*$-algebra and $A\subset B$ is a unital inclusion of $\rm C^*$-algebras equipped with a 
faithful
$A-A$ bimodular
unital 
completely positive (ucp) map
(a.k.a.~a faithful conditional expectation) $E_A: B \to A$.
We can endow $B$ with the structure of a right correspondence in $\rCorr(A\to A)$ by setting $\langle b_1|b_2\rangle_A := E_A(b_1^* b_2)$.
Observe that the inclusion $i_B:{}_AA_A\Rightarrow {}_AB_A$ is then an adjointable operator in $\rCorr({}_AA_A\Rightarrow {}_AB_A)$ with adjoint $E_A: B \Rightarrow A$.

The \emph{(Pimsner-Popa) index} of $E_A: B \to A$ 
is
$$
\ind(E_A)
:=
\inf\set{\lambda \in [1,\infty)}{\lambda \cdot E_A(b) \geq b  \text{ for all }b\in B_+},
$$
with the convention that $\inf\emptyset = \infty$.
If $\ind(E_A)<\infty$, then the $B-B$ bimodular multiplication map
$m:B\boxtimes_A B \to B$
given by
the extension of $b_1\boxtimes b_2 \mapsto b_1b_2$
is bounded
\begin{align*}
%\|m(b_1\boxtimes b_2)\|_b^2
%&=
\langle b_1b_2 | b_1b_2\rangle_B
&=
b_2^*b_1^*b_1b_2
\\&\leq
\ind(E_A)\cdot b_2^* E_A(b_1^*b_1)b_2
\\&=
\ind(E_A)\cdot \langle b_2|\langle b_1|b_1\rangle_A \rhd b_2\rangle_B
\\&=
\ind(E_A)\cdot \langle b_1\boxtimes b_2|b_1\boxtimes b_2\rangle_{B\boxtimes_A B}.
%\|b_1\boxtimes b_2\|_{B\boxtimes_A B}^2
\end{align*}
The following lemma characterizes exactly when $m$ is adjointable and $m^\dag$ is a $B-B$ bimodule map.

\begin{lem}
\label{lem:FGP-dualizable}
Suppose $A\subset B$ is a unital inclusion of $\rm C^*$-algebras
and $E_A: B \to A$ is a faithful conditional expectation.
The following are equivalent.
\begin{enumerate}[label=(\arabic*)]
\item 
$B_A$ is finitely generated projective,
\item
$m$ is adjointable, and $m^\dag$ is a $B-B$ bimodule map.
%\dave{and $m^\dag$ is a $B-B$ bimodule map. 
%I think this is required as an extra condition here.
%It does hold from (1).}
\item
$(B,m,i)$ is a $\rm C^*$ Frobenius algebra 
in $\rCorr(A\to A)$, and
\item
${}_AB_A \in \rCorr(A\to A)$ is dualizable.
\end{enumerate}
\end{lem}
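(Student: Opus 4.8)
The plan is to establish the cycle $(1)\Rightarrow(2)\Rightarrow(3)\Rightarrow(4)\Rightarrow(1)$, two of whose arrows are essentially citations and two of which carry the real content. For $(3)\Rightarrow(4)$, a $\rm C^*$ Frobenius algebra is automatically dualizable by \ref{Z:Dualizable}, with $\ev_Q$ and $\coev_Q$ assembled from $i$ and $m^\dag$. For $(4)\Rightarrow(1)$, I would invoke the characterization recalled in the Remark above (via \cite[Lem.~1.11, Lem.~1.12]{MR1624182}) that a $1$-morphism ${}_AX_B\in\rCorr(A\to B)$ is dualizable exactly when the right module $X_B$ is finitely generated projective; applied to ${}_AB_A$ this gives that $B_A$ is finitely generated projective.

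The computational heart is $(1)\Rightarrow(2)$. Finite generation and projectivity of $B_A$ over the unital algebra $A$, with the inner product $\langle b_1|b_2\rangle_A=E_A(b_1^*b_2)$, yield a finite frame, i.e.\ a quasi-basis $\{b_i\}\subset B$ with $\sum_i b_i\,E_A(b_i^*x)=x$ for all $x\in B$. I would then set $m^\dag(b):=\sum_i b_i\boxtimes_A b_i^*b$ and verify directly that it is the adjoint of $m$: expanding the $A$-valued inner product on $B\boxtimes_A B$ and collapsing the frame sum gives $\langle m^\dag(b)\,|\,c_1\boxtimes_A c_2\rangle_A=E_A(b^*c_1c_2)=\langle b\,|\,m(c_1\boxtimes_A c_2)\rangle_A$, so $m$ is adjointable with this $m^\dag$. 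Bimodularity is over the algebra $B=Q$: the right $B$-action on $B\boxtimes_A B$ is multiplication on the second tensorand, and right $B$-linearity is read straight off the formula, $m^\dag(bc)=\sum_i b_i\boxtimes_A b_i^*bc=m^\dag(b)\cdot c$; left $B$-linearity is then automatic, since $m$ intertwines the left $B$-actions, which act by adjointable operators with $(b\cdot(-))^\dag=b^*\cdot(-)$, so taking $\dag$ of $m\circ(b\cdot(-)\boxtimes\id)=b\cdot m(-)$ forces $m^\dag$ to be left $B$-linear as well.

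For $(2)\Rightarrow(3)$, associativity \ref{Q:associativity} and unitality \ref{Q:unitality} hold because $B$ is a unital associative algebra and $i$ is the unital inclusion (with adjoint $E_A$, as noted above). The Frobenius relation \ref{Q:Frobenius} is precisely the statement that the comultiplication $m^\dag$ is a $B-B$ bimodule map: writing $b_1b_2$ as the right action of $b_2$ on $b_1$ and using right $B$-linearity yields $m^\dag\circ m=(\id\boxtimes m)\circ(m^\dag\boxtimes\id)$, while left $B$-linearity yields the matching identity $m^\dag\circ m=(m\boxtimes\id)\circ(\id\boxtimes m^\dag)$. Hence $(B,m,i)$ satisfies \ref{Q:associativity}--\ref{Q:Frobenius} and is a $\rm C^*$ Frobenius algebra.

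I expect the main obstacle to be $(1)\Rightarrow(2)$, and within it the bookkeeping needed to pass from abstract finite projectivity to an explicit frame compatible with the Hilbert $A$-module structure induced by $E_A$, together with checking that the proposed $m^\dag$ descends to a well-defined map on the relative tensor product $B\boxtimes_A B$ (i.e.\ respects the kernel $N$ defining $\boxtimes_A$) and is genuinely $B$-bimodular rather than merely right $A$-linear. The crucial subtlety is that the right $B$-action on $B\boxtimes_A B$ is \emph{not} by adjointable operators, which is exactly why the right $B$-linearity of $m^\dag$ in $(2)$ is nontrivial content and not a formal consequence of adjointability; once these identities are secured, the remaining arrows are formal.
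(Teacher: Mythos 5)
Your proposal is correct, and its skeleton coincides with the paper's: the same cycle $(1)\Rightarrow(2)\Rightarrow(3)\Rightarrow(4)\Rightarrow(1)$, with $(3)\Rightarrow(4)$ quoted from \ref{Z:Dualizable}, $(4)\Rightarrow(1)$ handled by external citation, and the same frame formula $m^\dag(b)=\sum_i b_i\boxtimes_A b_i^*b$ doing the work in $(1)\Rightarrow(2)$. The one genuinely different sub-argument is the $B$-$B$ bimodularity of $m^\dag$: the paper proves both sides at once by showing the canonical element $\sum_\beta \beta\boxtimes_A\beta^*$ is independent of the choice of basis (citing \cite{MR1624182,MR996807}), hence commutes with every unitary $u\in U(B)$ since $\{u\beta\}$ is again a basis, and then uses that $B$ is spanned by its unitaries; you instead read right $B$-linearity directly off the formula and obtain left $B$-linearity formally by taking $\dag$ of $m\circ(b\rhd -)=b\rhd m(-)$, using that left multiplication is adjointable with adjoint $b^*\rhd-$. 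Both are valid; your version correctly isolates where the content sits (the right $B$-action is not adjointable---indeed not even right $A$-linear---so no adjoint trick is available on that side), while the paper's unitary trick simultaneously yields the basis-independence statement it needs anyway. You also supply the $(2)\Rightarrow(3)$ verification (Frobenius $=$ comultiplication is a bimodule map) which the paper explicitly leaves to the reader, and your $(4)\Rightarrow(1)$ leans on the paper's earlier Remark citing \cite{MR1624182}, whereas the paper's own proof routes through \cite[Thm.~4.13 and Ex.~2.31]{MR2085108} to first endow ${}_AB_A$ with a bi-Hilbertian structure; both are citations, so nothing is lost. One small correction to your closing paragraph: there is no ``descent'' issue for $m^\dag$, since it maps \emph{into} $B\boxtimes_A B$ rather than out of it; the well-definedness concern attaches to $m$ itself, whose boundedness on the completed tensor product follows from the finite Pimsner--Popa index supplied by the finite frame, exactly as in the discussion preceding the lemma.
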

\begin{proof}
\item[\underline{$(1)\Rightarrow (2)$:}]
Let $\{\beta\}\subset B$ be a finite $B_A$-basis, so $\sum_{\beta} \beta \langle \beta | b\rangle_A = b$ for all $b\in B$.
By \cite[\S4, p3462]{MR1624182} (see also \cite[Props.~1.2.6 and 1.2.8]{MR996807}), the element $\sum_\beta \beta\boxtimes \beta^* \in B\boxtimes_A B$ is independent of the choice of $B_A$-basis.
We claim that the map $m^\dag:b \mapsto \sum_\beta \beta\boxtimes \beta^* b$
is the adjoint of $m$.
One calculates for all $b_1,b_2,b_3\in B$,
\begin{align*}
\langle m^\dag(b_3)|b_1\boxtimes b_2\rangle_B
&=
\sum_{\beta}
\left\langle  \beta \boxtimes \beta^*b_3
| 
b_1\boxtimes b_2\right\rangle_B
\\&=
\sum_{\beta}
\left\langle \beta^*b_3
| 
\langle \beta |b_1\rangle_A b_2\right\rangle_B
\\&=
\sum_{\beta}
\left\langle b_3
| 
\beta \langle \beta |b_1\rangle_A b_2\right\rangle_B
\\&=
\langle b_3 | b_1b_2\rangle_B
\\&=
\langle b_3 | m(b_1\boxtimes b_2)\rangle_B.
\end{align*}
To see that $m^\dag$ is $B-B$ bilinear, we observe that given any unitary $u\in U(B)$, $\{u\beta\}$ is another finite $B_A$-basis, and thus 
$$
u\left(\sum_\beta \beta\boxtimes \beta^*\right)
=
\sum_\beta u\beta\boxtimes \beta^*
=
\sum_\beta u\beta\boxtimes (u\beta)^*u
=
\left(\sum_\beta \beta\boxtimes \beta^*\right)u.
$$
Since $B$ is linearly spanned by its unitaries, for all $b_1,b_2\in B$,
$$
b_1(m^\dag(b_2))
=
b_1\left(\sum_\beta \beta\boxtimes \beta^*\right)b_2
=
\left(\sum_\beta \beta\boxtimes \beta^*\right)b_1b_2
=
m^\dag(b_1b_2)
=
m^\dag(b_1)b_2.
$$

\item[\underline{$(2)\Rightarrow (3)$:}]
The straightforward verification of the algebra and Frobenius relations is left to the reader.

\item[\underline{$(3)\Rightarrow (4)$:}]
This is \ref{Z:Dualizable}.

\item[\underline{$(4)\Rightarrow (1)$:}]
By \cite[Thm 4.13]{MR2085108},
we have ${}_AB_A$ can be endowed with a finite index bi-Hilbertian structure which is dualizable in the 2 $\rm C^*$-category of bi-Hilbertian correspondences.
By \cite[Ex.~2.31]{MR2085108}, ${}_AB_A$ dualizable implies $B_A$ is finitely generated projective.
\end{proof}

Suppose now the equivalent conditions of Lemma \ref{lem:FGP-dualizable} hold.
The
\emph{Watatani index} $[B:A] := \sum_\beta \beta\beta^*\in Z(B)_+^\times$ is independent of the choice of basis \cite[Prop.~1.2.8]{MR996807}.
%is a scalar \nn{don't think scalar is necessary here... just take square root of the bubble inverse}, 
We may renormalize $m$ and $i$ so that $(B,m,i)$ is a Q-system.
Indeed, 
$[B:A]^{-1/2}$ is a $B-B$ bimodular map,
so we may replace $m$ by $[B:A]^{-1/2}m$ and $i$ by $[B:A]^{1/2} i$.
\end{ex}

\begin{ex}
\label{ex:NonunitalRestrictByExpecation}
Building on Example \ref{ex:RestrictByExpectation},
Suppose $A\subset B$ is a unital inclusion of $\rm C^*$-algebras equipped with a faithful surjective cp $A-A$ bimodular map $E: B\to A$ such that $B_A$ is finitely generated projective.\footnote{It follows that $E(1_B)$ is invertible.
Indeed, since $E$ is surjective, there is a $b\in B$ such that $E(b)=1_A$.
Replacing $b$ with $(b+b^*)/2$, we may may assume $b$ is self-adjoint.
Then $b \leq \|b\|1_B$, so $E(1_B) \geq \|b\|^{-1}1_A$.
}
For any auxiliary unital $\rm C^*$-algebra $A'$,
we may view $B$ as an $A\oplus A'-A\oplus A'$ bimodule, where $A'$ acts by zero.
Using the $A\oplus A'$-valued inner product $\langle b_1|b_2\rangle_{A\oplus A'} := E(b_1^*b_2)$, we may modify the multiplication and unit of $B$ to make it a Q-system in $\rCorr(A\oplus A' \to A\oplus A')$.
\end{ex}

\begin{rem}
When $\cC$ is a bi-involutive $\rm C^*$/$\rm W^*$ 2-category
and $\End(1_b)$ is finite dimensional, 
the tensor subcategory of dualizable endomorphisms $\End_{\mathsf d}(b)$
is a unitary multitensor category with canonical involutive structure $\overline{\,\cdot\,}$.
By \cite{MR3342166}, there is a unique \emph{balanced} unitary dual functor which gives the unique unitary \emph{spherical} structure, and there is a canonical unitary isomorphism $X^\vee \cong \overline{X}$.
(Such a canonical isomorphism exists for all unitary dual functors by \cite[Cor.~B]{MR4133163}.)

Suppose now that $b$ is simple so that $\End(1_b)=\bbC$, $\End_{\mathsf d}(b)$ is a unitary tensor category.
In this setting, we can ask for an additional axiom for a (non-degenerate) Q-system ${}_bQ_b \in \End_{\mathsf d}(b)$:
\begin{itemize}
\item 
(normalized)
$\ev_Q,\coev_Q$ from \ref{Z:Dualizable} are standard solutions to the conjugate equations, i.e., $(Q, \ev_Q,\coev_Q)$ is a balanced dual for $Q$.
\end{itemize}
This axiom was required for the definition of a Q-system in \cite[Def.~3.2 and 3.8]{MR3308880}.
In this setting, by 
\cite[Prop.~2.2.13]{MR3204665},
(see also \cite[\S6]{MR1444286},
\cite[\S3]{MR2794547},
\cite[Lem.~2.9]{MR4079745}),
$Q$ is a real (symmetrically self-dual) object in $\End_{\mathsf d}(b)$ via the unitary isomorphism
$$
\tikzmath{
\fill[\BColor,rounded corners=5pt] (-.5,-.6) rectangle (1.3,.6);
\draw (0,-.6) -- node[left]{$\scriptstyle Q$} (0,0) arc (180:0:.2cm) arc (-180:0:.2cm) --  node[right]{$\scriptstyle \overline{Q}$} (.8,.6);
\draw (.2,.2) -- (.2,.4);
\filldraw (.2,.2) circle (.05cm);
\filldraw (.2,.4) circle (.05cm);
}
=
(i^*\circ m)\xz \id_{\overline{Q}}) \circ (\id_Q\xz \coev_Q) 
\in\cC({}_bQ_b\Rightarrow {}_b\overline{Q}_b).
$$
\end{rem}

%%%%%%%%%%%%%%%%%%%%%%%%%%%%%%%%%%%%%%%%%%%%%%%
\subsection{Bimodules}
\label{sec:Bimodules}

\begin{defn}
Given two Q-systems $P\in \cC(a\to a)$ and $Q\in \cC(b\to b)$, a $P-Q$ bimodule consists of a triple $(X,\lambda, \rho)$
where 
$X\in \cC(a\to b)$
and
$\lambda \in \cC( P\xz X \Rightarrow X)$ and $\rho\in \cC(X\xz Q\Rightarrow X)$
satisfy certain axioms.
We denote ${}_aX_b$
by a \textcolor{\XColor}{red} colored strand which is $a$-shaded on the left and $b$-shaded on the right
$$
\tikzmath{\filldraw[\AColor, rounded corners=5, very thin, baseline=1cm] (0,0) rectangle (.6,.6);}=a
\qquad\qquad
\tikzmath{\filldraw[\BColor, rounded corners=5, very thin, baseline=1cm] (0,0) rectangle (.6,.6);}=b
\qquad\qquad
\tikzmath{
\begin{scope}
\clip[rounded corners=5pt] (-.3,0) rectangle (.3,.6);
\fill[\AColor] (0,0) rectangle (-.3,.6);
\fill[\BColor] (0,0) rectangle (.3,.6);
\end{scope}
\draw[thick, \XColor] (0,0) -- (0,.6);
}={}_aX_b.
$$
The maps $\lambda,\rho$
are denoted
trivalent vertices respectively, and $\lambda^\dag,\rho^\dag$ are denoted by the vertical reflections
$$
\tikzmath{
\begin{scope}
\clip[rounded corners = 5pt] (-.7,-.2) rectangle (.3,.5);
\filldraw[\AColor] (-.7,-.2) rectangle (0,.5);
\filldraw[\BColor] (0,-.2) rectangle (.3,.5);
\end{scope}
\draw[\XColor,thick] (0,-.2) -- (0,.5);
\draw (-.4,-.2) arc (180:90:.4cm);
\filldraw[\XColor] (0,.2) circle (.05cm);
}
=\lambda
\qquad\qquad
\tikzmath{
\begin{scope}
\clip[rounded corners = 5pt] (-.3,-.2) rectangle (.7,.5);
\filldraw[\AColor] (-.3,-.2) rectangle (0,.5);
\filldraw[\BColor] (0,-.2) rectangle (.7,.5);
\end{scope}
\draw[\XColor,thick] (0,-.2) -- (0,.5);
\draw (.4,-.2) arc (0:90:.4cm);
\filldraw[\XColor] (0,.2) circle (.05cm);
}
=\rho
\qquad\qquad
\tikzmath{
\begin{scope}
\clip[rounded corners = 5pt] (-.7,-.5) rectangle (.3,.2);
\filldraw[\AColor] (-.7,-.5) rectangle (0,.2);
\filldraw[\BColor] (0,-.5) rectangle (.3,.2);
\end{scope}
\draw[\XColor,thick] (0,-.5) -- (0,.2);
\draw (-.4,.2) arc (180:270:.4cm);
\filldraw[\XColor] (0,-.2) circle (.05cm);
}
=\lambda^\dag
\qquad\qquad
\tikzmath{
\begin{scope}
\clip[rounded corners = 5pt] (-.3,-.5) rectangle (.7,.2);
\filldraw[\AColor] (-.3,-.5) rectangle (0,.2);
\filldraw[\BColor] (0,-.5) rectangle (.7,.2);
\end{scope}
\draw[\XColor,thick] (0,-.5) -- (0,.2);
\draw (.4,.2) arc (0:-90:.4cm);
\filldraw[\XColor] (0,-.2) circle (.05cm);
}
=\rho^\dag.
$$
Here, $P,Q$ are both denoted by black strands as their type may be inferred from the side of the red strand on which they lie.

These maps must satisfy the following axioms:
\begin{enumerate}[label=(B\arabic*)]
\item 
\label{M:associativity}
(associativity)
$
\tikzmath{
\begin{scope}
\clip[rounded corners = 5pt] (-.9,-.6) rectangle (.3,.5);
\filldraw[\AColor] (-.9,-.6) rectangle (0,.5);
\filldraw[\BColor] (0,-.6) rectangle (.3,.5);
\end{scope}
\draw[\XColor,thick] (0,-.6) -- (0,.5);
\draw (-.6,-.6) -- (-.6,-.4) arc (180:90:.6cm);
\draw (-.3,-.6) -- (-.3,-.4) arc (180:90:.3cm);
\filldraw[\XColor] (0,.2) circle (.05cm);
\filldraw[\XColor] (0,-.1) circle (.05cm);
}
=
\tikzmath{
\begin{scope}
\clip[rounded corners = 5pt] (-.9,-.6) rectangle (.3,.5);
\filldraw[\AColor] (-.9,-.6) rectangle (0,.5);
\filldraw[\BColor] (0,-.6) rectangle (.3,.5);
\end{scope}
\draw[\XColor,thick] (0,-.6) -- (0,.5);
\draw (-.4,-.2) arc (180:90:.4cm);
\draw (-.6,-.6) -- (-.6,-.4)  arc (180:0:.2cm) -- (-.2,-.6);
\filldraw[\XColor] (0,.2) circle (.05cm);
\filldraw (-.4,-.2) circle (.05cm);
}
$,
$
\tikzmath{
\begin{scope}
\clip[rounded corners = 5pt] (-.3,-.6) rectangle (.9,.5);
\filldraw[\AColor] (-.3,-.6) rectangle (0,.5);
\filldraw[\BColor] (0,-.6) rectangle (.9,.5);
\end{scope}
\draw[\XColor,thick] (0,-.6) -- (0,.5);
\draw (.6,-.6) -- (.6,-.4) arc (0:90:.6cm);
\draw (.3,-.6) -- (.3,-.4) arc (0:90:.3cm);
\filldraw[\XColor] (0,.2) circle (.05cm);
\filldraw[\XColor] (0,-.1) circle (.05cm);
}
=
\tikzmath{
\begin{scope}
\clip[rounded corners = 5pt] (-.3,-.6) rectangle (.9,.5);
\filldraw[\AColor] (-.3,-.6) rectangle (0,.5);
\filldraw[\BColor] (0,-.6) rectangle (.9,.5);
\end{scope}
\draw[\XColor,thick] (0,-.6) -- (0,.5);
\draw (.4,-.2) arc (0:90:.4cm);
\draw (.6,-.6) -- (.6,-.4)  arc (0:180:.2cm) -- (.2,-.6);
\filldraw[\XColor] (0,.2) circle (.05cm);
\filldraw (.4,-.2) circle (.05cm);
}
$, and
$
\tikzmath{
\begin{scope}
\clip[rounded corners = 5pt] (-.7,-.5) rectangle (.7,.5);
\filldraw[\AColor] (-.7,-.5) rectangle (0,.5);
\filldraw[\BColor] (0,-.5) rectangle (.7,.5);
\end{scope}
\draw[\XColor,thick] (0,-.5) -- (0,.5);
\draw (-.4,-.5) -- (-.4,-.2) arc (180:90:.4cm);
\draw (.4,-.5) arc (0:90:.4cm);
\filldraw[\XColor] (0,.2) circle (.05cm);
\filldraw[\XColor] (0,-.1) circle (.05cm);
}
=
\tikzmath{
\begin{scope}
\clip[rounded corners = 5pt] (-.7,-.5) rectangle (.7,.5);
\filldraw[\AColor] (-.7,-.5) rectangle (0,.5);
\filldraw[\BColor] (0,-.5) rectangle (.7,.5);
\end{scope}
\draw[\XColor,thick] (0,-.5) -- (0,.5);
\draw (-.4,-.5) arc (180:90:.4cm);
\draw (.4,-.5) -- (.4,-.2) arc (0:90:.4cm);
\filldraw[\XColor] (0,.2) circle (.05cm);
\filldraw[\XColor] (0,-.1) circle (.05cm);
}
$
\item
\label{M:unitality}
(unitality)
$
\tikzmath{
\begin{scope}
\clip[rounded corners = 5pt] (-.7,-.5) rectangle (.3,.5);
\filldraw[\AColor] (-.7,-.5) rectangle (0,.5);
\filldraw[\BColor] (0,-.5) rectangle (.3,.5);
\end{scope}
\draw[\XColor,thick] (0,-.5) -- (0,.5);
\draw (-.4,-.2) arc (180:90:.4cm);
\filldraw[\XColor] (0,.2) circle (.05cm);
\filldraw (-.4,-.2) circle (.05cm);
}
=
\tikzmath{
\begin{scope}
\clip[rounded corners = 5pt] (-.3,-.5) rectangle (.3,.5);
\filldraw[\AColor] (-.3,-.5) rectangle (0,.5);
\filldraw[\BColor] (0,-.5) rectangle (.3,.5);
\end{scope}
\draw[\XColor,thick] (0,-.5) -- (0,.5);
}
$ and $
\tikzmath{
\begin{scope}
\clip[rounded corners = 5pt] (-.3,-.5) rectangle (.7,.5);
\filldraw[\AColor] (-.3,-.5) rectangle (0,.5);
\filldraw[\BColor] (0,-.5) rectangle (.7,.5);
\end{scope}
\draw[\XColor,thick] (0,-.5) -- (0,.5);
\draw (.4,-.2) arc (0:90:.4cm);
\filldraw[\XColor] (0,.2) circle (.05cm);
\filldraw (.4,-.2) circle (.05cm);
}
=
\tikzmath{
\begin{scope}
\clip[rounded corners = 5pt] (-.3,-.5) rectangle (.3,.5);
\filldraw[\AColor] (-.3,-.5) rectangle (0,.5);
\filldraw[\BColor] (0,-.5) rectangle (.3,.5);
\end{scope}
\draw[\XColor,thick] (0,-.5) -- (0,.5);
}
$
\item
\label{M:Frobenius}
(Frobenius)
$
\tikzmath{
\begin{scope}
\clip[rounded corners = 5pt] (-1.3,-.5) rectangle (.3,.8);
\filldraw[\AColor] (-1.3,-.5) rectangle (0,.8);
\filldraw[\BColor] (0,-.5) rectangle (.3,.8);
\end{scope}
\draw[\XColor,thick] (0,-.5) -- (0,.8);
\draw (-1,-.5) -- (-1,.2) arc (180:0:.3cm) arc (180:270:.4cm);
\draw (-.7,.5) -- (-.7,.8);
\filldraw[\XColor] (0,-.2) circle (.05cm);
\filldraw (-.7,.5) circle (.05cm);
}
=
\tikzmath{
\begin{scope}
\clip[rounded corners = 5pt] (-.7,-.2) rectangle (.3,1.1);
\filldraw[\AColor] (-.7,-.2) rectangle (0,1.1);
\filldraw[\BColor] (0,-.2) rectangle (.3,1.1);
\end{scope}
\draw[\XColor,thick] (0,-.2) -- (0,1.1);
\draw (-.4,-.2) arc (180:90:.4cm);
\draw (-.4,1.1) arc (180:270:.4cm);
\filldraw[\XColor] (0,.2) circle (.05cm);
\filldraw[\XColor] (0,.7) circle (.05cm);
}
=
\tikzmath{
\begin{scope}
\clip[rounded corners = 5pt] (-1.3,-.8) rectangle (.3,.5);
\filldraw[\AColor] (-1.3,-.8) rectangle (0,.5);
\filldraw[\BColor] (0,-.8) rectangle (.3,.5);
\end{scope}
\draw[\XColor,thick] (0,-.8) -- (0,.5);
\draw (-1,.5) -- (-1,-.2) arc (-180:0:.3cm) arc (180:90:.4cm);
\draw (-.7,-.5) -- (-.7,-.8);
\filldraw[\XColor] (0,.2) circle (.05cm);
\filldraw (-.7,-.5) circle (.05cm);
}
$ and $
\tikzmath{
\begin{scope}
\clip[rounded corners = 5pt] (-.3,-.5) rectangle (1.3,.8);
\filldraw[\AColor] (-.3,-.5) rectangle (0,.8);
\filldraw[\BColor] (0,-.5) rectangle (1.3,.8);
\end{scope}
\draw[\XColor,thick] (0,-.5) -- (0,.8);
\draw (1,-.5) -- (1,.2) arc (0:180:.3cm) arc (0:-90:.4cm);
\draw (.7,.5) -- (.7,.8);
\filldraw[\XColor] (0,-.2) circle (.05cm);
\filldraw (.7,.5) circle (.05cm);
}
=
\tikzmath{
\begin{scope}
\clip[rounded corners = 5pt] (-.3,-.2) rectangle (.7,1.1);
\filldraw[\AColor] (-.3,-.2) rectangle (0,1.1);
\filldraw[\BColor] (0,-.2) rectangle (.7,1.1);
\end{scope}
\draw[\XColor,thick] (0,-.2) -- (0,1.1);
\draw (.4,-.2) arc (0:90:.4cm);
\draw (.4,1.1) arc (0:-90:.4cm);
\filldraw[\XColor] (0,.2) circle (.05cm);
\filldraw[\XColor] (0,.7) circle (.05cm);
}
=
\tikzmath{
\begin{scope}
\clip[rounded corners = 5pt] (-.3,-.8) rectangle (1.3,.5);
\filldraw[\AColor] (-.3,-.8) rectangle (0,.5);
\filldraw[\BColor] (0,-.8) rectangle (1.3,.5);
\end{scope}
\draw[\XColor,thick] (0,-.8) -- (0,.5);
\draw (1,.5) -- (1,-.2) arc (0:-180:.3cm) arc (0:90:.4cm);
\draw (.7,-.5) -- (.7,-.8);
\filldraw[\XColor] (0,.2) circle (.05cm);
\filldraw (.7,-.5) circle (.05cm);
}
$
\item
\label{M:separable}
(separable)
$
\tikzmath{
\begin{scope}
\clip[rounded corners = 5pt] (-.5,-.5) rectangle (.3,.5);
\filldraw[\AColor] (-.6,-.5) rectangle (0,.5);
\filldraw[\BColor] (0,-.5) rectangle (.3,.5);
\end{scope}
\draw[\XColor,thick] (0,-.5) -- (0,.5);
\draw (0,-.3) arc (270:90:.3cm);
\filldraw[\XColor] (0,.3) circle (.05cm);
\filldraw[\XColor] (0,-.3) circle (.05cm);
}
=
\tikzmath{
\begin{scope}
\clip[rounded corners = 5pt] (-.3,-.5) rectangle (.3,.5);
\filldraw[\AColor] (-.3,-.5) rectangle (0,.5);
\filldraw[\BColor] (0,-.5) rectangle (.3,.5);
\end{scope}
\draw[\XColor,thick] (0,-.5) -- (0,.5);
}
=
\tikzmath{
\begin{scope}
\clip[rounded corners = 5pt] (-.3,-.5) rectangle (.5,.5);
\filldraw[\AColor] (-.3,-.5) rectangle (0,.5);
\filldraw[\BColor] (0,-.5) rectangle (.6,.5);
\end{scope}
\draw[\XColor,thick] (0,-.5) -- (0,.5);
\draw (0,-.3) arc (-90:90:.3cm);
\filldraw[\XColor] (0,.3) circle (.05cm);
\filldraw[\XColor] (0,-.3) circle (.05cm);
}
$
\end{enumerate}
We leave it to the reader to define the notions of left and right modules by removing one of these actions (or replacing it with an identity Q-system).
\end{defn}

\begin{facts}
We collect some additional well-known facts about modules for Q-systems below.
For these statements below, $P\in \cC(a\to a)$ is a Q-system and ${}_aX_b,{}_aY_b\in \cC(a\to b)$ are left $P$-modules.
(We ignore any right action, which can be taken to be $1_b$ if needed.)
We denote $a$ and $b$ by shaded regions and ${}_aX_b, {}_aY_b$ by colored strands.
$$
\tikzmath{\filldraw[\AColor, rounded corners=5, very thin, baseline=1cm] (0,0) rectangle (.6,.6);}=a
\qquad\qquad
\tikzmath{\filldraw[\BColor, rounded corners=5, very thin, baseline=1cm] (0,0) rectangle (.6,.6);}=b
\qquad\qquad
\tikzmath{
\begin{scope}
\clip[rounded corners=5pt] (-.3,0) rectangle (.3,.6);
\fill[\AColor] (0,0) rectangle (-.3,.6);
\fill[\BColor] (0,0) rectangle (.3,.6);
\end{scope}
\draw[thick, \XColor] (0,0) -- (0,.6);
}={}_aX_b
\qquad\qquad
\tikzmath{
\begin{scope}
\clip[rounded corners=5pt] (-.3,0) rectangle (.3,.6);
\fill[\AColor] (0,0) rectangle (-.3,.6);
\fill[\BColor] (0,0) rectangle (.3,.6);
\end{scope}
\draw[thick, \YColor] (0,0) -- (0,.6);
}={}_aY_b
$$

\begin{enumerate}[label=(MM\arabic*)]
\item
\label{QSys:AdjointAction}
The adjoint $\lambda^\dag$ of the left $P$-action $\lambda: P\xz X \to X$ is given by
$$
\tikzmath{
\begin{scope}
\clip[rounded corners = 5pt] (-.8,0) rectangle (.3,1);
\filldraw[\AColor] (-.8,0) rectangle (0,1);
\filldraw[\BColor] (0,0) rectangle (.3,1);
\end{scope}
\draw[\XColor,thick] (0,0) -- (0,1);
\draw (-.5,1) arc (180:270:.5cm);
\filldraw[\XColor] (0,.5) circle (.05cm);
}
=
\tikzmath{
\begin{scope}
\clip[rounded corners = 5pt] (-1.05,-.25) rectangle (.3,1);
\filldraw[\AColor] (-1.05,-.25) rectangle (0,1);
\filldraw[\BColor] (0,-.25) rectangle (.3,1);
\end{scope}
\draw[\XColor,thick] (0,-.25) -- (0,1);
\draw (-.75,1) -- (-.75,.5) arc (-180:0:.25cm) arc (180:90:.25cm);
\draw (-.5,0) -- (-.5,.25);
\filldraw (-.5,.25) circle (.05cm);
\filldraw (-.5,0) circle (.05cm);
\filldraw[\XColor] (0,.75) circle (.05cm);
}\,.
$$
\begin{proof}
This is immediate from \ref{M:unitality} and \ref{M:Frobenius}.
\end{proof}
\item
\label{QSys:StarClosed}
Given a left $P$-module map $f: {}_PX\to {}_PY$, $f^\dag: Y \to X$ is also a left $P$-module map.
\begin{proof}
Apply $\dag$ to
$
\tikzmath{
\begin{scope}
\clip[rounded corners=5pt] (-.8,-.5) rectangle (.5,1.5);
\fill[\AColor] (-.8,-.5) rectangle (0,1.5);
\fill[\BColor] (0,-.5) rectangle (.5,1.5);
\end{scope}
\draw[\XColor,thick] (0,-.5) -- (0,.25);
\draw[\YColor,thick] (0,.25) -- (0,1.5);
\draw (-.5,1.5) arc (180:270:.5cm);
\filldraw[\YColor] (0,1) circle (.05cm);
\roundNbox{unshaded}{(0,.25)}{.3}{0}{0}{$f$};
}
\underset{\text{\ref{QSys:AdjointAction}}}{=}
\tikzmath{
\begin{scope}
\clip[rounded corners=5pt] (-1.05,-1.2) rectangle (.5,1);
\fill[\AColor] (-1.05,-1.2) rectangle (0,1);
\fill[\BColor] (0,-1.2) rectangle (.5,1);
\end{scope}
\draw[\XColor,thick] (0,0) -- (0,-1.2);
\draw[\YColor,thick] (0,-.5) -- (0,1);
\draw (-.75,1) -- (-.75,.5) arc (-180:0:.25cm) arc (180:90:.25cm);
\draw (-.5,0) -- (-.5,.25);
\filldraw (-.5,.25) circle (.05cm);
\filldraw (-.5,0) circle (.05cm);
\filldraw[\YColor] (0,.75) circle (.05cm);
\roundNbox{unshaded}{(0,-.5)}{.3}{0}{0}{$f$};
}
=
\tikzmath{
\begin{scope}
\clip[rounded corners=5pt] (-1.3,-1) rectangle (.5,1);
\fill[\AColor] (-1.3,-1) rectangle (0,1);
\fill[\BColor] (0,-1) rectangle (.5,1);
\end{scope}
\draw[\XColor,thick] (0,0) -- (0,-1);
\draw[\YColor,thick] (0,0) -- (0,1);
\draw (-1,1) -- (-1,-.25) arc (-180:0:.25cm) -- (-.5,.25) arc (180:90:.5cm);
\draw (-.75,-.5) -- (-.75,-.75);
\filldraw (-.75,-.5) circle (.05cm);
\filldraw (-.75,-.75) circle (.05cm);
\filldraw[\YColor] (0,.75) circle (.05cm);
\roundNbox{unshaded}{(0,0)}{.3}{0}{0}{$f$};
}
=
\tikzmath{
\begin{scope}
\clip[rounded corners=5pt] (-1.05,-.75) rectangle (.5,1.3);
\fill[\AColor] (-1.05,-.75) rectangle (0,1.3);
\fill[\BColor] (0,-.75) rectangle (.5,1.3);
\end{scope}
\draw[\XColor,thick] (0,-.75) -- (0,.75);
\draw[\YColor,thick] (0,.75) -- (0,1.3);
\draw (-.75,1.3) -- (-.75,0) arc (-180:0:.25cm) arc (180:90:.25cm);
\draw (-.5,-.5) -- (-.5,-.25);
\filldraw (-.5,-.25) circle (.05cm);
\filldraw (-.5,-.5) circle (.05cm);
\filldraw[\XColor] (0,.25) circle (.05cm);
\roundNbox{unshaded}{(0,.75)}{.3}{0}{0}{$f$};
}
\underset{\text{\ref{QSys:AdjointAction}}}{=}
\tikzmath{
\begin{scope}
\clip[rounded corners=5pt] (-.8,.5) rectangle (.5,2.5);
\fill[\AColor] (-.8,.5) rectangle (0,2.5);
\fill[\BColor] (0,.5) rectangle (.5,2.5);
\end{scope}
\draw[\YColor,thick] (0,1.75) -- (0,2.5);
\draw[\XColor,thick] (0,0.5) -- (0,1.75);
\draw (-.5,2.5) -- (-.5,1.5) arc (180:270:.5cm);
\filldraw[\XColor] (0,1) circle (.05cm);
\roundNbox{unshaded}{(0,1.75)}{.3}{0}{0}{$f$};
}
$\,.
\end{proof}
\end{enumerate}
\end{facts}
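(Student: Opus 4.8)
The plan is to verify the defining intertwining relation for $f^\dag$, namely $f^\dag\circ \lambda_Y = \lambda_X\circ(\id_P\xz f^\dag)$, where $\lambda_X,\lambda_Y$ denote the left $P$-actions on $X,Y$. Applying $\dag$ and using that $\dag$ is contravariant with respect to $\circ$ and monoidal with respect to $\xz$, this is equivalent to proving
\begin{equation*}
\lambda_Y^\dag\circ f = (\id_P\xz f)\circ \lambda_X^\dag
\end{equation*}
as $2$-morphisms $X\Rightarrow P\xz Y$. I would establish this reformulated identity and then take daggers to conclude. Note that the naive approach of merely dualizing the hypothesis $f\circ\lambda_X=\lambda_Y\circ(\id_P\xz f)$ only yields $\lambda_X^\dag\circ f^\dag=(\id_P\xz f^\dag)\circ\lambda_Y^\dag$, an intertwining relation for the \emph{adjoint} actions rather than the actions themselves; so the Frobenius structure of the module genuinely has to be invoked.

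The key tool is \ref{QSys:AdjointAction}, which rewrites the adjoint action $\lambda^\dag$ of a left $P$-module in terms of $\lambda$ together with the comultiplication $m^\dag$ and counit $i^\dag$ of the Q-system $P$; this rewriting is precisely what the Frobenius axiom \ref{M:Frobenius} and unitality \ref{M:unitality} afford. The steps are: (i) expand $\lambda_Y^\dag$ on the left-hand side via \ref{QSys:AdjointAction}, producing a diagram built from $\lambda_Y$ and the structure maps of $P$; (ii) use the module-map hypothesis $\lambda_Y\circ(\id_P\xz f)=f\circ\lambda_X$ to pass $f$ through the $\lambda_Y$-vertex, thereby converting the $Y$-action into the $X$-action and moving $f$ onto the $X$-strand; and (iii) recognize the resulting diagram, now assembled from $\lambda_X$ and the same structure maps of $P$, as the expansion of $\lambda_X^\dag$ furnished by \ref{QSys:AdjointAction}, and fold it back up to obtain $(\id_P\xz f)\circ\lambda_X^\dag$. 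Taking daggers then delivers the module-map relation for $f^\dag$.

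The computation is entirely finite and diagrammatic, with no analytic content, so I expect no serious obstacle; the only care required is bookkeeping. The subtle point is recognizing at the outset that one should prove the daggered identity rather than manipulate $f^\dag\circ\lambda_Y$ directly, and then ensuring that the two applications of \ref{QSys:AdjointAction}---first unfolding the $Y$-action and finally refolding into the $X$-action---are carried out consistently with the orientation conventions. Since \ref{QSys:AdjointAction} itself rests only on \ref{M:unitality} and \ref{M:Frobenius}, no hypotheses beyond those defining a left $P$-module are needed.
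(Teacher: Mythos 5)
Your proposal is correct and is essentially identical to the paper's own proof: the paper likewise proves the daggered identity $\lambda_Y^\dag \circ f = (\id_P\xz f)\circ \lambda_X^\dag$ by expanding $\lambda_Y^\dag$ via \ref{QSys:AdjointAction}, passing $f$ through the action vertex using the module-map hypothesis, refolding the result via \ref{QSys:AdjointAction} into $\lambda_X^\dag$, and then applying $\dag$ to conclude. Your observation that naively dualizing the hypothesis only intertwines the \emph{adjoint} actions, so that \ref{QSys:AdjointAction} (hence \ref{M:unitality} and \ref{M:Frobenius}) must genuinely be invoked, is exactly the point of the paper's argument.
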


\begin{facts}
We again have the following dependencies amongst the relations.
\begin{itemize}
\item 
\ref{M:associativity}+\ref{M:separable}$\Rightarrow$\ref{M:unitality}
\item
By \cite[Lem.~3.23]{MR3308880},
\ref{M:associativity}+\ref{M:separable}$\Rightarrow$\ref{M:Frobenius}
\item
Using \ref{QSys:AdjointAction}, we have
\ref{M:associativity}+\ref{M:unitality}+\ref{M:Frobenius}
$\Rightarrow$
\ref{M:separable}
\item
\ref{QSys:StarClosed} follows from \ref{M:associativity}, \ref{M:Frobenius}, and \ref{M:separable} without even assuming \ref{Q:unitality}.
This fact is useful when one defines the $\rm C^*/W^*$ 2-category $\mathsf{Kar}^\dag(\cC)$ of unitary condensation algebras, unitary condensation bimodules, and intertwiners in $\cC$ in the spirit of \cite{1905.09566}.
Indeed, take $\dag$ of
$$
\tikzmath{
\begin{scope}
\clip[rounded corners=5pt] (-.8,-.5) rectangle (.5,1.5);
\fill[\AColor] (-.8,-.5) rectangle (0,1.5);
\fill[\BColor] (0,-.5) rectangle (.5,1.5);
\end{scope}
\draw[\XColor,thick] (0,-.5) -- (0,.25);
\draw[\YColor,thick] (0,.25) -- (0,1.5);
\draw (-.5,1.5) arc (180:270:.5cm);
\filldraw[\YColor] (0,1) circle (.05cm);
\roundNbox{unshaded}{(0,.25)}{.3}{0}{0}{$f$};
}
\underset{\text{\ref{M:separable}}}{=}
\tikzmath{
\begin{scope}
\clip[rounded corners=5pt] (-.8,-1.2) rectangle (.5,1.5);
\fill[\AColor] (-.8,-1.2) rectangle (0,1.5);
\fill[\BColor] (0,-1.2) rectangle (.5,1.5);
\end{scope}
\draw[\XColor,thick] (0,-1.2) -- (0,.25);
\draw[\YColor,thick] (0,.25) -- (0,1.5);
\draw (-.5,1.5) arc (180:270:.5cm);
\draw (0,-.4) arc (90:270:.3cm);
\filldraw[\YColor] (0,1) circle (.05cm);
\filldraw[\XColor] (0,-.4) circle (.05cm);
\filldraw[\XColor] (0,-1) circle (.05cm);
\roundNbox{unshaded}{(0,.25)}{.3}{0}{0}{$f$};
}
=
\tikzmath{
\begin{scope}
\clip[rounded corners=5pt] (-.8,-1.2) rectangle (.5,1.5);
\fill[\AColor] (-.8,-1.2) rectangle (0,1.5);
\fill[\BColor] (0,-1.2) rectangle (.5,1.5);
\end{scope}
\draw[\XColor,thick] (0,-1.2) -- (0,-.3);
\draw[\YColor,thick] (0,-.3) -- (0,1.5);
\draw (-.5,1.5) arc (180:270:.5cm);
\draw (0,-1) arc (270:180:.5cm) -- (-.5,-.1) arc (180:90:.5cm);
\filldraw[\YColor] (0,1) circle (.05cm);
\filldraw[\YColor] (0,.4) circle (.05cm);
\filldraw[\XColor] (0,-1) circle (.05cm);
\roundNbox{unshaded}{(0,-.3)}{.3}{0}{0}{$f$};
}
\underset{\text{\ref{M:Frobenius}}}{=}
\tikzmath{
\begin{scope}
\clip[rounded corners=5pt] (-1.1,-1.2) rectangle (.5,.7);
\fill[\AColor] (-1.1,-1.2) rectangle (0,.7);
\fill[\BColor] (0,-1.2) rectangle (.5,.7);
\end{scope}
\draw[\XColor,thick] (0,-1.2) -- (0,-.3);
\draw[\YColor,thick] (0,-.3) -- (0,.7);
\draw (-.5,-.3) arc (270:180:.3cm) -- (-.8,.7);
\draw (0,-1) arc (270:180:.5cm) -- (-.5,-.1) arc (180:90:.5cm);
\filldraw (-.5,-.3) circle (.05cm);
\filldraw[\YColor] (0,.4) circle (.05cm);
\filldraw[\XColor] (0,-1) circle (.05cm);
\roundNbox{unshaded}{(0,-.3)}{.3}{0}{0}{$f$};
}
\underset{\text{\ref{M:associativity}}^\dag}{=}
\tikzmath{
\begin{scope}
\clip[rounded corners=5pt] (-1.1,-1.7) rectangle (.5,.7);
\fill[\AColor] (-1.1,-1.7) rectangle (0,.7);
\fill[\BColor] (0,-1.7) rectangle (.5,.7);
\end{scope}
\draw[\XColor,thick] (0,-1.7) -- (0,-.3);
\draw[\YColor,thick] (0,-.3) -- (0,.7);
\draw (0,-1.4) arc (270:180:.8cm) -- (-.8,.7);
\draw (0,-1) arc (270:180:.5cm) -- (-.5,-.1) arc (180:90:.5cm);
\filldraw[\XColor] (0,-1.4) circle (.05cm);
\filldraw[\YColor] (0,.4) circle (.05cm);
\filldraw[\XColor] (0,-1) circle (.05cm);
\roundNbox{unshaded}{(0,-.3)}{.3}{0}{0}{$f$};
}
=
\tikzmath{
\begin{scope}
\clip[rounded corners=5pt] (-.9,-1.7) rectangle (.5,.8);
\fill[\AColor] (-.9,-1.7) rectangle (0,.8);
\fill[\BColor] (0,-1.7) rectangle (.5,.8);
\end{scope}
\draw[\XColor,thick] (0,-1.7) -- (0,.25);
\draw[\YColor,thick] (0,.25) -- (0,.8);
\draw (-.6,.8) -- (-.6,-.8) arc (180:270:.6cm);
\draw (0,-.4) arc (90:270:.3cm);
\filldraw[\XColor] (0,-1.4) circle (.05cm);
\filldraw[\XColor] (0,-.4) circle (.05cm);
\filldraw[\XColor] (0,-1) circle (.05cm);
\roundNbox{unshaded}{(0,.25)}{.3}{0}{0}{$f$};
}
\underset{\text{\ref{M:separable}}}{=}
\tikzmath{
\begin{scope}
\clip[rounded corners=5pt] (-.8,.5) rectangle (.5,2.5);
\fill[\AColor] (-.8,.5) rectangle (0,2.5);
\fill[\BColor] (0,.5) rectangle (.5,2.5);
\end{scope}
\draw[\YColor,thick] (0,1.75) -- (0,2.5);
\draw[\XColor,thick] (0,0.5) -- (0,1.75);
\draw (-.5,2.5) -- (-.5,1.5) arc (180:270:.5cm);
\filldraw[\XColor] (0,1) circle (.05cm);
\roundNbox{unshaded}{(0,1.75)}{.3}{0}{0}{$f$};
}\,.
$$
\end{itemize}
\end{facts}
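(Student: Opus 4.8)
The plan is to reduce the claim to its adjoint form and then manufacture the required identity using only the module relations \ref{M:associativity}, \ref{M:Frobenius}, and \ref{M:separable}. Writing $\lambda_X: P\xz X\Rightarrow X$ and $\lambda_Y: P\xz Y\Rightarrow Y$ for the left actions, the assertion that $f^\dag:Y\to X$ is a left $P$-module map is the identity $f^\dag\circ\lambda_Y=\lambda_X\circ(\id_P\xz f^\dag)$. Applying $\dag$, this is equivalent to
$$
\lambda_Y^\dag\circ f=(\id_P\xz f)\circ\lambda_X^\dag,
$$
i.e.\ to $f$ intertwining the adjoint coactions. So it suffices to verify this last equation and then take $\dag$. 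Note that the earlier argument for \ref{QSys:StarClosed} rewrote $\lambda^\dag$ via \ref{QSys:AdjointAction}, which in turn used the unit through \ref{M:unitality}; the present goal is to avoid the unit entirely.

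First I would expand the identity on the incoming strand using separability. By \ref{M:separable} we have $\lambda_X\circ\lambda_X^\dag=\id_X$, so $f=f\circ\lambda_X\circ\lambda_X^\dag$, which inserts a $P$-bubble on the $X$-strand below $f$. Next I would slide $f$ upward through the lower copy of $\lambda_X$ using the module-map hypothesis $f\circ\lambda_X=\lambda_Y\circ(\id_P\xz f)$, so that $f$ now sits above the action while a free $P$-strand is emitted to the left. I would then apply the module Frobenius relation \ref{M:Frobenius} to reconnect the two $P$-strands, turning the action/coaction pair into a single $\lambda_X^\dag$ whose emitted $P$-strand bends around, and apply module associativity \ref{M:associativity} (in its adjoint form) to straighten that $P$-strand into position. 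A final application of \ref{M:separable} cancels the residual bubble, yielding exactly $(\id_P\xz f)\circ\lambda_X^\dag$. Taking $\dag$ of the resulting equation gives the claim.

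The hard part will be purely the diagrammatic bookkeeping: at each step one must route the single $P$-strand all the way around the coupon $f$ using only the three module relations, never capping off a stray $P$-strand with a unit. The conceptual point that makes this possible is that \ref{M:separable} furnishes a unit-free substitute for $\lambda\circ\lambda^\dag=\id$, which in the unital setting would instead come from \ref{M:unitality}; this is precisely why the argument remains valid for unitary condensation bimodules, where no unit $i$ is available, and is the reason this refinement of \ref{QSys:StarClosed} is needed for the $\rm C^*/W^*$ 2-category $\mathsf{Kar}^\dag(\cC)$.
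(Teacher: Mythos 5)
Your proposal is correct and follows essentially the same route as the paper: you reduce the claim to the adjoint identity $\lambda_Y^\dag \circ f = (\id_P \xz f)\circ \lambda_X^\dag$, then establish it by inserting a separability bubble via \ref{M:separable}, sliding $f$ past the action with the module-map hypothesis, applying \ref{M:Frobenius}, applying \ref{M:associativity} in adjoint form, and cancelling the bubble, exactly as in the paper's diagrammatic chain. The only elision is that before the final application of \ref{M:separable} you need one more use of the module-map hypothesis (rewriting $\lambda_Y\circ(\id_P\xz f)$ back as $f\circ\lambda_X$ so that the residual bubble $\lambda_X\circ\lambda_X^\dag$ actually appears); since this is the same type of move you already invoked, the outline is sound.
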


\begin{defn}
Let $\cC$ be a $\rm C^*/\rm W^*$ 2-category.
We define the $\rm C^*/\rm W^*$ 2-category 
$\QSys(\cC)$ as follows:
\begin{itemize}
\item 
objects are Q-systems $(Q,m,i)\in \cC(b\to b)$,

\item
1-morphisms between Q-systems $P\in \cC(a\to a)$ and $Q\in \cC(b\to b)$ are (unital Frobenius) bimodules $({}_aX_b,\lambda,\rho)\in \cC(a\to b)$, and

\item
2-morphisms are bimodule intertwiners.
That is, if $P\in \cC(a\to a)$ and $Q\in \cC(b\to b)$ are Q-systems and
${}_aX_b, {}_aY_b\in \cC(a\to b)$ are $P-Q$ bimodules,
we define $\QSys(\cC)({}_PX_Q\Rightarrow {}_PY_Q)$ as the set of $f\in \cC({}_aX_b\Rightarrow {}_aY_b)$ such that
\begin{equation}
\label{eq:BimoduleMaps}
\tikzmath{
\begin{scope}
\clip[rounded corners=5pt] (-.8,.5) rectangle (.5,-1.5);
\fill[\AColor] (-.8,.5) rectangle (0,-1.5);
\fill[\BColor] (0,.5) rectangle (.5,-1.5);
\end{scope}
\draw[\YColor,thick] (0,.5) -- (0,-.25);
\draw[\XColor,thick] (0,-.25) -- (0,-1.5);
\draw (-.5,-1.5) arc (180:90:.5cm);
\filldraw[\XColor] (0,-1) circle (.05cm);
\roundNbox{unshaded}{(0,-.25)}{.3}{0}{0}{$f$};
}
=
\tikzmath{
\begin{scope}
\clip[rounded corners=5pt] (-.8,-.5) rectangle (.5,-2.5);
\fill[\AColor] (-.8,-.5) rectangle (0,-2.5);
\fill[\BColor] (0,-.5) rectangle (.5,-2.5);
\end{scope}
\draw[\XColor,thick] (0,-1.75) -- (0,-2.5);
\draw[\YColor,thick] (0,-0.5) -- (0,-1.75);
\draw (-.5,-2.5) -- (-.5,-1.5) arc (180:90:.5cm);
\filldraw[\YColor] (0,-1) circle (.05cm);
\roundNbox{unshaded}{(0,-1.75)}{.3}{0}{0}{$f$};
}
\qquad\text{and}\qquad
\tikzmath{
\begin{scope}
\clip[rounded corners=5pt] (.8,.5) rectangle (-.5,-1.5);
\fill[\BColor] (.8,.5) rectangle (0,-1.5);
\fill[\AColor] (0,.5) rectangle (-.5,-1.5);
\end{scope}
\draw[\YColor,thick] (0,.5) -- (0,-.25);
\draw[\XColor,thick] (0,-.25) -- (0,-1.5);
\draw (.5,-1.5) arc (0:90:.5cm);
\filldraw[\XColor] (0,-1) circle (.05cm);
\roundNbox{unshaded}{(0,-.25)}{.3}{0}{0}{$f$};
}
=
\tikzmath{
\begin{scope}
\clip[rounded corners=5pt] (.8,-.5) rectangle (-.5,-2.5);
\fill[\BColor] (.8,-.5) rectangle (0,-2.5);
\fill[\AColor] (0,-.5) rectangle (-.5,-2.5);
\end{scope}
\draw[\XColor,thick] (0,-1.75) -- (0,-2.5);
\draw[\YColor,thick] (0,-0.5) -- (0,-1.75);
\draw (.5,-2.5) -- (.5,-1.5) arc (0:90:.5cm);
\filldraw[\YColor] (0,-1) circle (.05cm);
\roundNbox{unshaded}{(0,-1.75)}{.3}{0}{0}{$f$};
}\,.
\end{equation}
By \ref{QSys:StarClosed}, every hom 1-category in $\QSys(\cC)$ is a $\dag$ 2-category.
Combining Remark \ref{rem:TensorIdNormContinuous} and Lemma \ref{lem:CompositionNormContinuous}, we see \eqref{eq:BimoduleMaps} is a norm-closed condition.
When $\cC$ is $\rm W^*$, \eqref{eq:BimoduleMaps} is a weak*-closed (normal) condition by replacing Remark \ref{rem:TensorIdNormContinuous} with \ref{W*:TensorIsNormal}.
Thus every hom 1-category in $\QSys(\cC)$ is $\rm C^*/W^*$ respectively.
\end{itemize}
\end{defn}

%We shall now briefly describe the \textbf{relative tensor product of bimodules} in $\QSys$ following [\textbf{NY16, p. 27}]. 

We now define composition of 1-morphisms and the associator of $\QSys(\cC)$.

\begin{defn}[Composition of 1-morphisms]\label{defn:QSysTensorProduct}
%[\textbf{Relative Tensor Product in $\QSys$ \cite[Rem.~2.6]{MR3509018}}]\label{defn:RelProdQSys}
\label{defn:QSysTensorProduct}
Let $P\in \cC(a\to a)$, $Q\in \cC(b\to b)$, and $R\in \cC(c\to c)$ be Q-systems.
Let 
${}_{a}X_{b} \in \QSys(\cC)(P \to Q)$ 
and
${}_{b}Y_{c} \in \QSys(\cC)(Q\to R)$.
% By \ref{QSys:StarClosed} above, $\QSys(\cC)(P \to R)$ is a $\rm C^*/W^*$ subcategory of $\cC$.
Since $\cC$ is unitarily idempotent complete, 
and
$\QSys(\cC)(P \to R)$ is a $\rm C^*/W^*$ subcategory of $\cC$,
it is straightforward to show that $\QSys(\cC)(P \to R)$ is also unitarily idempotent complete.
We define the 
$P-R$ bimodule ${}_aX\xzq_{Q}Y_b\in\QSys(\cC)(P \to R)$ 
by unitarily splitting the canonical separability projector
\begin{equation}
\label{eq:SeparabilityProjector}
p_{X,Y}
:=
\tikzmath{
\begin{scope}
\clip[rounded corners = 5pt] (-.5,-.5) rectangle (.5,.5);
\filldraw[\AColor] (-.5,-.5) rectangle (-.2,.5);
\filldraw[\BColor] (-.2,-.5) rectangle (.2,.5);
\filldraw[\CColor] (.2,-.5) rectangle (.5,.5);
\end{scope}
\draw[thick, \XColor] (-.2,-.5) -- (-.2,.5);
\draw[thick] (-.2,0) -- (.2,0);
\draw[thick, \YColor] (.2,-.5) -- (.2,.5);
}
:=
\tikzmath{
\begin{scope}
\clip[rounded corners = 5pt] (-.7,-.7) rectangle (.7,.5);
\filldraw[\AColor] (-.7,-.7) rectangle (-.4,.5);
\filldraw[\BColor] (-.4,-.7) rectangle (.4,.5);
\filldraw[\CColor] (.4,-.7) rectangle (.7,.5);
\end{scope}
\draw (-.4,.2) arc (90:0:.2cm) arc (-180:0:.2cm) arc (180:90:.2cm);
\draw (0,-.4) -- (0,-.2);
\draw[thick, \XColor] (-.4,-.7) -- (-.4,.5);
\draw[thick, \YColor] (.4,-.7) -- (.4,.5);
\filldraw[\XColor] (-.4,.2) circle (.05cm);
\filldraw[\YColor] (.4,.2) circle (.05cm);
\filldraw (0,-.2) circle (.05cm);
\filldraw (0,-.4) circle (.05cm);
}
=
\tikzmath{
\begin{scope}
\clip[rounded corners = 5pt] (-.6,-.6) rectangle (.6,.6);
\filldraw[\AColor] (-.6,-.6) rectangle (-.3,.6);
\filldraw[\BColor] (-.3,-.6) rectangle (.3,.6);
\filldraw[\CColor] (.3,-.6) rectangle (.6,.6);
\end{scope}\draw (-.3,-.3) arc (-90:0:.3cm) arc (180:90:.3cm);
\draw[thick, \XColor] (-.3,-.6) -- (-.3,.6);
\draw[thick, \YColor] (.3,-.6) -- (.3,.6);
\filldraw[\XColor] (-.3,-.3) circle (.05cm);
\filldraw[\YColor] (.3,.3) circle (.05cm);
}
\end{equation}
in the $\rm C^*/W^*$ category $\QSys(\cC)(P \to R)$.
That is, there is an object ${}_aX\xzq_Q Y_b \in \QSys(\cC)(P \to R)$ and a $P-R$ bimodular coisometry $u_{X,Y}: X\xz_b Y \to X\xzq_Q Y$ such that $p_{X,Y}=u^\dag_{X,Y}\circ u_{X,Y}$.
Since $u_{X,Y}$ is a coisometry, we have $u_{X,Y}\circ p_{X,Y} = u_{X,Y}$.
Graphically, we denote
$$
\tikzmath{
\begin{scope}
\clip[rounded corners = 5pt] (-.3,-.5) rectangle (.3,.5);
\filldraw[\AColor] (-.3,-.5) rectangle (-.04,.5);
\filldraw[\BColor] (-.04,-.5) rectangle (.04,.5);
\filldraw[\CColor] (.04,-.5) rectangle (.3,.5);
\end{scope}
\DoubleStrand{(0,-.5)}{(0,.5)}{\XColor}{\YColor}
}
=
X\xzq_Q Y
\qquad\qquad
\tikzmath{
\begin{scope}
\clip[rounded corners = 5pt] (-.5,-.7) rectangle (.5,.7);
\filldraw[\AColor] (-.5,-.7) -- (-.15,-.7) -- (-.15,0) -- (-.04,0) -- (-.04,.7) -- (-.5,.7);
\filldraw[\BColor] (-.15,-.7) -- (-.15,0) -- (-.04,0) -- (-.04,.7) -- (.04,.7) -- (.04,0) -- (.15,0) -- (.15,-.7);
\filldraw[\CColor] (.5,-.7) -- (.15,-.7) -- (.15,0) -- (.04,0) -- (.04,.7) -- (.5,.7);
\end{scope}
\draw[thick, \XColor] (-.15,-.7) -- (-.15,-.3);
\draw[thick, \YColor] (.15,-.7) -- (.15,-.3);
\DoubleStrand{(0,.3)}{(0,.7)}{\XColor}{\YColor}
\halfRoundBox{unshaded}{(0,0)}{.3}{0}{$u$};
}=u_{X,Y}.
$$
The dots between the red and orange strands are meant to evoke the separability projector.
The pair $({}_aX\xzq_Q Y_b,u_{X,Y})$ is unique up to canonical unitary isomorphism.

We now define the horizontal composition $\xzq_Q$ of bimodule intertwiners.
It suffices to define horizontal composition with an identity 2-morphism on both sides and prove the exchange relation.
Suppose $P\in \cC(a\to a)$, $Q\in \cC(b\to b)$, and $R\in \cC(c\to c)$ are Q-systems and ${}_PW_Q\in \cC(a\to b),{}_PX_Q\in \cC(a\to b)$ and ${}_QY_R,{}_QZ_R\in \cC(b\to c)$ are bimodules.
For $f\in \QSys(\cC)({}_PW_Q \Rightarrow {}_PX_Q)$, we define
\begin{equation}
\label{eq:QSysHorizontalComposition2Morphisms1}
f\xzq_Q \id_Y
:=
u_{X,Y} \circ (f\xz_b \id_Y) \circ u^{\dag}_{W,Y}
\in
\QSys(\cC)({}_PW\xzq_Q Y_R\Rightarrow {}_PX\xzq_Q Y_R)
\end{equation}
For $g\in \QSys(\cC)({}_QY_R \Rightarrow {}_QZ_R)$, we define
\begin{equation}
\label{eq:QSysHorizontalComposition2Morphisms2}
\id_X \xzq_Q g 
:=
u_{X,Z}\circ (\id_X \xz_b g)\circ u_{X,Y}^\dag
\in \QSys(\cC)({}_PX\xzq_Q Y_R \Rightarrow {}_PX\xzq_Q Z_R).
\end{equation}
The interchange relation now follows immediately by the fact that 
$$
p_{X,Y}\circ (f\xz_b \id_Y) = (f\xz_b \id_Y)\circ p_{W,Y}
\qquad\text{and}\qquad
p_{X,Z} \circ(\id_X\xz_b g) = (\id_X\xz_b g)\circ p_{X,Y}
$$
as $f,g$ are bimodule maps.
\end{defn}

\begin{rem}
\label{rem:QSysCoequalizer}
Observe that $(X\xzq_Q Y, u_{X,Y})$ is the coequalizer 
%\roberto{want $\lambda/\rho$ instead of $\rhd/\rhd$?}
\begin{equation}
\label{eq:QSysCoequalizer}
\begin{tikzcd}[column sep=4em]
%M\otimes_\cC N 
X\xz Q\xz Y
\arrow[r,shift left=1.5, "\id_{X}\xz \lambda_Y"]
\arrow[r,shift right=1.5, swap,"\rho_X \xz \id_{Y}"]
&
X\xz Y 
\arrow[r, shift left=1, "u_{X,Y}"]
\arrow[dr, swap, "T"]
&
X\xzq_Q  Y.
\arrow[l, shift left=1, "u^\dag_{X,Y}"]
\arrow[d, dashed, "\exists\,!\,\widetilde{T}"]
\\
&&
Z
\end{tikzcd}
\end{equation}
Indeed, any map $T\in \Hom_{P-R}(X\boxtimes_Q Y \to Z)$ which coequalizes the two maps on the left of \eqref{eq:QSysCoequalizer} satisfies 
$$
T\circ p_{X,Y}
=
T
\circ (\id_X\boxtimes \lambda_Y) 
\circ (\rho_X^\dag \boxtimes \id_Y)
=
T
\circ (\rho_X\boxtimes \id_Y) 
\circ (\rho_X^\dag \boxtimes \id_Y)
=
T.
$$
Since $p_{X,Y}=u_{X,Y}^\dag u_{X,Y}$ and $u_{X,Y}$ is a coisometry,
the unique map $\widetilde{T}: X\xzq_Q Y \to Z$ such that $\widetilde{T}\circ p_{X,Y}= T$ must be given by $\widetilde{T}:= T\circ u_{X,Y}^\dag$.
\end{rem}

\begin{defn}[Unitor and associator]
For each Q-system $Q\in \cC(b\to b)$, $Q$ is also the unit 1-morphism in $\QSys(\cC)(Q\to Q)$.
Given a $Q-R$ bimodule ${}_bY_c\in \cC(b\to c)$, the unitor natural isomorphism
$\lambda_Y^Q : Q\xzq_Q Y \to Y$ is given by $\lambda_Y\circ u_{Q,Y}^\dag$.
Similarly, given a $P-Q$ bimodule ${}_aX_b\in \cC(a\to b)$, the unitor natural isomorphisms $\rho^Q_X: X\xzq_Q Q \to X$ is given by $\rho_X \circ u_{X,Q}^\dag$.

The associator for $\QSys$ is also built from the $u_{X,Y}$.
Suppose $a,b,c,d\in\cC$, denoted by the following shaded regions:
$$
\tikzmath{\filldraw[\AColor, rounded corners=5, very thin, baseline=1cm] (0,0) rectangle (.6,.6);}=a
\qquad\qquad
\tikzmath{\filldraw[\BColor, rounded corners=5, very thin, baseline=1cm] (0,0) rectangle (.6,.6);}=b
\qquad\qquad
\tikzmath{\filldraw[\CColor, rounded corners=5, very thin, baseline=1cm] (0,0) rectangle (.6,.6);}=c
\qquad\qquad
\tikzmath{\filldraw[\DColor, rounded corners=5, very thin, baseline=1cm] (0,0) rectangle (.6,.6);}=d
$$
Suppose we have 1-morphism ${}_aX_b$, ${}_bY_c$, ${}_cZ_d$
which we denote graphically by
$$
\tikzmath{
\begin{scope}
\clip[rounded corners=5pt] (-.3,0) rectangle (.3,.6);
\fill[\AColor] (0,0) rectangle (-.3,.6);
\fill[\BColor] (0,0) rectangle (.3,.6);
\end{scope}
\draw[thick, \XColor] (0,0) -- (0,.6);
}={}_aX_b
\qquad\qquad
\tikzmath{
\begin{scope}
\clip[rounded corners=5pt] (-.3,0) rectangle (.3,.6);
\fill[\BColor] (0,0) rectangle (-.3,.6);
\fill[\CColor] (0,0) rectangle (.3,.6);
\end{scope}
\draw[thick, \YColor] (0,0) -- (0,.6);
}={}_bY_c
\qquad\qquad
\tikzmath{
\begin{scope}
\clip[rounded corners=5pt] (-.3,0) rectangle (.3,.6);
\fill[\CColor] (0,0) rectangle (-.3,.6);
\fill[\DColor] (0,0) rectangle (.3,.6);
\end{scope}
\draw[thick, \ZColor] (0,0) -- (0,.6);
}={}_cZ_d
$$
Assume these are $P-Q$, $Q-R$, and $R-S$ bimodule objects for Q-systems
$P\in \cC(a\to a)$, $Q\in \cC(b\to b)$, $R\in \cC(c\to c)$, and $S\in \cC(d\to d)$ respectively.
We define
\begin{equation}
\label{eq:DefinitionOfAlphaQSys}
\alpha^{\QSys}_{X,Y,Z}
:=
\tikzmath{
\begin{scope}
\clip[rounded corners=5pt] (-.7,-2.7) rectangle (.7,2.7);
\fill[\AColor] (-.7,-2.7) -- (-.08,-2.7) -- (-.08, -2) -- (-.19,-2) -- (-.19,-1) -- (-.3,-1) -- (-.3,1) .. controls ++(90:.3cm) and ++(270:.3cm) .. (-.15,1.7) -- (-.08,2) -- (-.08,2.7) -- (-.7,2.7);
\fill[\BColor] (-.08,-2.7) -- (-.08, -2) -- (-.19,-2) -- (-.19,-1) -- (-.3,-1) -- (-.3,1) .. controls ++(90:.3cm) and ++(270:.3cm) .. (-.15,1.7) -- (-.08,2) -- (-.08,2.7) -- (0,2.7) -- (0,2) -- (.11,2) -- (.11,1) -- (0,1) -- (0,-1) -- (-.11,-1) -- (-.1,-2) -- (0,-2) -- (0,-2.7);
\fill[\CColor] (.08,2.7) -- (.08, 2) -- (.19,2) -- (.19,1) -- (.3,1) -- (.3,-1) .. controls ++(270:.3cm) and ++(90:.3cm) .. (.15,-1.7) -- (.08,-2) -- (.08,-2.7) -- (0,-2.7) -- (0,-2) -- (-.11,-2) -- (-.11,-1) -- (0,-1) -- (0,1) -- (.11,1) -- (.1,2) -- (0,2) -- (0,2.7);
\fill[\DColor] (.7,2.7) -- (.08,2.7) -- (.08, 2) -- (.19,2) -- (.19,1) -- (.3,1) -- (.3,-1) .. controls ++(270:.3cm) and ++(90:.3cm) .. (.15,-1.7) -- (.08,-2) -- (.08,-2.7) -- (.7,-2.7);
\end{scope}
\draw[thick, \ZColor] (.3,1) -- (.3,-1) .. controls ++(270:.3cm) and ++(90:.3cm) .. (.15,-1.7);
\draw[thick, \XColor] (-.3,-1) -- (-.3,1) .. controls ++(90:.3cm) and ++(270:.3cm) .. (-.15,1.7);
\draw[thick, \YColor] (0,-.7) -- (0,.7);
\DoubleStrand{(-.15,-1)}{(-.15,-2)}{\XColor}{\YColor}
\DoubleStrand{(.15,1)}{(.15,2)}{\YColor}{\ZColor}
\TripleStrand{(0,2)}{(0,2.7)}{\XColor}{\YColor}{\ZColor}
\TripleStrand{(0,-2)}{(0,-2.7)}{\XColor}{\YColor}{\ZColor}
\halfRoundBox{unshaded}{(0,2)}{.4}{-.1}{$u$};
\halfRoundBox{unshaded}{(.15,1)}{.3}{0}{$u$};
\roundNbox{unshaded}{(0,0)}{.3}{.2}{.2}{$\alpha^\cC$}
\halfRoundBoxDag{unshaded}{(-.15,-1)}{.3}{0}{$u^\dag$};
\halfRoundBoxDag{unshaded}{(0,-2)}{.4}{-.1}{$u^\dag$};
}
:
(X\xzq_Q Y) \xzq_R Z
\Rightarrow
X \xzq_Q (Y\xzq_R Z).
\end{equation}
Here, we write the associator in $\cC$ for clarity, but we usually suppress it whenever possible.
While we use the same string type for both the source and target, the label may be inferred from the strings from the nearest $u$ or $u^\dag$.
It is a straightforward and enjoyable exercise 
to prove the pentagon relation
using the relation $u_{X,Y}^\dag \circ u_{X,Y} = p_{X,Y}$ 
and the fact that the $u$ and $u^\dag$ are bimodule maps. 
\end{defn}

\begin{rem}
One can show using \eqref{eq:QSysCoequalizer} that $\alpha^{\QSys}$, $\alpha^{\cC}$ and $u$ satisfy the following associativity relation \cite[p.~27]{MR3509018}:
\begin{equation*}
%\label{eq:CoisometryUAssociativity}
\tikzmath{
\begin{scope}
\clip[rounded corners=5pt] (-.8,-.2) rectangle (.8,3.2);
\fill[\AColor] (-.8,-.2) -- (-.3,-.2) -- (-.3, .5) -- (-.19,.5) -- (-.19,1.5) -- (-.08,1.5) -- (-.08,3.2) -- (-.8,3.2);
\fill[\BColor] (-.8,-.2) -- (-.3,-.2) -- (-.3, .5) -- (-.19,.5) -- (-.19,1.5) -- (-.08,1.5) -- (-.08,3.2) -- (0,3.2) -- (0,1.5) -- (-.11,1.5) -- (-.11,.5) -- (0,.5) -- (0,-.2);
\fill[\CColor] (0,3.2) -- (0,1.5) -- (-.11,1.5) -- (-.11,.5) -- (0,.5) -- (0,-.2) -- (.3,-.2) -- (.3,.5) .. controls ++(90:.3cm) and ++(270:.3cm) .. (.15,1.2) -- (.08,1.5) -- (.08,3.2);
\fill[\DColor] (.3,-.2) -- (.3,.5) .. controls ++(90:.3cm) and ++(270:.3cm) .. (.15,1.2) -- (.08,1.5) -- (.08,3.2) -- (.8,3.2) -- (.8,-.2);
\end{scope}
\draw[thick, \XColor] (-.3,.5) -- (-.3,-.2);
\draw[thick, \ZColor] (.3,-.2) -- (.3,.5) .. controls ++(90:.3cm) and ++(270:.3cm) .. (.15,1.2);
\draw[thick, \YColor] (0,-.2) -- (0,.2);
\DoubleStrand{(-.15,.5)}{(-.15,1.5)}{\XColor}{\YColor}
\TripleStrand{(0,1.5)}{(0,3.2)}{\XColor}{\YColor}{\ZColor}
\halfRoundBox{unshaded}{(0,1.5)}{.4}{-.1}{$u$};
\halfRoundBox{unshaded}{(-.15,.5)}{.3}{0}{$u$};
\roundNbox{unshaded}{(0,2.5)}{.3}{.3}{.3}{$\alpha^{\QSys}$}
}
=
\tikzmath{
\begin{scope}
\clip[rounded corners=5pt] (-.8,-1.2) rectangle (.8,2.2);
\fill[\AColor] (-.8,-1.2) -- (-.3,-1.2) -- (-.3,.5) .. controls ++(90:.3cm) and ++(270:.3cm) .. (-.15,1.2) -- (-.08,1.5) -- (-.08,2.2) -- (-.8,2.2);
\fill[\BColor] (-.3, -1.2) -- (-.3,-.5) -- (-.3,.5) .. controls ++(90:.3cm) and ++(270:.3cm) .. (-.15,1.2) -- (-.08,1.5) -- (-.08,2.2) -- (0,2.2) -- (0,1.5) -- (.11,1.5) -- (.11,.5) -- (0,.5) -- (0,-1.2);
\fill[\CColor] (.08,2.2) -- (.08, 1.5) -- (.19,1.5) -- (.19,.5) -- (.3,.5) -- (.3,-1.2) -- (0,-1.2) -- (0,.5) -- (.11,.5) -- (.1,1.5) -- (0,1.5) -- (0,2.2);
\fill[\DColor] (.8,2.2) -- (.08,2.2) -- (.08, 1.5) -- (.19,1.5) -- (.19,.5) -- (.3,.5) -- (.3,-1.2) -- (.8,-1.2);
\end{scope}
\draw[thick, \ZColor] (.3,.5) -- (.3,-1.2);
\draw[thick, \XColor] (-.3,-1.2) -- (-.3,.5) .. controls ++(90:.3cm) and ++(270:.3cm) .. (-.15,1.2);
\draw[thick, \YColor] (0,-1.2) -- (0,.2);
\DoubleStrand{(.15,.5)}{(.15,1.5)}{\YColor}{\ZColor}
\TripleStrand{(0,1.5)}{(0,2.2)}{\XColor}{\YColor}{\ZColor}
\halfRoundBox{unshaded}{(0,1.5)}{.4}{-.1}{$u$};
\halfRoundBox{unshaded}{(.15,.5)}{.3}{0}{$u$};
\roundNbox{unshaded}{(0,-.5)}{.3}{.2}{.2}{$\alpha^{\cC}$}
}
:
(X\xz Y) \xz Z
\Rightarrow
X \xzq_Q (Y\xzq_R Z).
\end{equation*}
One can also prove the pentagon relation for $\alpha^{\QSys(\cC)}$ using this relation.
\end{rem}

\begin{prop}
If $\cC$ is a $\rm C^*/W^*$ 2-category,
then
$\QSys(\cC)$ is a $\rm C^*/W^*$-2-category respectively.
\end{prop}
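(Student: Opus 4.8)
The plan is to assemble the structure from pieces already in place, since most of the analytic content has been established in the definitions. First I would observe that the hom $1$-categories are $\rm C^*/W^*$ essentially by construction: each $\QSys(\cC)(P\to Q)$ sits inside $\cC(a\to b)$ as the subcategory whose objects are $P$--$Q$ bimodules and whose morphism spaces are cut out by \eqref{eq:BimoduleMaps}. By \ref{QSys:StarClosed} these morphism spaces are closed under $\dag$, and by the discussion following \eqref{eq:BimoduleMaps} they are norm-closed (and weak*-closed when $\cC$ is $\rm W^*$). Hence $\QSys(\cC)(P\to Q)$ is a norm-closed (resp.\ weak*-closed) $\dag$-subcategory of the $\rm C^*$ (resp.\ $\rm W^*$) category $\cC(a\to b)$, so it inherits \ref{C*:Positive}--\ref{C*:Norm} and, in the $\rm W^*$ case, the predual. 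I would also note that orthogonal direct sums of bimodules are again bimodules, and that any orthogonal projection in $\End_{\QSys(\cC)}({}_PX_Q)$ splits in $\cC$ via an isometry whose range carries an induced bimodule structure (transporting the actions across the isometry via \ref{QSys:StarClosed}), so $\QSys(\cC)$ is again unitarily Cauchy complete.

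Next, the dagger structure on $\QSys(\cC)$ is inherited from $\cC$, and I would verify its four axioms. Involutivity and contravariance of $\dag$ under vertical composition are immediate from $\cC$. For compatibility with horizontal composition it suffices, by \eqref{eq:QSysHorizontalComposition2Morphisms1}--\eqref{eq:QSysHorizontalComposition2Morphisms2}, to dagger the whiskered composites: since the $u_{X,Y}$ are bimodular coisometries, $(f\xzq_Q\id_Y)^\dag = u_{W,Y}\circ(f^\dag\xz_b\id_Y)\circ u^\dag_{X,Y} = f^\dag\xzq_Q\id_Y$, and analogously on the other side, whence $(f\xzq_Q g)^\dag = f^\dag\xzq_Q g^\dag$ after combining with the dagger structure of $\cC$. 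The remaining axiom, unitarity of the coherence $2$-morphisms, is where the diagrammatic work lies: for the unitors $\lambda^Q_Y=\lambda_Y\circ u^\dag_{Q,Y}$ and $\rho^Q_X=\rho_X\circ u^\dag_{X,Q}$ one checks, using $u^\dag_{X,Y}\circ u_{X,Y}=p_{X,Y}$, $u_{X,Y}\circ u^\dag_{X,Y}=\id$, and the module axioms, that the daggers are two-sided inverses; for $\alpha^{\QSys}$ defined in \eqref{eq:DefinitionOfAlphaQSys}, I would read off from the diagram that it is a composite of the unitary $\alpha^\cC$ with coisometries $u$ and their adjoints arranged so that each $u\circ u^\dag$ collapses to an identity, giving $(\alpha^{\QSys})^\dag\circ\alpha^{\QSys}=\id$ and $\alpha^{\QSys}\circ(\alpha^{\QSys})^\dag=\id$.

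I would then record that $\QSys(\cC)$ is a genuine weak $2$-category: naturality of the unitors and associator follows from the universal property of the splittings in \eqref{eq:QSysCoequalizer}, and the pentagon and triangle identities follow from $u^\dag_{X,Y}\circ u_{X,Y}=p_{X,Y}$ together with the bimodularity of the $u$'s (the pentagon being the exercise flagged after \eqref{eq:DefinitionOfAlphaQSys}). Finally, for the conditions at the level of the $2$-category: $\End_{\QSys(\cC)}({}_PX_Q\oplus{}_PY_Q)$ is a norm-closed $*$-subalgebra of the $\rm C^*$-algebra $\End_\cC(X\oplus Y)$, giving \ref{C*:2x2}; and in the $\rm W^*$ case I would deduce separate normality of $\xzq_Q$ from \eqref{eq:QSysHorizontalComposition2Morphisms1}--\eqref{eq:QSysHorizontalComposition2Morphisms2}, since $f\mapsto f\xz_b\id_Y$ is normal by \ref{W*:TensorIsNormal} in $\cC$ and pre/post-composition with the fixed bounded maps $u,u^\dag$ is normal by Lemma \ref{lem:CompositionNormContinuous}. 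The main obstacle is the unitarity and coherence of $\alpha^{\QSys}$: everything else is inheritance from $\cC$, whereas the associator must be shown both $\dag$-compatible and to satisfy the pentagon directly from the splitting identities, which is the one place genuine (if routine) diagram chasing is required.
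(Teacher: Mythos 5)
Your proposal is correct and follows essentially the same route as the paper: everything except separate normality of $\xzq_Q$ is inherited from the definitions and constructions preceding the proposition, and the remaining $\rm W^*$ condition \ref{W*:TensorIsNormal} is verified exactly as in the paper, via the whiskering formulas \eqref{eq:QSysHorizontalComposition2Morphisms1}--\eqref{eq:QSysHorizontalComposition2Morphisms2}, normality of $\id_X\xz_b-$ and $-\xz_b\id_Y$ in $\cC$, and normality of pre/post-composition with $u^\dag$ and $u$ (Lemma \ref{lem:CompositionNormContinuous}). The extra detail you supply (Cauchy completeness, unitarity of the coherators, the pentagon) is material the paper establishes in the surrounding definitions and flagged exercises rather than in the proof itself, so it is a sound, if more verbose, rendering of the same argument.
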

\begin{proof}
It remains to check \ref{W*:TensorIsNormal} in the $\rm W^*$ case.
Recall that tensoring with an identity 2-morphism on the right and left are given respectively by \eqref{eq:QSysHorizontalComposition2Morphisms1} and \eqref{eq:QSysHorizontalComposition2Morphisms2}.
The result now follows as $\id_X \xz_b - $ as $- \xz_b \id_Y$ are normal by \ref{W*:TensorIsNormal} for $\cC$, as is pre-composition with $u^\dag$ and post-composition with $u$.
\end{proof}

\begin{nota}
Suppose $\cC,\cD$ are 2-categories.
We use the following coherence conventions for a 2-functor
$F=(F,F^2,F^1):\cC \to \cD$:
$$
F^2_{X,Y}\in \cD( F(X)\xz_{F(b)} F(Y) \Rightarrow F(X\xz_b Y))
\qquad\text{and}\qquad
F^1_a \in \cD(1_{F(a)} \Rightarrow F(1_a)).
$$
\end{nota}

\begin{construction}
Suppose $\cC$ is a ${\rm C^*/W^*}$-2-category.
We have a canonical inclusion $\dag$ 2-functor $\iota_{\cC}:\cC\hookrightarrow \QSys(\cC)$ given as follows.
\begin{itemize}
\item
For an object $c\in \cC$, we map $c$ to $1_c\in \cC(c\to c)$ with its obvious Q-system structure as the tensor unit of $\cC(c\to c)$. 
\item
For a 1-morphism ${}_aX_b\in \cC(a\to b)$, $X$ itself is a unital Frobenius $1_a-1_b$ bimodule object, so we map $X$ to itself.
\item
For a 2-morphism $f\in \cC(X\Rightarrow Y)$,
we see that $f$ is $1_a-1_b$ bimodular, so we map $f$ to itself.
\end{itemize}
Recall that the associator in $\QSys(\cC)$ is built from the associator in $\cC$ using a unitary splitting of the canonical separability projector as in \eqref{eq:DefinitionOfAlphaQSys}. 
But the canonical separability projector for $X\xzq_{1_b} Y$ is the identity, so the associator on the image of $\iota$ in $\QSys(\cC)$ is the same as the associator in $\cC$.
We may thus take the tensorator and unitor of $\iota$ to be identity morphisms.
That is, $\iota$ is a strict $\dag$ 2-functor.
\end{construction}

\begin{ex}
\label{ex:EquivalentRestrictedQSystem}
Continuing Example \ref{ex:RestrictByExpectation}, suppose $A\subset B$ is a unital inclusion of $\rm C^*$-algebras together with a finite index 
faithful ucp conditional expectation $E_A: B \to A$ such that $B_A$ is finitely generated projective.
As before, we can renormalize $m,i$ so that $({}_AB_A,m,i)$ is a Q-system.
% Assume further that the Watatani index $[B:A]$ is a scalar \nn{try to remove this} so that $B$ is a Q-system in $\rCorr(A\to A)$.
Let $1_B$ denote the trivial Q-system $B\in \rCorr(B\to B)$. 

We claim that ${}_AB{}_A$ and $1_B$ are equivalent as Q-systems in $\QSys(\rCorr)$.
Set $X:={}_AB_B$ with right $B$-valued inner product $\langle b_1|b_2\rangle_B:=b_1^*b_2$, and set
$X^\vee:= {}_BB_A$ with right $A$-valued inner product $\langle b_1|b_2\rangle_A:= E_A(b_1^*b_2)$.
Let the
evaluation $\ev_X : X^\vee \boxtimes_A X \to B$ be the renormalized multiplication, and the coevaluation $\coev_X: A \to X\boxtimes_B X^\vee$ be the renormalized inclusion.
Then $(X^\vee,\ev_X,\coev_X)$ is a unitarily separable left dual for $X$,
and $(X,\ev_{X^\vee}:=i^\dag[B:A]^{-1/2}, \coev_{X^\vee}:=m^\dag[B:A]^{1/2}$) is a unitarily separable left dual for $X^\vee$.
%We now show that 
%That is, there exists a unitarily separable $A-B$ bimodule ${}_AX_B \in \rCorr(A\to B)$ with unitarily separable dual ${}_BX^\vee_A\in \rCorr(B\to A)$
%such that
%$X \xzq_{1_B} X^\vee \cong B$ and $X^\vee \otimes_A X \cong 1_B$ as Q-systems.
The Q-system structures on
on $X \xzq_{1_B} X^\vee$ and     
$X^\vee \xzq_{{}_AB_A} X$ are given by 
Example \ref{ex:XXvQSystem}.
% and \ref{ex:Qsys:XXvee/XveeX} respectively.

% Indeed, set $X:={}_AB_B$ with right $B$-valued inner product $\langle b_1|b_2\rangle_B:=b_1^*b_2$.
% Set
% $X^\vee:= {}_BB_A$ with right $A$-valued inner product $\langle b_1|b_2\rangle_A:= E_A(b_1^*b_2)$, 
% and let the
% evaluation $\ev_X : X^\vee \boxtimes_A X \to B$ be $[B:A]^{-1/2}$ times multiplication, and the coevaluation $\coev_X: A \to X\boxtimes_B X^\vee$ be $[B:A]^{1/2}$ times the inclusion.
% Then $(X^\vee,\ev_X,\coev_X)$ is a unitarily separable left dual for $X$.

It is clear that $X\xzq_{1_B}X^\vee = X\boxtimes_B X^\vee \cong {}_AB_A$ as Q-systems.
%, and this isomorphism intertwines the canonical Q-system structure on $X\boxtimes_B X^\vee$ with the normalized multiplication on $B$ from Example \ref{ex:RestrictByExpectation}.
We now calculate that $X^\vee\xzq_{{}_AB_A}X \cong 1_B$, and this isomorphism intertwines the canonical Q-system structure on $X^\vee\xzq_{{}_AB_A}X$ with the unitor on $1_B$.
Denote $A,B$ and $X$ by the following shaded regions and strand respectively:
$$
\tikzmath{\filldraw[\AColor, rounded corners=5, very thin, baseline=1cm] (0,0) rectangle (.6,.6);}=A
\qquad\qquad
\tikzmath{\filldraw[\BColor, rounded corners=5, very thin, baseline=1cm] (0,0) rectangle (.6,.6);}=B
\qquad\qquad
\tikzmath{
\begin{scope}
\clip[rounded corners=5pt] (-.3,0) rectangle (.3,.6);
\fill[\AColor] (0,0) rectangle (-.3,.6);
\fill[\BColor] (0,0) rectangle (.3,.6);
\end{scope}
\draw (0,0) -- (0,.6);
}={}_AX_B
$$
Using the previous calculation, it is straightforward to see that the separability idempotent in $\End_{B-B}(X^\vee\boxtimes_A X)$ onto $X^\vee \xzq_{{}_AB_A} X$ is given by
$$
p_{X^\vee,X}
=
\ev_X^\dag \circ \ev_X
=
\tikzmath{
\begin{scope}
\clip[rounded corners=5pt] (-.6,-.5) rectangle (.6,.5);
\fill[\BColor] (-.6,-.5) rectangle (.6,.5);
\fill[\AColor] (-.3,-.5) arc (180:0:.3cm);
\fill[\AColor] (-.3,.5) arc (-180:0:.3cm);
\end{scope}
\draw (-.3,-.5) arc (180:0:.3cm);
\draw (-.3,.5) arc (-180:0:.3cm);
}\,.
$$
We see now that the coisometry 
$u_{X^\vee,X}
:=
\ev_X : X^\vee \boxtimes_A X \to B
$ 
unitarily splits the separability idempotent $p_{X^\vee,X}$.
Moreover, $u_{X^\vee,X}^\dag: B=1_B \to X^\vee\boxtimes_A X$ is clearly an isomorphism of Q-systems:
$$
\tikzmath{
\begin{scope}
\clip[rounded corners=5pt] (-.7,-.4) rectangle (.7,.9);
\fill[\BColor] (-.7,-.4) rectangle (.7,.9);
\fill[\AColor] (-.4,0) -- (-.4,.2) .. controls ++(90:.2cm) and ++(270:.2cm) .. (-.1,.7) -- (-.1,.9) -- (.1,.9) -- (.1,.7)  .. controls ++(270:.2cm) and ++(90:.2cm) .. (.4,.2) -- (.4,0) arc (0:-180:.1cm) -- (.2,.2) arc (0:180:.2cm) -- (-.2,0) arc (0:-180:.1cm);
%\fill[\BColor] (-.2,0) -- (-.2,.2) arc (180:0:.2cm) -- (.2,0);
\end{scope}
\draw (-.4,0) arc (-180:0:.1cm);
\draw (.2,0) arc (-180:0:.1cm);
\draw (-.2,0) -- (-.2,.2) arc (180:0:.2cm) -- (.2,0);
\draw (-.4,0) -- (-.4,.2) .. controls ++(90:.2cm) and ++(270:.2cm) .. (-.1,.7) -- (-.1,.9);
\draw (.4,0) -- (.4,.2) .. controls ++(90:.2cm) and ++(270:.2cm) .. (.1,.7) -- (.1,.9);
\draw[dotted, thick] (-.3,-.4) -- (-.3,-.1);
\draw[dotted, thick] (.3,-.4) -- (.3,-.1);
}
=
\tikzmath{
\begin{scope}
\clip[rounded corners=5pt] (-.7,-.5) rectangle (.7,.5);
\fill[\BColor] (-.7,-.5) rectangle (.7,.5);
\fill[\AColor] (-.1,.5) -- (-.1,.2) arc (-180:0:.1cm) -- (.1,.5);
\end{scope}
\draw (-.1,.5) -- (-.1,.2) arc (-180:0:.1cm) -- (.1,.5);
\draw[dotted, thick] (-.3,-.5) arc (180:0:.3cm);
\draw[dotted, thick] (0,-.2) -- (0,.1);
}\,.
$$
The claim now follows from the following lemma, whose straightforward proof is left to the reader.
\end{ex}

\begin{lem}
Let $\cC$ be a $\rm C^*/W^*$ 2-category.
Suppose ${}_aX_b$ has unitarily separable dual $({}_bX^\vee_a, \ev_X, \coev_X)$
such that $X$ is also a unitarily separable dual for $X^\vee$ via $\ev_{X^\vee}, \coev_{X^\vee}$.
Then 
the canonical Q-systems
$X\xz_b X^\vee$ and $X^\vee \xz_a X$
are equivalent in $\QSys(\cC)$ via the 
$X\xz_b X^\vee- X^\vee \xz_a X$ bimodule $X$
whose left and right actions are given by
$$
\tikzmath{
\begin{scope}
\clip[rounded corners = 5pt] (-.9,0) rectangle (.6,1.3);
\fill[\AColor] (-.9,0) rectangle (.3,1.3);
\fill[\BColor] (-.6,0) .. controls ++(90:.4cm) and ++(270:.4cm) .. (.3,1) -- (.3,1.3) --(.6,1.3) -- (.6,0) -- (.3,0) arc (0:180:.3cm);
\end{scope}
\draw[thick, red] (-.3,0) node[below, xshift=.1cm]{$\scriptstyle X^\vee$} arc (180:0:.3cm) node[below]{$\scriptstyle X$};
\draw[thick, red] (-.6,0) node[below]{$\scriptstyle X$} .. controls ++(90:.4cm) and ++(270:.4cm) .. (.3,1) -- (.3,1.3);
}
=
\id_X \xz_b \ev_X
\qquad\qquad
\tikzmath{
\begin{scope}
\clip[rounded corners = 5pt] (.9,0) rectangle (-.6,1.3);
\fill[\BColor] (.9,0) rectangle (-.3,1.3);
\fill[\AColor] (.6,0) .. controls ++(90:.4cm) and ++(270:.4cm) .. (-.3,1) -- (-.3,1.3) --(-.6,1.3) -- (-.6,0) -- (-.3,0) arc (180:0:.3cm);
\end{scope}
\draw[thick, red] (-.3,0) node[below]{$\scriptstyle X$} arc (180:0:.3cm) node[below]{$\scriptstyle X^\vee$};
\draw[thick, red] (.6,0) node[below, xshift=.1cm]{$\scriptstyle X$} .. controls ++(90:.4cm) and ++(270:.4cm) .. (-.3,1) -- (-.3,1.3);
}
=
\ev_{X^\vee}\xz \id_{X}
$$
\end{lem}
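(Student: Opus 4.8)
The plan is to show that $X$, with the given structure, is an invertible $1$-morphism in $\QSys(\cC)$ whose inverse is $X^\vee$; equivalently, to promote the pair $(X,X^\vee)$ to an adjoint equivalence between $P:=X\xz_b X^\vee$ and $Q:=X^\vee\xz_a X$. Throughout, the only inputs should be the two zig-zag (snake) equations for the dual pairs and the two unitary-separability identities $\ev_X\circ\ev_X^\dag=\id_{1_b}$ and $\ev_{X^\vee}\circ\ev_{X^\vee}^\dag=\id_{1_a}$.

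First I would verify that $(X,\lambda,\rho)$ satisfies the bimodule axioms \ref{M:associativity}--\ref{M:separable}, and dually that $X^\vee$ (with left $Q$-action $\id_{X^\vee}\xz\ev_{X^\vee}$ and right $P$-action $\ev_X\xz\id_{X^\vee}$, obtained from the given actions on $X$ by interchanging $X\leftrightarrow X^\vee$ and $\ev_X\leftrightarrow\ev_{X^\vee}$) is a $Q$--$P$ bimodule. Since every structure map is one of the partial evaluations built from $\ev_X,\ev_{X^\vee}$ and the multiplications $m_P=\id_X\xz\ev_X\xz\id_{X^\vee}$, $m_Q=\id_{X^\vee}\xz\ev_{X^\vee}\xz\id_X$ of Example~\ref{ex:XXvQSystem}, each axiom reduces to a diagrammatic identity resolved by isotoping caps past one another and applying a single snake move; unitality is exactly the zig-zags of $X$ against $\coev_X$ and $\coev_{X^\vee}$, and separability \ref{M:separable} is precisely the two unitary-separability hypotheses. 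I expect the most delicate point here to be the compatibility of the left and right actions, which shares the module strand and so must be checked against the snake equations rather than by a naive interchange argument.

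Next I would assemble the equivalence data. The unit and counit are induced by $\coev_X$ and $\coev_{X^\vee}$ (equivalently, their adjoints): one computes the canonical separability projectors of the two composites via \eqref{eq:SeparabilityProjector}, shows they are split by the relevant evaluations using unitary separability, and transports $\coev_X,\coev_{X^\vee}$ through these splittings exactly as in Example~\ref{ex:EquivalentRestrictedQSystem}. That the resulting $2$-morphisms are bimodular follows from \ref{QSys:StarClosed} and the Frobenius relations, and that they are unitary is immediate from the separability identities. The two triangle (zig-zag) identities for this unit and counit must then be verified \emph{after} passing through the coisometries $u_{X,X^\vee},u_{X^\vee,X}$ of Definition~\ref{defn:QSysTensorProduct}; using $u^\dag u=p$ and the fact that $u,u^\dag$ are bimodule maps, each identity should collapse to one of the original snake equations, so no input beyond the zig-zags and the two unitary-separability identities is required.

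The step I expect to be the main obstacle is this last coherence check: tracking the several separability projectors and the coisometries $u$ through Definition~\ref{defn:QSysTensorProduct} and \eqref{eq:QSysHorizontalComposition2Morphisms1}--\eqref{eq:QSysHorizontalComposition2Morphisms2} without error, and confirming that the unit and counit are genuinely mutually inverse bimodule isomorphisms rather than merely split idempotents. This is organizational bookkeeping rather than a conceptual difficulty, which is presumably why the authors leave it to the reader.
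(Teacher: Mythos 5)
The step you flag as ``the most delicate point'' is not merely delicate: it is false, and it cannot be repaired. With the actions as given, the left/right compatibility (the third identity in \ref{M:associativity}) does \emph{not} follow from the snake equations and unitary separability. Take $\cC=\rCorr$, $a=b=\bbC$, $X=\bbC^2\in\rCorr(\bbC\to\bbC)$ with orthonormal basis $e_1,e_2$, $X^\vee=\overline{X}$, and the normalized structure maps $\ev_X(\overline{e_i}\otimes e_j)=\tfrac{1}{\sqrt{2}}\delta_{ij}$, $\coev_X(1)=\sqrt{2}\sum_k e_k\otimes\overline{e_k}$, $\ev_{X^\vee}(e_i\otimes\overline{e_j})=\tfrac{1}{\sqrt{2}}\delta_{ij}$, $\coev_{X^\vee}(1)=\sqrt{2}\sum_k \overline{e_k}\otimes e_k$; all four zig-zags and both unitary separability identities hold. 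On the vector $v=(e_1\otimes\overline{e_1})\otimes e_1\otimes(\overline{e_1}\otimes e_2)\in P\xz X\xz Q$ one computes
\begin{align*}
\rho\circ(\lambda\xz\id_Q)(v) &= \ev_X(\overline{e_1}\otimes e_1)\,\ev_{X^\vee}(e_1\otimes\overline{e_1})\,e_2 \;=\; \tfrac{1}{2}\,e_2,\\
\lambda\circ(\id_P\xz\rho)(v) &= \ev_{X^\vee}(e_1\otimes\overline{e_1})\,\ev_X(\overline{e_1}\otimes e_2)\,e_1 \;=\; 0,
\end{align*}
so the two actions do not commute. No bookkeeping can fix this: here $P\cong M_2(\bbC)\cong Q$, so a unital $P$--$Q$ bimodule object is a unital module over $P\otimes Q^{\op}\cong M_4(\bbC)$ and has dimension divisible by $4$, whereas $\dim X=2$; hence $X$ admits \emph{no} $P$--$Q$ bimodule structure whatsoever. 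In fact the statement as printed fails even at the level of its conclusion: for $X={}_{\bbC}(\bbC^2)_{\bbC^2}\in\rCorr(\bbC\to\bbC^2)$ (with $\ev_X$ the normalized multiplication and $\ev_{X^\vee}$ the conditional expectation), the realizations are $|X\xz_b X^\vee|\cong\bbC^2$ and $|X^\vee\xz_a X|\cong M_2(\bbC)$, which are not Morita equivalent, so by Theorem \ref{thm:QSysComplete} these two Q-systems are not equivalent in $\QSys(\rCorr)$.

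Note the paper offers no proof (it is ``left to the reader''), and what its applications actually use is the one-sided statement, which is the one you should prove: $X$, equipped with the left $P$-action $\id_X\xz\ev_X$ and with the \emph{unitor} as right action, is an invertible $P$--$1_b$ bimodule with inverse ${}_{1_b}X^\vee_P$. Here there is no commutativity issue (one action is a unitor), unitality is exactly the snake equation, and separability \ref{M:separable} is $\ev_X\circ\ev_X^\dag=\id_{1_b}$. Then $X\xzq_{1_b}X^\vee=X\xz_b X^\vee=P$ as $P$--$P$ bimodules, while the separability projector \eqref{eq:SeparabilityProjector} for $X^\vee\xzq_P X$ equals $\ev_X^\dag\circ\ev_X$ and is split by the coisometry $\ev_X$, giving $X^\vee\xzq_P X\cong 1_b$; hence $P=X\xz_b X^\vee\simeq 1_b$ in $\QSys(\cC)$, and symmetrically $Q=X^\vee\xz_a X\simeq 1_a$ via ${}_QX^\vee_{1_a}$. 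This is precisely what Example \ref{ex:EquivalentRestrictedQSystem} and the proof of Theorem \ref{Thm:QSystemSplit} rely on, and the machinery in your second and third steps (splitting separability projectors by evaluations, checking zig-zags through the coisometries $u$) is exactly right for it; your error is only in which bimodule is claimed to carry the equivalence. The two Q-systems $P$ and $Q$ are equivalent to each other precisely when $1_a\simeq 1_b$ in $\QSys(\cC)$ --- for instance when $X$ is invertible, as in the paper's applications --- but not in the stated generality.
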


\begin{ex}
\label{ex:EquivalenceOfRestrictByExpectationExamples}
Continuing Example \ref{ex:NonunitalRestrictByExpecation},
suppose $A\subset B$ is a unital inclusion of $\rm C^*$-algebras equipped with a faithful cp $A-A$ bimodular map (non-normalized conditional expectation) $E: B\to A$, and suppose $A'$ is any auxiliary unital $\rm C^*$-algebra.
We can renormalize the multiplication on $B$ so that ${}_AB_A$ is a Q-system, and so that ${}_{A\oplus A'}B_{A\oplus A'}$ is a Q-system.

We claim that ${}_AB_A$ and ${}_{A\oplus A'}B_{A\oplus A'}$ are equivalent as Q-systems.
Indeed, define ${}_AX_{A\oplus A'}:= {}_AB_{A\oplus A'}$
as a unitarily separable ${}_AB_A-{}_{A\oplus A'}B_{A\oplus A'}$ bimodule with the obvious left and right actions.
It is straightforward to adapt the previous Example \ref{ex:EquivalentRestrictedQSystem} to show that 
$X \xzq_{{}_{A\oplus A'}B_{A\oplus A'}} X^\vee \cong {}_AB_A$
and
$X^\vee \xzq_{{}_AB_A} X \cong {}_{A\oplus A'}B_{A\oplus A'}$
as Q-systems.
\end{ex}

\begin{rem}
In \cite{MR4369356}, we prove that $\QSys$ is a 3-endofunctor on the 3-category (algebraic tricategory \cite{MR3076451}) of 2 $\rm C^*/W^*$ categories.
For the purposes of this article, in order to induce actions of $\rm C^*$-algebras, 
we need only the construction below at the level of $\dag$ 2-functors.
The following construction is similar to \cite[\S A.6]{1812.11933}.
\end{rem}

\begin{construction}
\label{construction:Qsys(F)}
A $\dag$ 2-functor $F: \cC \to \cD$ between $\rm C^*/W^*$ 2-categories induces a $\dag$ 2-functor $\QSys(F): \QSys(\cC) \to \QSys(\cD)$.

\item[\underline{Objects:}]
Given a Q-system $(Q,m,i)\in \cC(b\to b)$, 
$\QSys(F)$ maps 
$(Q,m,i)$ 
to
$(F(Q),F(m)\circ F^2_{Q,Q},F(i)\circ F^1_b)\in \cD(F(b)\to F(b))$.

\item[\underline{1-morphisms:}]
Given Q-systems $P\in\cC(a\to a)$ and $Q\in \cC(b\to b)$
and a $P-Q$ bimodule $({}_aX_b,\lambda, \rho) \in \cC(a\to b)$, 
$\QSys(F)$ maps 
$({}_aX_b,\lambda, \rho)$
to
$(F(X),F(\lambda)\circ F^2_{P,X},F(\rho)\circ F^2_{X,Q})\in \cD(F(a)\to F(b))$. 

\item[\underline{2-morphisms:}]
Given Q-systems $P\in\cC(a\to a)$ and $Q\in \cC(b\to b)$,
$P-Q$ bimodules ${}_aX_b, {}_aY_b \in \cC(a\to b)$,
and an intertwiner $f\in \cC({}_{a}X_{b}\Rightarrow {}_{a}Y_{b})$,
$\QSys(F)$ maps 
$f$
to
$F(f)\in \cD({}_{F(a)}F(X)_{F(b)}\Rightarrow {}_{F(a)}F(Y)_{F(b)})$.

\item[\underline{Coheretors:}]
For ${}_P X_Q \in \QSys(\cC)(P \to Q)$ and ${}_Q Y_R \in \QSys(\cC)(Q\to R)$, 
we define $\QSys(F)_{X,Y}^{2}\in \QSys(\cD)(F(X)\xzq_{F(Q)}F(Y)\Rightarrow F(X\xzq_Q Y))$ 
as 
the unique map given by the universal property of the coequalizer:
\begin{equation*} 
\label{eq:QSysCoheretorF2Coequalizer}
\begin{tikzcd}[column sep=4.5em]
%M\otimes_\cC N 
F(X)\xz F(Q)\xz F(Y)
\arrow[r,shift left=1.5, "\id_{F(X)}\xz -\rhd-"]
\arrow[r,shift right=1.5, swap,"-\lhd- \xz \id_{F(Y)}"]
%\arrow[d, "F^2_{X,Q,Y}"]
&
F(X)\xz F(Y) 
\arrow[r, "u_{F(X),F(Y)}"]
%\arrow[r, shift left=1, "u_{F(X),F(Y)}"]
\arrow[d, "F^2_{X,Y}"]
&
F(X)\xzq_{F(Q)} F(Y).
%\arrow[l, shift left=1, "u^\dag_{F(X),F(Y)}"]
\arrow[d, dashed, "\exists\,!\,\QSys(F)_{X,Y}^{2}"]
\\
&
F(X\xz Y)
\arrow[r, "F(u_{X,Y})"]
%\arrow[r, shift left=1, "F(u_{X,Y})"]
&
F(X\xzq_Q Y)
%\arrow[l, shift left=1, "F(u^\dag_{X,Y})"]
\end{tikzcd}
\end{equation*}
Observe we have the following direct formula
for $\QSys(F)_{X,Y}^{2}$:
\begin{equation}
\label{eq:DefOfQSysF2}
\QSys(F)_{X,Y}^{2} :=F(u_{X,Y}) \circ F^2_{X,Y} \circ u^\dag_{F(X), F(Y)}.
\end{equation}
By definition of the separability projector \eqref{eq:SeparabilityProjector} for $F(X)\xzq_{F(Q)}F(Y)$, we have
\begin{align*}
p_{F(X), F(Y)}
&=
(\id_{F(X)} \xz_{F(b)} \lambda_{F(Y)})\circ (\rho_{F(X)}^\dag \xz_{F(b)} \id_{F(Y)})
%\notag
\\&=
(F(\id_{X}) \xz_{F(b)} (F(\lambda_Y) \circ F^2_{Q,Y}))\circ ((F(\rho_X)\circ F^2_{X,Q})^\dag \xz_{F(b)}F(\id_{Y}))
%\notag
\\&=
(F^2_{X,Y})^\dag 
\circ 
F(p_{X,Y})
\circ 
F^2_{X,Y}.
%\label{eq:SeparabilityProjectorF(X)F(Y)}
\end{align*}
This formula for $p_{F(X), F(Y)}$ immediately implies 
$\QSys(F)^2_{X,Y}$ is unitary:
\begin{align*}
(\QSys(F)_{X,Y}^{2})^\dag\circ \QSys(F)_{X,Y}^{2}
&=(F(u_{X,Y}) \circ F^2_{X,Y} \circ u^\dag_{F(X), F(Y)})^\dag\circ (F(u_{X,Y}) \circ F^2_{X,Y} \circ u^\dag_{F(X), F(Y)})
\\&= 
u_{F(X),F(Y)}\circ (F^{2}_{X,Y})^\dag \circ F(p_{X,Y})\circ F^2_{X,Y} \circ u^\dag_{F(X), F(Y)}
\\&=
u_{F(X),F(Y)}\circ (F^{2}_{X,Y})^\dag \circ F^2_{X,Y}\circ p_{F(X),F(Y)} \circ u^\dag_{F(X), F(Y)}
\\&=
\id_{F(X)\xzq_{F(Q)}F(Y)}
\\
\QSys(F)_{X,Y}^{2}
\circ
(\QSys(F)_{X,Y}^{2})^\dag
&=
(F(u_{X,Y}) \circ F^2_{X,Y} \circ u^\dag_{F(X), F(Y)})
\circ
(F(u_{X,Y}) \circ F^2_{X,Y} \circ u^\dag_{F(X), F(Y)})^\dag
\\&=
F(u_{X,Y})\circ F^2_{X,Y} \circ p_{F(X), F(Y)} \circ (F^2_{X,Y})^\dag \circ F(u_{X,Y}^\dag) 
\\&=
F(u_{X,Y})\circ F(p_{X,Y}) \circ F^2_{X,Y}  \circ (F^2_{X,Y})^\dag \circ F(u_{X,Y}^\dag) 
\\&=
F(\id_{X\xzq_Q Y})
=
\id_{F(X\xzq_Q Y)}.
\end{align*}
We get the following two relations from \eqref{eq:DefOfQSysF2} using unitarity of $\QSys(F)^2$ and that $u$ is a coisometry:
\begin{align}
\QSys(F)^2_{X,Y} \circ u_{F(X),F(Y)} 
&= 
F(u_{X,Y})\circ F^2_{X,Y} 
\label{eq:QSysF2uCommute}
\\
F(u_{X,Y}^\dag)\circ \QSys(F)^2_{X,Y}  
&= 
F^2_{X,Y} \circ u^\dag_{F(X),F(Y)} 
\label{eq:QSysF2uDagCommute}
\end{align}

In turn, these identities are used to prove the associativity of $\QSys(F)^2$:
\begin{equation*}
\begin{tikzpicture}[scale=.68, transform shape]
\node (P1) at (-4,0) {$F(X)\xzq_{F(Q)}(F(Y)\xzq_{F(R)} F(Z))$};
\node (P2) at (6.5,0) {$F(X)\xzq_{F(Q)} F(Y\xzq_R Z)$};
\node (P3) at (15,0) {$F(X\xzq_Q (Y\xzq_R Z))$};
\node (P4) at (0,1.5) {$F(X)\xz (F(Y)\xzq_{F(R)} F(Z))$};
\node (P5) at (6.5,1.5) {$F(X)\xz F(Y\xzq_R Z)$};
\node (P6) at (12,1.5) {$F(X\xz (Y\xzq_R Z))$};
\node (P7) at (0,3) {$F(X)\xz (F(Y)\xz F(Z))$};
\node (P8) at (6.5,3) {$F(X)\xz F(Y\xz Z)$};
\node (P9) at (12,3) {$F(X\xz (Y\xz Z))$};
\node (P10) at (0,4.5) {$(F(X)\xz F(Y))\xz F(Z)$};
\node (P11) at (6.5,4.5) {$F(X\xz Y)\xz F(Z)$};
\node (P12) at (12,4.5) {$F((X\xz Y)\xz Z)$};
\node (P13) at (0,6) {$(F(X)\xzq_{F(Q)} F(Y))\xz F(Z)$};
\node (P14) at (6.5,6) {$F(X\xzq_Q Y)\xz F(Z)$};
\node (P15) at (12,6) {$F((X\xzq_Q Y)\xz Z)$};
\node (P16) at (-4,7.5) {$(F(X)\xzq_{F(Q)}F(Y))\xzq_{F(R)} F(Z)$};
\node (P17) at (6.5,7.5) {$F(X\xzq_Q Y)\xzq_{F(R)} F(Z)$};
\node (P18) at (15,7.5) {$F((X\xzq_Q Y)\xzq_R Z)$};
\path (P7) --node{\scriptsize \eqref{eq:QSysF2uCommute}} (P5);
\path (P13) --node{\scriptsize \eqref{eq:QSysF2uDagCommute}} (P11);
\path (P17) --node{\scriptsize \eqref{eq:QSysF2uDagCommute}} (P15);
\path (P2) --node{\scriptsize \eqref{eq:QSysF2uCommute}} (P6);
\path (P1) --node[right, xshift=.5cm]{\scriptsize \eqref{eq:DefinitionOfAlphaQSys}} (P16);
\path (P3) --node[left, xshift=-.5cm]{\scriptsize $F$\eqref{eq:DefinitionOfAlphaQSys}} (P18);
\path[commutative diagrams/.cd, every arrow, every label]
(P1) edge node {$\id\xzq\QSys(F)^2$} (P2)
(P2) edge node {$\QSys(F)^2$} (P3)
(P4) edge node {$\id\xz\QSys(F)^2$} (P5)
(P5) edge node {$F^2$} (P6)
(P7) edge node {$\id\xz F^2$} (P8)
(P8) edge node {$F^2$} (P9)
(P10) edge node {$F^2\xz\id$} (P11)
(P11) edge node {$F^2$} (P12)
(P13) edge node {$\QSys(F)^2\xz\id$} (P14)
(P14) edge node {$F^2$} (P15)
(P16) edge node {$\QSys(F)^2\xzq \id$} (P17)
(P17) edge node {$\QSys(F)^2$} (P18)
(P16) edge node[swap] {$\alpha^{\QSys(\cD)}$} (P1)
(P16) edge node {$u^\dag$} (P13)
(P13) edge node {$u^\dag\xz\id$} (P10)
(P10) edge node {$\alpha^{\cD}$} (P7)
(P7) edge node {$\id\xz u$} (P4)
(P4) edge node {$u$} (P1)
(P17) edge node {$u^\dag$} (P14)
(P14) edge node {$F(u^\dag)\xz\id$} (P11)
(P8) edge node {$\id\xz F(u)$} (P5)
(P5) edge node {$u$} (P2)
(P18) edge node {$F(\alpha^{\QSys(\cC)})$} (P3)
(P18) edge node[swap] {$F(u^\dag)$} (P15)
(P15) edge node {$F(u^\dag\xz\id)$} (P12)
(P12) edge node {$F(\alpha^{\cC})$} (P9)
(P9) edge node {$F(\id\xz u)$} (P6)
(P6) edge node[swap] {$F(u)$} (P3);
\end{tikzpicture}
\end{equation*}

% \quan{fill in naturality of associator by "nat" in the empty commuting squares above.}

Finally, for a Q-system $Q\in \cC(b\to b)$, 
we define $\QSys(F)^1_{Q}\in \QSys(\cD)(1_{F(Q)}=F(Q)\Rightarrow F(Q))$ to be the identity.
The diagram on the left below commutes as
\begin{align*}
%\rho_{F(X)}^{F(Q)} 
%& =
%\rho_{F(X)}\circ u_{F(X),F(Q)}^\dag
%= F(\rho_X)\circ F^2_{X,Q} \circ u_{F(X),F(Q)}^\dag
%\\
\QSys(F)(\rho_X^Q)\circ \QSys(F)^2_{X,Q} 
& = 
F(\rho_X^Q) \circ F(u_{X,Q})\circ F^2_{X,Q}\circ u_{F(X),F(Q)}^\dag 
\\
& = F(\rho_X)\circ F^2_{X,Q} \circ u_{F(X),F(Q)}^\dag = 
\rho_{F(X)}\circ u_{F(X),F(Q)}^\dag
=
\rho_{F(X)}^{F(Q)}. 
\end{align*}
The one on the right is similar.
\begin{equation*}
\begin{tikzcd}[column sep=3.4em]
{F(X)\xzq_{F(Q)}1_{F(Q)}}  
\arrow["\rho_{F(X)}^{F(Q)}", rightarrow]{r} 
\arrow["\id\xzq\QSys(F)^1_{Q}"', rightarrow]{d} 
&
{F(X)} 
\\
{F(X)\xzq_{F(Q)}F(Q)} 
\arrow["\QSys(F)^2_{X,Q}"', rightarrow]{r} 
&
F(X\xzq_Q Q)
\arrow["\QSys(F)(\rho_X^Q)", rightarrow]{u} 
\end{tikzcd}
\qquad
\begin{tikzcd}[column sep=3.4em]
{1_{F(Q)}\xzq_{F(Q)}F(Y)}  
\arrow["\lambda_{F(Y)}^{F(Q)}", rightarrow]{r} 
\arrow["\QSys(F)^1_{Q}\xzq\id"', rightarrow]{d} 
&
{F(Y)} 
\\
{F(Q)\xzq_{F(Q)}F(Y)} 
\arrow["\QSys(F)^2_{Q,Y}"', rightarrow]{r} 
&
F(Q\xzq_Q Y)
\arrow["\QSys(F)(\lambda_Y^Q)", rightarrow]{u} 
\end{tikzcd}
\end{equation*}
\end{construction}

\begin{rem}
\label{rem:2FullyFaithful}
Observe that if our $\dag$ 2-functor $F: \cC\to \cD$ is fully faithful on 2-morphisms, then $\QSys(F): \QSys(\cC) \to \QSys(\cD)$ is as well.
\end{rem}

%%%%%%%%%%%%%%%%%%%%%%%%%%%%%%%%%%%%%%%%%%%%%%%%
%%%%%%%%%%%%%%%%%%%%%%%%%%%%%%%%%%%%%%%%%%%%%%%%
%%%%%%%%%%%%%%%%%%%%%%%%%%%%%%%%%%%%%%%%%%%%%%%%
\subsection{Graphical calculus for \texorpdfstring{$\QSys(\cC)$}{QSysC}}
\label{sec:GraphicalCalculusForQSysC}

We now expand the graphical calculus for $\cC$ to a graphical calculus for $\QSys(\cC)$ which is compatible with the canonical embedding $\iota_\cC:\cC\hookrightarrow \QSys(\cC)$.
As before, objects $c\in \cC$, which are now the Q-systems $1_c\in \QSys(\cC)$, are represented by shaded regions, which we still denote in gray-scale:
$$
\tikzmath{\filldraw[\AColor, rounded corners=5, very thin, baseline=1cm] (0,0) rectangle (.6,.6);}=1_a
\qquad\qquad
\tikzmath{\filldraw[\BColor, rounded corners=5, very thin, baseline=1cm] (0,0) rectangle (.6,.6);}=1_b
\qquad\qquad
\tikzmath{\filldraw[\CColor, rounded corners=5, very thin, baseline=1cm] (0,0) rectangle (.6,.6);}=1_c.
% \qquad\qquad
% \tikzmath{\filldraw[\DColor, rounded corners=5, very thin, baseline=1cm] (0,0) rectangle (.6,.6);}=1_d
$$
Given Q-systems $P\in \cC(a\to a), Q\in \cC(b\to b), R\in \cC(c\to c)$, we represent these objects in $\QSys(\cC)$ by colored regions as follows: 
$$
\tikzmath{\filldraw[\PrColor, rounded corners=5, very thin, baseline=1cm] (0,0) rectangle (.6,.6);}=P
\qquad\qquad
\tikzmath{\filldraw[\QrColor, rounded corners=5, very thin, baseline=1cm] (0,0) rectangle (.6,.6);}=Q
\qquad\qquad
\tikzmath{\filldraw[\RrColor, rounded corners=5, very thin, baseline=1cm] (0,0) rectangle (.6,.6);}=R.
% \qquad\qquad
% \tikzmath{\filldraw[\SrColor, rounded corners=5, very thin, baseline=1cm] (0,0) rectangle (.6,.6);}=S
$$
Suppose ${}_PX_Q\in \cC(a\to b)$ is a unital Frobenius $P-Q$ bimodule.
Depending on whether we view $X$ as a 1-morphism in  $\cC(a\to b) = \QSys(\cC)(1_a\to 1_b)$, $\QSys(\cC)(1_a\to Q)$, $\QSys(\cC)(P\to 1_b)$, 
or $X\in \QSys(P\to Q)$, we represent it by a colored strand with the appropriate left and right shadings:
$$
\tikzmath{
\begin{scope}
\clip[rounded corners=5pt] (-.3,0) rectangle (.3,.6);
\fill[\AColor] (0,0) rectangle (-.3,.6);
\fill[\BColor] (0,0) rectangle (.3,.6);
\end{scope}
\draw[thick, \XColor] (0,0) -- (0,.6);
}
={}_{1_a}X_{1_b}
\qquad\qquad
\tikzmath{
\begin{scope}
\clip[rounded corners=5pt] (-.3,0) rectangle (.3,.6);
\fill[\AColor] (0,0) rectangle (-.3,.6);
\fill[\QrColor] (0,0) rectangle (.3,.6);
\end{scope}
\draw[thick, \XColor] (0,0) -- (0,.6);
}
={}_{1_a}X_Q
\qquad\qquad
\tikzmath{
\begin{scope}
\clip[rounded corners=5pt] (-.3,0) rectangle (.3,.6);
\fill[\PrColor] (0,0) rectangle (-.3,.6);
\fill[\BColor] (0,0) rectangle (.3,.6);
\end{scope}
\draw[thick, \XColor] (0,0) -- (0,.6);
}
={}_{P}X_{1_b}
\qquad\qquad
\tikzmath{
\begin{scope}
\clip[rounded corners=5pt] (-.3,0) rectangle (.3,.6);
\fill[\PrColor] (0,0) rectangle (-.3,.6);
\fill[\QrColor] (0,0) rectangle (.3,.6);
\end{scope}
\draw[thick, \XColor] (0,0) -- (0,.6);
}
={}_{P}X_Q.
$$
We denote the Q-system as a bimodule over itself by a string of similar, but slightly darker color.
Again, the shadings on the left and right designate over which Q-systems we are considering the bimodule.
For example:
\begin{equation}
\label{eq:ShadingQSystemRegions}
\tikzmath{
\begin{scope}
\clip[rounded corners=5pt] (-.3,0) rectangle (.3,.6);
\fill[\AColor] (0,0) rectangle (-.3,.6);
\fill[\AColor] (0,0) rectangle (.3,.6);
\end{scope}
\draw[thick, \PsColor] (0,0) -- (0,.6);
}
={}_{1_a}P_{1_a}
\qquad\qquad
\tikzmath{
\begin{scope}
\clip[rounded corners=5pt] (-.3,0) rectangle (.3,.6);
\fill[\BColor] (0,0) rectangle (-.3,.6);
\fill[\QrColor] (0,0) rectangle (.3,.6);
\end{scope}
\draw[thick, \QsColor] (0,0) -- (0,.6);
}={}_{1_b}Q_Q
\qquad\qquad
\tikzmath{
\begin{scope}
\clip[rounded corners=5pt] (-.3,0) rectangle (.3,.6);
\fill[\RrColor] (0,0) rectangle (-.3,.6);
\fill[\CColor] (0,0) rectangle (.3,.6);
\end{scope}
\draw[thick, \RsColor] (0,0) -- (0,.6);
}={}_{R}R_{1_c}
\qquad\qquad
\tikzmath{
\begin{scope}
\clip[rounded corners=5pt] (-.3,0) rectangle (.3,.6);
\fill[\PrColor] (0,0) rectangle (-.3,.6);
\fill[\PrColor] (0,0) rectangle (.3,.6);
\end{scope}
\draw[thick, \PsColor] (0,0) -- (0,.6);
}={}_{P}P_P
\end{equation}
As before, the 1-composite of two separable bimodules in $\QSys(\cC)$ (i.e., relative tensor product) is denoted by horizontal juxtaposition with the appropriate shading in between.
For example, given separable bimodules ${}_PX_Q \in \cC(a\to b)$ and ${}_QY_R\in \cC(b\to c)$, we denote their 1-composition by
$$
\tikzmath{
\begin{scope}
\clip[rounded corners=5pt] (-.3,0) rectangle (.6,.6);
\fill[\PrColor] (0,0) rectangle (-.3,.6);
\fill[\QrColor] (0,0) rectangle (.3,.6);
\fill[\RrColor] (.6,0) rectangle (.3,.6);
\end{scope}
\draw[thick, \XColor] (0,0) -- (0,.6);
\draw[thick, \YColor] (.3,0) -- (.3,.6);
}={}_P X\xzq_Q Y_R
\qquad\qquad
\text{where}
\qquad\qquad
\tikzmath{
\begin{scope}
\clip[rounded corners=5pt] (-.3,0) rectangle (.3,.6);
\fill[\PrColor] (0,0) rectangle (-.3,.6);
\fill[\QrColor] (0,0) rectangle (.3,.6);
\end{scope}
\draw[thick, \XColor] (0,0) -- (0,.6);
}={}_PX_Q
\qquad\qquad
\tikzmath{
\begin{scope}
\clip[rounded corners=5pt] (-.3,0) rectangle (.3,.6);
\fill[\QrColor] (0,0) rectangle (-.3,.6);
\fill[\RrColor] (0,0) rectangle (.3,.6);
\end{scope}
\draw[thick, \YColor] (0,0) -- (0,.6);
}={}_QY_R.
$$
The coisometry $u^Q_{X,Y}: X\xz_b Y \to X\xzq_Q Y$ from Definition \ref{defn:QSysTensorProduct} is $P-R$ bimodular and can be represented graphically by
\[
u^Q_{X,Y}
:=
\tikzmath{
\begin{scope}
\clip[rounded corners = 5pt] (-.5,-.5) rectangle (.5,.5);
%\filldraw[\PrColor] (-.5,-.5) rectangle (-.2,.5);
\filldraw[\QrColor] (-.2,0) rectangle (.2,.5);
\filldraw[\BColor] (-.2,-.5) rectangle (.2,0);
%\filldraw[\RrColor] (.2,-.5) rectangle (.5,.5);
\end{scope}
\draw[thick, \XColor] (-.2,-.5) -- (-.2,.5);
\draw[thick, \QsColor] (-.2,0) -- (.2,0);
\draw[thick, \YColor] (.2,-.5) -- (.2,.5);
}
: X\xz_b Y \to X\xzq_Q Y
\qquad\text{and}\qquad
(u^Q_{X,Y})^\dag
=
\tikzmath{
\begin{scope}
\clip[rounded corners = 5pt] (-.5,-.5) rectangle (.5,.5);
%\filldraw[\PrColor] (-.5,-.5) rectangle (-.2,.5);
\filldraw[\QrColor] (-.2,0) rectangle (.2,-.5);
\filldraw[\BColor] (-.2,.5) rectangle (.2,0);
%\filldraw[\RrColor] (.2,-.5) rectangle (.5,.5);
\end{scope}
\draw[thick, \XColor] (-.2,-.5) -- (-.2,.5);
\draw[thick, \QsColor] (-.2,0) -- (.2,0);
\draw[thick, \YColor] (.2,-.5) -- (.2,.5);
}\,.
\]
Here, we suppress the external shading, which can be taken to be any of the four combinations given above in \eqref{eq:ShadingQSystemRegions}.
We have the following relations for $u^Q_{X,Y}$ and its adjoint:
\[
u^Q_{X,Y}\circ (u^Q_{X,Y})^\dag 
=
\tikzmath{
\begin{scope}
\clip[rounded corners = 5pt] (-.5,-.5) rectangle (.5,1);
%\filldraw[\PrColor] (-.5,-.5) rectangle (-.2,.5);
\filldraw[\QrColor] (-.2,0) rectangle (.2,-.5);
\filldraw[\BColor] (-.2,.5) rectangle (.2,0);
\filldraw[\QrColor] (-.2,1) rectangle (.2,.5);
%\filldraw[\RrColor] (.2,-.5) rectangle (.5,.5);
\end{scope}
\draw[thick, \XColor] (-.2,-.5) -- (-.2,1);
\draw[thick, \QsColor] (-.2,0) -- (.2,0);
\draw[thick, \QsColor] (-.2,.5) -- (.2,.5);
\draw[thick, \YColor] (.2,-.5) -- (.2,1);
}
=
\tikzmath{
\begin{scope}
\clip[rounded corners = 5pt] (-.5,-.5) rectangle (.5,1);
%\filldraw[\PrColor] (-.5,-.5) rectangle (-.2,.5);
\filldraw[\QrColor] (-.2,1) rectangle (.2,-.5);
%\filldraw[\RrColor] (.2,-.5) rectangle (.5,.5);
\end{scope}
\draw[thick, \XColor] (-.2,-.5) -- (-.2,1);
\draw[thick, \YColor] (.2,-.5) -- (.2,1);
}
=
\id_{X\xzq_Q Y}
\qquad
(u^Q_{X,Y})^\dag \circ u^Q_{X,Y} 
=
\tikzmath{
\begin{scope}
\clip[rounded corners = 5pt] (-.5,-.5) rectangle (.5,1);
%\filldraw[\PrColor] (-.5,-.5) rectangle (-.2,.5);
\filldraw[\BColor] (-.2,0) rectangle (.2,-.5);
\filldraw[\QrColor] (-.2,.5) rectangle (.2,0);
\filldraw[\BColor] (-.2,1) rectangle (.2,.5);
%\filldraw[\RrColor] (.2,-.5) rectangle (.5,.5);
\end{scope}
\draw[thick, \XColor] (-.2,-.5) -- (-.2,1);
\draw[thick, \QsColor] (-.2,0) -- (.2,0);
\draw[thick, \QsColor] (-.2,.5) -- (.2,.5);
\draw[thick, \YColor] (.2,-.5) -- (.2,1);
}
=
\tikzmath{
\begin{scope}
\clip[rounded corners = 5pt] (-.5,-.5) rectangle (.5,1);
%\filldraw[\PrColor] (-.5,-.5) rectangle (-.2,.5);
\filldraw[\BColor] (-.2,1) rectangle (.2,-.5);
%\filldraw[\RrColor] (.2,-.5) rectangle (.5,.5);
\end{scope}
\draw[thick, \XColor] (-.2,-.5) -- (-.2,1);
\draw[thick, \QsColor] (-.2,.25) -- (.2,.25);
\draw[thick, \YColor] (.2,-.5) -- (.2,1);
}
=
p^Q_{X, Y}
\]
We define 
\[
\lambda^P_X=
\tikzmath{
\begin{scope}
\clip[rounded corners = 5pt] (-.7,-.2) rectangle (.3,.5);
\filldraw[\AColor] (-.7,-.2) rectangle (0,.5);
\filldraw[\PrColor,thick] (-.4,-.2) arc (180:90:.4cm) -- (0,-.2);
\filldraw[\BColor] (0,-.2) rectangle (.3,.5);
\end{scope}
\draw[\XColor,thick] (0,-.2) -- (0,.5);
\draw[\PsColor,thick] (-.4,-.2) arc (180:90:.4cm);
\filldraw[\XColor] (0,.2) circle (.05cm);
}
:=
\tikzmath{
\begin{scope}
\clip[rounded corners = 5pt] (-.7,-.6) rectangle (.3,.5);
\filldraw[\AColor] (-.7,-.6) rectangle (0,.5);
\filldraw[\PrColor,thick] (-.4,-.6) rectangle (0,-.2);
\filldraw[\BColor] (0,-.6) rectangle (.3,.5);
\end{scope}
\draw[\XColor,thick] (0,-.6) -- (0,.5);
\draw[\PsColor,thick] (-.4,-.6) -- (-.4,-.2) arc (180:90:.4cm);
\draw[\PsColor,thick] (-.4,-.2) -- (0,-.2);
\filldraw[\XColor] (0,.2) circle (.05cm);
}
=\lambda_X\circ (u^P_{P,X})^\dag
\qquad\text{and}\qquad
\rho_X^Q=
\tikzmath{
\begin{scope}
\clip[rounded corners = 5pt] (-.3,-.2) rectangle (.7,.5);
\filldraw[\AColor] (-.3,-.2) rectangle (0,.5);
\filldraw[\BColor] (0,-.2) rectangle (.7,.5);
\filldraw[\QrColor] (.4,-.2) arc (0:90:.4cm) -- (0,-.2);
\end{scope}
\draw[\XColor,thick] (0,-.2) -- (0,.5);
\draw[\QsColor,thick] (.4,-.2) arc (0:90:.4cm);
\filldraw[\XColor] (0,.2) circle (.05cm);
}
:=
\tikzmath{
\begin{scope}
\clip[rounded corners = 5pt] (-.3,-.6) rectangle (.7,.5);
\filldraw[\AColor] (-.3,-.6) rectangle (0,.5);
\filldraw[\BColor] (0,-.6) rectangle (.7,.5);
\filldraw[\QrColor] (.4,-.6) rectangle (0,-.2);
\end{scope}
\draw[\XColor,thick] (0,-.6) -- (0,.5);
\draw[\QsColor,thick] (.4,-.6) -- (.4,-.2) arc (0:90:.4cm);
\draw[\QsColor,thick] (0,-.2) -- (.4,-.2);
\filldraw[\XColor] (0,.2) circle (.05cm);
}
= \rho_X\circ (u_{X,Q}^Q)^\dag.
\]
Observe that 
$$
\tikzmath{
\begin{scope}
\clip[rounded corners = 5pt] (-.7,-.5) rectangle (.3,.9);
\filldraw[\AColor] (-.7,-.5) rectangle (0,.9);
\filldraw[\PrColor] (0,.6) arc (90:270:.4cm);
\filldraw[\BColor] (0,-.5) rectangle (.3,.9);
\end{scope}
\draw[\XColor,thick] (0,-.5) -- (0,.9);
\draw[\PsColor,thick] (0,.6) arc (90:270:.4cm);
\filldraw[\XColor] (0,-.2) circle (.05cm);
\filldraw[\XColor] (0,.6) circle (.05cm);
}
=
\tikzmath{
\begin{scope}
\clip[rounded corners = 5pt] (-.7,-.5) rectangle (.3,1.3);
\filldraw[\AColor] (-.7,-.5) rectangle (0,1.3);
\filldraw[\PrColor] (0,.6) rectangle (-.4,.2);
\filldraw[\BColor] (0,-.5) rectangle (.3,1.3);
\end{scope}
\draw[\XColor,thick] (0,-.5) -- (0,1.3);
\draw[\PsColor,thick] (0,1) arc (90:180:.4cm) -- (-.4,.2) arc (180:270:.4cm);
\draw[\PsColor,thick] (-.4,.2) -- (0,.2);
\draw[\PsColor,thick] (-.4,.6) -- (0,.6);
\filldraw[\XColor] (0,-.2) circle (.05cm);
\filldraw[\XColor] (0,1) circle (.05cm);
}
=
\tikzmath{
\begin{scope}
\clip[rounded corners = 5pt] (-.7,-.5) rectangle (.3,.9);
\filldraw[\AColor] (-.7,-.5) rectangle (0,.9);
% \filldraw[\PrColor] (0,.6) arc (90:270:.4cm);
\filldraw[\BColor] (0,-.5) rectangle (.3,.9);
\end{scope}
\draw[\XColor,thick] (0,-.5) -- (0,.9);
\draw[\PsColor,thick] (0,.6) arc (90:270:.4cm);
\draw[\PsColor,thick] (-.4,.2) -- (0,.2);
\filldraw[\XColor] (0,-.2) circle (.05cm);
\filldraw[\XColor] (0,.6) circle (.05cm);
}
=
\tikzmath{
\begin{scope}
\clip[rounded corners=5pt] (-.3,0) rectangle (.3,1.4);
\fill[\AColor] (0,0) rectangle (-.3,1.4);
\fill[\BColor] (0,0) rectangle (.3,1.4);
\end{scope}
\draw[thick, \XColor] (0,0) -- (0,1.4);
}
\qquad\text{and}\qquad
\tikzmath{
\begin{scope}
\clip[rounded corners = 5pt] (-.7,-.7) rectangle (.3,.7);
\filldraw[\AColor] (-.7,-.7) rectangle (0,.7);
\filldraw[\PrColor] (0,.3) arc (270:180:.4cm) -- (0,.7);
\filldraw[\PrColor] (0,-.3) arc (90:180:.4cm) -- (0,-.7);
\filldraw[\BColor] (0,-.7) rectangle (.3,.7);
\end{scope}
\draw[\XColor,thick] (0,-.7) -- (0,.7);
\draw[\PsColor,thick] (0,.3) arc (270:180:.4cm);
\draw[\PsColor,thick] (0,-.3) arc (90:180:.4cm);
\filldraw[\XColor] (0,-.3) circle (.05cm);
\filldraw[\XColor] (0,.3) circle (.05cm);
}
=
\tikzmath{
\begin{scope}
\clip[rounded corners = 5pt] (-.7,-.9) rectangle (.3,.9);
\filldraw[\AColor] (-.7,-.9) rectangle (0,.9);
\filldraw[\PrColor] (-.4,.6) rectangle (0,.9);
\filldraw[\PrColor] (-.4,-.6) rectangle (0,-.9);
\filldraw[\BColor] (0,-.9) rectangle (.3,.9);
\end{scope}
\draw[\XColor,thick] (0,-.9) -- (0,.9);
\draw[\PsColor,thick] (0,.2) arc (270:180:.4cm) -- (-.4,.9);
\draw[\PsColor,thick] (0,-.2) arc (90:180:.4cm) -- (-.4,-.9);
\draw[\PsColor,thick] (-.4,.6) -- (0,.6);
\draw[\PsColor,thick] (-.4,-.6) -- (0,-.6);
\filldraw[\XColor] (0,-.2) circle (.05cm);
\filldraw[\XColor] (0,.2) circle (.05cm);
}
=
\tikzmath{
\begin{scope}
\clip[rounded corners = 5pt] (-.7,-.7) rectangle (.3,.7);
\filldraw[\AColor] (-.7,-.7) rectangle (0,.7);
\filldraw[\PrColor] (0,-.7) rectangle (-.4,-.4);
\filldraw[\PrColor] (0,.7) rectangle (-.4,.4);
\filldraw[\BColor] (0,-.7) rectangle (.3,.7);
\end{scope}
\draw[\XColor,thick] (0,-.7) -- (0,.7);
\draw[\PsColor,thick] (-.4,-.7) -- (-.4,.7);
\draw[\PsColor,thick] (-.4,-.4) -- (0,-.4);
\draw[\PsColor,thick] (-.4,.4) -- (0,.4);
\draw[\PsColor,thick] (-.4,0) -- (0,0);
}
=
\tikzmath{
\begin{scope}
\clip[rounded corners = 5pt] (-.7,-.7) rectangle (.3,.7);
\filldraw[\AColor] (-.7,-.7) rectangle (-.4,.7);
\filldraw[\PrColor] (0,-.7) rectangle (-.4,.7);
\filldraw[\BColor] (0,-.7) rectangle (.3,.7);
\end{scope}
\draw[\XColor,thick] (0,-.7) -- (0,.7);
\draw[\PsColor,thick] (-.4,-.7) -- (-.4,.7);
}\,,
$$
and thus $\lambda_X^P$ is unitary.
Similarly, $\rho_X^Q$ is unitary.

The unitor $\lambda_X : P\xz_a X \to X$ may be represented as a $P-1_b$ bimodular map in $\QSys(\cC)$ by introducing a shading on the left.
Similarly, we may introduce a shading on the right to see $\rho_X$ as a $1_a-Q$ bimodular map.
$$
\lambda_X=\tikzmath{
\begin{scope}
\clip[rounded corners = 5pt] (-.7,-.2) rectangle (.3,.5);
\filldraw[\PrColor] (-.7,-.2) rectangle (0,.5);
\filldraw[\AColor,thick] (-.4,-.2) arc (180:90:.4cm) -- (0,-.2);
\filldraw[\BColor] (0,-.2) rectangle (.3,.5);
\end{scope}
\draw[\XColor,thick] (0,-.2) -- (0,.5);
\draw[\PsColor,thick] (-.4,-.2) arc (180:90:.4cm);
\filldraw[\XColor] (0,.2) circle (.05cm);
}
\qquad\qquad
\rho_X=\tikzmath{
\begin{scope}
\clip[rounded corners = 5pt] (-.3,-.2) rectangle (.7,.5);
\filldraw[\AColor] (-.3,-.2) rectangle (0,.5);
\filldraw[\QrColor] (0,-.2) rectangle (.7,.5);
\filldraw[\BColor] (.4,-.2) arc (0:90:.4cm) -- (0,-.2);
\end{scope}
\draw[\XColor,thick] (0,-.2) -- (0,.5);
\draw[\QsColor,thick] (.4,-.2) arc (0:90:.4cm);
\filldraw[\XColor] (0,.2) circle (.05cm);
}
$$
These maps are partial isometries as ${}_aX_b$ is unitarily separable:
\[
\tikzmath{
\begin{scope}
\clip[rounded corners = 5pt] (-.7,-.5) rectangle (.3,.9);
\filldraw[\PrColor] (-.7,-.5) rectangle (0,.9);
\filldraw[\AColor] (0,.6) arc (90:270:.4cm);
\filldraw[\BColor] (0,-.5) rectangle (.3,.9);
\end{scope}
\draw[\XColor,thick] (0,-.5) -- (0,.9);
\draw[\PsColor,thick] (0,.6) arc (90:270:.4cm);
\filldraw[\XColor] (0,-.2) circle (.05cm);
\filldraw[\XColor] (0,.6) circle (.05cm);
}
=
\tikzmath{
\begin{scope}
\clip[rounded corners=5pt] (-.3,0) rectangle (.3,1.4);
\fill[\PrColor] (0,0) rectangle (-.3,1.4);
\fill[\BColor] (0,0) rectangle (.3,1.4);
\end{scope}
\draw[thick, \XColor] (0,0) -- (0,1.4);
}
\qquad\text{and}\qquad
\tikzmath{
\begin{scope}
\clip[rounded corners=5pt] (-.3,0) rectangle (.3,1.4);
\fill[\AColor] (0,0) rectangle (-.3,1.4);
\fill[\QrColor] (0,0) rectangle (.3,1.4);
\end{scope}
\draw[thick, \XColor] (0,0) -- (0,1.4);
}
=
\tikzmath{
\begin{scope}
\clip[rounded corners = 5pt] (-.3,-.5) rectangle (.7,.9);
\filldraw[\AColor] (-.3,-.5) rectangle (0,.9);
\filldraw[\QrColor] (0,-.5) rectangle (.7,.9);
\filldraw[\BColor] (0,.6) arc (90:-90:.4cm);
\end{scope}
\draw[\XColor,thick] (0,-.5) -- (0,.9);
\draw[\QsColor,thick] (0,.6) arc (90:-90:.4cm);
\filldraw[\XColor] (0,-.2) circle (.05cm);
\filldraw[\XColor] (0,.6) circle (.05cm);
}
\]

\begin{facts} 
We have the following easily verified relations amongst the separability coisometries and trivalent vertices.
\label{Facts:QSysU}
\begin{enumerate}[label=(\arabic*)]
\item 
$
\tikzmath{
\begin{scope}
\clip[rounded corners = 5] (-.3,-.4) rectangle (.7,.8);
\filldraw[\AColor] (-.3,-.4) rectangle (0,.8);
\filldraw[\BColor] (0,-.4) rectangle (.7,.8);
\filldraw[\QrColor] (0,0) arc (-90:0:.4cm) -- (0,.4);
\end{scope}
\draw[thick,\XColor] (0,-.4) -- (0,.8);
\draw[thick,\QsColor] (0,0) arc (-90:0:.4cm) -- (.4,.8);
\draw[thick,\QsColor] (0,.4) -- (.4,.4);
\filldraw[\XColor] (0,0) circle (.05cm);
}
=
\tikzmath{
\begin{scope}
\clip[rounded corners = 5] (-.3,-.4) rectangle (.7,1.2);
\filldraw[\AColor] (-.3,-.4) rectangle (0,1.2);
\filldraw[\BColor] (0,-.4) rectangle (.7,1.2);
\filldraw[\QrColor] (0,.4) rectangle (.4,.8);
\end{scope}
\draw[thick,\XColor] (0,-.4) -- (0,1.2);
\draw[thick,\QsColor] (0,0) arc (-90:0:.4cm) -- (.4,1.2);
\draw[thick,\QsColor] (0,.4) -- (.4,.4);
\draw[thick,\QsColor] (0,.8) -- (.4,.8);
\filldraw[\XColor] (0,0) circle (.05cm);
}
=
\tikzmath{
\begin{scope}
\clip[rounded corners = 5] (-.3,-.4) rectangle (.7,.8);
\filldraw[\AColor] (-.3,-.4) rectangle (0,.8);
\filldraw[\BColor] (0,-.4) rectangle (.7,.8);
% \filldraw[\QrColor] (0,0) arc (-90:0:.4cm) -- (0,.4);
\end{scope}
\draw[thick,\XColor] (0,-.4) -- (0,.8);
\draw[thick,\QsColor] (0,0) arc (-90:0:.4cm) -- (.4,.8);
\draw[thick,\QsColor] (0,.4) -- (.4,.4);
\filldraw[\XColor] (0,0) circle (.05cm);
}
=
\tikzmath{
\begin{scope}
\clip[rounded corners = 5] (-.3,-.4) rectangle (.7,.4);
\filldraw[\AColor] (-.3,-.4) rectangle (0,.4);
\filldraw[\BColor] (0,-.4) rectangle (.7,.4);
\end{scope}
\draw[thick,\XColor] (0,-.4) -- (0,.4);
\draw[thick,\QsColor] (0,0) arc (-90:0:.4cm);
\filldraw[\XColor] (0,0) circle (.05cm);
}
\quad\text{and}\quad
\tikzmath{
\begin{scope}
\clip[rounded corners = 5] (-.7,-.4) rectangle (.3,.8);
\filldraw[\AColor] (-.7,-.4) rectangle (0,.8);
\filldraw[\BColor] (0,-.4) rectangle (.3,.8);
\filldraw[\PrColor] (0,0) arc (270:180:.4cm) -- (0,.4);
\end{scope}
\draw[thick,\XColor] (0,-.4) -- (0,.8);
\draw[thick,\PsColor] (0,0) arc (270:180:.4cm) -- (-.4,.8);
\draw[thick,\PsColor] (0,.4) -- (-.4,.4);
\filldraw[\XColor] (0,0) circle (.05cm);
}
=
\tikzmath{
\begin{scope}
\clip[rounded corners = 5] (-.7,-.4) rectangle (.3,.4);
\filldraw[\AColor] (-.7,-.4) rectangle (0,.4);
\filldraw[\BColor] (0,-.4) rectangle (.3,.4);
\end{scope}
\draw[thick,\XColor] (0,-.4) -- (0,.4);
\draw[thick,\PsColor] (0,0) arc (270:180:.4cm);
\filldraw[\XColor] (0,0) circle (.05cm);
}
$.
\item
Considering $Q$ as a bimodule over itself,
$
\tikzmath{
\fill[\BColor, rounded corners=5pt] (-.3,0) rectangle (.9,.6);
\filldraw[\QrColor] (0,0) arc (180:0:.3cm);
\draw[thick,\QsColor] (0,0) arc (180:0:.3cm);
\draw[thick,\QsColor] (.3,.3) -- (.3,.6);
\filldraw[\QsColor] (.3,.3) circle (.05cm);
}
$
is a unitary,
and
$
\tikzmath{
\begin{scope}
\clip[rounded corners = 5] (-.6,-.6) rectangle (.6,.3);
\filldraw[\BColor] (-.6,-.6) rectangle (.6,.3);
\filldraw[\QrColor] (-.3,0) arc (-180:0:.3cm);
\end{scope}
\draw[thick,\QsColor] (-.3,.3) -- (-.3,0) arc (-180:0:.3cm) -- (.3,.3);
\draw[thick,\QsColor] (0,-.6) -- (0,-.3);
\draw[thick,\QsColor] (-.3,0) -- (.3,0);
\filldraw[\QsColor] (0,-.3) circle (.05cm);
}
=
\tikzmath{
\fill[\BColor, rounded corners=5pt] (-.3,0) rectangle (.9,.6);
\draw[thick,\QsColor] (0,.6) arc (-180:0:.3cm);
\draw[thick,\QsColor] (.3,.3) -- (.3,0);
\filldraw[\QsColor] (.3,.3) circle (.05cm);
}
$\,.

\item 
$
u^Q_{X,Y}
=
\tikzmath{
\begin{scope}
\clip[rounded corners = 5pt] (-.5,-.5) rectangle (.5,.5);
%\filldraw[\PrColor] (-.5,-.5) rectangle (-.2,.5);
\filldraw[\QrColor] (-.2,0) rectangle (.2,.5);
\filldraw[\BColor] (-.2,-.5) rectangle (.2,0);
%\filldraw[\RrColor] (.2,-.5) rectangle (.5,.5);
\end{scope}
\draw[thick, \XColor] (-.2,-.5) -- (-.2,.5);
\draw[thick, \QsColor] (-.2,0) -- (.2,0);
\draw[thick, \YColor] (.2,-.5) -- (.2,.5);
}
=
\tikzmath{
\begin{scope}
\clip[rounded corners = 5pt] (-.7,-.7) rectangle (.7,.5);
%\filldraw[\PrColor] (-.7,-.7) rectangle (-.4,.5);
\filldraw[\QrColor] (-.4,.2) arc (90:0:.2cm) arc (-180:0:.2cm) arc (180:90:.2cm) -- (.4,.5) -- (-.4,.5);
\filldraw[\BColor] (-.4,.2) arc (90:0:.2cm) arc (-180:0:.2cm) arc (180:90:.2cm) -- (.4,-.7) -- (-.4,-.7);
%\filldraw[\RrColor] (.4,-.7) rectangle (.7,.5);
\end{scope}
\draw[thick,\QsColor] (-.4,.2) arc (90:0:.2cm) arc (-180:0:.2cm) arc (180:90:.2cm);
\draw[thick,\QsColor] (0,-.4) -- (0,-.2);
\draw[thick, \XColor] (-.4,-.7) -- (-.4,.5);
\draw[thick, \YColor] (.4,-.7) -- (.4,.5);
\filldraw[\XColor] (-.4,.2) circle (.05cm);
\filldraw[\YColor] (.4,.2) circle (.05cm);
\filldraw[thick,\QsColor] (0,-.2) circle (.05cm);
\filldraw[thick,\QsColor] (0,-.4) circle (.05cm);
}
=
\tikzmath{
\begin{scope}
\clip[rounded corners = 5pt] (-.6,-.6) rectangle (.6,.6);
%\filldraw[\PrColor] (-.6,-.6) rectangle (-.3,.6);
\filldraw[\QrColor] (-.3,-.3) arc (-90:0:.3cm) arc (180:90:.3cm) -- (.3,.6) -- (-.3,.6);
\filldraw[\BColor] (-.3,-.3) arc (-90:0:.3cm) arc (180:90:.3cm) -- (.3,-.6) -- (-.3,-.6);
%\filldraw[\RrColor] (.3,-.6) rectangle (.6,.6);
\end{scope}
\draw[thick,\QsColor] (-.3,-.3) arc (-90:0:.3cm) arc (180:90:.3cm);
\draw[thick, \XColor] (-.3,-.6) -- (-.3,.6);
\draw[thick, \YColor] (.3,-.6) -- (.3,.6);
\filldraw[\XColor] (-.3,-.3) circle (.05cm);
\filldraw[\YColor] (.3,.3) circle (.05cm);
}
$
\item 
For a bimodule map 
$f:{}_PX_Q\to {}_PZ_Q$,
$%\label{eq:BimoduleMaps}
\tikzmath{
\begin{scope}
\clip[rounded corners=5pt] (-.8,.5) rectangle (.5,-1.5);
\fill[\AColor] (-.8,.5) rectangle (0,-1.5);
\fill[\BColor] (0,.5) rectangle (.5,-1.5);
\filldraw[\PrColor] (-.5,-1.5) arc (180:90:.5cm) -- (0,-1.5);
\end{scope}
\draw[\ZColor,thick] (0,.5) -- (0,-.25);
\draw[\XColor,thick] (0,-.25) -- (0,-1.5);
\draw[\PsColor,thick] (-.5,-1.5) arc (180:90:.5cm);
\filldraw[\XColor] (0,-1) circle (.05cm);
\roundNbox{unshaded}{(0,-.25)}{.3}{0}{0}{$f$};
}
=
\tikzmath{
\begin{scope}
\clip[rounded corners=5pt] (-.8,-.5) rectangle (.5,-2.5);
\fill[\AColor] (-.8,-.5) rectangle (0,-2.5);
\fill[\BColor] (0,-.5) rectangle (.5,-2.5);
\filldraw[\PrColor] (-.5,-2.5) -- (-.5,-1.5) arc (180:90:.5cm) -- (0,-2.5);
\end{scope}
\draw[\XColor,thick] (0,-1.75) -- (0,-2.5);
\draw[\ZColor,thick] (0,-0.5) -- (0,-1.75);
\draw[\PsColor,thick] (-.5,-2.5) -- (-.5,-1.5) arc (180:90:.5cm);
\filldraw[\ZColor] (0,-1) circle (.05cm);
\roundNbox{unshaded}{(0,-1.75)}{.3}{0}{0}{$f$};
}$
and
$
\tikzmath{
\begin{scope}
\clip[rounded corners=5pt] (.8,.5) rectangle (-.5,-1.5);
\fill[\QrColor] (.8,.5) rectangle (0,-1.5);
\fill[\AColor] (0,.5) rectangle (-.5,-1.5);
\fill[\BColor] (.5,-1.5) arc (0:90:.5cm) -- (0,-1.5);
\end{scope}
\draw[\ZColor,thick] (0,.5) -- (0,-.25);
\draw[\XColor,thick] (0,-.25) -- (0,-1.5);
\draw[\QsColor,thick] (.5,-1.5) arc (0:90:.5cm);
\filldraw[\XColor] (0,-1) circle (.05cm);
\roundNbox{unshaded}{(0,-.25)}{.3}{0}{0}{$f$};
}
=
\tikzmath{
\begin{scope}
\clip[rounded corners=5pt] (.8,-.5) rectangle (-.5,-2.5);
\fill[\QrColor] (.8,-.5) rectangle (0,-2.5);
\fill[\AColor] (0,-.5) rectangle (-.5,-2.5);
\fill[\BColor] (.5,-2.5) -- (.5,-1.5) arc (0:90:.5cm) -- (0,-2.5);
\end{scope}
\draw[\XColor,thick] (0,-1.75) -- (0,-2.5);
\draw[\ZColor,thick] (0,-0.5) -- (0,-1.75);
\draw[\QsColor,thick] (.5,-2.5) -- (.5,-1.5) arc (0:90:.5cm);
\filldraw[\ZColor] (0,-1) circle (.05cm);
\roundNbox{unshaded}{(0,-1.75)}{.3}{0}{0}{$f$};
}\,.
$
\end{enumerate}
\end{facts}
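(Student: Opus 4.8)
The plan is to verify each of the four relations by translating the displayed picture into a composite of the structure 2-morphisms of $\cC$ and then reducing to the Q-system axioms \ref{Q:associativity}--\ref{Q:separable} and the bimodule axioms \ref{M:associativity}--\ref{M:separable}. Every strand drawn in a darker ``self-colored'' shade denotes $Q$ regarded as a bimodule over itself, whose left and right actions are literally the multiplication $m$; and every separability coisometry $u^Q_{X,Y}$, left unitor vertex $\lambda^P_X$, and right unitor vertex $\rho^Q_X$ was \emph{defined} (in the graphical-calculus subsection and in Definition \ref{defn:QSysTensorProduct}) as a composite of $\lambda$, $\rho$, $m$, $m^\dag$ and the chosen splitting data. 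So once the definitions are unfolded, each pictured identity becomes an equation between honest 2-morphisms in $\cC$, and the task is purely local diagram-chasing.

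For relation (2) I would first recognize that the trivalent $Q$-vertex appearing there is exactly the multiplication $m\colon Q\xz_b Q \to Q$, which by construction splits the separability projector $p_{Q,Q}$ for $Q$ over itself. Unitarity (as the induced map $Q\xzq_Q Q \to Q$) is then immediate: $m\circ m^\dag = \id_Q$ is the separability axiom \ref{Q:separable}, while $m^\dag\circ m = p_{Q,Q}$ is the Frobenius relation \ref{Q:Frobenius}, so $m$ is the unitor $\lambda^Q_Q$ and is unitary by the general fact (proved just above the \ref{Facts:QSysU} statement) specialized to $X=Q$. The second equality in (2) is a single further application of \ref{Q:Frobenius} to slide the cup past the multiplication. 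Relation (1) asserts that the right $Q$-action on $X$ absorbs a multiplication fed into the incoming $Q$-strand; I would obtain it by applying module associativity \ref{M:associativity} to merge the $Q$-multiplication into the action vertex $\rho$, then collapsing the resulting bigon with separability (\ref{M:separable} together with \ref{Q:separable}) and deleting the leftover unit via \ref{M:unitality}, with the mirror argument handling the $P$-sided (left) version.

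Relation (3) is essentially definitional: the two right-hand pictures are the two graphical presentations of the separability projector $p_{X,Y}$ from \eqref{eq:SeparabilityProjector}, which agree after one application of \ref{M:associativity}/\ref{M:Frobenius}, and the asserted chain of equalities encodes $u^Q_{X,Y} = u^Q_{X,Y}\circ p_{X,Y}$ together with the identification of $u^Q_{X,Y}$ as the ``half-projector'' whose top region is $Q$-shaded. This is precisely the content of Definition \ref{defn:QSysTensorProduct} and Remark \ref{rem:QSysCoequalizer}, so I would simply invoke the defining relation $u^Q_{X,Y}\circ p_{X,Y}=u^Q_{X,Y}$ (valid since $u^Q_{X,Y}$ is a coisometry with $p_{X,Y}=(u^Q_{X,Y})^\dag\circ u^Q_{X,Y}$). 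Relation (4) is the naturality of the one-sided actions with respect to bimodule maps: a bimodule intertwiner $f$ satisfies \eqref{eq:BimoduleMaps}, and combining this with \ref{QSys:StarClosed} (so that $f$ also commutes with the adjoint actions $\lambda^\dag,\rho^\dag$) lets $f$ slide freely through the $\lambda^P_X$ and $\rho^Q_X$ vertices on either side.

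The only genuine source of friction will be the bookkeeping of shadings: for each cap, cup, and trivalent vertex one must keep straight which object the relevant strand is a module over, and whether a given vertex represents $m$, $m^\dag$, a unit $i$, a module action, or a unitor, since the same strand-type is reused for both source and target. Once these shadings are pinned down, every identity reduces to a one- or two-move application of axioms already in hand, so I expect no conceptual obstacle — consistent with the ``easily verified'' billing of \ref{Facts:QSysU}.
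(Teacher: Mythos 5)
Your verification is correct and is precisely the intended one: the paper gives no written proof of Facts \ref{Facts:QSysU} (they are billed as ``easily verified''), and your reductions --- (1) to \ref{M:associativity} plus \ref{M:separable} via $\rho_X\circ(\id_X\xz m)=\rho_X\circ(\rho_X\xz\id_Q)$ and the bigon $\rho_X\rho_X^\dag=\id_X$, (2) to $m\,m^\dag=\id_Q$ and $m^\dag m=p_{Q,Q}$ (equivalently the unitarity of the unitor proved just above, specialized to $X=Q$), (3) to the coisometry relation $u^Q_{X,Y}\circ p_{X,Y}=u^Q_{X,Y}$ applied to the two graphical presentations of $p_{X,Y}$ from \eqref{eq:SeparabilityProjector}, and (4) to \eqref{eq:BimoduleMaps} combined with the coisometry relations --- are exactly how these identities are meant to be checked. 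Your few extra citations (e.g.\ \ref{M:unitality} and \ref{Q:separable} in (1)) are superfluous but harmless.
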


\begin{lem}
\label{Lem:NaturalSplit}
\mbox{}
% \todo{quan will fix this}
\begin{enumerate}[label=(\arabic*)]
    \item Suppose $Q\in \cC(b\to b)$ is a Q-system, then ${}_bQ_b\cong {}_bQ\xzq_QQ_b$ as Q-systems in $\QSys(\cC)$.
    \item 
    Suppose $(R,m^R_b,i^R_b)\in \cC(b\to b)$ is a Q-system in $\cC$ 
    and $(Q,m^Q_R,i^Q_R)\in \QSys(\cC)(R\to R)$ is a Q-system in $\QSys(\cC)$.
    Then $Q\in \cC(b\to b)$ is a Q-system in $\cC$ with multiplication and unit given by
    % \todo{shadings}
\[
m^Q_b :=
\tikzmath{
\begin{scope}
\clip[rounded corners = 5] (-.6,-.4) rectangle (.6,1);
\filldraw[\BColor] (-.6,-.4) rectangle (.6,1);
\filldraw[\RrColor] (-.3,0) to [bend left=50] (0,.6) to [bend left=50] (.3,0);
\end{scope}
\draw[\RsColor,thick] (-.3,0) to [bend left=50] (0,.6) to [bend left=50] (.3,0) -- (-.3,0);
\draw[\QsColor,thick] (-.3,-.4) -- (-.3,0) arc (180:0:.3cm) -- (.3,-.4);
\draw[\QsColor,thick] (0,.3) -- (0,1);
\filldraw[\QsColor,thick] (0,.3) circle (.05cm);
\filldraw[\QsColor,thick] (0,.6) circle (.05cm);
\filldraw[\QsColor,thick] (-.3,0) circle (.05cm);
\filldraw[\QsColor,thick] (.3,0) circle (.05cm);
}
=
\tikzmath{
\begin{scope}
\clip[rounded corners = 5] (-.6,-.4) rectangle (.6,1);
\filldraw[\BColor] (-.6,-.4) rectangle (.6,1);
\filldraw[\RrColor] (-.3,0) to [bend left=50] (0,.6) to [bend left=50] (.3,0);
\end{scope}
\draw[\RsColor,thick] (-.3,0) to [bend left=50] (0,.6) to [bend left=50] (.3,0) -- (-.3,0);
\draw[\QsColor,thick] (-.3,-.4) -- (-.3,0) arc (180:0:.3cm) -- (.3,-.4);
\draw[\QsColor,thick] (0,.3) -- (0,1);
\filldraw[\QsColor,thick] (0,.3) circle (.05cm);
}
\qquad
i^Q_b := 
\tikzmath{
\begin{scope}
\clip[rounded corners = 5] (-.6,-.9) rectangle (.6,.7);
\filldraw[\BColor] (-.6,-.9) rectangle (.6,.7);
\end{scope}
\filldraw[\RrColor] (0,0) circle (.3cm);
\draw[\RsColor,thick] (0,0) circle (.3cm);
\draw[thick,\QsColor] (0,0) -- (0,.7);
\draw[thick,\RsColor] (0,-.3) -- (0,-.6);
\filldraw[\QsColor] (0,0) circle (.05cm);
\filldraw[\QsColor] (0,.3) circle (.05cm);
\filldraw[\RsColor] (0,-.6) circle (.05cm);
\filldraw[\RsColor] (0,-.3) circle (.05cm);
}
=
\tikzmath{
\begin{scope}
\clip[rounded corners = 5] (-.6,-.6) rectangle (.6,.7);
\filldraw[\BColor] (-.6,-.6) rectangle (.6,.7);
\end{scope}
\filldraw[\RrColor] (0,0) circle (.3cm);
\draw[\RsColor,thick] (0,0) circle (.3cm);
\draw[thick,\QsColor] (0,0) -- (0,.7);
\filldraw[\QsColor] (0,0) circle (.05cm);
}
\]
Here, we use the same notation for $b,Q,R$ as in the beginning of this section, and the 4-valent vertex is defined as
\[
\tikzmath{
\fill[\BColor, rounded corners=5pt] (-.3,0) rectangle (.9,.6);
\filldraw[\RrColor] (0,0) arc (180:0:.3cm);
\draw[thick,\RsColor] (0,0) arc (180:0:.3cm);
\draw[thick,\QsColor] (.3,0) -- (.3,.6);
% \filldraw[\QsColor] (.3,.3) circle (.05cm);
}
:=
\tikzmath{
\fill[\BColor, rounded corners=5pt] (-.3,0) rectangle (.9,.6);
\filldraw[\RrColor] (0,0) arc (180:0:.3cm);
\draw[thick,\RsColor] (0,0) arc (180:0:.3cm);
\draw[thick,\QsColor] (.3,0) -- (.3,.6);
\filldraw[\QsColor] (.3,.3) circle (.05cm);
}
:=
\tikzmath{
\begin{scope}
\clip[rounded corners = 5pt] (-.7,-.5) rectangle (.7,.5);
\filldraw[\BColor] (-.7,-.5) rectangle (.7,.5);
\filldraw[\RrColor] (-.4,-.5) -- (-.4,-.2) arc (180:90:.4cm) -- (0,-.5);
\filldraw[\RrColor] (.4,-.5) arc (0:90:.4cm) -- (0,-.5);
\end{scope}
\draw[\QsColor,thick] (0,-.5) -- (0,.5);
\draw[thick,\RsColor] (-.4,-.5) -- (-.4,-.2) arc (180:90:.4cm);
\draw[thick,\RsColor] (.4,-.5) arc (0:90:.4cm);
\filldraw[\QsColor] (0,.2) circle (.05cm);
\filldraw[\QsColor] (0,-.1) circle (.05cm);
}
=
\tikzmath{
\begin{scope}
\clip[rounded corners = 5pt] (-.7,-.5) rectangle (.7,.5);
\filldraw[\BColor] (-.7,-.5) rectangle (.7,.5);
\filldraw[\RrColor] (-.4,-.5) arc (180:90:.4cm) -- (0,-.5);
\filldraw[\RrColor] (.4,-.5) -- (.4,-.2) arc (0:90:.4cm) -- (0,-.5);
\end{scope}
\draw[\QsColor,thick] (0,-.5) -- (0,.5);
\draw[thick,\RsColor] (-.4,-.5) arc (180:90:.4cm);
\draw[thick,\RsColor] (.4,-.5) -- (.4,-.2) arc (0:90:.4cm);
\filldraw[\QsColor] (0,.2) circle (.05cm);
\filldraw[\QsColor] (0,-.1) circle (.05cm);
}\,.
\]
\end{enumerate}

\end{lem}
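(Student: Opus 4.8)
The plan is to write down explicit structure maps for each assertion and then verify the Q-system axioms \ref{Q:associativity}--\ref{Q:separable}, exploiting the dependency recorded after the definition of a Q-system that \ref{Q:associativity}$+$\ref{Q:unitality}$+$\ref{Q:separable}$\Rightarrow$\ref{Q:Frobenius}, so that Frobenius never has to be checked by hand.

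For part (2), I would set $m^Q_b:=m^Q_R\circ u^R_{Q,Q}$ and $i^Q_b:=i^Q_R\circ i^R_b$, and first confirm these match the displayed diagrams by expanding $u^R_{Q,Q}$ as the $R$-balanced cap of Facts \ref{Facts:QSysU}(3) and unfolding the $4$-valent vertex. The organizing principle is that sending an $R$--$R$ bimodule to its underlying $1$-morphism is a lax monoidal functor $(\QSys(\cC)(R\to R),\xzq_R,R)\to(\cC(b\to b),\xz_b,1_b)$ with coherence data $u^R_{X,Y}$ and unit $i^R_b$, whose associativity and unit coherences are precisely the unitor definitions together with the relation among $u$, $\alpha^\cC$ and $\alpha^{\QSys}$ recorded after \eqref{eq:DefinitionOfAlphaQSys}. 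Such a functor carries the algebra $(Q,m^Q_R,i^Q_R)$ to an algebra $(Q,m^Q_b,i^Q_b)$, giving \ref{Q:associativity} and \ref{Q:unitality} simultaneously. Concretely, unitality unwinds as
\begin{align*}
m^Q_b\circ(i^Q_b\xz_b\id_Q)
&=m^Q_R\circ(i^Q_R\xzq_R\id_Q)\circ u^R_{R,Q}\circ(i^R_b\xz_b\id_Q)\\
&=\lambda^R_Q\circ u^R_{R,Q}\circ(i^R_b\xz_b\id_Q)\\
&=\lambda_Q\circ(i^R_b\xz_b\id_Q)
=\id_Q,
\end{align*}
where we used naturality of $u^R$ in the bimodule map $i^Q_R$ (cf.~\eqref{eq:QSysHorizontalComposition2Morphisms1}), the $\QSys$-unitality $m^Q_R\circ(i^Q_R\xzq_R\id_Q)=\lambda^R_Q$, the identity $\lambda^R_Q=\lambda_Q\circ(u^R_{R,Q})^\dag$ together with $(u^R_{R,Q})^\dag u^R_{R,Q}=p^R_{R,Q}$ being absorbed by the action ($\lambda_Q\circ p^R_{R,Q}=\lambda_Q$, Remark \ref{rem:QSysCoequalizer}), and finally module unitality \ref{M:unitality}.

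Separability is the easy axiom, and I would verify it directly: since $u^R_{Q,Q}$ is a coisometry,
\[
m^Q_b\circ (m^Q_b)^\dag
= m^Q_R\circ u^R_{Q,Q}\circ (u^R_{Q,Q})^\dag\circ (m^Q_R)^\dag
= m^Q_R\circ(m^Q_R)^\dag
=\id_Q,
\]
the last equality being separability \ref{Q:separable} of $Q$ over $R$. With \ref{Q:associativity}, \ref{Q:unitality}, \ref{Q:separable} in hand, \ref{Q:Frobenius} is then automatic. \textbf{The main obstacle is associativity \ref{Q:associativity}}: it is the one axiom in which the nontrivial associator $\alpha^{\QSys}_{Q,Q,Q}$ of \eqref{eq:DefinitionOfAlphaQSys} intervenes, so converting the $\QSys$-associativity of $m^Q_R$ into the $\cC$-associativity of $m^Q_b$ forces one to unwind $\alpha^{\QSys}$ in terms of the $u^R$ and $\alpha^\cC$ (the relation after \eqref{eq:DefinitionOfAlphaQSys}) and to slide the $R$-caps through $m^Q_R$ using that it is an $R$-bimodule map. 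This is exactly the lax-monoidal associativity coherence, and it is the only bookkeeping-heavy step.

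For part (1), the isomorphism is the unitor $\rho^Q_Q=\lambda^Q_Q=m\circ u_{Q,Q}^\dag\colon Q\xzq_Q Q\to Q$, where $m$ is simultaneously the left and right action of $Q$ on itself. I would first check it is unitary: the separability projector splitting $\xzq_Q$ is $p_{Q,Q}=(\id_Q\xz_b m)\circ(m^\dag\xz_b\id_Q)$, which equals $m^\dag m$ by Frobenius \ref{Q:Frobenius}; combined with $mm^\dag=\id_Q$ from \ref{Q:separable} and the coisometry relations $u_{Q,Q}u_{Q,Q}^\dag=\id$, $u_{Q,Q}^\dag u_{Q,Q}=p_{Q,Q}$, one computes $(m u_{Q,Q}^\dag)(m u_{Q,Q}^\dag)^\dag=m\,p_{Q,Q}\,m^\dag=\id_Q$ and $(m u_{Q,Q}^\dag)^\dag(m u_{Q,Q}^\dag)=u_{Q,Q}\,p_{Q,Q}\,u_{Q,Q}^\dag=\id_{Q\xzq_Q Q}$. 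That the unitor intertwines the unit and multiplication is the coherence of the unitor on the tensor unit of $\QSys(\cC)(Q\to Q)$, the degenerate case of the same computation with $\alpha^{\QSys}$ trivial, so $Q\cong Q\xzq_Q Q$ as Q-systems.
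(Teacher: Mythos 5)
Your proposal is correct, and for part (2) it takes a genuinely different route from the paper's. The paper verifies all four axioms \ref{Q:associativity}--\ref{Q:separable} for $(Q,m^Q_b,i^Q_b)$ head-on by diagrammatic computation, using the absorption relations of Facts \ref{Facts:QSysU}. You instead package \ref{Q:associativity} and \ref{Q:unitality} as ``lax monoidal functors preserve algebra objects,'' where the forgetful functor $(\QSys(\cC)(R\to R),\xzq_R,R)\to(\cC(b\to b),\xz_b,1_b)$ has coherence data $u^R_{-,-}$ and $i^R_b$; you prove \ref{Q:separable} by the one-line coisometry computation $m^Q_R\circ u\circ u^\dag\circ (m^Q_R)^\dag=\id_Q$; and you invoke the dependency \ref{Q:associativity}$+$\ref{Q:unitality}$+$\ref{Q:separable}$\Rightarrow$\ref{Q:Frobenius} (which the paper records with citations) to skip Frobenius entirely. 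This is leaner than the paper's argument -- your separability and unitality steps are correct as written, and your sketch of associativity (slide $m^Q_R$ through $u$ by naturality, apply $\QSys$-associativity, then the compatibility of $\alpha^{\QSys}$, $\alpha^{\cC}$, $u$) does close up. What it buys is brevity and conceptual clarity; what it costs is that the hard content is outsourced to the relation recorded after \eqref{eq:DefinitionOfAlphaQSys}, which the paper itself only justifies by a pointer (``one can show using \eqref{eq:QSysCoequalizer}''), plus naturality of $u$ against bimodule maps (which does follow from $u\circ p=u$ and the fact that $p$ commutes with bimodule maps, as in Definition \ref{defn:QSysTensorProduct}). The paper's diagram chase is self-contained by comparison.

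For part (1) your route is essentially the paper's: your $m\circ u^\dag_{Q,Q}$ is the adjoint of the paper's unitary $u_{Q,Q}\circ m^\dag$, and your unitarity computation (via $p_{Q,Q}=m^\dag\circ m$ from \eqref{eq:SeparabilityProjector} and \ref{Q:Frobenius}, and $m\circ m^\dag=\id_Q$ from \ref{Q:separable}) is exactly the content of Facts \ref{Facts:QSysU}(2). The one soft spot is your final sentence: that the unitor ``intertwines the unit and multiplication'' is not literally an instance of monoidal coherence, because the Q-system structure on ${}_bQ\xzq_QQ_b$ is the canonical one of Example \ref{ex:XXvQSystem} attached to the duality data $({}_{1_b}Q_Q,{}_QQ_{1_b})$, with multiplication $\id_Q\xzq\ev\xzq\id_Q$ and unit $\coev$. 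Checking that $m\circ u^\dag_{Q,Q}$ carries these to $m$ and $i$ is a short but genuine computation -- of the same flavor as your unitarity check, using $u^\dag u=p_{Q,Q}=m^\dag\circ m$, $m\circ m^\dag=\id_Q$, and associativity -- and this is precisely what the paper's two displayed diagram identities in its proof of (1) establish. As it stands that step is asserted rather than proved; it is easily repaired, but it should be written out.
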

\begin{proof}
\item[(1)]
Note that 
\[
\tikzmath{
\fill[\BColor, rounded corners=5pt] (-.3,0) rectangle (.9,.6);
\filldraw[\QrColor] (0,.6) arc (-180:0:.3cm);
\draw[thick,\QsColor] (0,.6) arc (-180:0:.3cm);
\draw[thick,\QsColor] (.3,.3) -- (.3,0);
\filldraw[\QsColor] (.3,.3) circle (.05cm);
}
=
\tikzmath{
\begin{scope}
\clip[rounded corners = 5] (-.6,-.6) rectangle (.6,.3);
\filldraw[\BColor] (-.6,-.6) rectangle (.6,.3);
\filldraw[\QrColor] (-.3,.3) rectangle (.3,0);
\end{scope}
\draw[thick,\QsColor] (-.3,.3) -- (-.3,0) arc (-180:0:.3cm) -- (.3,.3);
\draw[thick,\QsColor] (0,-.6) -- (0,-.3);
\draw[thick,\QsColor] (-.3,0) -- (.3,0);
\filldraw[\QsColor] (0,-.3) circle (.05cm);
}
: {}_{1_b}Q_{1_b} \to {}_{1_b}Q\xzq_Q Q_{1_b}
\]
is a unitary from Facts \ref{Facts:QSysU}(2).
Moreover,
this unitary intertwines the Q-system structures as
\[
\tikzmath{
\begin{scope}
\clip[rounded corners = 5] (-.3,-.4) rectangle (1.5,1.2);
\filldraw[\BColor] (-.3,-.4) rectangle (1.5,1.2);
\filldraw[\QrColor] (0,0) -- (.4,0) arc (180:0:.2cm) -- (1.2,0)  .. controls ++(90:.2cm) and ++(270:.2cm) .. (.8,.8) -- (.4,.8) .. controls ++(270:.2cm) and ++(90:.2cm) .. (0,0);
\end{scope}
\draw[thick,\QsColor] (0,0) arc (-180:0:.2cm) arc (180:0:.2cm) arc (-180:0:.2cm) .. controls ++(90:.2cm) and ++(270:.2cm) .. (.8,.8) arc (0:180:.2cm) .. controls ++(270:.2cm) and ++(90:.2cm) .. (0,0);
\draw[thick,\QsColor] (.2,-.4) -- (.2,-.2);
\draw[thick,\QsColor] (1,-.4) -- (1,-.2);
\draw[thick,\QsColor] (.6,1) -- (.6,1.2);
% \draw[thick,\QsColor] (.6,.2) -- (.6,.4);
%
\draw[thick,\QsColor] (.4,.8) -- (.8,.8);
\draw[thick,\QsColor] (0,0) -- (.4,0);
\draw[thick,\QsColor] (.8,0) -- (1.2,0);
\filldraw[\QsColor] (.2,-.2) circle (.05cm);
\filldraw[\QsColor] (1,-.2) circle (.05cm);
% \filldraw[\QsColor] (.6,.2) circle (.05cm);
\filldraw[\QsColor] (.6,1) circle (.05cm);
% \filldraw[\QsColor] (.6,.4) circle (.05cm);
}
=
\tikzmath{
\fill[\BColor, rounded corners=5pt] (-.3,-.4) rectangle (1.5,1.2);
\draw[thick,\QsColor] (0,0) arc (-180:0:.2cm) arc (180:0:.2cm) arc (-180:0:.2cm) .. controls ++(90:.2cm) and ++(270:.2cm) .. (.8,.8) arc (0:180:.2cm) .. controls ++(270:.2cm) and ++(90:.2cm) .. (0,0);
\draw[thick,\QsColor] (.2,-.4) -- (.2,-.2);
\draw[thick,\QsColor] (1,-.4) -- (1,-.2);
\draw[thick,\QsColor] (.6,1) -- (.6,1.2);
\draw[thick,\QsColor] (.6,.2) -- (.6,.4);
\draw[thick,\QsColor] (.4,.8) -- (.8,.8);
\filldraw[\QsColor] (.2,-.2) circle (.05cm);
\filldraw[\QsColor] (1,-.2) circle (.05cm);
\filldraw[\QsColor] (.6,.2) circle (.05cm);
\filldraw[\QsColor] (.6,1) circle (.05cm);
\filldraw[\QsColor] (.6,.4) circle (.05cm);
}
=
\tikzmath{
\fill[\BColor, rounded corners=5pt] (-.3,0) rectangle (.9,.6);
\draw[thick,\QsColor] (0,0) arc (180:0:.3cm);
\draw[thick,\QsColor] (.3,.3) -- (.3,.6);
\filldraw[\QsColor] (.3,.3) circle (.05cm);
}
\qquad\text{and}\qquad
\tikzmath{
\fill[\BColor, rounded corners=5pt] (-.6,-.9) rectangle (.6,.6);
\filldraw[\QrColor] (-.3,0) arc (-180:0:.3cm);
\draw[thick,\QsColor] (0,0) circle (.3cm);
\draw[thick,\QsColor] (-.3,0) -- (.3,0);
\draw[thick,\QsColor] (0,-.6) -- (0,-.3);
\draw[thick,\QsColor] (0,.3) -- (0,.6);
\filldraw[\QsColor] (0,-.3) circle (.05cm);
\filldraw[\QsColor] (0,.3) circle (.05cm);
\filldraw[\QsColor] (0,-.6) circle (.05cm);
}
=
\tikzmath{
\fill[\BColor, rounded corners=5pt] (-.6,-.9) rectangle (.6,.6);
% \filldraw[\QrColor] (-.3,0) arc (-180:0:.3cm);
\draw[thick,\QsColor] (0,0) circle (.3cm);
% \draw[thick,\QsColor] (-.3,0) -- (.3,0);
\draw[thick,\QsColor] (0,-.6) -- (0,-.3);
\draw[thick,\QsColor] (0,.3) -- (0,.6);
\filldraw[\QsColor] (0,-.3) circle (.05cm);
\filldraw[\QsColor] (0,.3) circle (.05cm);
\filldraw[\QsColor] (0,-.6) circle (.05cm);
}
=
\tikzmath{
\fill[\BColor, rounded corners=5pt] (0,0) rectangle (.6,.6);
\draw[thick,\QsColor] (.3,.3) -- (.3,.6);
\filldraw[\QsColor] (.3,.3) circle (.05cm);
}\,.
\]
\item[(2)] 
We verify that $(Q,m^Q_b,i^Q_b)\in\cC(b\to b)$ is a Q-system by using Facts \ref{Facts:QSysU}. 
\item[\underline{\ref{Q:associativity}:}]
$
\tikzmath{
\begin{scope}
\clip[rounded corners = 5] (-.6,-.4) rectangle (.9,2);
\filldraw[\BColor] (-.6,-.4) rectangle (.9,2);
\filldraw[\RrColor] (-.3,0) to [bend left=50] (0,.6) to [bend left=50] (.3,0);
\filldraw[\RrColor] (0,1) to [bend left=50] (.3,1.6) to [bend left=50] (.6,1);
\end{scope}
\draw[\RsColor,thick] (-.3,0) to [bend left=50] (0,.6) to [bend left=50] (.3,0) -- (-.3,0);
\draw[\RsColor,thick] (0,1) to [bend left=50] (.3,1.6) to [bend left=50] (.6,1) -- (0,1);
\draw[\QsColor,thick] (-.3,-.4) -- (-.3,0) arc (180:0:.3cm) -- (.3,-.4);
\draw[\QsColor,thick] (0,.3) -- (0,1) arc (180:0:.3cm) -- (.6,-.4);
\draw[\QsColor,thick] (.3,1.3) -- (.3,2);
\filldraw[\QsColor,thick] (0,.3) circle (.05cm);
\filldraw[\QsColor,thick] (.3,1.3) circle (.05cm);
}
=
\tikzmath{
\begin{scope}
\clip[rounded corners = 5] (-.6,-.4) rectangle (1,2);
\filldraw[\BColor] (-.6,-.4) rectangle (1,2);
\filldraw[\RrColor] (-.3,0) to [bend left=50] (.3,1.6) to [bend left=50] (.6,0);
\end{scope}
\draw[\RsColor,thick] (-.3,0) to [bend left=50] (.3,1.6) to [bend left=50] (.6,0) -- (-.3,0);
\draw[\QsColor,thick] (-.3,-.4) -- (-.3,0) arc (180:0:.3cm) -- (.3,-.4);
\draw[\QsColor,thick] (0,.3) -- (0,1) arc (180:0:.3cm) -- (.6,-.4);
\draw[\QsColor,thick] (.3,1.3) -- (.3,2);
\filldraw[\QsColor,thick] (0,.3) circle (.05cm);
\filldraw[\QsColor,thick] (.3,1.3) circle (.05cm);
}
=
\tikzmath{
\begin{scope}
\clip[rounded corners = 5] (-.7,-.4) rectangle (.9,2);
\filldraw[\BColor] (-.7,-.4) rectangle (.9,2);
\filldraw[\RrColor] (-.3,0) to [bend left=50] (0,1.6) to [bend left=50] (.6,0);
\end{scope}
\draw[\RsColor,thick] (-.3,0) to [bend left=50] (0,1.6) to [bend left=50] (.6,0) -- (-.3,0);
\draw[\QsColor,thick] (0,-.4) -- (0,0) arc (180:0:.3cm) -- (.6,-.4);
\draw[\QsColor,thick] (.3,.3) -- (.3,1) arc (0:180:.3cm) -- (-.3,-.4);
\draw[\QsColor,thick] (0,1.3) -- (0,2);
\filldraw[\QsColor,thick] (.3,.3) circle (.05cm);
\filldraw[\QsColor,thick] (0,1.3) circle (.05cm);
}
=
\tikzmath{
\begin{scope}
\clip[rounded corners = 5] (-.6,-.4) rectangle (.9,2);
\filldraw[\BColor] (-.6,-.4) rectangle (.9,2);
\filldraw[\RrColor] (0,0) to [bend left=50] (.3,.6) to [bend left=50] (.6,0);
\filldraw[\RrColor] (-.3,1) to [bend left=50] (0,1.6) to [bend left=50] (.3,1);
\end{scope}
\draw[\RsColor,thick] (0,0) to [bend left=50] (.3,.6) to [bend left=50] (.6,0) -- (0,0);
\draw[\RsColor,thick] (-.3,1) to [bend left=50] (0,1.6) to [bend left=50] (.3,1) -- (-.3,1);
\draw[\QsColor,thick] (0,-.4) -- (0,0) arc (180:0:.3cm) -- (.6,-.4);
\draw[\QsColor,thick] (.3,.3) -- (.3,1) arc (0:180:.3cm) -- (-.3,-.4);
\draw[\QsColor,thick] (0,1.3) -- (0,2);
\filldraw[\QsColor,thick] (.3,.3) circle (.05cm);
\filldraw[\QsColor,thick] (0,1.3) circle (.05cm);
}
$
\item[\underline{\ref{Q:unitality}:}]
$\tikzmath{
\begin{scope}
\clip[rounded corners = 5] (-.5,-.4) rectangle (.9,2);
\filldraw[\BColor] (-.5,-.4) rectangle (.9,2);
\filldraw[\RrColor] (0,.3) circle (.3cm);
\filldraw[\RrColor] (0,1) to [bend left=50] (.3,1.6) to [bend left=50] (.6,1);
\end{scope}
\draw[\RsColor,thick] (0,.3) circle (.3cm);
\draw[\RsColor,thick] (0,1) to [bend left=50] (.3,1.6) to [bend left=50] (.6,1) -- (0,1);
\draw[\QsColor,thick] (0,.3) -- (0,1) arc (180:0:.3cm) -- (.6,-.4);
\draw[\QsColor,thick] (.3,1.3) -- (.3,2);
\filldraw[\QsColor,thick] (0,.3) circle (.05cm);
\filldraw[\QsColor,thick] (.3,1.3) circle (.05cm);
}
=
\tikzmath{
\begin{scope}
\clip[rounded corners = 5] (-.4,-.4) rectangle (1,2);
\filldraw[\BColor] (-.4,-.4) rectangle (1,2);
\filldraw[\RrColor] (0,0) to [bend left=50] (.3,1.6) to [bend left=50] (.6,0);
\end{scope}
\draw[\RsColor,thick] (0,0) to [bend left=50] (.3,1.6) to [bend left=50] (.6,0) -- (0,0);
\draw[\QsColor,thick] (0,.3) -- (0,1) arc (180:0:.3cm) -- (.6,-.4);
\draw[\QsColor,thick] (.3,1.3) -- (.3,2);
\filldraw[\QsColor,thick] (0,.3) circle (.05cm);
\filldraw[\QsColor,thick] (.3,1.3) circle (.05cm);
}
=
\tikzmath{
\begin{scope}
\clip[rounded corners = 5] (-.5,-.7) rectangle (.5,.7);
\filldraw[\BColor] (-.5,-.7) rectangle (.5,.7);
\filldraw[\RrColor] (0,0) circle (.3cm);
\end{scope}
\draw[\RsColor,thick] (0,0) circle (.3cm);
\draw[\QsColor,thick] (0,-.7) -- (0,.7);
}
=
\tikzmath{
\begin{scope}
\clip[rounded corners = 5] (-.3,-.7) rectangle (.3,.7);
\filldraw[\BColor] (-.3,-.7) rectangle (.3,.7);
\end{scope}
\draw[\QsColor,thick] (0,-.7) -- (0,.7);
}
=\cdots=
\tikzmath{
\begin{scope}
\clip[rounded corners = 5] (-.3,-.4) rectangle (1.1,2);
\filldraw[\BColor] (-.3,-.4) rectangle (1.1,2);
\filldraw[\RrColor] (.6,.3) circle (.3cm);
\filldraw[\RrColor] (0,1) to [bend left=50] (.3,1.6) to [bend left=50] (.6,1);
\end{scope}
\draw[\RsColor,thick] (.6,.3) circle (.3cm);
\draw[\RsColor,thick] (0,1) to [bend left=50] (.3,1.6) to [bend left=50] (.6,1) -- (0,1);
\draw[\QsColor,thick] (0,-.4) -- (0,1) arc (180:0:.3cm) -- (.6,.3);
\draw[\QsColor,thick] (.3,1.3) -- (.3,2);
\filldraw[\QsColor,thick] (.6,.3) circle (.05cm);
\filldraw[\QsColor,thick] (.3,1.3) circle (.05cm);
}
$
\item[\underline{\ref{Q:Frobenius}:}]
$
\tikzmath{
\begin{scope}
\clip[rounded corners = 5] (-.9,-1.2) rectangle (.9,1.2);
\filldraw[\BColor] (-.9,-1.2) rectangle (.9,1.2);
\filldraw[\RrColor] (-.6,.2) to [bend left=50] (-.3,.8) to [bend left=50] (0,.2);
\filldraw[\RrColor] (.6,-.2) to [bend left=50] (.3,-.8) to [bend left=50] (0,-.2);
\end{scope}
\draw[\RsColor,thick] (-.6,.2) to [bend left=50] (-.3,.8) to [bend left=50] (0,.2) -- (-.6,.2);
\draw[\RsColor,thick] (.6,-.2) to [bend left=50] (.3,-.8) to [bend left=50] (0,-.2) -- (.6,-.2);
\draw[\QsColor,thick] (-.6,-1.2) -- (-.6,.2) arc (180:0:.3cm) -- (0,-.2) arc (-180:0:.3cm) -- (.6,1.2);
\draw[\QsColor,thick] (.3,-1.2) -- (.3,-.5);
\draw[\QsColor,thick] (-.3,1.2) -- (-.3,.5);
\filldraw[\QsColor,thick] (.3,-.5) circle (.05cm);
\filldraw[\QsColor,thick] (-.3,.5) circle (.05cm);
}
=
\tikzmath{
\begin{scope}
\clip[rounded corners = 5] (-1,-1.2) rectangle (1,1.2);
\filldraw[\BColor] (-1,-1.2) rectangle (1,1.2);
\filldraw[\RrColor] (-.6,-.8) to [bend left=50] (-.3,.8) -- (.6,.8) to [bend left=50] (.3,-.8);
\end{scope}
\draw[\RsColor,thick] (-.6,-.8) to [bend left=50] (-.3,.8) -- (.6,.8) to [bend left=50] (.3,-.8) -- (-.6,-.8);
\draw[\QsColor,thick] (-.6,-1.2) -- (-.6,.2) arc (180:0:.3cm) -- (0,-.2) arc (-180:0:.3cm) -- (.6,1.2);
\draw[\QsColor,thick] (.3,-1.2) -- (.3,-.5);
\draw[\QsColor,thick] (-.3,1.2) -- (-.3,.5);
\filldraw[\QsColor,thick] (.3,-.5) circle (.05cm);
\filldraw[\QsColor,thick] (-.3,.5) circle (.05cm);
}
=
\tikzmath{
\begin{scope}
\clip[rounded corners = 5] (-.9,-1.2) rectangle (.9,1.2);
\filldraw[\BColor] (-.9,-1.2) rectangle (.9,1.2);
\filldraw[\RrColor] (-.3,-.8) to [bend left=50] (-.3,.8) -- (.3,.8) to [bend left=50] (.3,-.8);
\end{scope}
\draw[\RsColor,thick] (-.3,-.8) to [bend left=50] (-.3,.8) -- (.3,.8) to [bend left=50] (.3,-.8) -- (-.3,-.8);
\draw[\QsColor,thick] (-.3,-1.2) -- (-.3,-.8) arc (180:0:.3cm) -- (.3,-1.2);
\draw[\QsColor,thick] (-.3,1.2) -- (-.3,.8) arc (-180:0:.3cm) -- (.3,1.2);
\draw[\QsColor,thick] (0,-.5) -- (0,.5);
\filldraw[\QsColor,thick] (0,-.5) circle (.05cm);
\filldraw[\QsColor,thick] (0,.5) circle (.05cm);
}
=
\tikzmath{
\begin{scope}
\clip[rounded corners = 5] (-.6,-1.2) rectangle (.6,1.2);
\filldraw[\BColor] (-.6,-1.2) rectangle (.6,1.2);
\filldraw[\RrColor] (-.3,-.8) to [bend left=50] (0,-.2) to [bend left=50] (.3,-.8);
\filldraw[\RrColor] (.3,.8) to [bend left=50] (0,.2) to [bend left=50] (-.3,.8);
\end{scope}
\draw[\RsColor,thick] (-.3,-.8) to [bend left=50] (0,-.2) to [bend left=50] (.3,-.8) -- (-.3,-.8);
\draw[\RsColor,thick] (.3,.8) to [bend left=50] (0,.2) to [bend left=50] (-.3,.8) -- (.3,.8);
\draw[\QsColor,thick] (-.3,-1.2) -- (-.3,-.8) arc (180:0:.3cm) -- (.3,-1.2);
\draw[\QsColor,thick] (-.3,1.2) -- (-.3,.8) arc (-180:0:.3cm) -- (.3,1.2);
\draw[\QsColor,thick] (0,-.5) -- (0,.5);
\filldraw[\QsColor,thick] (0,-.5) circle (.05cm);
\filldraw[\QsColor,thick] (0,.5) circle (.05cm);
}
=\cdots=
\tikzmath{
\begin{scope}
\clip[rounded corners = 5] (-.9,-1.2) rectangle (.9,1.2);
\filldraw[\BColor] (-.9,-1.2) rectangle (.9,1.2);
\filldraw[\RrColor] (0,.2) to [bend left=50] (.3,.8) to [bend left=50] (.6,.2);
\filldraw[\RrColor] (0,-.2) to [bend left=50] (-.3,-.8) to [bend left=50] (-.6,-.2);
\end{scope}
\draw[\RsColor,thick] (0,.2) to [bend left=50] (.3,.8) to [bend left=50] (.6,.2) -- (0,.2);
\draw[\RsColor,thick] (0,-.2) to [bend left=50] (-.3,-.8) to [bend left=50] (-.6,-.2) -- (0,-.2);
\draw[\QsColor,thick] (-.6,1.2) -- (-.6,-.2) arc (-180:0:.3cm) -- (0,.2) arc (180:0:.3cm) -- (.6,-1.2);
\draw[\QsColor,thick] (.3,1.2) -- (.3,.5);
\draw[\QsColor,thick] (-.3,-1.2) -- (-.3,-.5);
\filldraw[\QsColor,thick] (.3,.5) circle (.05cm);
\filldraw[\QsColor,thick] (-.3,-.5) circle (.05cm);
}
$
\item[\underline{\ref{Q:separable}:}]
$
\tikzmath{
\begin{scope}
\clip[rounded corners = 5] (-.6,-1.2) rectangle (.6,1.2);
\filldraw[\BColor] (-.6,-1.2) rectangle (.6,1.2);
\filldraw[\RrColor] (-.3,.2) to [bend left=50] (0,.8) to [bend left=50] (.3,.2);
\filldraw[\RrColor] (.3,-.2) to [bend left=50] (0,-.8) to [bend left=50] (-.3,-.2);
\end{scope}
\draw[\RsColor,thick] (-.3,.2) to [bend left=50] (0,.8) to [bend left=50] (.3,.2) -- (-.3,.2);
\draw[\RsColor,thick] (.3,-.2) to [bend left=50] (0,-.8) to [bend left=50] (-.3,-.2) -- (.3,-.2);
\draw[\QsColor,thick] (-.3,.2) arc (180:0:.3cm) -- (.3,-.2) arc (0:-180:.3cm) -- (-.3,.2);
\draw[\QsColor,thick] (0,.5) -- (0,1.2);
\draw[\QsColor,thick] (0,-.5) -- (0,-1.2);
\filldraw[\QsColor,thick] (0,.5) circle (.05cm);
\filldraw[\QsColor,thick] (0,-.5) circle (.05cm);
}
=
\tikzmath{
\begin{scope}
\clip[rounded corners = 5] (-.7,-1.2) rectangle (.7,1.2);
\filldraw[\BColor] (-.7,-1.2) rectangle (.7,1.2);
\filldraw[\RrColor] (0,-.8) to [bend left=80] (0,.8) to [bend left=80] (0,-.8);
\end{scope}
\draw[\RsColor,thick] (0,-.8) to [bend left=80] (0,.8) to [bend left=80] (0,-.8);
\draw[\QsColor,thick] (-.3,.2) arc (180:0:.3cm) -- (.3,-.2) arc (0:-180:.3cm) -- (-.3,.2);
\draw[\QsColor,thick] (0,.5) -- (0,1.2);
\draw[\QsColor,thick] (0,-.5) -- (0,-1.2);
\filldraw[\QsColor,thick] (0,.5) circle (.05cm);
\filldraw[\QsColor,thick] (0,-.5) circle (.05cm);
}
=
\tikzmath{
\begin{scope}
\clip[rounded corners = 5] (-.5,-.7) rectangle (.5,.7);
\filldraw[\BColor] (-.5,-.7) rectangle (.5,.7);
\filldraw[\RrColor] (0,0) circle (.3cm);
\end{scope}
\draw[\RsColor,thick] (0,0) circle (.3cm);
\draw[\QsColor,thick] (0,-.7) -- (0,.7);
}
=
\tikzmath{
\begin{scope}
\clip[rounded corners = 5] (-.3,-.7) rectangle (.3,.7);
\filldraw[\BColor] (-.3,-.7) rectangle (.3,.7);
\end{scope}
\draw[\QsColor,thick] (0,-.7) -- (0,.7);
}
% \qquad\qquad
% \tikzmath{
% \filldraw[\RrColor] (0,0) circle (.3cm);
% \filldraw[\RrColor] (0,1.1) circle (.3cm);
% %
% \draw[\RsColor,thick] (0,0) circle (.3cm);
% \draw[\RsColor,thick] (0,1.1) circle (.3cm);
% %
% \draw[thick,\QsColor] (0,0) -- (0,1.1);
% \filldraw[\QsColor] (0,0) circle (.05cm);
% \filldraw[\QsColor] (0,1.1) circle (.05cm);
% }
% =
% \tikzmath{
% \filldraw[\RrColor] (-.3,1.1) arc (180:0:.3cm) -- (.3,0) arc (0:-180:.3cm);
% %
% \draw[\RsColor,thick] (-.3,1.1) arc (180:0:.3cm) -- (.3,0) arc (0:-180:.3cm) -- (-.3,1.1);
% %
% \draw[thick,\QsColor] (0,0) -- (0,1.1);
% \filldraw[\QsColor] (0,0) circle (.05cm);
% \filldraw[\QsColor] (0,1.1) circle (.05cm);
% }
% \in \End(1_b)^{\times}
$
%\item[\underline{\ref{Q:nondegenerate}:}]
\end{proof}

\begin{rem}
In Lemma \ref{Lem:NaturalSplit}(2), if 
$(R,m^R_b,i^R_b)$ 
and
$(Q,m^Q_R,i^Q_R)$ are non-degenerate, 
then so is
$(Q,m_b^Q,i_b^Q)$.
Indeed, $(i_b^R)^\dag \circ i_b^R$ is positive and invertible and thus bounded below.
%as in Example \ref{ex:Qsys:XXvee/XveeX}.
Hence there is some $c>0$ such that
\[
\tikzmath{
\begin{scope}
\clip[rounded corners = 5] (-.5,-.6) rectangle (.5,1.7);
\filldraw[\BColor] (-.5,-.6) rectangle (.5,1.7);
\filldraw[\RrColor] (0,0) circle (.3cm);
\filldraw[\RrColor] (0,1.1) circle (.3cm);
\end{scope}
\draw[thick,\RsColor] (0,0) circle (.3cm);
\draw[thick,\RsColor] (0,1.1) circle (.3cm);
\draw[thick,\QsColor] (0,0) -- (0,1.1);
\filldraw[\QsColor] (0,0) circle (.05cm);
\filldraw[\QsColor] (0,1.1) circle (.05cm);
}
=
\tikzmath{
\begin{scope}
\clip[rounded corners = 5] (-.5,-.6) rectangle (.5,1.7);
\filldraw[\BColor] (-.5,-.6) rectangle (.5,1.7);
\filldraw[\RrColor] (-.3,1.1) arc (180:0:.3cm) -- (.3,0) arc (0:-180:.3cm);
\end{scope}
\draw[\RsColor,thick] (-.3,1.1) arc (180:0:.3cm) -- (.3,0) arc (0:-180:.3cm) -- (-.3,1.1);
\draw[thick,\QsColor] (0,0) -- (0,1.1);
\filldraw[\QsColor] (0,0) circle (.05cm);
\filldraw[\QsColor] (0,1.1) circle (.05cm);
}
\geq
c\cdot 
\tikzmath{
\begin{scope}
\clip[rounded corners = 5] (-.5,-.6) rectangle (.5,1.7);
\filldraw[\BColor] (-.5,-.6) rectangle (.5,1.7);
\filldraw[\RrColor] (-.3,1.1) arc (180:0:.3cm) -- (.3,0) arc (0:-180:.3cm);
\end{scope}
\draw[\RsColor,thick] (-.3,1.1) arc (180:0:.3cm) -- (.3,0) arc (0:-180:.3cm) -- (-.3,1.1);
}
\in
\End(1_b)^\times.
\]
\end{rem}

%%%%%%%%%%%%%%%%%%%%%%%%%%%%%%%%%%%%%%%%%%%%%%%%
%%%%%%%%%%%%%%%%%%%%%%%%%%%%%%%%%%%%%%%%%%%%%%%%
%%%%%%%%%%%%%%%%%%%%%%%%%%%%%%%%%%%%%%%%%%%%%%%%
\subsection{Q-system completion}
\label{sec:QSystemCompletion}

We now present the notion of Q-system completeness for a $\rm C^*/W^*$ 2-category.
The following presentation is 
motivated by \cite[\S A.5]{1812.11933} and \cite{1905.09566}.
Similar results to those in this section were proved for particular $\rm W^*$ 2-subcategories of $\vNA$ in \cite[\S3.2]{2010.01072} (see Example \ref{ex:GY} below).

\begin{defn}
We say that a $\rm C^*$-2-category $\cC$ is \emph{Q-system complete} if the inclusion $\iota_\cC:\cC\hookrightarrow \QSys(\cC)$ is a $\dag$ 2-equivalence.
\end{defn}

\begin{rem}[{cf.~\cite[Prop.~A.5.4]{1812.11933}}]
\label{rem:QSysCompleteIffESon0}
Observe that by construction,
the inclusion $\dag$ 2-functor $\iota_\cC : \cC \to \QSys(\cC)$ is
\begin{enumerate}[label=($\iota$\arabic*)]
\item
\label{iota:FFon2}
unitarily essentially surjective on 1-morphisms between trivial Q-systems
(those of the form $1_c$ for $c\in\cC$), 
since the unital Frobenius $1_a-1_b$ bimodule objects are exactly the 1-morphisms
${}_aX_b \in \cC(a\to b)$,\footnote{For a unital $1_a-1_b$ bimodule ${}_aX_b\in \cC(a\to b)$, the left $1_a$- and right $1_b$-actions must be unitors in $\cC$.}
and

\item
\label{iota:ESon1}
fully faithful on 2-morphisms,
since the $1_a-1_b$ bimodule maps ${}_aX_b\Rightarrow {}_aY_b$ are exactly the 2-morphisms $f\in \cC({}_aX_b\Rightarrow {}_aY_b)$.
\end{enumerate}
This means that by a dagger version of \cite[Thm.~7.4.1]{MR4261588}, 
%\dave{this is sketchy: better to show $|\cdot|$ is inverse to $\iota_\cC$ via a unitary 2-transformation!},
$\cC$ is Q-system complete if and only if $\iota_\cC$ is unitarily essentially surjective on objects.
By Remark \ref{rem:IsoIffUnitarilyIso}, it suffices to prove $\iota_\cC$ is (algebraically) essentially surjective on objects.
%(This reduction was also observed in the first line of the proof of \nn{Lem.~4.1 2010.01072})
\end{rem}

Various results related to the next theorem for Q-systems  have appeared in the subfactor literature; for example, see \cite[Thm.~3.11]{MR3308880} and \cite{MR1966524}.

\begin{thm}[{cf.~\cite[Prop.~A.4.2]{1812.11933}}]
\label{Thm:QSystemSplit}
A $\rm C^*/W^*$ 2-category $\cC$ is Q-system complete if and only if
every Q-system $Q\in \cC(b\to b)$ splits, i.e., there is an object $c\in \cC$ and
a dualizable 1-morphism ${}_bX_c\in \cC(b\to c)$ which admits a unitarily separable dual $(X^\vee,\ev_X,\coev_X)$ such that
$(Q,m,i)$ is isomorphic to $X\xz_c X^\vee$ with its usual multiplication and unit from Example \ref{ex:XXvQSystem}.
\end{thm}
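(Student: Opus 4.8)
The plan is to first invoke Remark~\ref{rem:QSysCompleteIffESon0}, which reduces Q-system completeness to $\iota_\cC$ being (algebraically) essentially surjective on objects. Since $\iota_\cC$ sends an object $c\in\cC$ to the trivial Q-system $1_c$, this says precisely that every Q-system $Q\in\cC(b\to b)$ is equivalent in $\QSys(\cC)$ to some $1_c$. Thus it suffices to prove the biconditional: a Q-system $Q$ splits in the sense of the statement if and only if $Q$ is equivalent to a trivial Q-system $1_c$ in $\QSys(\cC)$. I would organize the whole argument around matching the data of a splitting with the data of such an equivalence.

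For the ``splitting $\Rightarrow$ equivalent'' direction, suppose $Q\cong X\xz_c X^\vee$ with $({}_bX_c,X^\vee,\ev_X,\coev_X)$ a unitarily separable left dual as in Example~\ref{ex:XXvQSystem}. I would view $X$ as a $Q$--$1_c$ bimodule, with left $Q=X\xz_c X^\vee$ acting by $\id_X\xz\ev_X$ and right $1_c$ acting by the unitor; the bimodule and separability axioms \ref{M:associativity}--\ref{M:separable} are the standard consequences of the zig-zag equations together with $\ev_X\circ\ev_X^\dag=\id_{1_c}$. To see $X$ is an equivalence I take $X^\vee$ as pseudo-inverse and check the two composites. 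On one side $X\xzq_{1_c}X^\vee = X\xz_c X^\vee\cong Q$, since tensoring over the trivial Q-system is just $\xz_c$, and this is exactly $Q$ as the unit of $\QSys(\cC)(Q\to Q)$. On the other side I compute the separability projector of Definition~\ref{defn:QSysTensorProduct}: unwinding \eqref{eq:SeparabilityProjector} with $\lambda_X=\id_X\xz\ev_X$ and $\rho_{X^\vee}=\ev_X\xz\id_{X^\vee}$ collapses $p_{X^\vee,X}$ to $\ev_X^\dag\circ\ev_X$. Unitary separability makes $\ev_X$ a coisometry splitting this projector, so $X^\vee\xzq_Q X\cong 1_c$ with $u_{X^\vee,X}=\ev_X$. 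Hence $X$ is an equivalence and $Q$ is equivalent to $1_c$.

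For the converse, assume $Q$ is equivalent to $1_c$ in $\QSys(\cC)$. Because $\QSys(\cC)$ is a $\rm C^*/W^*$ 2-category, I would first promote this equivalence to a unitary adjoint equivalence (unitarizing the coherence 2-isomorphisms as in Remark~\ref{rem:IsoIffUnitarilyIso}), obtaining a $Q$--$1_c$ bimodule $X$, a pseudo-inverse $1_c$--$Q$ bimodule $Y$, and unitary 2-isomorphisms $\epsilon\colon X\xz_c Y\xrightarrow{\sim}Q$ and $\eta\colon Y\xzq_Q X\xrightarrow{\sim}1_c$ satisfying the triangle identities. I then set $X^\vee:=Y$ and define $\ev_X:=\eta\circ u_{Y,X}$ and $\coev_X:=\epsilon^\dag\circ i$, where $i\colon 1_b\to Q$ is the unit of $Q$. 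Unitary separability is immediate: $\ev_X\circ\ev_X^\dag=\eta\circ u_{Y,X}\circ u_{Y,X}^\dag\circ\eta^\dag=\id_{1_c}$ since $u_{Y,X}$ is a coisometry and $\eta$ is unitary. It then remains to verify that $(X^\vee,\ev_X,\coev_X)$ is a genuine dual (the zig-zag equations) and that $\epsilon$ intertwines the canonical Q-system on $X\xz_c X^\vee$ from Example~\ref{ex:XXvQSystem} with $(Q,m,i)$; the unit is matched at once, as $\epsilon\circ\coev_X=\epsilon\circ\epsilon^\dag\circ i=i$.

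I expect the main obstacle to be exactly this last step of the converse: translating the triangle identities of the adjoint equivalence in $\QSys(\cC)$---which are phrased via the relative tensor products $\xzq$ and their coisometries $u$---back into the zig-zag equations in $\cC$, and checking that the transported multiplication $\epsilon^\dag\circ m\circ(\epsilon\xz_b\epsilon)$ agrees with $\id_X\xz\ev_X\xz\id_{X^\vee}$. This is a careful diagrammatic bookkeeping exercise using $u_{X,Y}^\dag\circ u_{X,Y}=p_{X,Y}$ and the fact that all structure maps are bimodule maps. The two directions together then establish the claimed biconditional, and hence the theorem via the reduction in the first paragraph.
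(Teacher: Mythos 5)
Your reduction via Remark \ref{rem:QSysCompleteIffESon0}, and your ``splitting $\Rightarrow$ equivalent to a trivial Q-system'' direction, are essentially the paper's own argument: the paper likewise views the splitting 1-morphism as a $Q$--$1_c$ bimodule, computes the separability projector of Definition \ref{defn:QSysTensorProduct} to be $\ev^\dag\circ\ev$, and uses unitary separability to see that $\ev$ is a coisometry splitting it, so that the bimodule is invertible with $X\xzq_{1_c}X^\vee=X\xz_cX^\vee\cong Q$ and $X^\vee\xzq_Q X\cong 1_c$.

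The converse is where you depart from the paper, and where your proposal has a genuine gap. You set $\ev_X:=\eta\circ u_{Y,X}$, $\coev_X:=\epsilon^\dag\circ i$, and assert the zig-zag equations will follow from the triangle identities by ``careful diagrammatic bookkeeping.'' They do not follow by unwinding alone. If you unpack a triangle identity using $\alpha^{\QSys}$ from \eqref{eq:DefinitionOfAlphaQSys}, the unitor $\lambda_X^Q=\lambda_X\circ u_{Q,X}^\dag$, the relations $u^\dag u=p$, and bimodularity of $\epsilon^\dag$, what you actually obtain is
\[
\rho_X\circ(\id_X\xz \ev_X)\circ\alpha^\cC\circ(\epsilon^\dag\xz\id_X)\circ\lambda_X^\dag=\id_X,
\]
whose right-most factor is the \emph{adjoint of the module action}, whereas the zig-zag you need has $(i\xz\id_X)$ (up to a unitor) in that spot. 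These are genuinely different sections of $\lambda_X$: the first lies in the image of $p_{Q,X}$, the second in general does not (already for $Q=M_n(\bbC)$ acting on $\bbC^n$). Bridging them is the real content of this direction: one must show that $F:=\rho_X\circ(\id_X\xz\ev_X)\circ\alpha^\cC\circ(\epsilon^\dag\xz\id_X)$ coequalizes the two middle $Q$-actions (using Remark \ref{rem:QSysCoequalizer} and that $\epsilon$ is a bimodule map), deduce $F\circ(i\xz\id_X)\circ\lambda_X=F$, and only then invoke separability \ref{M:separable}, $\lambda_X\circ\lambda_X^\dag=\id_X$, to get $F\circ(i\xz\id_X)=F\circ\lambda_X^\dag=\id_X$; a parallel argument is needed to check $\epsilon$ intertwines $\id_X\xz\ev_X\xz\id_{X^\vee}$ with $m$. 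So your route can be completed, but the step you defer is the theorem's heart, not bookkeeping, and the key ingredient (separability as the bridge between $\lambda_X^\dag$ and $i\xz\id_X$) is missing from your outline. Note that the paper sidesteps this entirely by a different construction: it replaces the equivalence bimodule $Y$ by $X:=Q\xzq_Q Y$, a 1-morphism between \emph{trivial} Q-systems; since $\xzq_{1_c}=\xz_c$ and hom-spaces between such bimodules in $\QSys(\cC)$ coincide with those in $\cC$, the unitarily separable duality of $X$ in $\cC$ is inherited on the nose, and the splitting reduces to an explicit Q-system isomorphism $X\xzq_{1_c}X^\vee\cong Q\xzq_Q Q$ together with Lemma \ref{Lem:NaturalSplit}(1).
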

\begin{proof}
\item[\underline{$\Rightarrow$:}]
Suppose $Q\in \cC(b\to b)$ is a Q-system.
Since $\cC \hookrightarrow \QSys(\cC)$ is an equivalence, it is unitarily essentially surjective on objects.
This means there is an object $c\in \cC$ and an invertible (which implies dualizable) separable bimodule 
${}_{Q}Y_{1_c}=
\tikzmath{
\begin{scope}
\clip[rounded corners=5pt] (-.3,0) rectangle (.3,.6);
\fill[\QrColor] (0,0) rectangle (-.3,.6);
\fill[\CColor] (0,0) rectangle (.3,.6);
\end{scope}
\draw[thick, \YColor] (0,0) -- (0,.6);
}\in \QSys(\cC)(Q\to 1_c)$
together with unitaries
$$
\varepsilon
:=
\tikzmath{
\begin{scope}
\clip[rounded corners=5pt] (-.3,0) rectangle (.7,.6);
\fill[\QrColor] (-.3,0) rectangle (.7,.6);
\fill[\CColor] (0,0) -- (0,.2) arc (180:0:.2cm) -- (.4,0);
\end{scope}
\draw[thick,\YColor] (0,0) -- (0,.2) arc (180:0:.2cm) -- (.4,0);
}
:{}_{Q}Y\xzq_{1_c} Y^\vee_Q \xrightarrow{\cong} {}_QQ_Q.
\qquad \text{and} \qquad
\delta:=
\tikzmath{
\begin{scope}
\clip[rounded corners=5pt] (-.3,0) rectangle (.7,.6);
\fill[\CColor] (-.3,0) rectangle (.7,.6);
\fill[\QrColor] (0,.6) -- (0,.4) arc (-180:0:.2cm) -- (.4,.6);
\end{scope}
\draw[thick,\YColor] (0,.6) -- (0,.4) arc (-180:0:.2cm) -- (.4,.6);
}
:{}_{1_c}{1_c}_{1_c} \xrightarrow{\cong} {}_{1_c}Y^\vee\xzq_Q Y_{1_c}, 
$$
which satisfy the zigzag conditions.
Since $\varepsilon$ and $\delta$ are unitaries, we see that $Y^\vee$ is a unitarily separable dual for $Y$, and $Y$ is a unitarily separable dual for $Y^\vee$.

Now consider
${}_{1_b}Q_Q=\tikzmath{
\begin{scope}
\clip[rounded corners=5pt] (-.3,0) rectangle (.3,.6);
\fill[\BColor] (0,0) rectangle (-.3,.6);
\fill[\QrColor] (0,0) rectangle (.3,.6);
\end{scope}
\draw[thick, \QsColor] (0,0) -- (0,.6);
}$ 
as a 
$1_b-Q$ bimodule,
and set
$X:= {}_{1_b}Q\xzq_Q Y_{1_c}
=
\tikzmath{
\begin{scope}
\clip[rounded corners=5pt] (-.3,0) rectangle (.6,.6);
\fill[\BColor] (0,0) rectangle (-.3,.6);
\fill[\QrColor] (0,0) rectangle (.3,.6);
\fill[\CColor] (.6,0) rectangle (.3,.6);
\end{scope}
\draw[thick, \QsColor] (0,0) -- (0,.6);
\draw[thick, \YColor] (.3,0) -- (.3,.6);
}$.
Then clearly ${}_{1_b}X\xzq_{1_c} X^\vee_{1_b}=\tikzmath{
\begin{scope}
\clip[rounded corners=5pt] (-.3,0) rectangle (1.3,.6);
\fill[\BColor] (0,0) rectangle (-.3,.6);
\fill[\QrColor] (0,0) rectangle (.3,.6);
\fill[\CColor] (.7,0) rectangle (.3,.6);
\fill[\QrColor] (.7,0) rectangle (1,.6);
\fill[\BColor] (1,0) rectangle (1.3,.6);
\end{scope}
\draw[thick, \QsColor] (0,0) -- (0,.6);
\draw[thick, \YColor] (.3,0) -- (.3,.6);
\draw[thick, \YColor] (.7,0) -- (.7,.6);
\draw[thick, \QsColor] (1,0) -- (1,.6);
}$ is isomorphic to ${}_bQ\xzq_Q Q_b$ as Q-systems via the isomorphism
\[
u:=
\tikzmath{
\begin{scope}
\clip[rounded corners=5pt] (-.3,0) rectangle (1.3,.6);
\fill[\BColor] (0,0) rectangle (-.3,.6);
\fill[\QrColor] (0,0) rectangle (1,.6);
\fill[\CColor] (.3,0) -- (.3,.2) arc (180:0:.2cm) -- (.7,0);
\fill[\BColor] (1,0) rectangle (1.3,.6);
\end{scope}
\draw[thick, \QsColor] (0,0) -- (0,.6);
\draw[thick, \YColor] (.3,0) -- (.3,.2) arc (180:0:.2cm) -- (.7,0);
% \draw[thick, \YColor] (.7,0) -- (.7,.6);
\draw[thick, \QsColor] (1,0) -- (1,.6);
}
\qquad\Longrightarrow\qquad
\tikzmath{
\begin{scope}
\clip[rounded corners=5pt] (-.6,-.5) rectangle (2.4,1.9);
\fill[\QrColor] (-.6,-.5) rectangle (2.4,1.9);
\fill[\BColor] (.7,-.5) -- (.7,0) arc (180:0:.2cm) -- (1.1,-.5);
\fill[\BColor] (-.3,-.5) -- (-.3,.1) .. controls ++(90:.3cm) and ++(270:.3cm) .. (.4,1.3) -- (.4,1.9) -- (-.6,1.9) -- (-.6,-.5);
\fill[\BColor] (2.1,-.5) -- (2.1,.1) .. controls ++(90:.3cm) and ++(270:.3cm) .. (1.4,1.3) -- (1.4,1.9) -- (2.4,1.9) -- (2.4,-.5);
\fill[\CColor] (0,0) arc (-180:0:.2cm) arc (180:0:.5cm) arc (-180:0:.2cm) -- (1.8,.1) .. controls ++(90:.3cm) and ++(270:.3cm) .. (1.1,1.3) -- (1.1,1.4) arc (0:180:.2cm) -- (.7,1.3) .. controls ++(270:.3cm) and ++(90:.3cm) .. (0,.1) -- (0,0);
\end{scope}
\draw[thick,\YColor] (0,0) arc (-180:0:.2cm) arc (180:0:.5cm) arc (-180:0:.2cm) -- (1.8,.1) .. controls ++(90:.3cm) and ++(270:.3cm) .. (1.1,1.3) -- (1.1,1.4) arc (0:180:.2cm) -- (.7,1.3) .. controls ++(270:.3cm) and ++(90:.3cm) .. (0,.1) -- (0,0);
\draw[thick,\QsColor] (.7,-.5) -- (.7,0) arc (180:0:.2cm) -- (1.1,-.5);
\draw[thick,\QsColor] (-.3,-.5) -- (-.3,.1) .. controls ++(90:.3cm) and ++(270:.3cm) .. (.4,1.3) -- (.4,1.9);
\draw[thick,\QsColor] (2.1,-.5) -- (2.1,.1) .. controls ++(90:.3cm) and ++(270:.3cm) .. (1.4,1.3) -- (1.4,1.9);
\draw[dashed] (-.6,0) -- (2.4,0);
\draw[dashed] (-.6,1.4) -- (2.4,1.4);
}
=
\tikzmath{
\begin{scope}
\clip[rounded corners=5pt] (-.7,0) rectangle (.7,.9);
\fill[\BColor] (-.7,0) rectangle (.7,.9);
\fill[\QrColor] (-.4,0) -- (-.4,.2) .. controls ++(90:.2cm) and ++(270:.2cm) .. (-.1,.7) -- (-.1,.9) -- (.1,.9) -- (.1,.7)  .. controls ++(270:.2cm) and ++(90:.2cm) .. (.4,.2) -- (.4,0);
\fill[\BColor] (-.2,0) -- (-.2,.2) arc (180:0:.2cm) -- (.2,0);
\end{scope}
\draw[thick,\QsColor] (-.2,0) -- (-.2,.2) arc (180:0:.2cm) -- (.2,0);
\draw[thick,\QsColor] (-.4,0) -- (-.4,.2) .. controls ++(90:.2cm) and ++(270:.2cm) .. (-.1,.7) -- (-.1,.9);
\draw[thick,\QsColor] (.4,0) -- (.4,.2) .. controls ++(90:.2cm) and ++(270:.2cm) .. (.1,.7) -- (.1,.9);
}
\qquad\qquad
\tikzmath{
\begin{scope}
\clip[rounded corners=5pt] (-.8,-.8) rectangle (.8,.5);
\fill[\BColor] (-.8,-.8) rectangle (.8,.5);
\fill[\QrColor] (-.5,.5) -- (-.5,0) arc (-180:0:.5cm) -- (.5,.5);
\fill[\CColor] (0,0) circle (.2cm);
\end{scope}
\draw[thick,\YColor] (0,0) circle (.2cm);
\draw[thick,\QsColor] (-.5,.5) -- (-.5,0) arc (-180:0:.5cm) -- (.5,.5);
\draw[dashed] (-.8,0) -- (.8,0);
}
=
\tikzmath{
\begin{scope}
\clip[rounded corners=5pt] (-.4,-.4) rectangle (.4,.5);
\fill[\BColor] (-.7,-.4) rectangle (.7,.5);
\fill[\QrColor] (-.1,.5) -- (-.1,0) arc (-180:0:.1cm) -- (.1,.5);
\end{scope}
\draw[thick,\QsColor] (-.1,.5) -- (-.1,0) arc (-180:0:.1cm) -- (.1,.5);
}\,.
\]
%Therefore, ${}_{1_b}X\xzq_{1_a} X^\vee_{1_b}\cong {}_bQ\xzq_Q Q_b$ as Q-systems. 
By Lemma \ref{Lem:NaturalSplit}(1), ${}_{1_b}X\xzq_{1_a} X^\vee_{1_b}\cong {}_bQ\xzq_Q Q_b\cong {}_bQ_b$ as Q-systems.
Hence $Q$ splits as desired. 

%\item[\underline{$\Leftarrow$:}]
% \dave{i need to read this over and polish.}

% \noindent\underline{Claim:} Suppose $Q\in\QSys(\cC)$ is an object and ${}_QQ_{Q}$ is the trivial Q-system in $\QSys(\cC)$ with $i=\id_Q$ and $p$ is a projection on ${}_QQ_Q$, then $p({}_QQ_Q)$ is a Q-system if and only if $p=\id$.
% \begin{proof}
% Note that the unit on $p({}_QQ_Q)$ is given by $p\circ i=p$. 
% Then $\dim(p(Q))=p^\dag\circ p=p$ is invertible.
% But the only invertible projection is the identity. 
% \end{proof}
% \begin{proof}
% Note that the unit on $p({}_QQ_Q)$ is given by $p\circ i$. 
% Since $i$ is invertible, $\dim(p(Q))=(p\circ i)^\dag\circ (p\circ i)=i^\dag \circ p\circ i$ is invertible if and only if $p$ is invertible
% \dave{this claim needs to be justified. note that $i$ is not invertible; $i^\dag i$ is invertible.}.
% But the only invertible projection is the identity.
% \end{proof}

\item[\underline{$\Leftarrow$:}]
Suppose every Q-system $Q\in\cC(b\to b)$ splits.
To show $\cC$ is Q-system complete, by Remark \ref{rem:QSysCompleteIffESon0}, it suffices to show that every $Q$-system ${}_bQ_b$ is equivalent to a trivial Q-system. 
Let ${}_{b}Y_c$ be a dualizable 1-morphism 
with unitarily separable dual $({}_cY^\vee_b, \ev_Y, \coev_Y)$
such that ${}_bQ_b\cong {}_bY\xz_cY^\vee_b$ as Q-systems.
This isomorphism intertwines the $Q-Q$ bimodule structure on ${}_bQ{}_b$ with the ${}_bY\xz_c Y^\vee_b-{}_bY\xz_c Y^\vee_b$ bimodule structure on ${}_bY\xz_c Y^\vee_b$.
The rest of the proof is now similar to Example \ref{ex:EquivalentRestrictedQSystem}.
Observe that under the above isomorphism, the canonical separability projector
$p_{Y^\vee,Y} \in \End({}_cY^\vee \xz_b Y_c)$
onto
${}_cY^\vee \xzq_Q Y{}_c \cong {}_cY^\vee \xzq_{Y\xz_b Y^\vee} Y_c$ 
is given as in \eqref{eq:SeparabilityProjector} by
$$
p_{Y,Y^\vee}
=
\tikzmath{
\begin{scope}
\clip[rounded corners = 5pt] (-.6,-.6) rectangle (.6,.6);
\filldraw[\CColor] (-.6,-.6) rectangle (-.3,.6);
\filldraw[\BColor] (-.3,-.6) rectangle (.3,.6);
\filldraw[\CColor] (.3,-.6) rectangle (.6,.6);
\end{scope}
\draw[thick, \QsColor] (-.3,-.3) arc (-90:0:.3cm) arc (180:90:.3cm);
\draw[thick, \YColor] (-.3,-.6) -- (-.3,.6);
\draw[thick, \YColor] (.3,-.6) -- (.3,.6);
\filldraw[\YColor] (-.3,-.3) circle (.05cm);
\filldraw[\YColor] (.3,.3) circle (.05cm);
}
=
\tikzmath{
\fill[\CColor, rounded corners=5pt] (-.8,-.8) rectangle (.8,.8);
\filldraw[fill=\BColor, thick, draw=\YColor] (-.4,-.8) -- (-.4,-.5) arc (180:90:.1cm) arc (-90:0:.4cm) arc (180:90:.2cm) arc (90:0:.1cm) -- (.4,-.8);
\filldraw[fill=\BColor, thick, draw=\YColor] (-.4,.8) -- (-.4,-.1) arc (180:270:.1cm) arc (-90:0:.2cm) arc (180:90:.4cm) arc (-90:0:.1cm) -- (.4,.8);
}
=
\tikzmath{
\begin{scope}
\clip[rounded corners=5pt] (-.6,-.5) rectangle (.6,.5);
\fill[\CColor] (-.6,-.5) rectangle (.6,.5);
\fill[\BColor] (-.3,-.5) arc (180:0:.3cm);
\fill[\BColor] (-.3,.5) arc (-180:0:.3cm);
\end{scope}
\draw[thick, \YColor] (-.3,-.5) arc (180:0:.3cm);
\draw[thick, \YColor] (-.3,.5) arc (-180:0:.3cm);
}
=
\ev_Y^\dag \circ \ev_Y.
$$
Thus $\ev_Y$ is a coisometry which splits $p_{Y^\vee,Y}$.
Since the image of $\ev_Y$ is $1_c$, we see that ${}_QY_c$ is an invertible $Q-1_c$ bimodule in $\QSys(\cC)(Q\to 1_c)$, as desired.
\end{proof}

The following example of Q-system splitting will be useful in our proof of Theorem \ref{thm:QSysComplete}, which says that $\rCorr$ is Q-system complete.

\begin{cor}[{cf.~\cite[Prop. 4.2]{MR3459961} and \cite[Prop.~A.5.3]{1812.11933}}]
$\QSys(\cC)$ is Q-system complete.
\end{cor}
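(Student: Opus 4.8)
The plan is to invoke Theorem \ref{Thm:QSystemSplit} with $\QSys(\cC)$ in place of $\cC$: it suffices to show that every Q-system in $\QSys(\cC)$ splits. So fix a Q-system $R\in\cC(b\to b)$, regarded as an object of $\QSys(\cC)$, together with a Q-system $(Q,m^Q_R,i^Q_R)\in \QSys(\cC)(R\to R)$. The first step is to apply Lemma \ref{Lem:NaturalSplit}(2), which endows $Q$ with a canonical Q-system structure $(Q,m^Q_b,i^Q_b)$ directly in $\cC(b\to b)$; denote this object of $\QSys(\cC)$ by $Q_b$. This collapse of a ``Q-system of Q-systems'' down to a single Q-system in $\cC$ is the conceptual heart of the statement, and is exactly the content of that lemma.

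Next I would exhibit the splitting data over the object $Q_b$. Take $X:={}_RQ_{Q_b}$, the underlying $1$-morphism $Q\in\cC(b\to b)$ regarded as a bimodule whose left $R$-action is the module structure coming from $Q\in\QSys(\cC)(R\to R)$ and whose right $Q_b$-action is the multiplication $m^Q_b$ produced by Lemma \ref{Lem:NaturalSplit}(2). Its candidate dual is $X^\vee:={}_{Q_b}Q_R$ with the reflected actions, and the evaluation $\ev_X\colon X^\vee\xz_R X\to 1_{Q_b}$ and coevaluation $\coev_X\colon 1_R\to X\xz_{Q_b}X^\vee$ are built from the multiplication, comultiplication, and unit of $Q$, exactly as in the Q-system $X\xz X^\vee$ of Example \ref{ex:XXvQSystem}. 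The key identity to check is the unitary separability $\ev_X\circ \ev_X^\dag=\id_{1_{Q_b}}$, which is precisely the separability axiom \ref{Q:separable} for $Q_b$; the zig-zag equations then reduce to \ref{Q:unitality} together with \ref{Q:Frobenius} for $Q$.

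Finally, I would identify $X\xzq_{Q_b}X^\vee$ with $Q$ as Q-systems in $\QSys(\cC)(R\to R)$. Since $X$ carries the regular $Q_b$-module on the right and $X^\vee$ on the left, the coisometry splitting the separability projector $p_{X,X^\vee}$ is the multiplication $m^Q_b$, in complete analogy with Lemma \ref{Lem:NaturalSplit}(1), where ${}_bQ\xzq_Q Q_b\cong {}_bQ_b$; and one checks using Facts \ref{Facts:QSysU} that this isomorphism intertwines the induced Q-system structure on $X\xzq_{Q_b}X^\vee$ with that of $Q$. This produces a dualizable $1$-morphism ${}_RX_{Q_b}$ with unitarily separable dual and $Q\cong X\xzq_{Q_b}X^\vee$, so $Q$ splits, and Theorem \ref{Thm:QSystemSplit} gives that $\QSys(\cC)$ is Q-system complete.

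The main obstacle I anticipate is bookkeeping rather than any genuine difficulty: one must simultaneously track three relative tensor products---over $b$ inside $\cC$, over $R$, and over $Q_b$ in $\QSys(\cC)$---and verify that the left $R$-action and the right $Q_b$-action on $X$ genuinely commute and satisfy the bimodule axioms (B1)--(B4), so that $X$ is a legitimate $1$-morphism of $\QSys(\cC)$. Once the graphical dictionary of Facts \ref{Facts:QSysU} and Lemma \ref{Lem:NaturalSplit} is in place, each of the separability and zig-zag verifications is a short diagram chase, but assembling them coherently, with the correct shadings, is the delicate part.
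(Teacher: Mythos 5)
Your proposal is correct and follows essentially the same route as the paper's proof: reduce via Theorem \ref{Thm:QSystemSplit}, apply Lemma \ref{Lem:NaturalSplit}(2) to regard the Q-system $Q\in\QSys(\cC)(R\to R)$ as a Q-system $Q_b$ in $\cC(b\to b)$, and split it through the unitarily separable dualizable bimodule ${}_RQ_{Q_b}$, identifying $Q\cong Q\xzq_{Q_b}Q$ as Q-systems in the spirit of Lemma \ref{Lem:NaturalSplit}(1). The paper compresses this into two sentences; the verifications you outline (duality data, separability, bimodule axioms) are precisely the details it leaves implicit.
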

\begin{proof}
By Theorem \ref{Thm:QSystemSplit}, it suffices to prove every Q-system $P\in \QSys(\cC)(Q\to Q)$ splits, where $Q\in \cC(b\to b)$ is a Q-system in $\cC$. 
According to Lemma \ref{Lem:NaturalSplit}(2), $P\in\cC(b\to b)$ is a Q-system in $\cC$, ${}_QP_Q\cong {}_Q P\xzq_P P_Q$ as Q-systems, where ${}_QP_P\in \QSys(\cC)(Q\to P)$ is a unitarily separable dualizable 1-morphism. 
\end{proof}
% \todo{}

% By Remark \ref{rem:QSysCompleteIffESon0}, it suffices to prove $\iota_{\QSys(\cC)}$ is essentially surjective on objects.
% Suppose $Q\in \cC(b\to b)$ is a Q-system in $\cC$ and ${}_bP{}_b\in \QSys(Q\to Q)$ is a Q-system in $\QSys(\cC)$.
% We claim that 
% (1)
% $P\in \cC(b\to b)$ is also a Q-system in $\QSys(1_b \to 1_b)$, so we may view $P$ as a Q-system in $\QSys(\cC)(P\to P)$, and
% (2) the Q-system $P \in \QSys(1_b \to 1_b)$ is 1-isomorphic to the Q-system ${}_PP_P \in \QSys(P\to P)$.

% \todo{make this better...}

\begin{ex}[{\cite{2010.01072}}]
\label{ex:GY}
Let $\vNA_{\rm II_1}^{\mathsf{factor}}$ denote the $\rm W^*$ 2-category whose objects are type $\rm II_1$ factors.
Then $\QSys(\vNA_{\rm II_1}^{\mathsf{factor}})=\vNA_{\rm II_1}^{\mathsf{multifactor}}$, whose objects are $\rm II_1$ multifactors, i.e., finite direct sums of $\rm II_1$ factors.
For $R$ the hyperfinite $\rm II_1$ factor, the objects of $\QSys(\Bim(R))$ are exactly the hyperfinite $\rm II_1$ \emph{multifactors}, i.e., finite direct sums of $R$.
\end{ex}

\begin{ex}
Let $\cC$ be a unitary fusion category.
Recall from Remark \ref{rem:delooping} that $\rmB\cC$ is the delooping of $\cC$, which is $\cC$ considered as a $\rm C^*$ 2-category with one object.

The $\rm C^*$ 2-category $\Mod^\dag(\cC)$ has 
\begin{itemize}
\item
objects finitely semisimple left $\cC$-module $\rm C^*$-categories, 
\item
1-morphisms $\dag$ $\cC$-module functors, and 
\item
2-morphisms $\cC$-module natural transformations.
\end{itemize}

By the unitary version of the Barr-Beck/Ostrik theorem \cite[Thm.~3.1]{MR1976459}, \cite[\S4]{MR3847209} for unitary fusion categories,
$\Mod^\dag(\cC)$ is unitarily 2-equivalent to the Q-system completion $\QSys(\rmB\cC)$.
We sketch a proof below for the reader's convenience.

It is straightforward to prove that the map
\begin{equation}
\label{eq:Equivalence2Functor}
Q \mapsto \Mod^\dag_\cC(Q)
\qquad\qquad
{}_PX_Q \mapsto (-\xzq_P X : \Mod^\dag_\cC(P) \to \Mod^\dag_\cC(Q))
\end{equation}
defines a $\dag$ 2-functor $\QSys(\rmB\cC)\to \Mod^\dag(\cC)$.
Here, $\Mod^\dag_\cC(Q)$ is the category of unitarily separable right $Q$-module objects in $\cC$ with right $Q$-module maps, which is $\rm C^*$ by the analogous version of \ref{QSys:StarClosed}.

One checks that for all unital $P-Q$ bimodules ${}_PX_Q, {}_PY_Q$,
$$
\Hom_{P-Q}(X \Rightarrow Y)
\ni
\theta
\longmapsto
(-\xzq \theta 
: - \xzq_PX_Q \Rightarrow -\xzq_P Y_Q)
$$
is an isomorphism.
Indeed, every $\cC$-module natural transformation $\theta: - \xzq_P X_Q \Rightarrow -\xzq_P X_Q$ is completely determined by $\theta_P$ using that $\Mod^\dag_\cC(P)$ is the unitary Cauchy completion of $\mathsf{FreeMod}_\cC(P)$.

Thus to show our 2-functor \eqref{eq:Equivalence2Functor} is fully faithful, we need to prove the hom functors are essentially surjective.
Suppose $\cF: \Mod^\dag_\cC(P) \to \Mod^\dag_\cC(Q)$ is a $\cC$-module $\dag$-functor. 
Then $\cF(P)\in \Mod^\dag_\cC(Q)$ carries a unitarily separable right $Q$-action be definition, and a left $P$-action using the modulator $\lambda_P:= \cF(m_P)\circ \cF^2_{P,P}$.
Moreover, it is straightforward to see 
$$
\lambda_P\lambda_P^\dag
=
\cF(m_P)\circ \cF^2_{P,P} \circ (\cF^2_{P,P})^\dag \circ \cF(m_P)^\dag
=
\cF(m_P\circ m_P^\dag)
=
\id_{X},
$$
so the left $P$-action on $\cF(P)$ is unitarily separable.
One then checks that $\cF$ is unitarily isomorphic to  $ -\xzq_P \cF(P)$.

It remains to show the $\dag$ 2-functor \eqref{eq:Equivalence2Functor} is essentially surjective.
This follows by \cite[Thm.~A.1]{MR3933035}, which essentially shows that a semisimple left $\cC$-module $\rm C^*$-category $\cM$ which admits a pointing is unitarily equivalent to $\Mod^\dag_\cC(Q)$ for some Q-system $Q$.
\end{ex}

\begin{rem}
 Q-system completion satisfies a universal property similar to \cite[\S1.2]{MR4372801}.
We leave this to \cite{MR4369356} as it would take us too far afield.
\end{rem}

%%%%%%%%%%%%%%%%%%%%%%%%%%%%%%%%%%%%%%%%%%%%
%%%%%%%%%%%%%%%%%%%%%%%%%%%%%%%%%%%%%%%%%%%%
%%%%%%%%%%%%%%%%%%%%%%%%%%%%%%%%%%%%%%%%%%%%
\section{\texorpdfstring{$\rCorr$}{C*Alg} is Q-system complete}
\label{sec:C*AlgQSysComplete}

In this section we prove the following theorem.

\begin{thm*}[Theorem \ref{thm:QSysComplete}]
$\rCorr$ is Q-system complete.
\end{thm*}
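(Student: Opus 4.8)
The plan is to reduce to Q-system splitting and then realize the splitting concretely. By Theorem \ref{Thm:QSystemSplit}, since $\rCorr$ is unitarily Cauchy complete, it is Q-system complete if and only if every Q-system splits. So I fix a Q-system $(Q,m,i)\in\rCorr(B\to B)$ over a unital $\rm C^*$-algebra $B$, and the goal becomes to produce a unital $\rm C^*$-algebra $C$, a dualizable $1$-morphism ${}_BX_C$ admitting a unitarily separable dual $(X^\vee,\ev_X,\coev_X)$, and an isomorphism $Q\cong X\boxtimes_C X^\vee$ of Q-systems as in Example \ref{ex:XXvQSystem}.

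First I would construct the \emph{realization} $C:=|Q|$ following \cite{MR2097363} (expressed diagrammatically in \cite[\S4.1]{MR3221289}). As a vector space $|Q|=\Hom_{\bbC-B}(B\to B\boxtimes_B Q)\cong Q$, equipped with the multiplication, unit $1_{|Q|}$, and $*$-operation displayed in the introduction, all built from $m$, $i$, and $m^\dag$. The bulk of the work is verifying that $|Q|$ is a unital $\rm C^*$-algebra: associativity and the involution identities are diagrammatic consequences of \ref{Q:associativity}--\ref{Q:Frobenius}, while the $\rm C^*$-structure comes from the left regular representation $q\cdot\xi:=m(q\boxtimes\xi)$ of $|Q|$ on the Hilbert $B$-module $Q_B$. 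This representation is faithful by unitality \ref{Q:unitality} (apply it to $\xi=i(1)$), a $*$-homomorphism by \ref{Q:Frobenius}, and its image is a $\rm C^*$-subalgebra of $\End(Q_B)$; completeness follows since the map $q\mapsto L_q$ is bounded below and $Q_B$ is finitely generated projective, the latter because $Q$ is dualizable in $\rCorr(B\to B)$ by \ref{Z:Dualizable} together with the identification of dualizable correspondences with f.g.p.\ bimodules.

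Next I would produce the inclusion data realizing $B$ inside $C=|Q|$. The unit $i\colon B\to Q$ gives a unital inclusion $B\hookrightarrow|Q|$ whose associated conditional expectation is the counit $E:=i^\dag$, and separability \ref{Q:separable} together with definiteness of the $B$-valued inner product makes $E$ faithful. Since $|Q|_B\cong Q_B$ is finitely generated projective, $E$ has finite Watatani index, so $B\subset|Q|$ is an inclusion of exactly the type treated in Example \ref{ex:RestrictByExpectation} and satisfies the equivalent conditions of Lemma \ref{lem:FGP-dualizable}. A diagrammatic check then shows that, under $|Q|\cong Q$, the $B$-$B$ correspondence structure on $|Q|$ induced by $E$, with its renormalized multiplication and unit, recovers $(Q,m,i)$ as a Q-system, i.e.\ ${}_B|Q|_B\cong Q$.

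Finally I would invoke Example \ref{ex:EquivalentRestrictedQSystem} with the inclusion $B\subset C$ in place of $A\subset B$: take $X:={}_BC_C$ (with left $B$-action through the inclusion, right $C$-action by multiplication, and inner product $\langle x_1|x_2\rangle_C=x_1^*x_2$) and $X^\vee:={}_CC_B$ (with $B$-valued inner product $E(x_1^*x_2)$), and let $\ev_X$, $\coev_X$ be the renormalized multiplication and inclusion. These exhibit $X^\vee$ as a unitarily separable dual for $X$ with $X\boxtimes_C X^\vee\cong{}_BC_B\cong Q$ as Q-systems, which is precisely a splitting of $Q$. Theorem \ref{Thm:QSystemSplit} then yields that $\rCorr$ is Q-system complete. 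The main obstacle is the second step: establishing that the purely algebraic realization $|Q|$ is a genuine unital $\rm C^*$-algebra (the $\rm C^*$-norm, the $*$-homomorphism property of $L$, and norm-completeness of the image) and that its $B$-$B$ correspondence reproduces $Q$; everything after that is a faithful transcription of the finite-index inclusion picture already recorded in Examples \ref{ex:RestrictByExpectation} and \ref{ex:EquivalentRestrictedQSystem}.
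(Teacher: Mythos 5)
Your overall route is the paper's: realize $Q$ as a unital $\rm C^*$-algebra $|Q|$ (your left regular representation on $Q_B$, bounded below by evaluating at the unit, is essentially the paper's embedding $\Phi$ of $|Q|$ into $\End_{-Q}({}_\bbC B\xz_B Q_B)$ from Construction \ref{construction:|Q|}), then run the finite-index-inclusion picture of Examples \ref{ex:RestrictByExpectation} and \ref{ex:EquivalentRestrictedQSystem}. Reducing to splitting via Theorem \ref{Thm:QSystemSplit} rather than to essential surjectivity of $\iota$ on objects via Remark \ref{rem:QSysCompleteIffESon0} (which is what Proposition \ref{prop:IotaEssSurj} verifies) is an inessential repackaging. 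However, there are two genuine gaps in your middle step.

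First, $E:=i^\dag$ is not a conditional expectation. By \eqref{eq:dQ} we have $i^\dag\circ i=d_Q\in\End(1_B)^+=Z(B)^+$, and $d_Q\neq 1_B$ in general; hence $E(1_{|Q|})=d_Q$, so $E$ is not unital and does not restrict to the identity on the image of $B$, and Example \ref{ex:RestrictByExpectation} (which requires a ucp map) does not apply to it. The paper's expectation (Definition \ref{defn:ConditionalExpectation}) is $E_B=i^\dag(\,\cdot\,)\,d_Q^{-1}$, with $d_Q^{-1}$ defined by functional calculus as in \ref{Z:Support}. This correction is not cosmetic: it is exactly what makes the induced $B$-valued inner product $E_B(x^*y)$ on $|Q|$ coincide with the original inner product on $Q_B$, which your final identification of $X\xz_C X^\vee$ with $(Q,m,i)$ as Q-systems, and the unitary separability of the dual (i.e.\ $m m^\dag = \id$ computed with respect to these inner products), silently require. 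With $i^\dag$ in place of $E_B$, the inner products differ by a factor of $d_Q$ and the claimed splitting data fails.

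Second, you assume $\iota:B\to|Q|$ is an injective unital inclusion with expectation onto $B$. This fails exactly when $Q$ is degenerate ($d_Q$ not invertible), which the paper's definition of Q-system permits and which genuinely occurs in $\rCorr$ (Example \ref{ex:NonunitalRestrictByExpecation}): in that case $\ker\iota=(1-s_Q)B$ and $E_B$ maps onto the corner $B_0=s_QBs_Q$ only, so neither Example \ref{ex:RestrictByExpectation} nor Example \ref{ex:EquivalentRestrictedQSystem} applies to ``$B\subset C$''. The paper repairs this by passing through the corner, concatenating the equivalences ${}_BQ_B\cong{}_B|Q|_B\cong{}_{B_0}|Q|_{B_0}\cong 1_{|Q|}$ using Examples \ref{ex:EquivalentRestrictedQSystem} and \ref{ex:EquivalenceOfRestrictByExpectationExamples} (Proposition \ref{prop:IotaEssSurj}); the splitting 1-morphism then has left $B$-action factoring through $B_0$. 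As written, your argument establishes the theorem only for non-degenerate Q-systems, and only after replacing $i^\dag$ by $E_B$.
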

% \begin{itemize}
% \item 

To do so, we construct a $\dag$ 2-functor
$|\cdot|:\QSys(\rCorr) \to \rCorr$
called the \emph{realization functor},
and show it is inverse to the canonical inclusion $\iota:\cC\hookrightarrow \QSys(\cC)$.
Our strategy will be to adapt the 
usual subfactor method \cite{MR2097363}, 
which is clearly expressed diagrammatically in \cite[\S4.1]{MR3221289}. 
(This strategy is also similar to that employed to describe the Tannaka duality for semisimple Hopf algebras in terms of Frobenius algebras \cite[\S7]{MR2075605}.)

We begin with the following important initial observation.

\begin{construction}
\label{construction:BoundedVectorModule}
Suppose ${}_AX_B\in \rCorr(A\to B)$.
Consider the right $B$-module
$$
\Hom_{\bbC-B}({}_{\bbC}B_B \to {}_\bbC A\boxtimes_A X_B)
$$
with 
left $A$-action $(a\rhd f)(x):= (a\boxtimes \id_X)\circ f)(x)$
(here, we identify $A=\End(A_A)$),
right $B$-action $(f\lhd b)(x):=f(bx)$,
and right $B$-valued inner product 
$\langle f|g\rangle^{\Hom}_B := f^\dag \circ g \in \End(B_B)=B$.
Identifying ${}_\bbC X_B = {}_\bbC A\boxtimes_A X_B$, the maps
\[
\begin{aligned}
{}_AX_B
&\longrightarrow 
\Hom_{\bbC-B}({}_{\bbC}B_B \to {}_{\bbC}X_B)
\\
\xi &\longmapsto (L_\xi : b\mapsto \xi \lhd b)
\end{aligned}
\qquad\text{and}\qquad
\begin{aligned}
\Hom_{\bbC-B}({}_{\bbC}B_B \to {}_{\bbC}X_B)
&\longrightarrow
X_B
\\
f& \longmapsto f(1_B)
\end{aligned}
\]
are mutually inverse linear maps which preserve these $B$-valued inner products.
Indeed, $L_\xi$ is adjointable with adjoint
$L_\xi^\dag(\eta) := \langle \xi|\eta\rangle^X_B$.
Thus
$$
\langle L_\xi|L_\eta\rangle^{\Hom}_B
=
L_\xi^\dag \circ L_\eta
=
\langle \xi|\eta\rangle_B^X.
$$
\end{construction}

\begin{rem}
Observe that while we always have
$
\Hom_{\bbC-B}(B_B \to X_B)\cong X_B,
$
in general,
$
\Hom_{\bbC-B}(X_B \to B_B) \ncong {}_B\overline{X}.
$
Each $\xi \in X$ still gives us an adjointable map $L_\xi^\dag : X_B \to B_B$ by $\eta\mapsto \langle \xi|\eta\rangle_B$, but the anti-linear map $\xi\mapsto L_\xi^\dag$ need not be onto.
This map is onto precisely when $X_B$ is a \emph{self-dual} $B$-module; see \cite[\S3]{MR355613} for more details.
\end{rem}

%%%%%%%%%%%%%%%%%%%%%%%%%%%%%%%%%%%%%%%%%%%%
\subsection{Realization of Q-systems}

For this section, we fix a unital $\rm C^*$-algebra $B$.
In the graphical calculus for $\rCorr$, we denote the algebra $B$ by a shaded region and $\bbC$ by an unshaded region.
$$
\tikzmath{\filldraw[fill=white,dashed, rounded corners=5, very thin, baseline=1cm] (0,0) rectangle (.5,.5);}=\bbC
\qquad\qquad
\tikzmath{\filldraw[\BColor, rounded corners=5, very thin, baseline=1cm] (0,0) rectangle (.5,.5);}=B
$$
Suppose
$({}_BQ_B, \mu, i)$ is a Q-system in $\rCorr(B\to B)$.
We denote $Q$ by a black strand shaded on both sides, and ${}_\bbC B_B$ by a dashed strand which is shaded on the right side
$$
{}_BQ{}_{B}
=
\tikzmath{
\begin{scope}
\clip[rounded corners=5pt] (-.5,0) rectangle (.5,1);
\fill[\BColor] (-.5,0) rectangle (.5,1);
\end{scope}
\draw (0,0) -- (0,1);
%\node at (0,1.2) {$\scriptstyle Q$};
%\node at (0,-.2) {$\scriptstyle Q$};
}
\qquad\qquad
{}_{\bbC}B{}_{B}
=
\tikzmath{
\begin{scope}
\clip[rounded corners=5pt] (-.2,0) rectangle (.5,1);
\fill[\BColor] (0,0) rectangle (.5,1);
\end{scope}
\draw[dashed] (0,0) -- (0,1);
%\node at (0,1.2) {$\scriptstyle B$};
%\node at (0,-.2) {$\scriptstyle B$};
}
\qquad\qquad
{}_{\bbC}B\boxtimes_B Q_{B}
=
\tikzmath{
\begin{scope}
\clip[rounded corners=5pt] (-.2,0) rectangle (.5,1);
\fill[\BColor] (0,0) rectangle (.7,1);
\end{scope}
\draw[dashed] (0,0) -- (0,1);
\draw (.2,0) -- (.2,1);
}
$$
We denote the multiplication and unit of $Q$ by a trivalent and univalent vertex respectively.
Observe that the $\bbC-B$ bimodule ${}_\bbC B\boxtimes_B Q_B$ which forgets the left $B$-action is a right $Q$-module object in $\rCorr(\bbC \to B)$.
$$
\tikzmath{
\fill[\BColor, rounded corners=5pt] (-.4,0) rectangle (1,1);
\draw (0,0) -- (0,.2) arc (180:0:.3cm) -- (.6,0);
\draw (.3,.5) -- (.3,1);
\filldraw (.3,.5) circle (.05cm);
}
\qquad\text{and}\qquad
\tikzmath{
\fill[\BColor, rounded corners=5pt] (0,0) rectangle (.8,1);
\draw (.4,.5) -- (.4,1);
\filldraw (.4,.5) circle (.05cm);
}
\qquad\qquad
\rightsquigarrow
\qquad\qquad
\tikzmath{
\begin{scope}
\clip[rounded corners=5pt] (-.4,0) rectangle (1,1);
\fill[\BColor] (-.2,0) rectangle(1,1);
\end{scope}
\draw[dashed] (-.2,0) -- (-.2,1);
\draw (0,0) -- (0,.2) arc (180:0:.3cm) -- (.6,0);
\draw (.3,.5) -- (.3,1);
\filldraw (.3,.5) circle (.05cm);
}
\qquad\text{and}\qquad
\tikzmath{
\begin{scope}
\clip[rounded corners=5pt] (0,0) rectangle (.8,1);
\fill[\BColor] (.2,0) rectangle(.8,1);
\end{scope}
\draw[dashed] (.2,0) -- (.2,1);
\draw (.4,.5) -- (.4,1);
\filldraw (.4,.5) circle (.05cm);
}
\,.
$$

\begin{construction}
\label{construction:|Q|}
Following Construction \ref{construction:BoundedVectorModule}, we define $|Q|$ to be the unital $*$-algebra
whose underlying vector space is $\Hom_{\bbC - B}({}_{\bbC}B_B \to {}_{\bbC}B\boxtimes_B Q_B)$,
and we denote its elements by
$$
\tikzmath{
\begin{scope}
\clip[rounded corners=5pt] (-.4,-.7) rectangle (.7,.7);
\fill[\BColor] (0,-.7) -- (0,0) -- (-.1,0) -- (-.1,.7) -- (.8,.7) -- (.8,-.7);
\end{scope}
\draw[dashed] (0,-.7) -- (0,0) -- (-.1,0) -- (-.1,.7);
\draw (.1,.3) -- (.1,.7);
\roundNbox{unshaded}{(0,0)}{.3}{0}{0}{$q$};
}
\in |Q| 
:=
\Hom_{\bbC - B}({}_{\bbC}B_B \to {}_{\bbC}B\boxtimes_B Q_B).
$$
We define the multiplication, unit, and adjoint $*$ of $|Q|$ respectively by
\[
q_1\cdot q_2
:=
\tikzmath{
\begin{scope}
\clip[rounded corners=5pt] (-.4,-2) rectangle (1.2,1);
\fill[\BColor] (.6,-2) -- (.6,-1.3) -- (.5,-1) .. controls ++(90:.3cm) and ++(270:.3cm) .. (0,-.3) -- (-.1,.3) -- (-.1,1) -- (1.2,1) -- (1.2,-2);
\end{scope}
\draw (.4,.6) -- (.4,1);
\filldraw (.4,.6) circle (.05cm);
\draw (.1,.3) arc (180:0:.3cm) -- (.7,-1);
\draw[dashed] (.6,-2) -- (.6,-1.3) -- (.5,-1) .. controls ++(90:.3cm) and ++(270:.3cm) .. (0,-.3) -- (-.1,.3) -- (-.1,1);
\roundNbox{unshaded}{(0,0)}{.3}{0}{0}{$q_1$};
\roundNbox{unshaded}{(.6,-1.3)}{.3}{0}{0}{$q_2$};
}\,,
\qquad
1_{|Q|}
:=
\tikzmath{
\begin{scope}
\clip[rounded corners=5pt] (-.2,0) rectangle (.6,1);
\fill[\BColor] (0,0) rectangle (.6,1);
\end{scope}
\draw[dashed] (0,0) -- (0,1);
\filldraw (.3,.5) circle (.05cm);
\draw (.3,.5) -- (.3,1);
}\,,
\qquad\text{and}\qquad
q^*:=\ 
\tikzmath{
\begin{scope}
\clip[rounded corners=5pt] (-.4,-1) rectangle (1,.7);
\fill[\BColor] (-.1,-1) -- (-.1,0) -- (0,0) -- (0,.7) -- (1,.7) -- (1,-1);
\end{scope}
\draw[dashed] (-.1,-1) -- (-.1,0) -- (0,0) -- (0,.7);
\draw (.1,-.3) arc (-180:0:.3cm) -- (.7,.7);
\draw (.4,-.6) -- (.4,-.8);
\filldraw (.4,-.6) circle (.05cm);
\filldraw (.4,-.8) circle (.05cm);
\roundNbox{unshaded}{(0,0)}{.3}{0}{0}{$q^\dag$};
}\,.
\]
Associativity of the multiplication $m_{Q}$ gives associativity of multiplication for $|Q|$, and unitality of $Q$ gives unitality of $|Q|$.
The Frobenius property for $Q$ and unitality yields $q^{**} = q$.
That $(q_1\cdot q_2)^*=q_2^*\cdot q_1^*$ again follows from unitality and the Frobenius properties:
$$
(q_1 \cdot q_2)^*
=
\tikzmath{
\begin{scope}
\clip[rounded corners=5pt] (-.4,2) rectangle (1.5,-1.4);
\fill[\BColor] (.6,2) -- (.6,1.3) -- (.5,1) .. controls ++(270:.3cm) and ++(90:.3cm) .. (0,.3) -- (-.1,-.3) -- (-.1,-1.4) -- (1.5,-1.4) -- (1.5,2);
\end{scope}
\draw (.4,-.6) arc (-180:0:.4cm) -- (1.2,2);
\draw (.8,-1) -- (.8,-1.2);
\filldraw (.8,-1.2) circle (.05cm);
\filldraw (.8,-1) circle (.05cm);
\filldraw (.4,-.6) circle (.05cm);
\draw (.1,-.3) arc (-180:0:.3cm) -- (.7,1);
\draw[dashed] (.6,2) -- (.6,1.3) -- (.5,1) .. controls ++(270:.3cm) and ++(90:.3cm) .. (0,.3) -- (-.1,-.3) -- (-.1,-1.4);
\roundNbox{unshaded}{(0,0)}{.3}{0}{0}{$q_1^\dag$};
\roundNbox{unshaded}{(.6,1.3)}{.3}{0}{0}{$q_2^\dag$};
}
=
\tikzmath{
\begin{scope}
\clip[rounded corners=5pt] (-1,2.3) rectangle (1,-1.5);
\fill[\BColor] (-.6,2.3) -- (-.6,1.3) -- (-.7,1) .. controls ++(270:.3cm) and ++(90:.3cm) .. (0,-.2) -- (-.1,-.8) -- (-.1,-1.5) -- (1,-1.5) -- (1,2.3);
\end{scope}
\filldraw (-.2,.7) circle (.05cm);
\filldraw (-.2,.5) circle (.05cm);
\filldraw (.4,1.9) circle (.05cm);
\filldraw (.4,-1.1) circle (.05cm);
\filldraw (.4,-1.3) circle (.05cm);
\draw (-.5,1) arc (-180:0:.3cm) -- (.1,1.6) arc (180:0:.3cm) -- (.7,-.2);
\draw (.4,1.9) -- (.4,2.3);
\draw (-.2,.7) -- (-.2,.5);
\draw (.1,-.8) arc (-180:0:.3cm) -- (.7,-.2);
\draw (.4,-1.1) -- (.4,-1.3);
\draw[dashed] (-.6,2.3) -- (-.6,1.3) -- (-.7,1) .. controls ++(270:.3cm) and ++(90:.3cm) .. (0,-.2) -- (-.1,-.8) -- (-.1,-1.5);
\roundNbox{unshaded}{(0,-.5)}{.3}{0}{0}{$q_1^\dag$};
\roundNbox{unshaded}{(-.6,1.3)}{.3}{0}{0}{$q_2^\dag$};
}
=
q_2^*\cdot q_1^*.
$$
To see that $|Q|$ is a $\rm C^*$-algebra,
we show it is $*$-isomorphic to a closed $*$-subalgebra of the unital $\rm C^*$-algebra 
$\End_{\bbC - B}({}_{\bbC}B\boxtimes_B Q_B)$,
which has 
multiplication, unit, and adjoint $*$ given respectively by
\[
x\cdot y :=
\tikzmath{
\begin{scope}
\clip[rounded corners=5pt] (-.4,-.7) rectangle (.8,1.7);
\fill[\BColor] (-.1,-.7) rectangle (.8,1.7);
\end{scope}
\draw[dashed] (-.1,-.7) -- (-.1,1.7);
\draw (.1,-.7) -- (.1,1.7);
\roundNbox{unshaded}{(0,1)}{.3}{0}{0}{$x$};
\roundNbox{unshaded}{(0,0)}{.3}{0}{0}{$y$};
}\,,
\qquad
1:=
\tikzmath{
\begin{scope}
\clip[rounded corners=5pt] (-.4,-.7) rectangle (.7,.7);
\fill[\BColor] (-.1,-.7) rectangle (.8,.7);
\end{scope}
\draw[dashed] (-.1,-.7) -- (-.1,.7);
\draw (.1,-.7) -- (.1,.7);
}
\,,
\qquad\text{and}\qquad
x^* :=
\tikzmath{
\begin{scope}
\clip[rounded corners=5pt] (-.4,-.7) rectangle (.7,.7);
\fill[\BColor] (-.1,-.7) rectangle (.8,.7);
\end{scope}
\draw[dashed] (-.1,-.7) -- (-.1,.7);
\draw (.1,-.7) -- (.1,.7);
\roundNbox{unshaded}{(0,0)}{.3}{0}{0}{$x^\dagger$};
}\,.
\]
Indeed, consider the unital $*$-subalgebra
\begin{equation}
\label{eq:NormClosedCondition}
\End_{-Q}({}_{\bbC}B\boxtimes_B Q_B)
:=
\set{
\tikzmath{
\begin{scope}
\clip[rounded corners=5pt] (-.4,-.7) rectangle (.5,.7);
\fill[\BColor] (-.1,-.7) rectangle (.5,.7);
\end{scope}
\draw[dashed] (-.1,-.7) -- (-.1,.7);
\draw (.1,-.7) -- (.1,.7);
\roundNbox{unshaded}{(0,0)}{.3}{0}{0}{$x$};
}\,\,
}{
\,\,
\tikzmath{
\begin{scope}
\clip[rounded corners=5pt] (-.4,-.5) rectangle (.9,1.3);
\fill[\BColor] (-.1,-.5) rectangle (.9,1.3);
\end{scope}
\draw[dashed] (-.1,-.5) -- (-.1,1.3);
\draw (.1,-.5) -- (.1,1.3);
\draw (.6,-.5) -- (.6,.4)  arc (0:90:.5cm); 
\filldraw (.1,.9) circle (.05cm);
\roundNbox{unshaded}{(0,.1)}{.3}{0}{0}{$x$};
}
\,=\,
\tikzmath{
\begin{scope}
\clip[rounded corners=5pt] (-.4,-.5) rectangle (.9,1.3);
\fill[\BColor] (-.1,-.5) rectangle (.9,1.3);
\end{scope}
\draw[dashed] (-.1,-.5) -- (-.1,1.3);
\draw (.1,-.5) -- (.1,1.3);
\draw (.6,-.5)  arc (0:90:.5cm); 
\filldraw (.1,0) circle (.05cm);
\roundNbox{unshaded}{(0,.6)}{.3}{0}{0}{$x$};
}\,
},
\end{equation}
which is clearly a norm closed unital $*$-subalgebra.\footnote{
Indeed, the map $\End({}_{\bbC}B\boxtimes_B Q_B)\ni x\mapsto x\boxtimes \id_Q\in \End({}_\bbC B\boxtimes_B Q\boxtimes_B Q_B)$ is injective and thus norm-preserving.
Since pre- and post-composition with a morphism in $\rCorr$ is a norm-continuous operation by Lemma \ref{lem:CompositionNormContinuous}, we see that 
the defining condition of $\End_{-Q}({}_{\bbC}B\boxtimes_B Q_B)$
in \eqref{eq:NormClosedCondition} is a norm-closed condition.
% That is, the map
% $$
% \End({}_{\bbC}B_B)
% x\mapsto 
% \tikzmath{
% \begin{scope}
% \clip[rounded corners=5pt] (-.4,-.5) rectangle (.8,1.3);
% \fill[\BColor] (-.1,-.5) rectangle (.8,1.3);
% \end{scope}
% \draw[dashed] (-.1,-.5) -- (-.1,1.3);
% \draw (.1,-.5) -- (.1,1.3);
% \draw (.6,-.5) -- (.6,.4)  arc (0:90:.5cm); 
% \filldraw (.1,.9) circle (.05cm);
% \roundNbox{unshaded}{(0,.1)}{.3}{0}{0}{$x$};
% }
% \,-\,
% \tikzmath{
% \begin{scope}
% \clip[rounded corners=5pt] (-.4,-.5) rectangle (.8,1.3);
% \fill[\BColor] (-.1,-.5) rectangle (.8,1.3);
% \end{scope}
% \draw[dashed] (-.1,-.5) -- (-.1,1.3);
% \draw (.1,-.5) -- (.1,1.3);
% \draw (.6,-.5)  arc (0:90:.5cm); 
% \filldraw (.1,0) circle (.05cm);
% \roundNbox{unshaded}{(0,.6)}{.3}{0}{0}{$x$};
% }
% \in
% \Hom({}_{\bbC}Q\boxtimes_B Q_B \to {}_{\bbC}Q_B)
% $$
% is norm continuous, and $\End_{-Q}({}_{\bbC}Q_B)$ is clearly the preimage of the closed set $\{0\}$.
}
It is a straightforward and enjoyable exercise using the graphical calculus that the maps
$\Phi: |Q| \to \End_{-Q}({}_{\bbC}B\boxtimes_B Q_B)$ and $\Psi:\End_{-Q}({}_{\bbC}B\boxtimes_B Q_B)\to |Q|$  given by
\begin{equation}
\label{eq:CStarIso}
\Phi: 
\tikzmath{
\begin{scope}
\clip[rounded corners=5pt] (-.4,-.7) rectangle (.7,.7);
\fill[\BColor] (0,-.7) -- (0,0) -- (-.1,0) -- (-.1,.7) -- (.8,.7) -- (.8,-.7);
\end{scope}
\draw[dashed] (0,-.7) -- (0,0) -- (-.1,0) -- (-.1,.7);
\draw (.1,.3) -- (.1,.7);
\roundNbox{unshaded}{(0,0)}{.3}{0}{0}{$q$};
}
\mapsto
\tikzmath{
\begin{scope}
\clip[rounded corners=5pt] (-.4,-.7) rectangle (.9,1);
\fill[\BColor] (0,-.7) -- (0,0) -- (-.1,0) -- (-.1,1) -- (.9,1) -- (.9,-.7);
\end{scope}
\draw[dashed] (0,-.7) -- (0,0) -- (-.1,0) -- (-.1,1);
\draw (.1,.3) arc (180:0:.3cm) -- (.7,-.7);
\draw (.4,.6) -- (.4,1);
\filldraw (.4,.6) circle (.05cm);
\roundNbox{unshaded}{(0,0)}{.3}{0}{0}{$q$};
}
\qquad\qquad
\Psi:
\tikzmath{
\begin{scope}
\clip[rounded corners=5pt] (-.4,-.7) rectangle (.7,.7);
\fill[\BColor] (-.1,-.7) rectangle (.8,.7);
\end{scope}
\draw[dashed] (-.1,-.7) -- (-.1,.7);
\draw (.1,-.7) -- (.1,.7);
\roundNbox{unshaded}{(0,0)}{.3}{0}{0}{$x$};
}
\mapsto
\tikzmath{
\begin{scope}
\clip[rounded corners=5pt] (-.4,-.7) rectangle (.7,.7);
\fill[\BColor] (-.1,-.7) rectangle (.8,.7);
\end{scope}
\draw[dashed] (-.1,-.7) -- (-.1,.7);
\draw (.1,-.5) -- (.1,.7);
\filldraw (.1,-.5) circle (.05cm);
\roundNbox{unshaded}{(0,0)}{.3}{0}{0}{$x$};
}
\end{equation}
are mutually inverse unital $*$-isomorphisms.
\end{construction}

\begin{rem}
If we instead work in the $\rm W^*$ 2-category of $\rm W^*$-correspondences (or $\vNA$), observe that the unital $\rm C^*$-algebra $|Q|=\Hom_{\bbC-B}({}_{\bbC}(L^2 B)_B \to {}_\bbC (L^2B)\boxtimes_B X_B)$ has a predual as it is a hom space in a $\rm W^*$ category \cite{MR808930}, and is thus a von Neumann algebra.
One can also use that right $Q$-linearity is a weak* closed condition by combining \ref{W*:TensorIsNormal} and Lemma \ref{lem:CompositionNormContinuous}.
\end{rem}

\begin{defn}
\label{defn:ConditionalExpectation}
Observe we have a
%n injective \nn{not injective.} 
unital $*$-algebra homomorphism $\iota:B\to |Q|$ given by
$$
B=\End_{\bbC-B}(B)
\ni
\tikzmath{
\begin{scope}
\clip[rounded corners=5pt] (-.4,-.7) rectangle (.7,.7);
\fill[\BColor] (0,-.7) rectangle (.8,.7);
\end{scope}
\draw[dashed] (0,-.7) -- (0,.7);
\roundNbox{unshaded}{(0,0)}{.3}{0}{0}{$b$};
}
\longmapsto
\tikzmath{
\begin{scope}
\clip[rounded corners=5pt] (-.4,-.7) rectangle (.9,1);
\fill[\BColor] (0,-.7) -- (0,0) -- (-.1,0) -- (-.1,1) -- (.9,1) -- (.9,-.7);
\end{scope}
\draw[dashed] (0,-.7) -- (0,0) -- (-.1,0) -- (-.1,1);
\draw (.4,.6) -- (.4,1);
\filldraw (.4,.6) circle (.05cm);
\roundNbox{unshaded}{(0,0)}{.3}{0}{0}{$b$};
}
\in |Q|.
$$
%\dave{here, we're treating $B$ and $|Q|$ as right $B$-modules!}
This map is adjointable in $\rCorr_{\bbC-B}(B\Rightarrow |Q|)$ 
(equipped with the right $B$-valued inner products from Construction \ref{construction:BoundedVectorModule})
with adjoint
$\iota^\dag: |Q| \Rightarrow B$
given by
$$
|Q|
\ni
\tikzmath{
\begin{scope}
\clip[rounded corners=5pt] (-.4,-.7) rectangle (.7,.7);
\fill[\BColor] (0,-.7) -- (0,0) -- (-.1,0) -- (-.1,.7) -- (.8,.7) -- (.8,-.7);
\end{scope}
\draw[dashed] (0,-.7) -- (0,0) -- (-.1,0) -- (-.1,.7);
\draw (.1,.3) -- (.1,.7);
\roundNbox{unshaded}{(0,0)}{.3}{0}{0}{$q$};
}
\longmapsto
\tikzmath{
\begin{scope}
\clip[rounded corners=5pt] (-.4,-.7) rectangle (.7,.7);
\fill[\BColor] (0,-.7) -- (0,0) -- (-.1,0) -- (-.1,.7) -- (.8,.7) -- (.8,-.7);
\end{scope}
\draw[dashed] (0,-.7) -- (0,0) -- (-.1,0) -- (-.1,.7);
\draw (.1,.3) -- (.1,.5);
\filldraw (.1,.5) circle (.05cm);
\roundNbox{unshaded}{(0,0)}{.3}{0}{0}{$q$};
}
\in \End_{\bbC-B}(B)=B.
$$
We define $E_B : |Q| \to B$ by
$$
E_B(q):=
\tikzmath{
\begin{scope}
\clip[rounded corners=5pt] (-.4,-.7) rectangle (1.6,.7);
\fill[\BColor] (0,-.7) -- (0,0) -- (-.1,0) -- (-.1,.7) -- (1.6,.7) -- (1.6,-.7);
\end{scope}
\draw[dashed] (0,-.7) -- (0,0) -- (-.1,0) -- (-.1,.7);
\draw (.1,.3) -- (.1,.5);
\filldraw (.1,.5) circle (.05cm);
\roundNbox{unshaded}{(0,0)}{.3}{0}{0}{$q$};
\roundNbox{unshaded}{(.9,0)}{.3}{.1}{.1}{$d_Q^{-1}$};
}\,,
$$
which is a $B-B$ bimodular map onto $s_QBs_Q$ where $s_Q=d_Qd_Q^{-1}\in B$ is the support of $Q$ as in \ref{Z:Support}.
\end{defn}

\begin{prop}
\label{prop:EBCPandFiniteIndex}
The map $E_B:|Q| \to B$ is completely positive, faithful and has finite Pimsner-Popa index, i.e., there is a $c>0$ such that for all $x\in |Q|^+$,
$x\leq c\cdot \iota(E_B(x))$.
\end{prop}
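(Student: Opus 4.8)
The plan is to realize $E_B$ as a compression of the faithful unital $*$-representation $\Phi\colon |Q|\xrightarrow{\ \cong\ }\End_{-Q}({}_\bbC B\boxtimes_B Q_B)$ from Construction~\ref{construction:|Q|}, and to read off all three assertions from this picture together with the estimate \ref{Z:JonesProjBound}. Write $H:={}_\bbC B\boxtimes_B Q_B$, viewed as a right Hilbert $B$-module, and let $\Omega:=1_{|Q|}(1_B)=1_B\boxtimes i\in H$ be the vector corresponding to the unit of $|Q|$ under the identification $|Q|\cong H$ of Construction~\ref{construction:BoundedVectorModule}. A one-line diagrammatic computation using only unitality \ref{Q:unitality} (feeding $i$ into the multiplication gives the identity) shows that $\Phi(x)\,\Omega=x(1_B)$ for every $x\in|Q|$, equivalently $\iota^\dag(x)=\langle\Omega\,|\,\Phi(x)\,\Omega\rangle_B=L_\Omega^\dag\circ\Phi(x)\circ L_\Omega$.

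For \emph{complete positivity}, note that $d_Q^{-1}=f(d_Q)$ is central, lying in $\End_\cC(1_b)=Z(B)$ by \ref{Z:Support}, so that $E_B(x)=\iota^\dag(x)\,d_Q^{-1}=d_Q^{-1/2}\,\iota^\dag(x)\,d_Q^{-1/2}$. Setting $V:=L_\Omega\circ d_Q^{-1/2}$, an adjointable operator in $\Hom({}_\bbC B_B\to {}_\bbC H_B)$, the identity above yields $E_B(x)=V^\dag\circ\Phi(x)\circ V$. Since $\Phi$ is a unital $*$-homomorphism into $\End(H_B)$ and $V$ is adjointable, $x\mapsto V^\dag\,\Phi(x)\,V$ is the compression of a $*$-representation and is therefore completely positive; hence so is $E_B$.

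For \emph{faithfulness}, suppose $x\in|Q|^+$ with $E_B(x)=0$. Then $\beta:=\iota^\dag(x)=\langle x^{1/2}(1_B)\,|\,x^{1/2}(1_B)\rangle_B\geq 0$ and $d_Q^{-1/2}\,\beta\,d_Q^{-1/2}=0$, so $\beta^{1/2}d_Q^{-1/2}=0$ and thus $\beta\,s_Q=0$, where $s_Q=d_Qd_Q^{-1}$ is the support projection (the range projection of $d_Q^{-1/2}$). On the other hand $d_Q=i^\dag\circ i$ forces $i=i\circ s_Q$, hence $i^\dag=s_Q\circ i^\dag$, and reading off $\beta=(\id_B\boxtimes i^\dag)(x(1_B))$ gives $\beta=\beta\,s_Q$. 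Combining $\beta=\beta s_Q$ with $\beta s_Q=0$ yields $\beta=0$, i.e. $x^{1/2}(1_B)=0$; since $x^{1/2}\mapsto x^{1/2}(1_B)$ is the injection $|Q|\cong H$ of Construction~\ref{construction:BoundedVectorModule}, we conclude $x^{1/2}=0$ and so $x=0$.

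The \emph{finite-index estimate} is the main obstacle. Because $Q$ is a Q-system it is dualizable (\ref{Z:Dualizable}), so $Q_B$ is finitely generated projective and admits a finite Pimsner basis $\{\eta_\alpha\}\subset Q$ with $\sum_\alpha \eta_\alpha\langle\eta_\alpha|\,\cdot\,\rangle_B=\id_Q$. A preliminary observation is that $s_Q$ acts as $\id_Q$ from the left (apply \ref{Q:unitality} and $s_Q\rhd i=i$), so $\iota(s_Q)=1_{|Q|}$ and $E_B$ is a genuine faithful conditional expectation onto the corner $s_QBs_Q\hookrightarrow|Q|$. I would then promote $\{\eta_\alpha\}$ to a finite quasi-basis $\{m_\alpha\}\subset|Q|$ for $E_B$ by taking $m_\alpha(1_B):=1_B\boxtimes\eta_\alpha$, and verify the quasi-basis identity $\sum_\alpha m_\alpha\,\iota(E_B(m_\alpha^{*}y))=y$ for all $y\in|Q|$ by a diagrammatic computation in which the defining $d_Q^{-1}$ of $E_B$ cancels against the Q-system maps $m,m^\dag$ via separability \ref{Q:separable} and the basis relation (with pre- and post-composition continuous by Lemma~\ref{lem:CompositionNormContinuous}). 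The Pimsner--Popa inequality for a conditional expectation with finite quasi-basis then gives $x\leq c\,\iota(E_B(x))$ for all $x\in|Q|^+$, with $c=\big\|\sum_\alpha m_\alpha m_\alpha^{*}\big\|$, and the central index element $\sum_\alpha m_\alpha m_\alpha^{*}$ is norm-bounded precisely by the estimate \ref{Z:JonesProjBound}. The delicate points are the graphical verification of the quasi-basis identity in the possibly degenerate case (where $s_Q\neq 1$ and $d_Q^{-1}$ is only a generalized inverse) and the boundedness of the index element; both reduce to manipulations controlled by \ref{Z:JonesProjBound} and \ref{Z:Support}.
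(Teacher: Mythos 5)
Your treatment of complete positivity and faithfulness is correct. The compression formula $E_B=V^\dag\,\Phi(\cdot)\,V$ with $V=L_\Omega\circ d_Q^{-1/2}$ is essentially the paper's own argument (the paper transports $\iota\circ E_B$ through $\Phi$ and observes the resulting map $\cE_B$ is manifestly cp), and your direct faithfulness argument via $\beta=s_Q\beta s_Q$ and $\beta s_Q=0$ is valid; the paper instead gets faithfulness in one line from the Pimsner--Popa inequality once it is proved.

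The genuine gap is in the finite-index step, exactly at the point you flag as delicate: the quasi-basis you propose does not satisfy the quasi-basis identity. Unwind everything under the linear isomorphism $|Q|\cong Q$, $q\mapsto q(1_B)$: the $|Q|$-product becomes $\xi_1\cdot\xi_2=m(\xi_1\boxtimes\xi_2)$, unitality gives $m(\eta\boxtimes i(b))=\eta\lhd b$, and the zigzag identity for the duality $(\ev_Q,\coev_Q)=(i^\dag\circ m,\,m^\dag\circ i)$ of \ref{Z:Dualizable} shows $i^\dag m(\xi^{*}\boxtimes\zeta)=\langle\xi|\zeta\rangle_B$, where $\xi^{*}$ denotes the $|Q|$-star. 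Hence, with $m_\alpha:=L_{\eta_\alpha}$ and $y\leftrightarrow\xi$,
$$
\sum_\alpha m_\alpha\,\iota\bigl(E_B(m_\alpha^{*}\,y)\bigr)
\;\longleftrightarrow\;
\sum_\alpha \eta_\alpha\lhd\bigl(d_Q^{-1}\langle\eta_\alpha|\xi\rangle_B\bigr),
$$
which differs from $\xi$ by the factor $d_Q^{-1}$. This factor is \emph{not} cancelled by separability: for $B=\bbC$ and $Q=M_n(\bbC)$ with its (renormalized) Q-system structure one has $d_Q=n^2$, and your sum equals $n^{-2}\xi$. The repair is to build $d_Q$ into the basis: set $m_\alpha:=L_{\eta_\alpha\lhd d_Q^{1/2}}$; then centrality of $d_Q\in\End(1_B)=Z(B)$ together with the support identity $\eta\lhd s_Q=\eta$ (which follows from $i=i\circ s_Q$ and \ref{Q:unitality}) yields $\sum_\alpha m_\alpha\,\iota(E_B(m_\alpha^{*}y))=y$.

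Two further points. First, your claimed constant $c=\bigl\|\sum_\alpha m_\alpha m_\alpha^{*}\bigr\|$ is not automatic from a quasi-basis: the standard matrix/Cauchy--Schwarz argument gives the constant $\bigl\|\sum_\alpha m_\alpha m_\alpha^{*}\bigr\|^{2}$, so you should either run that argument explicitly or cite Watatani \cite{MR996807} for the Pimsner--Popa inequality of an expectation of index-finite type. Second, for contrast, the paper's proof of the index bound is basis-free: working in $\End_{-Q}({}_\bbC B\boxtimes_B Q_B)$, it uses right $Q$-linearity of $x$ to insert the duality morphisms and then applies the operator inequalities \ref{Z:JonesProjBound} and \ref{Z:Support} to obtain $x\le\|d_Q\|^{2}\,\cE_B(x)$ directly; this avoids both the choice of a Pimsner basis and the quasi-basis bookkeeping that tripped up your argument.
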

\begin{proof}
Under the isomorphisms \eqref{eq:CStarIso} above, 
$\iota$ is mapped to
$-\boxtimes \id_Q : B\to \End_{-Q}({}_{\bbC}B\boxtimes_B Q_B)$
and the expectation $E_B:|Q|\to B$ 
is mapped to 
$$
\cE_B(x)
:=
%\frac{1}{d_Q}\cdot
\tikzmath{
\begin{scope}
\clip[rounded corners=5pt] (-.4,-.8) rectangle (1.6,.8);
\fill[\BColor] (-.1,-.8) rectangle (1.6,.8);
\end{scope}
\draw[dashed] (-.1,-.8) -- (-.1,.8);
\draw (.1,.3) -- (.1,.5);
\draw (.1,-.3) -- (.1,-.5);
%\draw (1.5,-.8) -- (1.5,.8);
\filldraw (.1,.5) circle (.05cm);
\filldraw (.1,-.5) circle (.05cm);
\roundNbox{unshaded}{(0,0)}{.3}{0}{0}{$x$};
\roundNbox{unshaded}{(.9,0)}{.3}{.1}{.1}{$d_Q^{-1}$};
}
:
\End_{-Q}({}_\bbC B\boxtimes_B Q_B)\to \End_{-Q}({}_\bbC B\boxtimes_B Q_B).
$$
Since $\cE_B$ is manifestly cp, so is $E_B$.
% Now using the inequality
% $$
% \tikzmath{
% \fill[\BColor, rounded corners=5pt] (-.5,-.5) rectangle (.5,.5);
% \draw (-.2,-.5) arc (180:0:.2cm);
% \draw (-.2,.5) arc (-180:0:.2cm);
% \draw (0,-.3) -- (0,-.1);
% \draw (0,.3) -- (0,.1);
% \filldraw (0,.3) circle (.05cm);
% \filldraw (0,.1) circle (.05cm);
% \filldraw (0,-.1) circle (.05cm);
% \filldraw (0,-.3) circle (.05cm);
% }
% \leq 
% d_Q \cdot
% \tikzmath{
% \fill[\BColor, rounded corners=5pt] (-.5,-.5) rectangle (.5,.5);
% \draw (-.2,-.5) -- (-.2,.5);
% \draw (.2,-.5) -- (.2,.5);
% }\,,
% $$
We compute that $\cE_B$ has finite Pimsner-Popa index:
%\begin{equation}
%\label{eq:ExpectationBound}    
$$
\tikzmath{
\begin{scope}
\clip[rounded corners=5pt] (-.4,-.7) rectangle (.7,.7);
\fill[\BColor] (-.1,-.7) rectangle (.8,.7);
\end{scope}
\draw[dashed] (-.1,-.7) -- (-.1,.7);
\draw (.1,-.7) -- (.1,.7);
\roundNbox{unshaded}{(0,0)}{.3}{0}{0}{$x$};
}
=
\tikzmath{
\begin{scope}
\clip[rounded corners=5pt] (-.4,-1.2) rectangle (1.5,1.2);
\fill[\BColor] (-.1,-1.2) rectangle (1.5,1.2);
\end{scope}
\draw[dashed] (-.1,-1.2) -- (-.1,1.2);
\draw (.1,.3) -- (.1,.5) arc (180:0:.3cm) arc (-180:0:.2cm) -- (1.1,1.2);
\draw (.1,-.3) -- (.1,-.5) arc (-180:0:.3cm) arc (180:0:.2cm) -- (1.1,-1.2);
\draw (.9,-.1) -- (.9,-.3);
\draw (.9,.1) -- (.9,.3);
\filldraw (.9,.3) circle (.05cm);
\filldraw (.9,.1) circle (.05cm);
\filldraw (.9,-.1) circle (.05cm);
\filldraw (.9,-.3) circle (.05cm);
\draw (.4,.8) -- (.4,1);
\draw (.4,-.8) -- (.4,-1);
\filldraw (.4,.8) circle (.05cm);
\filldraw (.4,1) circle (.05cm);
\filldraw (.4,-1) circle (.05cm);
\filldraw (.4,-.8) circle (.05cm);
\roundNbox{unshaded}{(0,0)}{.3}{0}{0}{$x$};
\draw[dashed, thick, rounded corners=5pt] (.5,-.5) rectangle (1.3,.5);
}
\underset{\text{\ref{Z:JonesProjBound}}}{\leq}
\|d_Q\|\cdot
\tikzmath{
\begin{scope}
\clip[rounded corners=5pt] (-.4,-1.2) rectangle (1.5,1.2);
\fill[\BColor] (-.1,-1.2) rectangle (1.5,1.2);
\end{scope}
\draw[dashed] (-.1,-1.2) -- (-.1,1.2);
\draw (.1,.3) -- (.1,.5) arc (180:0:.3cm) -- (.7,-.5) arc (0:-180:.3cm) -- (.1,-.3);
\draw (1.1,-1.2) -- (1.1,1.2);
\draw (.4,.8) -- (.4,1);
\draw (.4,-.8) -- (.4,-1);
\filldraw (.4,.8) circle (.05cm);
\filldraw (.4,1) circle (.05cm);
\filldraw (.4,-1) circle (.05cm);
\filldraw (.4,-.8) circle (.05cm);
\roundNbox{unshaded}{(0,0)}{.3}{0}{0}{$x$};
\draw[dashed, thick, rounded corners=5pt] (.5,-.5) rectangle (1.3,.5);
}
=
\|d_Q\|\cdot
\tikzmath{
\begin{scope}
\clip[rounded corners=5pt] (-.4,-.8) rectangle (.8,.8);
\fill[\BColor] (-.1,-.8) rectangle (1.5,.8);
\end{scope}
\draw[dashed] (-.1,-.8) -- (-.1,.8);
\draw (.1,.3) -- (.1,.5);
\draw (.1,-.3) -- (.1,-.5);
\draw (.5,-.8) -- (.5,.8);
\filldraw (.1,.5) circle (.05cm);
\filldraw (.1,-.5) circle (.05cm);
\roundNbox{unshaded}{(0,0)}{.3}{0}{0}{$x$};
}
\underset{\text{\ref{Z:Support}}}{\leq}
\|d_Q\|^2\cdot (\cE_B(x)\boxtimes \id_Q).
$$
%\end{equation}
We conclude that the expectation $E_B: |Q| \to B$ has finite Pimsner-Popa index.
Faithfulness follows immediately from the Pimnser-Popa inequality.
\end{proof}

\begin{rem}
The observant reader may notice that we get the Pimsner-Popa bound $\|d_Q\|^2$ instead of $\|d_Q\|$; one should expect the latter from subfactor theory.
The reason is that we have secretly embedded $|Q|$ into the Jones/Watatani \emph{basic construction} $\End_{\bbC-B}({}_\bbC B\boxtimes_B Q_B)$ (where the right $Q$-action is replaced by $B$), and noticed that the conditional expectation down to $B$ restricts to the correct expectation on the image of $|Q| \cong \End_{\bbC-Q}({}_\bbC B\boxtimes_B Q_B)$.
Thus the best bound we can obtain in this way is $\|d_Q\|^2$ rather than $\|d_Q\|$.
\end{rem}

%%%%%%%%%%%%%%%%%%%%%%%%%%%%%%%%%%%%%%%%%%%%
\subsection{Realization of 1-morphisms between Q-systems}

Suppose $A,B$ are unital $\rm C^*$-algebras and ${}_AX_B\in \rCorr(A\to B)$.
We denote the algebras $A,B$ by shaded regions
and $X$ by a \textcolor{red}{red} string respectively:
$$
\tikzmath{\filldraw[\AColor, rounded corners=5, very thin, baseline=1cm] (0,0) rectangle (.6,.6);}=A
\qquad\qquad
\tikzmath{\filldraw[\BColor, rounded corners=5, very thin, baseline=1cm] (0,0) rectangle (.6,.6);}=B
\qquad\qquad
\tikzmath{
\begin{scope}
\clip[rounded corners=5pt] (-.3,0) rectangle (.3,.6);
\fill[\AColor] (0,0) rectangle (-.3,.6);
\fill[\BColor] (0,0) rectangle (.3,.6);
\end{scope}
\draw[thick, \XColor] (0,0) -- (0,.6);
}=X.
$$
Suppose now that $P\in \rCorr(A\to A)$ and $Q\in \rCorr(B\to B)$ are Q-systems, and $X:P\to Q$ is a 1-morphism in $\QSys(\rCorr)$.
We denote the left $P$- and right $Q$-action morphisms graphically by \textcolor{red}{red} trivalent vertices:
$$
\lambda_X
=
\tikzmath{
\begin{scope}
\clip[rounded corners=5pt] (-.7,0) rectangle (.4,1);
\fill[\AColor] (-.7,0) rectangle (0,1);
\fill[\BColor] (0,0) rectangle (.4,1);
\end{scope}
\draw[thick, \XColor] (0,0) -- (0,1);
\draw (-.5,0) arc (180:90:.5cm);
\filldraw[\XColor] (0,.5) circle (.05cm);
%\node at (0,-.2) {$\scriptstyle F(M)$};
%\node at (0,1.2) {$\scriptstyle F(M)$};
}
\qquad\qquad
\rho_X
=
\tikzmath{
\begin{scope}
\clip[rounded corners=5pt] (-.4,0) rectangle (.7,1);
\fill[\AColor] (-.4,0) rectangle (0,1);
\fill[\BColor] (0,0) rectangle (.7,1);
\end{scope}
\draw[thick, \XColor] (0,0) -- (0,1);
\draw (.5,0) arc (0:90:.5cm);
\filldraw[\XColor] (0,.5) circle (.05cm);
%\node at (0,-.2) {$\scriptstyle F(M)$};
%\node at (0,1.2) {$\scriptstyle F(M)$};
}\,.
$$
We suppress the labellings of the $P$ and $Q$ strings, which can be inferred from whether they act from the left or the right on $X$.

As above,
${}_{\bbC}A\boxtimes_A X_B$ 
is the right $B$-module obtained from $X$ by forgetting the left $A$-action. 
We denote ${}_{\bbC}A_A$ by a dashed string which is shaded by $A$ on the right.
$$
\tikzmath{
\begin{scope}
\clip[rounded corners=5pt] (-.2,0) rectangle (.5,1);
\fill[\AColor] (0,0) rectangle (.5,1);
\end{scope}
\draw[dashed] (0,0) -- (0,1);
%\node at (0,1.2) {$\scriptstyle B$};
%\node at (0,-.2) {$\scriptstyle B$};
}
={}_{\bbC}A{}_{A}
\qquad\qquad
\tikzmath{
\begin{scope}
\clip[rounded corners=5pt] (-.2,0) rectangle (.5,1);
\fill[\AColor] (0,0) rectangle (.2,1);
\fill[\BColor] (.2,0) rectangle (.7,1);
\end{scope}
\draw[dashed] (0,0) -- (0,1);
\draw[thick, \XColor] (.2,0) -- (.2,1);
}
={}_{\bbC}A\boxtimes_A X_{B}
\qquad\qquad
\tikzmath{
\begin{scope}
\clip[rounded corners=5pt] (-.2,0) rectangle (.5,1);
\fill[\BColor] (0,0) rectangle (.5,1);
\end{scope}
\draw[dashed] (0,0) -- (0,1);
%\node at (0,1.2) {$\scriptstyle B$};
%\node at (0,-.2) {$\scriptstyle B$};
}
={}_{\bbC}B{}_{B}.
$$
Observe that ${}_{\bbC}A\boxtimes_A X_B$
has an obvious right $Q$-action, but there is no longer an obvious left $P$-action.

\begin{construction}
\label{construction:|X|}
Following Construction \ref{construction:BoundedVectorModule},
we define $|X|:= \Hom_{\bbC-B}({}_{\bbC}B_B\to {}_{\bbC}A\boxtimes_A X_B)$,
whose elements are represented graphically by
\[
\tikzmath{
\begin{scope}
\clip[rounded corners=5pt] (-.4,-.7) rectangle (.7,.7);
\fill[\AColor] (-.1,0) rectangle (.1,.7);
\fill[\BColor] (0,-.7) -- (0,0) -- (.1,0) -- (.1,.7) -- (.8,.7) -- (.8,-.7);
\end{scope}
\draw[dashed] (0,-.7) -- (0,0) -- (-.1,0) -- (-.1,.7);
\draw[thick, \XColor] (.1,.3) -- (.1,.7);
\roundNbox{unshaded}{(0,0)}{.3}{0}{0}{$\xi$};
}
\in |X|.
\]
We endow $|X|$ with a $|P|$-$|Q|$ bimodule structure by
\begin{equation*}
p \rhd \xi
:=
\tikzmath{
\begin{scope}
\clip[rounded corners=5pt] (-.4,-2) rectangle (1.2,1);
\fill[\AColor] (.4,-1) .. controls ++(90:.3cm) and ++(270:.3cm) .. (0,-.5) -- (-.1,.1) -- (-.1,1) -- (.6,1) -- (.6,-1);
\fill[\BColor] (.5,-2) -- (.5,-1.3) -- (.6,-1) -- (.6,1) -- (1.2,1) -- (1.2,-2);
\end{scope}
\draw (.1,.1) arc (180:90:.5cm);
\draw[thick, \XColor] (.6,-1) -- (.6,1);
\draw[dashed] (.5,-2) -- (.5,-1.3) -- (.4,-1) .. controls ++(90:.3cm) and ++(270:.3cm) .. (0,-.5) -- (-.1,.1) -- (-.1,1);
\filldraw[\XColor] (.6,.6) circle (.05cm);
\roundNbox{unshaded}{(0,-.2)}{.3}{0}{0}{$p$};
\roundNbox{unshaded}{(.5,-1.3)}{.3}{0}{0}{$\xi$};
}
\qquad\qquad
\xi \lhd q
:=
\tikzmath{
\begin{scope}
\clip[rounded corners=5pt] (-.4,-2) rectangle (1.2,1);
\fill[\AColor] (-.1,.3) rectangle (.1,1);
\fill[\BColor] (.5,-2) -- (.5,-1.3) -- (.4,-1) .. controls ++(90:.3cm) and ++(270:.3cm) .. (0,-.3) -- (.1,.3) -- (.1,1) -- (1.2,1) -- (1.2,-2);
\end{scope}
\draw[thick, \XColor] (.1,.3) -- (.1,1);
\draw (.6,-1) -- (.6,.1) arc (0:90:.5cm);
\draw[dashed] (.5,-2) -- (.5,-1.3) -- (.4,-1) .. controls ++(90:.3cm) and ++(270:.3cm) .. (0,-.3) -- (-.1,.3) -- (-.1,1);
\filldraw[\XColor] (.1,.6) circle (.05cm);
\roundNbox{unshaded}{(0,0)}{.3}{0}{0}{$\xi$};
\roundNbox{unshaded}{(.5,-1.3)}{.3}{0}{0}{$q$};
}
\qquad\qquad
\begin{aligned}
&\forall\, f\in |P|,
\\&
\forall\, \eta\in |M|,\text{ and}
\\&
\forall\, g\in |Q|.
\end{aligned}
\end{equation*}
It is a straightforward exercise using the graphical calculus to prove associativity and unitality for the $|P|$ and $|Q|$ actions.
\end{construction}

\begin{lem}
The $|Q|$-valued sesquilinear form on $|X|$ given by
\[
\langle \eta|\xi\rangle_{|Q|}:= 
\tikzmath{
\begin{scope}
\clip[rounded corners=5pt] (-.4,-1.2) rectangle (1,1.2);
\fill[\AColor] (-.1,-.5) rectangle (.1,.5);
\fill[\BColor] (0,-1.2) -- (0,-.7) -- (.1,-.7) -- (.1,.7) -- (0,.7) -- (0,1.2) -- (1,1.2) -- (1,-1.2);
\end{scope}
\draw[dashed] (0,-1.2) -- (0,-.7) -- (-.1,-.7) -- (-.1,.7) -- (0,.7) -- (0,1.2);
\draw[thick, \XColor] (.1,-.5) -- (.1,.5);
\draw (.6,1.2) -- (.6,.5) arc (0:-90:.5cm);
\filldraw[\XColor] (.1,0) circle (.05cm);
\roundNbox{unshaded}{(0,.5)}{.3}{0}{0}{$\eta^\dag$};
\roundNbox{unshaded}{(0,-.5)}{.3}{0}{0}{$\xi$};
}
\in \Hom_{\bbC-B}({}_{\bbC}B_B \to {}_{\bbC} B\boxtimes_B Q_B) = |Q|
\]
is a $|Q|$-valued inner product.
\end{lem}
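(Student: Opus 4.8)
The plan is to reduce every defining property of an inner product to the single observation that the form computes, under the $*$-isomorphism of Construction~\ref{construction:|Q|}, as a composite of the shape $T^\dag\circ S$. Concretely, for $\xi\in |X|$ I would introduce the \emph{creation operator} $\widehat\xi\colon {}_\bbC B\boxtimes_B Q_B \to {}_\bbC A\boxtimes_A X_B$ obtained by feeding the incoming $Q$-strand into the right $Q$-action $\rho_X$ on the output of $\xi$; graphically this is $\xi$ with its $X$-output acted on by $\rho_X$. By associativity of $\rho_X$ the map $\widehat\xi$ is right $Q$-linear, and it is adjointable because it is built from the adjointable pieces $\xi,\xi^\dag$ and $\rho_X,\rho_X^\dag$ (which are genuine $2$-morphisms in $\rCorr$). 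Thus $\widehat\eta^\dag\circ\widehat\xi\in \End_{-Q}({}_\bbC B\boxtimes_B Q_B)$, and this assignment extends the isomorphism $\Phi$ of \eqref{eq:CStarIso} (the case $X=Q$). The heart of the argument is the graphical identity
\[
\Phi\big(\langle \eta|\xi\rangle_{|Q|}\big)=\widehat\eta^{\,\dag}\circ\widehat\xi\in \End_{-Q}({}_{\bbC}B\boxtimes_B Q_B),
\]
which is checked by comparing the definition of $\langle\eta|\xi\rangle_{|Q|}$ with the definition of $\Phi$, using unitality of $Q$ and the Frobenius relations to move the boundary $Q$-cap into the right $Q$-action.

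Granting this identity, all four axioms follow formally. Sesquilinearity is immediate since the form is $\bbC$-linear in $\xi$ and conjugate-linear in $\eta$ by construction. Conjugate symmetry follows because $\Phi$ is a $*$-isomorphism: $\Phi(\langle\eta|\xi\rangle_{|Q|}^*)=(\widehat\eta^{\,\dag}\widehat\xi)^\dag=\widehat\xi^{\,\dag}\widehat\eta=\Phi(\langle\xi|\eta\rangle_{|Q|})$, hence $\langle\eta|\xi\rangle_{|Q|}^*=\langle\xi|\eta\rangle_{|Q|}$. Right $|Q|$-linearity reduces to the graphical fact that acting on the right by $q$ corresponds to post-composition, $\widehat{\xi\lhd q}=\widehat\xi\circ\Phi(q)$, whence $\widehat\eta^{\,\dag}\circ\widehat{\xi\lhd q}=\Phi(\langle\eta|\xi\rangle_{|Q|})\,\Phi(q)=\Phi(\langle\eta|\xi\rangle_{|Q|}\cdot q)$. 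Positivity is now automatic: since $\Phi$ is a $*$-isomorphism of $\rm C^*$-algebras, $\langle\xi|\xi\rangle_{|Q|}\geq 0$ in $|Q|$ if and only if $\Phi(\langle\xi|\xi\rangle_{|Q|})=\widehat\xi^{\,\dag}\circ\widehat\xi\geq 0$ in $\End_{-Q}({}_\bbC B\boxtimes_B Q_B)$, which holds as it is of the form $T^\dag T$. Finally, definiteness follows from the $\rm C^*$-identity: if $\langle\xi|\xi\rangle_{|Q|}=0$ then $\|\widehat\xi\|^2=\|\widehat\xi^{\,\dag}\widehat\xi\|=0$, so $\widehat\xi=0$; precomposing $\widehat\xi$ with the unit inclusion ${}_\bbC B_B\hookrightarrow {}_\bbC B\boxtimes_B Q_B$ recovers $\xi$ by unitality of $Q$, giving $\xi=0$.

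I expect the main obstacle to be the verification of the central identity $\Phi(\langle\eta|\xi\rangle_{|Q|})=\widehat\eta^{\,\dag}\circ\widehat\xi$, since it is the one place requiring honest manipulation of the string diagrams (matching the cap-and-$\rho_X$ configuration in the definition of the form against the definition of $\Phi$), together with the attendant bookkeeping that $\widehat\xi$ is indeed adjointable and right $Q$-linear so that $\widehat\eta^{\,\dag}\circ\widehat\xi$ genuinely lands in the norm-closed subalgebra $\End_{-Q}({}_\bbC B\boxtimes_B Q_B)$ of \eqref{eq:NormClosedCondition}. The remaining checks---conjugate symmetry, linearity, positivity, and definiteness---are then purely formal consequences of $\Phi$ being an isometric $*$-isomorphism, and require no further diagrammatic computation.
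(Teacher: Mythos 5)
Your proposal is correct and is essentially the paper's own argument: the paper's positivity and definiteness computations are precisely your identity $\Phi(\langle\xi|\xi\rangle_{|Q|}) = \widehat{\xi}^{\,\dag}\circ\widehat{\xi}$, with $\widehat{\xi} = \rho_X\circ(\xi\boxtimes\id_Q)$ appearing implicitly as the intermediate diagram and the key move being the module Frobenius relation \ref{M:Frobenius}, followed by the same $\rm C^*$-identity and unit-capping step to recover $\xi$ from $\widehat{\xi}$. The only difference is organizational --- you route right $|Q|$-linearity and conjugate symmetry formally through the single identity $\Phi(\langle\eta|\xi\rangle_{|Q|})=\widehat{\eta}^{\,\dag}\circ\widehat{\xi}$ together with multiplicativity and injectivity of $\Phi$, whereas the paper verifies right $|Q|$-linearity by a direct diagrammatic computation and obtains self-adjointness from positivity via polarization.
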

\begin{proof}
We need to prove right $|Q|$-linearity, positivity (which implies self-adjointness by polarization), and definiteness.
\item[\underline{Right $|Q|$-linear:}]
$
\langle \eta|\xi\lhd q\rangle_{|Q|} 
=
\tikzmath{
\begin{scope}
\clip[rounded corners=5pt] (-.4,-2) rectangle (1.2,1.9);
\fill[\AColor] (-.1,0) rectangle (.1,1.2);
\fill[\BColor] (.5,-2) -- (.5,-1.3) -- (.4,-1) .. controls ++(90:.3cm) and ++(270:.3cm) .. (0,-.3) -- (.1,.3) -- (.1,1) -- (0,1) -- (0,1.9) -- (1.2,1.9) -- (1.2,-2);
\end{scope}
\draw[thick, \XColor] (.1,.3) -- (.1,1);
\draw (.6,-1) -- (.6,0) arc (0:90:.5cm);
\draw (.6,1.9) -- (.6,1.2) arc (0:-90:.5cm);
\draw[dashed] (.5,-2) -- (.5,-1.3) -- (.4,-1) .. controls ++(90:.3cm) and ++(270:.3cm) .. (0,-.3) -- (-.1,.3) -- (-.1,1) -- (0,1) -- (0,1.9);
\filldraw[\XColor] (.1,.5) circle (.05cm);
\filldraw[\XColor] (.1,.7) circle (.05cm);
\roundNbox{unshaded}{(0,1.2)}{.3}{0}{0}{$\eta^\dag$};
\roundNbox{unshaded}{(0,0)}{.3}{0}{0}{$\xi$};
\roundNbox{unshaded}{(.5,-1.3)}{.3}{0}{0}{$q$};
}
\underset{\text{\ref{Q:Frobenius}}}{=}
\tikzmath{
\begin{scope}
\clip[rounded corners=5pt] (-.4,-2) rectangle (1.7,1.9);
\fill[\AColor] (-.1,0) rectangle (.1,1.2);
\fill[\BColor] (1,-2) -- (1,-1.3) -- (.9,-1) .. controls ++(90:.3cm) and ++(270:.3cm) .. (0,-.3) -- (.1,.3) -- (.1,1) -- (0,1) -- (0,1.9) -- (1.7,1.9) -- (1.7,-2);
\end{scope}
\draw[thick, \XColor] (.1,.3) -- (.1,1);
\draw (1.1,-1) -- (1.1,1) arc (0:90:.5cm);
\draw (.6,1.9) -- (.6,1) arc (0:-90:.5cm);
\draw[dashed] (1,-2) -- (1,-1.3) -- (.9,-1) .. controls ++(90:.3cm) and ++(270:.3cm) .. (0,-.3) -- (-.1,.3) -- (-.1,1) -- (0,1) -- (0,1.9);
\filldraw[\XColor] (.1,.5) circle (.05cm);
\filldraw (.6,1.5) circle (.05cm);
\roundNbox{unshaded}{(0,1)}{.3}{0}{0}{$\eta^\dag$};
\roundNbox{unshaded}{(0,0)}{.3}{0}{0}{$\xi$};
\roundNbox{unshaded}{(1,-1.3)}{.3}{0}{0}{$q$};
}
=
\langle \eta|\xi\rangle_{|Q|}\lhd q
$
\item[\underline{Positive:}]
To show that $\langle \xi|\xi\rangle_{|Q|}\geq 0$ for all $\xi\in |X|$,
we use the unital $*$-algebra isomorphism
$\Phi: |Q| \to \End_{-Q}({}_{\bbC}B\boxtimes_B Q_B)$ from
\eqref{eq:CStarIso}.
For all $\xi\in |X|$,
\[
\Phi(\langle \xi|\xi\rangle_{|Q|})
= 
\tikzmath{
\begin{scope}
\clip[rounded corners=5pt] (-.4,-1.2) rectangle (1.6,1.6);
\fill[\AColor] (-.1,-.7) rectangle (.1,.7);
\fill[\BColor] (0,-1.2) -- (0,-.7) -- (.1,-.7) -- (.1,.7) -- (0,.7) -- (0,1.6) -- (1.6,1.6) -- (1.6,-1.6);
\end{scope}
\draw[dashed] (0,-1.2) -- (0,-.7) -- (-.1,-.7) -- (-.1,.7) -- (0,.7) -- (0,1.6);
\draw[thick, \XColor] (.1,-.5) -- (.1,.5);
\draw (.1,0) arc (-90:0:.5cm) -- (.6,.8) arc (180:0:.3cm) -- (1.2,-1.2);
\draw (.9,1.1) -- (.9,1.6);
\filldraw[\XColor] (.1,0) circle (.05cm);
\filldraw (.9,1.1) circle (.05cm);
\roundNbox{unshaded}{(0,.5)}{.3}{0}{0}{$\xi^\dag$};
\roundNbox{unshaded}{(0,-.5)}{.3}{0}{0}{$\xi$};
}
=
\tikzmath{
\begin{scope}
\clip[rounded corners=5pt] (-.4,-1.4) rectangle (1,1.2);
\fill[\AColor] (-.1,-.7) rectangle (.1,.7);
\fill[\BColor] (0,-1.4) -- (0,-.7) -- (.1,-.7) -- (.1,.7) -- (0,.7) -- (0,1.2) -- (1,1.2) -- (1,-1.4);
\end{scope}
\draw[dashed] (0,-1.4) -- (0,-.7) -- (-.1,-.7) -- (-.1,.7) -- (0,.7) -- (0,1.2);
\draw[thick, \XColor] (.1,-.5) -- (.1,.5);
\draw (.6,1.2) -- (.6,.5) arc (0:-90:.5cm);
\draw (.6,-1.4) -- (.6,-.7) arc (0:90:.5cm);
\filldraw[\XColor] (.1,0) circle (.05cm);
\filldraw[\XColor] (.1,-.2) circle (.05cm);
\roundNbox{unshaded}{(0,.5)}{.3}{0}{0}{$\xi^\dag$};
\roundNbox{unshaded}{(0,-.7)}{.3}{0}{0}{$\xi$};
}
\ge 0.
\]
Hence $\langle \xi|\xi\rangle_{|Q|}\geq 0$ as $\Phi$ is a unital $*$-isomorphism.

\item[\underline{Definite:}]
Suppose $\langle \xi|\xi\rangle_{|Q|}= 0$.
Again applying the isomorphism $\Phi$ and using the fact that $\rCorr$ is $\rm C^*$, we have
\begin{equation*}
\tikzmath{
\begin{scope}
\clip[rounded corners=5pt] (-.4,-1.4) rectangle (1,1.2);
\fill[\AColor] (-.1,-.7) rectangle (.1,.7);
\fill[\BColor] (0,-1.4) -- (0,-.7) -- (.1,-.7) -- (.1,.7) -- (0,.7) -- (0,1.2) -- (1,1.2) -- (1,-1.4);
\end{scope}
\draw[dashed] (0,-1.4) -- (0,-.7) -- (-.1,-.7) -- (-.1,.7) -- (0,.7) -- (0,1.2);
\draw[thick, \XColor] (.1,-.5) -- (.1,.5);
\draw (.6,1.2) -- (.6,.5) arc (0:-90:.5cm);
\draw (.6,-1.4) -- (.6,-.7) arc (0:90:.5cm);
\filldraw[\XColor] (.1,0) circle (.05cm);
\filldraw[\XColor] (.1,-.2) circle (.05cm);
\roundNbox{unshaded}{(0,.5)}{.3}{0}{0}{$\xi^\dag$};
\roundNbox{unshaded}{(0,-.7)}{.3}{0}{0}{$\xi$};
}
=
0
\qquad
\Longrightarrow
\qquad
\tikzmath{
\begin{scope}
\clip[rounded corners=5pt] (-.4,-1.4) rectangle (1,.5);
\fill[\AColor] (-.1,-.7) rectangle (.1,.5);
\fill[\BColor] (0,-1.4) -- (0,-.7) -- (.1,-.7) -- (.1,.5) -- (1,.5) -- (1,-1.4);
\end{scope}
\draw[dashed] (0,-1.4) -- (0,-.7) -- (-.1,-.7) -- (-.1,.5);
\draw[thick, \XColor] (.1,-.5) -- (.1,.5);
\draw (.6,-1.4) -- (.6,-.6) arc (0:90:.5cm);
\filldraw[\XColor] (.1,-.1) circle (.05cm);
\roundNbox{unshaded}{(0,-.7)}{.3}{0}{0}{$\xi$};
}
=
0
\qquad
\Longrightarrow
\qquad
\tikzmath{
\begin{scope}
\clip[rounded corners=5pt] (-.4,-.7) rectangle (.7,.7);
\fill[\AColor] (-.1,0) rectangle (.1,.7);
\fill[\BColor] (0,-.7) -- (0,0) -- (.1,0) -- (.1,.7) -- (.8,.7) -- (.8,-.7);
\end{scope}
\draw[dashed] (0,-.7) -- (0,0) -- (-.1,0) -- (-.1,.7);
\draw[thick, \XColor] (.1,.3) -- (.1,.7);
\roundNbox{unshaded}{(0,0)}{.3}{0}{0}{$\xi$};
}
=
\tikzmath{
\begin{scope}
\clip[rounded corners=5pt] (-.4,-1.4) rectangle (1,.5);
\fill[\AColor] (-.1,-.7) rectangle (.1,.5);
\fill[\BColor] (0,-1.4) -- (0,-.7) -- (.1,-.7) -- (.1,.5) -- (1,.5) -- (1,-1.4);
\end{scope}
\draw[dashed] (0,-1.4) -- (0,-.7) -- (-.1,-.7) -- (-.1,.5);
\draw[thick, \MColor] (.1,-.5) -- (.1,.5);
\draw (.6,-1.2) -- (.6,-.6) arc (0:90:.5cm);
\filldraw[\MColor] (.1,-.1) circle (.05cm);
\filldraw (.6,-1.2) circle (.05cm);
\roundNbox{unshaded}{(0,-.7)}{.3}{0}{0}{$\xi$};
}
=
0.
\qedhere
\end{equation*}
\end{proof}

\begin{prop}
The space $|X|$ equipped with its right $|Q|$-valued inner product is a right $|P|-|Q|$ correspondence.
\end{prop}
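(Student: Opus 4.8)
The plan is to verify the clauses in the definition of a right $|P|-|Q|$ correspondence that are not yet in hand. The preceding lemma establishes that $\langle\,\cdot\,|\,\cdot\,\rangle_{|Q|}$ is a genuine right $|Q|$-valued inner product (right linearity, positivity, definiteness), and Construction~\ref{construction:|X|} provides the left $|P|$- and right $|Q|$-actions together with their associativity and unitality. That the two actions commute, i.e.\ $(p\rhd\xi)\lhd q = p\rhd(\xi\lhd q)$, is immediate in the graphical calculus: the $|P|$-action is implemented by the vertex $\lambda_X$ on the left of the $X$-strand and the $|Q|$-action by $\rho_X$ on the right, and these slide past one another by the bimodule associativity axiom~\ref{M:associativity}. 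Thus what remains is to prove completeness of $|X|$ in the norm induced by $\langle\,\cdot\,|\,\cdot\,\rangle_{|Q|}$ and that the left $|P|$-action is by adjointable (hence bounded) operators.

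For completeness, first recall from Construction~\ref{construction:BoundedVectorModule} that $|X|=\Hom_{\bbC-B}({}_\bbC B_B\to{}_\bbC A\boxtimes_A X_B)$ is isomorphic, as a right $B$-module and preserving $B$-valued inner products, to $(A\boxtimes_A X)_B\cong X_B$ with $\langle\eta|\xi\rangle_B:=\eta^\dag\circ\xi$; since $X_B$ is a complete right Hilbert $B$-module, $|X|$ is complete in $\|\xi\|_X^2=\|\langle\xi|\xi\rangle_B\|_B$. I would then show the norm $\|\xi\|_{|Q|}^2:=\|\langle\xi|\xi\rangle_{|Q|}\|_{|Q|}$ is equivalent to $\|\cdot\|_X$. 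The key identity, obtained by capping the output $Q$-strand with $i^\dag$ and applying module unitality~\ref{M:unitality}, is
\[
\iota^\dag\big(\langle\eta|\xi\rangle_{|Q|}\big)=\langle\eta|\xi\rangle_B ,
\]
where $\iota^\dag:|Q|\to B$ is the adjoint of the inclusion from Definition~\ref{defn:ConditionalExpectation}. Boundedness of $\iota^\dag$ gives $\|\xi\|_X^2\le\|\iota^\dag\|\,\|\xi\|_{|Q|}^2$. For the reverse inequality I would invoke the finite Pimsner--Popa index of $E_B$ from Proposition~\ref{prop:EBCPandFiniteIndex}: for $x=\langle\xi|\xi\rangle_{|Q|}\in|Q|^+$ we have $x\le c\cdot\iota(E_B(x))$, so, as $\iota$ is a contractive $*$-homomorphism, $\|\xi\|_{|Q|}^2=\|x\|_{|Q|}\le c\,\|\iota(E_B(x))\|_{|Q|}\le c\,\|E_B(x)\|_B$. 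Since $E_B(\langle\xi|\xi\rangle_{|Q|})=\iota^\dag(\langle\xi|\xi\rangle_{|Q|})\,d_Q^{-1}=\langle\xi|\xi\rangle_B\,d_Q^{-1}$ by the definition of $E_B$ and the support conventions of~\ref{Z:Support}, this is bounded by $c\,\|d_Q^{-1}\|\,\|\xi\|_X^2$. The two inequalities yield equivalence of the norms, and completeness transfers from $\|\cdot\|_X$ to $\|\cdot\|_{|Q|}$.

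Finally, for adjointability of the left action I would show that for each $p\in|P|$ the operator $L_p:\xi\mapsto p\rhd\xi$ admits the adjoint $L_{p^*}$ with respect to $\langle\,\cdot\,|\,\cdot\,\rangle_{|Q|}$, i.e.
\[
\langle p\rhd\eta\mid\xi\rangle_{|Q|}=\langle\eta\mid p^*\rhd\xi\rangle_{|Q|}
\qquad\forall\,\eta,\xi\in|X|,\ p\in|P|.
\]
This is a direct diagrammatic computation: expanding $(p\rhd\eta)^\dag$ and using the Frobenius and unitality relations~\ref{M:Frobenius} and~\ref{M:unitality} for the left $P$-action (exactly as the $*$-operation on $|Q|$ was analyzed in Construction~\ref{construction:|Q|}), the $p^\dag$-vertex is transported off $\eta^\dag$ and reassembled into $p^*\rhd\xi$. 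Because adjointable operators on a Hilbert $\rm C^*$-module are automatically bounded by the Closed Graph Theorem, this simultaneously yields boundedness of the left action and completes the verification. I expect the norm-equivalence step to be the main obstacle, since it is the only place requiring genuine analytic input, namely the finite Pimsner--Popa index of $E_B$; every other clause reduces to the graphical calculus.
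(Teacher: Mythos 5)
Your proof is correct and follows essentially the same route as the paper's: adjointability of the left $|P|$-action is verified by the same diagrammatic computation (the paper invokes \ref{QSys:StarClosed} and \ref{QSys:AdjointAction}, which encode precisely the Frobenius/unitality manipulations you describe), and completeness is obtained by showing the $|Q|$-norm is equivalent to the $B$-norm from Construction \ref{construction:BoundedVectorModule}, using the finite Pimsner--Popa index of $E_B$ from Proposition \ref{prop:EBCPandFiniteIndex}. If anything, your treatment of the completeness step is slightly more careful: the paper's display asserts $\|E_B(\langle\xi|\xi\rangle^{|X|}_{|Q|})\|_B^{1/2}=\|\xi\|_B$, silently ignoring the $d_Q^{-1}$ factor in the definition of $E_B$, whereas your version tracks it correctly at the cost of the harmless constant $\|d_Q^{-1}\|$.
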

\begin{proof}
We must check that that the left $|P|$-action is by adjointable operators and that $|X|$ is complete in the norm induced from $|Q|$.
\item[\underline{Adjointable $|P|$-action:}]
Observe that for all $\eta,\xi \in |X|$ and $p\in |P|$,
\[
\langle p\rhd \eta|\xi\rangle_{|Q|}
=
\tikzmath{
\begin{scope}
\clip[rounded corners=5pt] (-.4,-2) rectangle (1.7,2);
\fill[\AColor] (.4,-1) .. controls ++(90:.3cm) and ++(270:.3cm) .. (-.1,-.1) -- (0,.5) .. controls ++(90:.3cm) and ++(270:.3cm) .. (.4,1) -- (.6,1) -- (.6,-1);
\fill[\BColor] (.5,-2) -- (.5,-1) -- (.6,-1) -- (.6,1) -- (.5,1)-- (.5,2) -- (1.7,2) -- (1.7,-2);
\end{scope}
\draw (.1,-.1) arc (180:270:.5cm);
\draw (.6,-.8) arc (-90:0:.5cm) -- (1.1,2);
\draw[thick, \XColor] (.6,-1) -- (.6,1);
\draw[dashed] (.5,-2) -- (.5,-1) -- (.4,-1) .. controls ++(90:.3cm) and ++(270:.3cm) .. (-.1,-.1) -- (0,.5) .. controls ++(90:.3cm) and ++(270:.3cm) .. (.4,1) -- (.5,1)-- (.5,2);
\filldraw[\XColor] (.6,-.6) circle (.05cm);
\filldraw[\XColor] (.6,-.8) circle (.05cm);
\roundNbox{unshaded}{(0,.2)}{.3}{0}{0}{$p^\dag$};
\roundNbox{unshaded}{(.5,1.3)}{.3}{0}{0}{$\eta^\dag$};
\roundNbox{unshaded}{(.5,-1.3)}{.3}{0}{0}{$\xi$};
}
\underset{\text{\ref{QSys:StarClosed}}}{=}
\tikzmath{
\begin{scope}
\clip[rounded corners=5pt] (-.4,-2) rectangle (1.7,2);
\fill[\AColor] (.4,-1) .. controls ++(90:.3cm) and ++(270:.3cm) .. (-.1,-.1) -- (0,.5) .. controls ++(90:.3cm) and ++(270:.3cm) .. (.4,1) -- (.6,1) -- (.6,-1);
\fill[\BColor] (.5,-2) -- (.5,-1) -- (.6,-1) -- (.6,1) -- (.5,1)-- (.5,2) -- (1.7,2) -- (1.7,-2);
\end{scope}
\draw (.1,-.1) arc (180:270:.5cm);
\draw (.6,.8) arc (-90:0:.5cm) -- (1.1,2);
\draw[thick, \MColor] (.6,-1) -- (.6,1);
\draw[dashed] (.5,-2) -- (.5,-1) -- (.4,-1) .. controls ++(90:.3cm) and ++(270:.3cm) .. (-.1,-.1) -- (0,.5) .. controls ++(90:.3cm) and ++(270:.3cm) .. (.4,1) -- (.5,1)-- (.5,2);
\filldraw[\MColor] (.6,-.6) circle (.05cm);
\filldraw[\MColor] (.6,.8) circle (.05cm);
\roundNbox{unshaded}{(0,.2)}{.3}{0}{0}{$p^\dag$};
\roundNbox{unshaded}{(.5,1.3)}{.3}{0}{0}{$\eta^\dag$};
\roundNbox{unshaded}{(.5,-1.3)}{.3}{0}{0}{$\xi$};
}
\underset{\text{\ref{QSys:AdjointAction}}}{=}
\tikzmath{
\begin{scope}
\clip[rounded corners=5pt] (-.4,-2) rectangle (1.7,2);
\fill[\AColor] (.4,-1) .. controls ++(90:.3cm) and ++(270:.3cm) .. (-.38,-.5) -- (-.4,-.1) -- (-.3,.5) .. controls ++(90:.3cm) and ++(270:.3cm) .. (.4,1) -- (.6,1) -- (.6,-1);
\fill[\BColor] (.5,-2) -- (.5,-1) -- (.6,-1) -- (.6,1) -- (.5,1)-- (.5,2) -- (1.7,2) -- (1.7,-2);
\end{scope}
\draw (.6,.6) arc (90:180:.2cm) -- (.4,-.1) arc (0:-180:.3cm);
\draw (.6,.8) arc (-90:0:.5cm) -- (1.1,2);
\draw[thick, \MColor] (.6,-1) -- (.6,1);
\draw[dashed] (.5,-2) -- (.5,-1) -- (.4,-1) .. controls ++(90:.3cm) and ++(270:.3cm) .. (-.38 ,-.5) -- (-.4,-.1) -- (-.3,.5) .. controls ++(90:.3cm) and ++(270:.3cm) .. (.4,1) -- (.5,1)-- (.5,2);
\draw (.1,-.4) -- (.1,-.6);
\filldraw[\MColor] (.6,.6) circle (.05cm);
\filldraw[\MColor] (.6,.8) circle (.05cm);
\filldraw (.1,-.4) circle (.05cm);
\filldraw (.1,-.6) circle (.05cm);
\roundNbox{unshaded}{(-.3,.2)}{.3}{0}{0}{$p^\dag$};
\roundNbox{unshaded}{(.5,1.3)}{.3}{0}{0}{$\eta^\dag$};
\roundNbox{unshaded}{(.5,-1.3)}{.3}{0}{0}{$\xi$};
}
=
\langle \eta|p^*\rhd \xi\rangle_{|Q|}.
\]

\item[\underline{Complete:}]
By definition, $|X|$ is complete in the norm induced from the right $B$-valued inner product given by $\langle \eta|\xi\rangle_B^{|X|} = \eta^\dag \circ \xi \in \End_{\bbC-B}(B)=B$ from Construction \ref{construction:BoundedVectorModule}.
We prove this norm is equivalent to the norm induced by the right $|Q|$-valued inner product.
First, by \ref{Z:Support} and \ref{M:unitality}, we have the identities
$$
E_B(\langle\eta|\xi\rangle^{|X|}_{|Q|})
=
d_Q^{-1}\cdot \langle \eta|\xi\rangle^{|X|}_B
\qquad\text{and}\qquad
d_Q\cdot E_B(\langle\eta|\xi\rangle^{|X|}_{|Q|})
=
\langle \eta|\xi\rangle^{|X|}_B,
%\langle\eta|\xi\rangle^{|X|}_B 
%= 
%\iota(E_B(\langle\eta|\xi\rangle^{|X|}_{|Q|}))
$$
where $E_B: |Q| \to B$ is the finite index conditional expectation from Definition \ref{defn:ConditionalExpectation}.
Since $E_B$ is finite index,
there is a $c>0$ such that
$q \leq c \cdot E_B(q)$ 
for all $q\in Q_+$.
Since 
$\|E_B(q)\|_B \leq \|q\|_{|Q|}$ 
for all $q\in |Q|$, for all $\xi\in |X|$,
\begin{align*}
\|\xi\|_{|Q|}^2
&=
\|\langle \xi|\xi\rangle^{|X|}_{|Q|}\|_{|Q|}
\leq
c\cdot
\|E_B(\langle\xi|\xi\rangle^{|X|}_{|Q|})\|_B
=
c\cdot
\|d_Q^{-1}\cdot \langle \xi|\xi\rangle^{|X|}_B\|_B
\\&\leq
c\cdot \|d_Q^{-1}\|\cdot 
\underbrace{
\|\langle \xi|\xi\rangle^{|X|}_B\|_B
}_{=\|\xi\|^2_B}
=
c\cdot \|d_Q^{-1}\|\cdot 
\|d_Q\cdot E_B(\langle\xi|\xi\rangle^{|X|}_{|Q|})\|_{B}
\\&\leq
c\cdot \|d_Q^{-1}\|\cdot \|d_Q\|\cdot \|\langle\xi|\xi\rangle^{|X|}_{|Q|}\|_{|Q|}
=
c\cdot \|d_Q^{-1}\|\cdot \|d_Q\|\cdot \|\xi\|^2_{|Q|}.
\qedhere
\end{align*}
%This completes the proof.
\end{proof}

%%%%%%%%%%%%%%%%%%%%%%%%%%%%%%%%%%%%%%%%%%%%
\subsection{Realization of 2-morphisms}

Suppose $A,B$ are unital $\rm C^*$-algebras
and  ${}_AX_B, {}_AY_B, {}_AZ_B\in \rCorr(A\to B)$.
We denote $A,B$ as above by shaded regions and ${}_AX_B, {}_AY_B, {}_AZ_B$ by colored strands:
$$
\tikzmath{\filldraw[\AColor, rounded corners=5, very thin, baseline=1cm] (0,0) rectangle (.6,.6);}=A
\qquad\qquad
\tikzmath{\filldraw[\BColor, rounded corners=5, very thin, baseline=1cm] (0,0) rectangle (.6,.6);}=B.
\qquad
\tikzmath{
\begin{scope}
\clip[rounded corners=5pt] (-.3,0) rectangle (.3,.6);
\fill[\AColor] (0,0) rectangle (-.3,.6);
\fill[\BColor] (0,0) rectangle (.3,.6);
\end{scope}
\draw[thick, \XColor] (0,0) -- (0,.6);
}=X
\qquad\qquad
\tikzmath{
\begin{scope}
\clip[rounded corners=5pt] (-.3,0) rectangle (.3,.6);
\fill[\AColor] (0,0) rectangle (-.3,.6);
\fill[\BColor] (0,0) rectangle (.3,.6);
\end{scope}
\draw[thick, \YColor] (0,0) -- (0,.6);
}=Y
\qquad\qquad
\tikzmath{
\begin{scope}
\clip[rounded corners=5pt] (-.3,0) rectangle (.3,.6);
\fill[\AColor] (0,0) rectangle (-.3,.6);
\fill[\BColor] (0,0) rectangle (.3,.6);
\end{scope}
\draw[thick, \ZColor] (0,0) -- (0,.6);
}=Z.
$$

\begin{construction}
\label{defn:F2Mor}
Suppose $f\in \rCorr({}_AX_B \Rightarrow {}_AY_B)$ is a $P-Q$ bimodule intertwiner.
We define $|f|:|X| \to |Y|$ by
$$
|f|
\left(
\tikzmath{
\begin{scope}
\clip[rounded corners=5pt] (-.4,-.7) rectangle (.7,.7);
\fill[\AColor] (-.1,0) rectangle (.1,.7);
\fill[\BColor] (0,-.7) -- (0,0) -- (.1,0) -- (.1,.7) -- (.8,.7) -- (.8,-.7);
\end{scope}
\draw[dashed] (0,-.7) -- (0,0) -- (-.1,0) -- (-.1,.7);
\draw[thick, \MColor] (.1,.3) -- (.1,.7);
\roundNbox{unshaded}{(0,0)}{.3}{0}{0}{$\xi$};
}
\right)
:=
\tikzmath{
\begin{scope}
\clip[rounded corners=5pt] (-.6,-.7) rectangle (1,1.7);
\fill[\AColor] (-.3,0) rectangle (.3,1.7);
\fill[\BColor] (0,-.7) -- (0,0) -- (.3,0) -- (.3,1.7) -- (1,1.7) -- (1,-.7);
\end{scope}
\draw[dashed] (0,-.7) -- (0,0) -- (-.3,0) -- (-.3,1.7);
\draw[thick, \XColor] (.3,.3) -- (.3,.7);
\draw[thick, \YColor] (.3,1.3) -- (.3,1.7);
\roundNbox{unshaded}{(0,0)}{.3}{.2}{.2}{$\xi$};
\roundNbox{unshaded}{(.3,1)}{.3}{0}{0}{$f$};
}
\in |Y|.
$$
Since $f$ is $P-Q$ bimodular, it follows that $|f|$ is $|P|-|Q|$ bimodular.
\end{construction}

\begin{lem}
For $f\in \rCorr({}_{A}X_B \Rightarrow {}_{A}Y_B)$
and
$g\in\rCorr({}_{A}Y_B \Rightarrow {}_{A}Z_B)$ 
both $P-Q$ bimodular,
$|g\circ f| = |g|\circ |f|$.
\end{lem}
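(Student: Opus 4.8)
The plan is to recognize that $|\cdot|$ acts on a 2-morphism simply by post-composition, so that functoriality reduces to the bifunctoriality of the relative tensor product together with associativity of composition. First I would record that $|g\circ f|$ is even well-defined: since $f$ and $g$ are each $P$-$Q$ bimodular, so is $g\circ f$, and hence Construction \ref{defn:F2Mor} applies to it.

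Next I would unwind the definition. For $\xi\in |X|=\Hom_{\bbC-B}({}_{\bbC}B_B \to {}_{\bbC}A\boxtimes_A X_B)$, the formula in Construction \ref{defn:F2Mor} produces $|f|(\xi)\in |Y|$ by attaching the coupon $f$ to the $X$-strand emanating from $\xi$; algebraically, writing $\id_A\boxtimes f$ for the adjointable map ${}_{\bbC}A\boxtimes_A X_B \to {}_{\bbC}A\boxtimes_A Y_B$ induced by $f$ (i.e.\ the whiskering $\id_{{}_{\bbC}A_A}\boxtimes f$), this reads
\[
|f|(\xi) = (\id_A\boxtimes f)\circ \xi.
\]

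The key step is then an immediate computation. Applying $|g|$ stacks the coupon $g$ above $f$, so for every $\xi\in|X|$,
\[
|g|\bigl(|f|(\xi)\bigr)
= (\id_A\boxtimes g)\circ(\id_A\boxtimes f)\circ \xi
= \bigl(\id_A\boxtimes(g\circ f)\bigr)\circ \xi
= |g\circ f|(\xi),
\]
where the middle equality is functoriality of the relative tensor product in its second variable (equivalently, the interchange law of the graphical calculus, $(\id_A\boxtimes g)\circ(\id_A\boxtimes f)=\id_A\boxtimes(g\circ f)$) and the last equality is the defining formula for $|g\circ f|$. Since $\xi$ was arbitrary, $|g|\circ|f|=|g\circ f|$.

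The hard part will be essentially nonexistent: the entire content is that stacking coupons along a strand is compatible with vertical composition, which is built into the graphical calculus for $\rCorr$. The only point worth stating explicitly is that the whiskering $\id_A\boxtimes(-)$ satisfies the interchange identity above, but this is exactly the bifunctoriality of $\boxtimes$ established when $\rCorr$ was defined, so the verification is routine bookkeeping rather than a genuine difficulty.
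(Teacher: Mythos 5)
Your proposal is correct and matches the paper's own argument: the paper proves the lemma by the identical pointwise computation, simply drawn in the graphical calculus, where stacking the coupons $f$ and $g$ on the strand emanating from $\xi$ is visibly the same as attaching the single coupon $g\circ f$. Your algebraic rendering via $|f|(\xi)=(\id_A\boxtimes f)\circ\xi$ and functoriality of $\id_A\boxtimes(-)$ is exactly this, so nothing is missing.
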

\begin{proof}
We observe that for all $\xi \in |X|$,
$$
|g\circ f|
\left(
\tikzmath{
\begin{scope}
\clip[rounded corners=5pt] (-.4,-.7) rectangle (.7,.7);
\fill[\AColor] (-.1,0) rectangle (.1,.7);
\fill[\BColor] (0,-.7) -- (0,0) -- (.1,0) -- (.1,.7) -- (.8,.7) -- (.8,-.7);
\end{scope}
\draw[dashed] (0,-.7) -- (0,0) -- (-.1,0) -- (-.1,.7);
\draw[thick, \MColor] (.1,.3) -- (.1,.7);
\roundNbox{unshaded}{(0,0)}{.3}{0}{0}{$\xi$};
}\,
\right)
=
\tikzmath{
\begin{scope}
\clip[rounded corners=5pt] (-.7,-.7) rectangle (1.1,1.7);
\fill[\AColor] (-.4,0) rectangle (.4,1.7);
\fill[\BColor] (0,-.7) -- (0,0) -- (.4,0) -- (.4,1.7) -- (1.6,1.7) -- (1.6,-.7);
\end{scope}
\draw[dashed] (0,-.7) -- (0,0) -- (-.4,0) -- (-.4,1.7);
\draw[thick, \XColor] (.4,.3) -- (.4,.7);
\draw[thick, \ZColor] (.4,1.3) -- (.4,1.7);
\roundNbox{unshaded}{(0,0)}{.3}{.3}{.3}{$\xi$};
\roundNbox{unshaded}{(.4,1)}{.3}{.2}{.2}{$g\circ f$};
}
=
\tikzmath{
\begin{scope}
\clip[rounded corners=5pt] (-.6,-.7) rectangle (1,2.7);
\fill[\AColor] (-.3,0) rectangle (.3,2.7);
\fill[\BColor] (0,-.7) -- (0,0) -- (.3,0) -- (.3,2.7) -- (1,2.7) -- (1,-.7);
\end{scope}
\draw[dashed] (0,-.7) -- (0,0) -- (-.3,0) -- (-.3,2.7);
\draw[thick, \XColor] (.3,.3) -- (.3,.7);
\draw[thick, \YColor] (.3,1.3) -- (.3,1.7);
\draw[thick, \ZColor] (.3,2.3) -- (.3,2.7);
\roundNbox{unshaded}{(0,0)}{.3}{.2}{.2}{$\xi$};
\roundNbox{unshaded}{(.3,1)}{.3}{0}{0}{$f$};
\roundNbox{unshaded}{(.3,2)}{.3}{0}{0}{$g$};
}
=
|g|\left(
\tikzmath{
\begin{scope}
\clip[rounded corners=5pt] (-.6,-.7) rectangle (1,1.7);
\fill[\AColor] (-.3,0) rectangle (.3,1.7);
\fill[\BColor] (0,-.7) -- (0,0) -- (.3,0) -- (.3,1.7) -- (1,1.7) -- (1,-.7);
\end{scope}
\draw[dashed] (0,-.7) -- (0,0) -- (-.3,0) -- (-.3,1.7);
\draw[thick, \XColor] (.3,.3) -- (.3,.7);
\draw[thick, \YColor] (.3,1.3) -- (.3,1.7);
\roundNbox{unshaded}{(0,0)}{.3}{.2}{.2}{$\xi$};
\roundNbox{unshaded}{(.3,1)}{.3}{0}{0}{$f$};
}\,
\right)
=
(|g|\circ|f|)\left(
\tikzmath{
\begin{scope}
\clip[rounded corners=5pt] (-.4,-.7) rectangle (.7,.7);
\fill[\AColor] (-.1,0) rectangle (.1,.7);
\fill[\BColor] (0,-.7) -- (0,0) -- (.1,0) -- (.1,.7) -- (.8,.7) -- (.8,-.7);
\end{scope}
\draw[dashed] (0,-.7) -- (0,0) -- (-.1,0) -- (-.1,.7);
\draw[thick, \MColor] (.1,.3) -- (.1,.7);
\roundNbox{unshaded}{(0,0)}{.3}{0}{0}{$\xi$};
}\,
\right).
$$
The claim follows.
\end{proof}

\begin{lem}
For $f\in \rCorr( {}_{A}X_B \Rightarrow {}_{A}Y_B)$ $P-Q$ bimodular, 
$|f|: |X| \to |Y|$ 
is adjointable with respect to the right $|Q|$-valued inner products with $|f|^* = |f^\dag|$.
\end{lem}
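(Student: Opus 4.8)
The plan is to exhibit $|f^{\dag}|$ as the adjoint directly. Recall from the definition of adjointability for right Hilbert modules that it suffices to produce a right $|Q|$-linear map $T\colon |Y|\to|X|$ satisfying $\langle\eta\,|\,|f|(\xi)\rangle_{|Q|}=\langle T\eta\,|\,\xi\rangle_{|Q|}$ for all $\xi\in|X|$ and $\eta\in|Y|$; boundedness of $|f|$ and of $T$ is then automatic by the Closed Graph Theorem, as noted after the definition of $\rCorr$. I would take $T:=|f^{\dag}|$. Both $|f|$ and $|f^{\dag}|$ are right $|Q|$-linear by the bimodularity recorded in Construction \ref{defn:F2Mor}; here one uses that $f^{\dag}$ is again a $P$--$Q$ bimodule intertwiner, which is exactly \ref{QSys:StarClosed}. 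Thus the entire lemma reduces to the single inner-product identity
\[
\langle\eta\,|\,|f|(\xi)\rangle_{|Q|}=\langle |f^{\dag}|(\eta)\,|\,\xi\rangle_{|Q|}.
\]

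First I would expand both sides in the graphical calculus using the definitions of $|f|$, $|f^{\dag}|$, and the $|Q|$-valued inner product. On the left, $|f|(\xi)$ is $\xi$ with $f$ stacked on its $X$-output, turning it into a $Y$-strand, so the diagram reads, from bottom to top, $\xi$, then $f\colon X\Rightarrow Y$, then $\eta^{\dag}$, capped by the adjoint right $Q$-action $\rho^{\dag}$ placed on the $Y$-strand. On the right, contravariance of $\dag$ together with $f^{\dag\dag}=f$ gives $(|f^{\dag}|(\eta))^{\dag}=\eta^{\dag}\circ f$, so pairing with $\xi$ produces the \emph{same} tower $\xi$, then $f$, then $\eta^{\dag}$ --- the only difference being that the $\rho^{\dag}$ cap now sits on the $X$-strand below $f$ rather than on the $Y$-strand above it.

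The remaining step, which is essentially the only content, is to slide the $Q$-action cap across $f$. This is justified by the right $Q$-module intertwiner property: taking $\dag$ of the module-map condition $f^{\dag}\circ\rho_Y=\rho_X\circ(f^{\dag}\xz\id_Q)$ (valid by \ref{QSys:StarClosed}) yields $\rho_Y^{\dag}\circ f=(f\xz\id_Q)\circ\rho_X^{\dag}$, which is precisely the equality of the two diagrams. This gives the displayed inner-product identity, so $|f|$ is adjointable with $|f|^{*}=|f^{\dag}|$ as claimed. I do not anticipate a genuine obstacle: there is no analytic subtlety, and the argument is purely diagrammatic once everything is unwound through Construction \ref{construction:BoundedVectorModule}. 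The only place demanding care is the bookkeeping of the dagger --- the order-reversal that turns $|f^{\dag}|(\eta)$ into $\eta^{\dag}\circ f$ --- and the recognition that the cap-sliding step is exactly the module-map condition rather than an additional hypothesis.
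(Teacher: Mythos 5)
Your proposal is correct and follows essentially the same route as the paper: the paper's proof is exactly the one-line diagrammatic computation $\langle \eta\,|\,|f|\xi\rangle_{|Q|} = \langle |f^\dag|\eta\,|\,\xi\rangle_{|Q|}$, where the only step is sliding the $\rho^\dag$ cap past $f$, justified by \ref{QSys:StarClosed} (i.e., by taking $\dag$ of the right $Q$-module condition for $f^\dag$), just as you do. Your preliminary remarks on right $|Q|$-linearity and boundedness are consistent with what the paper records in Construction \ref{defn:F2Mor} and after the definition of $\rCorr$.
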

\begin{proof}
For all $\xi \in |X|$ and $\eta\in |Y|$,
$
\langle \eta| |f|\xi \rangle^{|Y|}_{|Q|}
=
\tikzmath{
\begin{scope}
\clip[rounded corners=5pt] (-.6,-.7) rectangle (1.1,2.9);
\fill[\AColor] (-.3,0) rectangle (.3,2.2);
\fill[\BColor] (0,-.7) -- (0,0) -- (.3,0) -- (.3,2.2) -- (0,2.2) -- (0,2.9) -- (1.1,2.9) -- (1.1,-.7);
\end{scope}
\draw[dashed] (0,-.7) -- (0,0) -- (-.3,0) -- (-.3,2) -- (0,2.2) -- (0,2.9);
\draw[thick, \XColor] (.3,.3) -- (.3,.7);
\draw[thick, \YColor] (.3,1.3) -- (.3,2.2);
\draw (.3,1.6) arc (-90:0:.5cm) -- (.8,2.9);
\filldraw[\YColor] (.3,1.6) circle (.05cm);
\roundNbox{unshaded}{(0,0)}{.3}{.2}{.2}{$\xi$};
\roundNbox{unshaded}{(.3,1)}{.3}{0}{0}{$f$};
\roundNbox{unshaded}{(0,2.2)}{.3}{.2}{.2}{$\eta^\dag$};
}
\underset{\text{\ref{QSys:StarClosed}}}{=}
\tikzmath{
\begin{scope}
\clip[rounded corners=5pt] (-.6,-.7) rectangle (1.1,2.9);
\fill[\AColor] (-.3,0) rectangle (.3,2.2);
\fill[\BColor] (0,-.7) -- (0,0) -- (.3,0) -- (.3,2.2) -- (0,2.2) -- (0,2.9) -- (1.1,2.9) -- (1.1,-.7);
\end{scope}
\draw[dashed] (0,-.7) -- (0,0) -- (-.3,0) -- (-.3,2) -- (0,2.2) -- (0,2.9);
\draw[thick, \XColor] (.3,.3) -- (.3,1.2);
\draw[thick, \YColor] (.3,1.3) -- (.3,2.2);
\draw (.3,.6) arc (-90:0:.5cm) -- (.8,2.9);
\filldraw[\XColor] (.3,.6) circle (.05cm);
\roundNbox{unshaded}{(0,0)}{.3}{.2}{.2}{$\xi$};
\roundNbox{unshaded}{(.3,1.2)}{.3}{0}{0}{$f$};
\roundNbox{unshaded}{(0,2.2)}{.3}{.2}{.2}{$\eta^\dag$};
}
=
\langle |f^\dag|\eta| \xi \rangle^{|X|}_{|Q|}
$.
\end{proof}

\begin{rem}
Suppose ${}_AX_B,{}_AY_B \in\rCorr(A\to B)$.
Recall from Construction \ref{construction:BoundedVectorModule} that we have canonical unitary isomorphisms 
${}_AX_B \cong {}_A|X|_B$ and ${}_AY_B \cong {}_A|Y|_B$.
We claim that for all $f\in \rCorr({}_AX_B \Rightarrow {}_AY_B)$, the following diagram commutes:
\begin{equation}
\label{eq:BoundedVectorNaturalIso}
\begin{tikzcd}
{}_AX_B
\arrow[r, "f"]
\arrow[d, <->, "\cong"]
&
{}_AY_B
\arrow[d, <->, "\cong"]
\\
{}_A|X|_B
\arrow[r, "|f|"]
&
{}_A|Y|_B.
\end{tikzcd}
\end{equation}
This is easily seen from the following calculation.
Given $\xi \in X$,
going right then down yields
$L_{f\xi}: b \mapsto 1_A\boxtimes (f\xi)\lhd b$,
whereas going down then right yields
$|f|\circ L_\xi : b \mapsto (\id_A \boxtimes f)(1_A\boxtimes \xi\lhd b)$,
which are visibly equal maps in $|Y|=\Hom_{\bbC-B}({}_\bbC B_B \to {}_{\bbC}A\boxtimes_A Y_B)$.
\end{rem}

%%%%%%%%%%%%%%%%%%%%%%%%%%%%%%%%%%%%%%%%%%%%
\subsection{Composition of 1-morphisms}

Suppose $A,B,C$ are unital $\rm C^*$-algebras,  and ${}_AX_B\in \rCorr(A\to B)$ and ${}_BY_C\in \rCorr(B\to C)$.
We denote the algebras $A,B,C$ by shaded regions
and $X,Y$ by \textcolor{red}{red} and \textcolor{orange}{orange} strings respectively:
$$
\tikzmath{\filldraw[\AColor, rounded corners=5, very thin, baseline=1cm] (0,0) rectangle (.6,.6);}=A
\qquad\qquad
\tikzmath{\filldraw[\BColor, rounded corners=5, very thin, baseline=1cm] (0,0) rectangle (.6,.6);}=B
\qquad\qquad
\tikzmath{\filldraw[\CColor, rounded corners=5, very thin, baseline=1cm] (0,0) rectangle (.6,.6);}=C
\qquad\qquad
\tikzmath{
\begin{scope}
\clip[rounded corners=5pt] (-.3,0) rectangle (.3,.6);
\fill[\AColor] (0,0) rectangle (-.3,.6);
\fill[\BColor] (0,0) rectangle (.3,.6);
\end{scope}
\draw[thick, \XColor] (0,0) -- (0,.6);
}=X
\qquad\qquad
\tikzmath{
\begin{scope}
\clip[rounded corners=5pt] (-.3,0) rectangle (.3,.6);
\fill[\BColor] (0,0) rectangle (-.3,.6);
\fill[\CColor] (0,0) rectangle (.3,.6);
\end{scope}
\draw[thick, \YColor] (0,0) -- (0,.6);
}=Y.
$$
Suppose now that $P\in \rCorr(A\to A)$, $Q\in \rCorr(B\to B)$, and $R\in \rCorr(C\to C)$ are Q-systems,
and $X$ is a $P-Q$ unital Frobenius bimodule object and $Y$ is a $Q-R$ unital Frobenius bimodule object.
As in Definition \ref{defn:QSysTensorProduct}, 
we denote the separability idempotent 
$p_{X,Y}\in \End_{A-C}(X\boxtimes_B Y)$ with respect to $Q$,
the object
$X\xzq_Q Y \in \rCorr(A\to C)$,
and the coisometry
$u_{X,Y}\in \Hom_{A-C}(X\boxtimes_B Y \to X\xzq_Q Y)$ by
$$
\tikzmath{
\begin{scope}
\clip[rounded corners = 5pt] (-.5,-.5) rectangle (.5,.5);
\filldraw[\AColor] (-.5,-.5) rectangle (-.2,.5);
\filldraw[\BColor] (-.2,-.5) rectangle (.2,.5);
\filldraw[\CColor] (.2,-.5) rectangle (.5,.5);
\end{scope}
\draw[thick, \XColor] (-.2,-.5) -- (-.2,.5);
\draw[thick] (-.2,0) -- (.2,0);
\draw[thick, \YColor] (.2,-.5) -- (.2,.5);
}
=
p_{X,Y}
\qquad\qquad
\tikzmath{
\begin{scope}
\clip[rounded corners = 5pt] (-.3,-.5) rectangle (.3,.5);
\filldraw[\AColor] (-.3,-.5) rectangle (-.04,.5);
\filldraw[\BColor] (-.04,-.5) rectangle (.04,.5);
\filldraw[\CColor] (.04,-.5) rectangle (.3,.5);
\end{scope}
\DoubleStrand{(0,-.5)}{(0,.5)}{\XColor}{\YColor}
}
=
X\xzq_Q Y
\qquad\qquad
\tikzmath{
\begin{scope}
\clip[rounded corners = 5pt] (-.5,-.7) rectangle (.5,.7);
\filldraw[\AColor] (-.5,-.7) -- (-.15,-.7) -- (-.15,0) -- (-.04,0) -- (-.04,.7) -- (-.5,.7);
\filldraw[\BColor] (-.15,-.7) -- (-.15,0) -- (-.04,0) -- (-.04,.7) -- (.04,.7) -- (.04,0) -- (.15,0) -- (.15,-.7);
\filldraw[\CColor] (.5,-.7) -- (.15,-.7) -- (.15,0) -- (.04,0) -- (.04,.7) -- (.5,.7);
\end{scope}
\draw[thick, \XColor] (-.15,-.7) -- (-.15,-.3);
\draw[thick, \YColor] (.15,-.7) -- (.15,-.3);
\DoubleStrand{(0,.3)}{(0,.7)}{\XColor}{\YColor}
\halfRoundBox{unshaded}{(0,0)}{.3}{0}{$u$};
}
=
u_{X,Y}.
$$

We now construct 
the tensorator $\mu$ for our 2-functor $\QSys(\rCorr) \to \rCorr$, i.e, an associative unitary natural isomorphism
$\mu= \{\mu_{X,Y}\in \Hom_{|P|-|R|}(|X|\boxtimes_{|Q|} |Y| \Rightarrow |X\xzq_Q Y|)\}_{X,Y}$.
Before we do so, we make the following remark.

\begin{rem}
Observe that the map
$$
\tikzmath{
\begin{scope}
\clip[rounded corners=5pt] (-.4,-.7) rectangle (.7,.7);
\fill[\AColor] (-.1,0) rectangle (.1,.7);
\fill[\BColor] (0,-.7) -- (0,0) -- (.1,0) -- (.1,.7) -- (.8,.7) -- (.8,-.7);
\end{scope}
\draw[dashed] (0,-.7) -- (0,0) -- (-.1,0) -- (-.1,.7);
\draw[thick, \MColor] (.1,.3) -- (.1,.7);
\roundNbox{unshaded}{(0,0)}{.3}{0}{0}{$\xi$};
}
\otimes_{\bbC}
\tikzmath{
\begin{scope}
\clip[rounded corners=5pt] (-.4,-.7) rectangle (.7,.7);
\fill[\BColor] (-.1,0) rectangle (.1,.7);
\fill[\CColor] (0,-.7) -- (0,0) -- (.1,0) -- (.1,.7) -- (.8,.7) -- (.8,-.7);
\end{scope}
\draw[dashed] (0,-.7) -- (0,0) -- (-.1,0) -- (-.1,.7);
\draw[thick, \YColor] (.1,.3) -- (.1,.7);
\roundNbox{unshaded}{(0,0)}{.3}{0}{0}{$\eta$};
}
\longmapsto
\tikzmath{
\begin{scope}
\clip[rounded corners=5pt] (-.4,-2) rectangle (1.2,.5);
\fill[\AColor] (-.1,.1) -- (-.1,.5) -- (.1,.5) -- (.1,.1);
\fill[\BColor] (.4,-1) .. controls ++(90:.3cm) and ++(270:.3cm) .. (0,-.5) -- (.1,-.5) -- (.1,.5) -- (.6,.5) -- (.6,-1);
\fill[\CColor] (.5,-2) -- (.5,-1.3) -- (.6,-1) -- (.6,.5) -- (1.2,.5) -- (1.2,-2);
\end{scope}
\draw[thick, \XColor] (.1,.1) -- (.1,.5);
\draw[thick, \YColor] (.6,-1) -- (.6,.5);
\draw[dashed] (.5,-2) -- (.5,-1.3) -- (.4,-1) .. controls ++(90:.3cm) and ++(270:.3cm) .. (0,-.5) -- (-.1,.1) -- (-.1,.5);
\roundNbox{unshaded}{(0,-.2)}{.3}{0}{0}{$\xi$};
\roundNbox{unshaded}{(.5,-1.3)}{.3}{0}{0}{$\eta$};
}\,.
$$
descends to a unitary isomorphism $T_{X,Y}:|X|\boxtimes_B |Y| \to |X\boxtimes_B Y|$.
Indeed, it is $B$-middle linear, it preserves the $B$-valued inner products
$$
\langle\xi_1\boxtimes \eta_1 | \xi_2\boxtimes \eta_2\rangle_C^{|X\boxtimes_BY|}
=
\tikzmath{
\begin{scope}
\clip[rounded corners=5pt] (-.4,-2) rectangle (1.2,3);
\fill[\AColor] (-.1,.1) -- (-.1,1) -- (.1,1) -- (.1,.1);
\fill[\BColor] (.4,-1) .. controls ++(90:.3cm) and ++(270:.3cm) .. (0,-.5) -- (.1,.1) -- (.1,1) -- (0,1) -- (0,1.5) .. controls ++(90:.3cm) and ++(270:.3cm) .. (.4,2) -- (.6,2) -- (.6,-1);
\fill[\CColor] (.5,-2) -- (.5,-1.3) -- (.6,-1) -- (.6,2) -- (.5,2) -- (.5,3) -- (1.2,3) -- (1.2,-2);
\end{scope}
\draw[thick, \XColor] (.1,.1) -- (.1,1);
\draw[thick, \YColor] (.6,-1) -- (.6,2.1);
\draw[dashed] (.5,-2) -- (.5,-1.3) -- (.4,-1) .. controls ++(90:.3cm) and ++(270:.3cm) .. (0,-.5) -- (-.1,.1) -- (-.1,1) -- (0,1) -- (0,1.5) .. controls ++(90:.3cm) and ++(270:.3cm) .. (.4,2) -- (.5,2) -- (.5,3);
\roundNbox{unshaded}{(0,-.2)}{.3}{0}{0}{$\xi_2$};
\roundNbox{unshaded}{(.5,-1.3)}{.3}{0}{0}{$\eta_2$};
\roundNbox{unshaded}{(0,1.2)}{.3}{0}{0}{$\xi_1^\dag$};
\roundNbox{unshaded}{(.5,2.3)}{.3}{0}{0}{$\eta_1^\dag$};
} 
=
\langle \eta_1 | \langle \xi_1|\xi_2\rangle_B^{|X|}\rhd \eta_2\rangle_{C}^{|Y|},
$$
and it clearly has dense range as $|X|_B \cong X_B$
and ${}_B|Y|_C\cong {}_BY_C$ 
as in Construction \ref{construction:BoundedVectorModule}.
Observe that the composite
$$
|X|\boxtimes_B |Y|
\xrightarrow{T_{X,Y}}
|X\boxtimes_B Y|
\xrightarrow{|u_{X,Y}|}
|X\xzq_Q Y|
$$
is a coisometry as 
$|u_{X,Y}| \in \Hom_{\bbC-C}(|X\boxtimes_B Y| \Rightarrow |X\xzq_Q Y|)$
is a coisometry.
\end{rem}

\begin{prop}
\label{prop:RealizationIsQSystem}
The realization $(|Q|, |m|\circ T_{Q,Q}, |i|)$ is a Q-system in $\rCorr(B\to B)$ which is equivalent to $(Q,m,i)$ in $\QSys(\rCorr)$.
\end{prop}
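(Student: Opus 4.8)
The plan is to establish two things separately: that $(|Q|,|m|\circ T_{Q,Q},|i|)$ satisfies the Q-system axioms \ref{Q:associativity}--\ref{Q:separable} as a $B-B$ correspondence, and that it is isomorphic, hence equivalent, to $(Q,m,i)$ in $\QSys(\rCorr)$. The starting observation is that $|m|\circ T_{Q,Q}\colon |Q|\boxtimes_B|Q|\to|Q|$ and $|i|\colon B\to|Q|$ are exactly the multiplication and unit of the unital $\rm C^*$-algebra $|Q|$ of Construction \ref{construction:|Q|}, reinterpreted through the canonical correspondence structure on $|Q|$ whose right $B$-valued inner product $\langle q_1|q_2\rangle_B:=E_B(q_1^*q_2)$ comes from the conditional expectation $E_B\colon|Q|\to B$ of Definition \ref{defn:ConditionalExpectation}. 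After this identification, the Q-system axioms \ref{Q:associativity} and \ref{Q:unitality} are immediate, since they are precisely the associativity and unitality of the algebra $|Q|$ already recorded in Construction \ref{construction:|Q|}.

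For the Frobenius \ref{Q:Frobenius} and separability \ref{Q:separable} axioms I would invoke the finite-index machinery. By Proposition \ref{prop:EBCPandFiniteIndex}, $E_B$ is a faithful conditional expectation of finite Pimsner-Popa index, so $B\subseteq|Q|$ is a finite-index inclusion. Feeding this into Example \ref{ex:RestrictByExpectation} and the chain of equivalences in Lemma \ref{lem:FGP-dualizable} yields that $|Q|_B$ is finitely generated projective and that $(|Q|,\cdot,\iota)$ is a $\rm C^*$ Frobenius algebra; in particular the multiplication is adjointable with bimodular adjoint, which is \ref{Q:Frobenius}. To upgrade a $\rm C^*$ Frobenius algebra to a genuine Q-system one must verify the separability normalization $m\circ m^\dag=\id_{|Q|}$; here the key point is that the factor $d_Q^{-1}$ built into $E_B$ via \ref{Z:Support} exactly compensates the normalization already present in the Q-system multiplication of $Q$, so no further rescaling is required and \ref{Q:separable} holds on the nose. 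This is the same computation as the bound \eqref{eq:ExpectationBound}, now read as an equality after the $d_Q^{-1}$ insertion.

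For the equivalence I would produce a direct isomorphism of Q-systems rather than merely an invertible bimodule, which is more than enough. Construction \ref{construction:BoundedVectorModule} already supplies a unitary of right $B$-modules ${}_BQ_B\cong{}_B|Q|_B$ via $\xi\mapsto L_\xi$; the plan is to promote this to a $B-B$ correspondence isomorphism that intertwines the two Q-system structures, checking that $L_{(\,\cdot\,)}$ carries $m$ and $i$ to $|m|\circ T_{Q,Q}$ and $|i|$ (the multiplicative verification $L_{m(\xi\boxtimes\eta)}=L_\xi\cdot L_\eta$ being the main content). Being a genuine isomorphism of Q-systems, it is a fortiori an equivalence in $\QSys(\rCorr)$, as required.

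The main obstacle is reconciling the two a priori different right $B$-valued inner products on the common underlying space: the native inner product of $Q$ versus the expectation inner product $E_B(\,\cdot^*\,\cdot\,)$ that makes $|Q|$ a Q-system. These differ by the factor $d_Q$, and the map of Construction \ref{construction:BoundedVectorModule} preserves the former but not the latter, so the delicate step is inserting the correct power of $d_Q$ (equivalently, of the support $s_Q$ from \ref{Z:Support}) so that the comparison map becomes simultaneously unitary and multiplicative. Notably, this is the very computation that separates the $\rm C^*$ Frobenius structure from the \emph{normalized} Q-system structure, so the normalization check of the second paragraph and the inner-product reconciliation here are two aspects of the same bookkeeping, and I expect getting these $d_Q$-factors to align consistently to be the crux of the proof.
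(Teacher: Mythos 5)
The gap is in your identification of the correspondence structure on $|Q|$, and it sinks your verification of \ref{Q:Frobenius} and \ref{Q:separable}. In the Proposition, $|Q|$ carries the right $B$-valued inner product of Construction \ref{construction:BoundedVectorModule}, namely $\langle q_1|q_2\rangle_B = q_1^\dag\circ q_2$ — this is the structure with respect to which $T_{Q,Q}$ is unitary and $|m|$ is adjointable with $|m|^\dag=|m^\dag|$ — and \emph{not} the expectation inner product $E_B(q_1^*q_2)$; the two differ by the central factor $d_Q^{-1}$. With the correct structure there is no normalization to reconcile at all: since realization of $2$-morphisms preserves composition and adjoints and $T_{Q,Q}$ is unitary, every axiom of $(Q,m,i)$ transfers verbatim to $(|Q|,|m|\circ T_{Q,Q},|i|)$, and separability is the one-line computation $(|m|\circ T_{Q,Q})\circ(|m|\circ T_{Q,Q})^\dag = |m|\circ|m^\dag| = |m\circ m^\dag| = \id_{|Q|}$. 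This is exactly the paper's argument; the finite-index results (Proposition \ref{prop:EBCPandFiniteIndex}, Lemma \ref{lem:FGP-dualizable}) enter the paper's proof only to show $|Q|_B$ is finitely generated projective, not to establish the Q-system axioms.

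Your route through Example \ref{ex:RestrictByExpectation} and Lemma \ref{lem:FGP-dualizable} with the expectation inner product cannot give the statement on the nose: those results produce a $\rm C^*$ Frobenius algebra that becomes a Q-system only after the Watatani renormalization, and that renormalization is genuinely nontrivial here. Concretely, take $B=\bbC$ and $Q=\bbC^2$ with its standard separable Q-system structure, so $d_Q=2$; then $E_B(q_1^*q_2)=\tfrac{1}{2}\,q_1^\dag\circ q_2$, and with respect to the expectation inner product the unrenormalized multiplication satisfies $m\circ m^\dag = 2\cdot\id$, not $\id$. So your claim that the $d_Q^{-1}$ inside $E_B$ makes \ref{Q:separable} hold with ``no further rescaling,'' by reading \eqref{eq:ExpectationBound} as an equality, is false: \eqref{eq:ExpectationBound} is a Pimsner--Popa \emph{inequality} with constant $\|d_Q\|^2$ and does not become an equality. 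Your treatment of the equivalence — promoting $\xi\mapsto L_\xi$ to an isomorphism of Q-systems using naturality \eqref{eq:BoundedVectorNaturalIso} — is correct and matches the paper; the ``main obstacle'' of your final paragraph is a red herring that disappears once the inner product on $|Q|$ is identified correctly.
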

\begin{proof}
Under the identification $|B|=\End_{\bbC-B}(B)=B$,
the map $B \to |Q|$ is given by $|i|$,
and the multiplication on $|Q|$ is the composite
$$
|Q|\boxtimes_B |Q| 
\xrightarrow{T_{Q,Q}}
|Q\boxtimes_B Q| 
\xrightarrow{|m|}
|Q|,
$$
which are both obviously adjointable operators.
It follows that the adjoint of multiplication is given by $|m|^\dag \circ T_{Q,Q}^{-1}$,
and that $(|Q|, |m|\circ T_{Q,Q}, |i|)$ is a Q-system in $\rCorr(B\to B)$, as
$$
(|m| \circ T_{Q,Q})\circ (|m| \circ T_{Q,Q})^\dag
=
|m| \circ |m|^\dag 
= 
\id_{|Q|}.
% \qquad\text{and}\qquad
% |i|^\dag\circ |i| = d_Q \id_B.
$$
Letting $s_Q\in B$ be the range projection of $d_Q$ as in \ref{Z:Support},
the $B-B$ bimodular map 
$E_B: |Q|\to B$ 
is a faithful cp map onto $B_0:=s_QBs_Q$
by Proposition \ref{prop:EBCPandFiniteIndex}.
In particular, $|Q|_{B_0}$ is finitely generated projective by Lemma \ref{lem:FGP-dualizable}, and thus so is $|Q|_B$ as the orthogonal complement of $B_0$ acts by zero on $|Q|$.

Moreover, by Construction \ref{construction:BoundedVectorModule}, we have that ${}_BQ_B$ is isomorphic to ${}_B|Q|_B$ as 1-morphisms in $\rCorr(B\to B)$.
It is straightforward to show that this isomorphism intertwines the multiplication and unit, giving an isomorphism of Q-systems.
This is seen by analyzing the following diagrams which commute by \eqref{eq:BoundedVectorNaturalIso}:
$$
\begin{tikzcd}
Q\boxtimes_B Q
\arrow[d, <->, "\cong\boxtimes \cong"]
\arrow[rr, "m"]
\arrow[dr, <->, "\cong"]
&&
Q
\arrow[d, <->, "\cong"]
\\
{|Q|\boxtimes_B |Q|}
\arrow[r,"T_{Q,Q}","\cong"']
&
{|Q\boxtimes_B Q|}
\arrow[r,"|m|"]
&
{|Q|}
\end{tikzcd}
\qquad\qquad
\begin{tikzcd}
B
\arrow[d,equals]
\arrow[r,"i"]
&
Q
\arrow[d,<->,"\cong"]
\\
{B=|B|}
\arrow[r,"|i|"]
&
{|Q|}
\end{tikzcd}
$$
This concludes the proof.
\end{proof}

The following corollary is a rephrasing of Proposition \ref{prop:RealizationIsQSystem}.

\begin{cor}
Every Q-system $P\in \rCorr(A\to A)$ is equivalent in $\QSys(\rCorr)$ to a Q-system of the form of Example \ref{ex:NonunitalRestrictByExpecation}.
In more detail, there is a direct sum decomposition $A=A_0\oplus A_1$ and a surjective faithful cp $A_0-A_0$ bimodular map $E:|P| \to A_0$ such that the Q-system built from $(A\subset |P|, E)$ as in Example \ref{ex:NonunitalRestrictByExpecation} is equivalent to ${}_AP_A$ in $\QSys(\rCorr)$.
\end{cor}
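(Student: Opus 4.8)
The plan is to read the statement off directly from Proposition~\ref{prop:RealizationIsQSystem} and its proof, translating the realization $(|P|,|m|\circ T_{P,P},|i|)$ into the language of Example~\ref{ex:NonunitalRestrictByExpecation}. First I would record that $\End_{\rCorr(A\to A)}(1_A)=\End({}_AA_A)=Z(A)$, so that $d_P\in Z(A)^+$ and its support $s_P:=d_Pd_P^{-1}$ from \ref{Z:Support} is a \emph{central} projection. Setting $A_0:=s_PA=s_PAs_P$ and $A_1:=(1-s_P)A$ then produces the required direct sum decomposition $A=A_0\oplus A_1$.

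Next I would identify the inclusion and the expectation. The unital $*$-homomorphism $\iota\colon A\to|P|$ of Definition~\ref{defn:ConditionalExpectation} annihilates $A_1$ (which acts by zero on $P$, hence on $|P|$), and since $\iota$ is unital we have $\iota(s_P)=\iota(1_A)=1_{|P|}$; thus $\iota$ restricts to a \emph{unital} inclusion $A_0\hookrightarrow|P|$. By Proposition~\ref{prop:EBCPandFiniteIndex} together with the description in Definition~\ref{defn:ConditionalExpectation}, the corestricted map $E:=E_B\colon|P|\to A_0=s_PAs_P$ is a faithful, surjective, completely positive $A_0$--$A_0$ bimodular map of finite Pimsner--Popa index, and finite index with Lemma~\ref{lem:FGP-dualizable} shows $|P|_{A_0}$ is finitely generated projective. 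Hence $(A_0\subset|P|,E)$ is exactly data of the form feeding Example~\ref{ex:NonunitalRestrictByExpecation}, taking the auxiliary algebra there to be $A'=A_1$ so that $A_0\oplus A'=A$.

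It then remains to match the two Q-system structures on the $A$-correspondence $|P|$. Using the relation $\langle f|g\rangle^{\Hom}_A=E(f^*g)$ between the $A$-valued inner product of Construction~\ref{construction:BoundedVectorModule} and $E$, the underlying $A$-correspondence of the realization agrees with the one produced by Example~\ref{ex:NonunitalRestrictByExpecation}; and the realization multiplication $|m|\circ T_{P,P}$ and unit $|i|$ are, up to the renormalization by the Watatani index carried out in Example~\ref{ex:RestrictByExpectation}, the multiplication and unit of the $\rm C^*$-algebra $|P|$. Since renormalization yields an isomorphic Q-system, the Example~\ref{ex:NonunitalRestrictByExpecation} Q-system is equivalent in $\QSys(\rCorr)$ to $(|P|,|m|\circ T_{P,P},|i|)$, which by Proposition~\ref{prop:RealizationIsQSystem} is equivalent to ${}_AP_A$. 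I expect the only real obstacle to be this final bookkeeping: verifying that the inner product and the (renormalized) structure maps of the realization line up precisely with the recipe of Example~\ref{ex:NonunitalRestrictByExpecation}, since $P$ being an honest Q-system (so that $|m|\circ T_{P,P}$ is already separable) is what controls the normalization; everything else is a transcription of the cited results.
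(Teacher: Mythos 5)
Your steps (1)--(3) are correct and follow the route the paper intends (the corollary is meant to be read off from Proposition~\ref{prop:RealizationIsQSystem} and its proof): $s_P$ is central since $\End(1_A)=Z(A)$, the decomposition $A_0=s_PA$, $A_1=(1-s_P)A$ is the right one, $\iota$ kills $A_1$ by Fact~\ref{Z:Support}, and Proposition~\ref{prop:EBCPandFiniteIndex} with Lemma~\ref{lem:FGP-dualizable} supplies the expectation and finite generation. The gap is in your final matching step: the identity $\langle f|g\rangle^{\Hom}_A=E(f^*g)$ is \emph{false} for your choice $E=E_A$ (the map of Definition~\ref{defn:ConditionalExpectation}, which carries a $d_P^{-1}$ correction). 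The correct identity, obtained from the snake equation for $\ev_P=i^\dag\circ m$, $\coev_P=m^\dag\circ i$ together with unitality, is
$$
\langle f|g\rangle^{\Hom}_A \;=\; f^\dag\circ g \;=\; \iota^\dag(f^*\cdot g)\;=\;E_A(f^*\cdot g)\,d_P.
$$
Already $f=g=1_{|P|}$ exhibits the discrepancy: $\langle 1_{|P|}|1_{|P|}\rangle^{\Hom}_A=i^\dag\circ i=d_P$, whereas $E_A(1_{|P|})=s_P$. Consequently the realized Q-system and the Example~\ref{ex:NonunitalRestrictByExpecation} Q-system built from $E_A$ live on genuinely different correspondences (their inner products differ by the factor $d_P$), and your closing assertion that ``renormalization yields an isomorphic Q-system'' does not repair this. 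Any rescaling intertwiner would involve $\iota(d_P^{1/2})$, but $\iota(d_P)$ is in general \emph{not} central in $|P|$, because the left and right actions of $d_P\in Z(A)$ on $P$ can differ: for the Q-system attached to $\bbC^2\subset M_2(\bbC)\oplus\bbC$ one gets $d_P=(3,2)$, which acts by $3$ on the left and by $2$ on the right of the off-diagonal corner. Relatedly, the Watatani index of $E_A$ is $\sum_\beta\beta\,\iota(d_P)\,\beta^*$ rather than $\iota(d_P)$, so no single central rescaling makes the units, multiplications, and inner products match simultaneously.

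The statement is nonetheless true, and the repair is small. Either (i) choose $E:=\iota^\dag=d_P\cdot E_A$ instead: this is still surjective onto $A_0$, faithful, cp, and $A_0$--$A_0$ bimodular (a non-normalized expectation, exactly what Example~\ref{ex:NonunitalRestrictByExpecation} permits, with $E(1_{|P|})=d_P$ invertible in $A_0$); then the Example's inner product is exactly $\langle\,\cdot\,|\,\cdot\,\rangle^{\Hom}_A$, its multiplication and unit are exactly $|m|\circ T_{P,P}$ and $|i|=\iota$, and no Watatani renormalization occurs because separability of the realized Q-system (Proposition~\ref{prop:RealizationIsQSystem}, axiom~\ref{Q:separable}) forces $[\,|P|:A_0\,]=(|m|\circ T_{P,P})\circ(|m|\circ T_{P,P})^\dag$ applied to $1_{|P|}$ to equal $1$. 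With this choice the Example's Q-system \emph{is} $(|P|,|m|\circ T_{P,P},|i|)$ on the nose, and Proposition~\ref{prop:RealizationIsQSystem} finishes the proof. Or (ii) keep $E=E_A$, but then prove only an \emph{equivalence} rather than an isomorphism of the two Q-systems, e.g.\ by showing each is equivalent to the trivial Q-system $1_{|P|}$ via Examples~\ref{ex:EquivalentRestrictedQSystem} and~\ref{ex:EquivalenceOfRestrictByExpectationExamples}; the on-the-nose identification you wrote down is not available in that case.
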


\begin{construction}
We define $\mu_{X,Y}:|X|\boxtimes_{|Q|}|Y| \to |X\xzq_QY|$
as follows.
First, the map
$X\otimes_\bbC Y \to |X\xzq_Q Y|$
given by
$$
\tikzmath{
\begin{scope}
\clip[rounded corners=5pt] (-.4,-.7) rectangle (.7,.7);
\fill[\AColor] (-.1,0) rectangle (.1,.7);
\fill[\BColor] (0,-.7) -- (0,0) -- (.1,0) -- (.1,.7) -- (.8,.7) -- (.8,-.7);
\end{scope}
\draw[dashed] (0,-.7) -- (0,0) -- (-.1,0) -- (-.1,.7);
\draw[thick, \XColor] (.1,.3) -- (.1,.7);
\roundNbox{unshaded}{(0,0)}{.3}{0}{0}{$\xi$};
}
\otimes_\bbC
\tikzmath{
\begin{scope}
\clip[rounded corners=5pt] (-.4,-.7) rectangle (.7,.7);
\fill[\BColor] (-.1,0) rectangle (.1,.7);
\fill[\CColor] (0,-.7) -- (0,0) -- (.1,0) -- (.1,.7) -- (.8,.7) -- (.8,-.7);
\end{scope}
\draw[dashed] (0,-.7) -- (0,0) -- (-.1,0) -- (-.1,.7);
\draw[thick, \YColor] (.1,.3) -- (.1,.7);
\roundNbox{unshaded}{(0,0)}{.3}{0}{0}{$\eta$};
}
\longmapsto
\tikzmath{
\begin{scope}
\clip[rounded corners=5pt] (-.4,-2) rectangle (1.2,1.5);
\fill[\AColor] (.1,.1) .. controls ++(90:.2cm) and ++(270:.2cm) .. (.2,.5) -- (.31,.8) -- (.31,1.5) -- (-.1,1.5) -- (-.1,.1);
%\fill[\BColor] (.6,-1) -- (.6,.1) .. controls ++(90:.2cm) and ++(270:.2cm) .. (.5,.5) -- (.4,.5) -- (.4,-1);
\fill[\BColor] (.4,-1) .. controls ++(90:.3cm) and ++(270:.3cm) .. (0,-.5) -- (.1,.1) .. controls ++(90:.2cm) and ++(270:.2cm) .. (.2,.5) -- (.31,.5) -- (.31,1.5) -- (.39,1.5) -- (.39,.5) -- (.5,.5) .. controls ++(270:.2cm) and ++(90:.2cm) .. (.6,.1) -- (.6,-1);
\fill[\CColor] (.5,-2) -- (.5,-1.3) -- (.6,-1) -- (.6,.1) .. controls ++(90:.2cm) and ++(270:.2cm) .. (.5,.5) -- (.39,.5) -- (.39,1.5) -- (1.2,1.5) -- (1.2,-2);
\end{scope}
\draw[thick, \XColor] (.1,.1) .. controls ++(90:.2cm) and ++(270:.2cm) .. (.2,.5);
\draw[thick, \YColor] (.6,-1) -- (.6,.1) .. controls ++(90:.2cm) and ++(270:.2cm) .. (.5,.5);
\draw[dashed] (.5,-2) -- (.5,-1.3) -- (.4,-1) .. controls ++(90:.3cm) and ++(270:.3cm) .. (0,-.5) -- (-.1,.1) -- (-.1,1.5);
\DoubleStrand{(.35,.8)}{(.35,1.5)}{\XColor}{\YColor}
\roundNbox{unshaded}{(0,-.2)}{.3}{0}{0}{$\xi$};
\roundNbox{unshaded}{(.5,-1.3)}{.3}{0}{0}{$\eta$};
\halfRoundBox{unshaded}{(.35,.8)}{.3}{0}{$u$}
}\,.
$$
is $|Q|$-middle linear as the left and right hand diagrams below agree:
$$
\tikzmath{
\begin{scope}
\clip[rounded corners=5pt] (-.4,-2.7) rectangle (1.3,2);
\fill[\AColor] (.1,.3) -- (.1,.6) .. controls ++(90:.2cm) and ++(270:.2cm) .. (.35,1.1) -- (.46,1.7) -- (.46,2) -- (-.1,2) -- (-.1,.3);
\fill[\BColor] (.7,-1.7) .. controls ++(90:.2cm) and ++(270:.2cm) .. (.4,-1.3) -- (.3,-.7) .. controls ++(90:.2cm) and ++(270:.2cm) .. (0,-.3) -- (.1,.6) .. controls ++(90:.2cm) and ++(270:.2cm) .. (.35,1.1) -- (.46,1.1) -- (.46,2) -- (.54,2) -- (.54,1.1) -- (.9,1.1) -- (.9,-1.7);
\fill[\CColor] (.8,-2.7) -- (.8,-2.3) -- (.9,-1.7) -- (.9,.6) .. controls ++(90:.2cm) and ++(270:.2cm) .. (.65,1.1) -- (.54,1.7) -- (.54,2) -- (1.3,2) -- (1.3,-2.7);
\end{scope}
\draw (.5,-1) -- (.5,.2) arc (0:90:.4cm);
\draw[dashed] (.8,-2.7) -- (.8,-2.3) -- (.7,-1.7) .. controls ++(90:.2cm) and ++(270:.2cm) .. (.4,-1.3) -- (.3,-.7) .. controls ++(90:.2cm) and ++(270:.2cm) .. (0,-.3) -- (-.1,.3) -- (-.1,2);
\draw[thick, \XColor] (.1,.3) -- (.1,.6) .. controls ++(90:.2cm) and ++(270:.2cm) .. (.35,1.1);
\draw[thick, \YColor] (.9,-1.7) -- (.9,.6) .. controls ++(90:.2cm) and ++(270:.2cm) .. (.65,1.1);
\DoubleStrand{(.5,1.7)}{(.5,2)}{\XColor}{\YColor}
\filldraw[\XColor] (.1,.6) circle (.05cm);
\roundNbox{unshaded}{(0,0)}{.3}{0}{0}{$\xi$};
\roundNbox{unshaded}{(.4,-1)}{.3}{0}{0}{$q$};
\roundNbox{unshaded}{(.8,-2)}{.3}{0}{0}{$\eta$};
\halfRoundBox{unshaded}{(.5,1.4)}{.3}{0}{$u$}
}
\longmapsfrom
\tikzmath{
\begin{scope}
\clip[rounded corners=5pt] (-.4,-.7) rectangle (.7,.7);
\fill[\AColor] (-.1,0) rectangle (.1,.7);
\fill[\BColor] (0,-.7) -- (0,0) -- (.1,0) -- (.1,.7) -- (.8,.7) -- (.8,-.7);
\end{scope}
\draw[dashed] (0,-.7) -- (0,0) -- (-.1,0) -- (-.1,.7);
\draw[thick, \XColor] (.1,.3) -- (.1,.7);
\roundNbox{unshaded}{(0,0)}{.3}{0}{0}{$\xi$};
}
\otimes_\bbC
\tikzmath{
\begin{scope}
\clip[rounded corners=5pt] (-.4,-.7) rectangle (.7,.7);
\fill[\BColor] (0,-.7) -- (0,0) -- (-.1,0) -- (-.1,.7) -- (.8,.7) -- (.8,-.7);
\end{scope}
\draw[dashed] (0,-.7) -- (0,0) -- (-.1,0) -- (-.1,.7);
\draw (.1,.3) -- (.1,.7);
\roundNbox{unshaded}{(0,0)}{.3}{0}{0}{$q$};
}
\otimes_\bbC
\tikzmath{
\begin{scope}
\clip[rounded corners=5pt] (-.4,-.7) rectangle (.7,.7);
\fill[\BColor] (-.1,0) rectangle (.1,.7);
\fill[\CColor] (0,-.7) -- (0,0) -- (.1,0) -- (.1,.7) -- (.8,.7) -- (.8,-.7);
\end{scope}
\draw[dashed] (0,-.7) -- (0,0) -- (-.1,0) -- (-.1,.7);
\draw[thick, \YColor] (.1,.3) -- (.1,.7);
\roundNbox{unshaded}{(0,0)}{.3}{0}{0}{$\eta$};
}
\longmapsto
\tikzmath{
\begin{scope}
\clip[rounded corners=5pt] (-.4,-2.7) rectangle (1.3,2);
\fill[\AColor] (.1,.3) -- (.1,.6) .. controls ++(90:.2cm) and ++(270:.2cm) .. (.35,1.1) -- (.46,1.7) -- (.46,2) -- (-.1,2) -- (-.1,.3);
\fill[\BColor] (.7,-1.7) .. controls ++(90:.2cm) and ++(270:.2cm) .. (.4,-1.3) -- (.3,-.7) .. controls ++(90:.3cm) and ++(270:.3cm) .. (0,0) -- (.1,.6) .. controls ++(90:.2cm) and ++(270:.2cm) .. (.35,1.1) -- (.46,1.1) -- (.46,2) -- (.54,2) -- (.54,1.1) -- (.9,1.1) -- (.9,-1.7);
\fill[\CColor] (.8,-2.7) -- (.8,-2.3) -- (.9,-1.7) -- (.9,.6) .. controls ++(90:.2cm) and ++(270:.2cm) .. (.65,1.1) -- (.54,1.7) -- (.54,2) -- (1.3,2) -- (1.3,-2.7);
\end{scope}
\draw (.5,-.7) arc (180:90:.4cm);
\draw[dashed] (.8,-2.7) -- (.8,-2.3) -- (.7,-1.7) .. controls ++(90:.2cm) and ++(270:.2cm) .. (.4,-1.3) -- (.3,-.7) .. controls ++(90:.3cm) and ++(270:.3cm) .. (0,0) -- (-.1,.3) -- (-.1,2);
\draw[thick, \XColor] (.1,.3) -- (.1,.6) .. controls ++(90:.2cm) and ++(270:.2cm) .. (.35,1.1);
\draw[thick, \YColor] (.9,-1.7) -- (.9,.6) .. controls ++(90:.2cm) and ++(270:.2cm) .. (.65,1.1);
\DoubleStrand{(.5,1.7)}{(.5,2)}{\XColor}{\YColor}
\filldraw[\YColor] (.9,-.3) circle (.05cm);
\roundNbox{unshaded}{(0,.3)}{.3}{0}{0}{$\xi$};
\roundNbox{unshaded}{(.4,-1)}{.3}{0}{0}{$q$};
\roundNbox{unshaded}{(.8,-2)}{.3}{0}{0}{$\eta$};
\halfRoundBox{unshaded}{(.5,1.4)}{.3}{0}{$u$}
}\,.
$$
Since $u_{X,Y} = u_{X,Y}\circ p_{X,Y}$, the left and right hand sides are visibly equal.
Next, observe that for all $\xi \in X$ and $\eta\in Y$,
\begin{align*}
\langle \mu_{X,Y}(\xi\otimes\eta)| \mu_{X,Y}(\xi\otimes\eta) \rangle_C^{|X\xzq_QY|} 
&= 
% \tikzmath{
% \begin{scope}
% \clip[rounded corners=5pt] (-.4,-2) rectangle (1.2,3);
% \fill[gray!40] (.5,-2) -- (.5,-1.3) -- (.4,-1) .. controls ++(90:.3cm) and ++(270:.3cm) .. (0,-.5) -- (-.1,.1) -- (-.1,1) -- (0,1) -- (0,1.5) .. controls ++(90:.3cm) and ++(270:.3cm) .. (.4,2) -- (.5,2) -- (.5,3) -- (1.2,3) -- (1.2,-2);
% \end{scope}
% \draw[thick, \MColor] (.1,.1) -- (.1,1);
% \draw[thick, \YColor] (.6,-1) -- (.6,2.1);
% \draw (.1,.3) -- (.6,.3);
% \draw (.1,.7) -- (.6,.7);
% \draw[dashed] (.5,-2) -- (.5,-1.3) -- (.4,-1) .. controls ++(90:.3cm) and ++(270:.3cm) .. (0,-.5) -- (-.1,.1) -- (-.1,1) -- (0,1) -- (0,1.5) .. controls ++(90:.3cm) and ++(270:.3cm) .. (.4,2) -- (.5,2) -- (.5,3);
% \roundNbox{unshaded}{(0,-.2)}{.3}{0}{0}{$\xi_2$};
% \roundNbox{unshaded}{(.5,-1.3)}{.3}{0}{0}{$\eta_2$};
% \roundNbox{unshaded}{(0,1.2)}{.3}{0}{0}{$\xi_1^\dag$};
% \roundNbox{unshaded}{(.5,2.3)}{.3}{0}{0}{$\eta_1^\dag$};
% } 
% =
\tikzmath{
\begin{scope}
\clip[rounded corners=5pt] (-.4,-2) rectangle (1.2,3);
\fill[\AColor] (-.1,.1) -- (-.1,1) -- (.1,1) -- (.1,.1);
\fill[\BColor] (.4,-1) .. controls ++(90:.3cm) and ++(270:.3cm) .. (0,-.5) -- (.1,.1) -- (.1,1) -- (0,1) -- (0,1.5) .. controls ++(90:.3cm) and ++(270:.3cm) .. (.4,2) -- (.6,2) -- (.6,-1);
\fill[\CColor] (.5,-2) -- (.5,-1.3) -- (.6,-1) -- (.6,2) -- (.5,2) -- (.5,3) -- (1.2,3) -- (1.2,-2);
\end{scope}
\draw[thick, \XColor] (.1,.1) -- (.1,1);
\draw[thick, \YColor] (.6,-1) -- (.6,2.1);
\draw (.1,.5) -- (.6,.5);
\draw[dashed] (.5,-2) -- (.5,-1.3) -- (.4,-1) .. controls ++(90:.3cm) and ++(270:.3cm) .. (0,-.5) -- (-.1,.1) -- (-.1,1) -- (0,1) -- (0,1.5) .. controls ++(90:.3cm) and ++(270:.3cm) .. (.4,2) -- (.5,2) -- (.5,3);
\roundNbox{unshaded}{(0,-.2)}{.3}{0}{0}{$\xi_2$};
\roundNbox{unshaded}{(.5,-1.3)}{.3}{0}{0}{$\eta_2$};
\roundNbox{unshaded}{(0,1.2)}{.3}{0}{0}{$\xi_1^\dag$};
\roundNbox{unshaded}{(.5,2.3)}{.3}{0}{0}{$\eta_1^\dag$};
} 
=
\tikzmath{
\begin{scope}
\clip[rounded corners=5pt] (-.4,-2) rectangle (1.2,3);
\fill[\AColor] (-.2,.1) -- (-.2,1) -- (0,1) -- (0,.1);
\fill[\BColor] (.4,-1) .. controls ++(90:.3cm) and ++(270:.3cm) .. (-.1,-.5) -- (0,.1) -- (0,1) --  (-.1,1.3) .. controls ++(90:.3cm) and ++(270:.3cm) .. (.4,2) -- (.6,2) -- (.6,-1);
\fill[\CColor] (.5,-2) -- (.5,-1.3) -- (.6,-1) -- (.6,2) -- (.5,2) -- (.5,3) -- (1.2,3) -- (1.2,-2);
\end{scope}
\draw[thick, \XColor] (0,.1) -- (0,1);
\draw[thick, \YColor] (.6,-1) -- (.6,2.1);
\draw (0,.4) arc (-90:0:.3cm) -- (.3,1.3) arc (180:90:.3cm);
\draw[dashed] (.5,-2) -- (.5,-1.3) -- (.4,-1) .. controls ++(90:.3cm) and ++(270:.3cm) .. (-.1,-.5) -- (-.2,-.5) -- (-.2,1) -- (-.1,1) -- (-.1,1.3) .. controls ++(90:.3cm) and ++(270:.3cm) .. (.4,2) -- (.5,2) -- (.5,3);
\filldraw[\XColor] (0,.4) circle (.05cm);
\filldraw[\YColor] (.6,1.6) circle (.05cm);
\roundNbox{unshaded}{(-.1,-.2)}{.3}{0}{0}{$\xi_2$};
\roundNbox{unshaded}{(.5,-1.3)}{.3}{0}{0}{$\eta_2$};
\roundNbox{unshaded}{(-.1,1)}{.3}{0}{0}{$\xi_1^\dag$};
\roundNbox{unshaded}{(.5,2.3)}{.3}{0}{0}{$\eta_1^\dag$};
} 
\\&=
\left\langle \eta_1\left|\langle \xi_1|\xi_2\rangle_{|Q|}^{|X|} \rhd \eta_2\right.\right\rangle_C^{|Y|}
\\&=
\langle \xi_1\boxtimes \eta_1|\xi_2\boxtimes\eta_2\rangle_C^{|X|\boxtimes_{|Q|} |Y|}.
\end{align*}
Hence we get a well-defined isometric map $\mu_{X,Y}:|X|\boxtimes_{|Q|}|Y| \to |X\xzq_QY|$.
We immediately see that $\im(|u_{X,Y}|\circ T_{X,Y}) \subset \im(\mu_{X,Y})$,
so $\mu_{X,Y}$ is surjective.
Finally, it is straightforward to verify that $\mu_{X,Y}$ is $P-R$ bimodular.
\end{construction}

While the relative tensor product of right correspondences does \emph{not} satisfy a universal property, 
the following corollary establishes a universal property for changing the fusion by a finite index over algebra.

\begin{cor}
Let $B\subset Q$ be a unital inclusion of $\rm C^*$-algebras
equipped with a finite index faithful (completely positive) conditional expectation 
$E_B: Q\to B$ as in Example \ref{ex:RestrictByExpectation}, 
so that $i = E_B^\dag$ is the inclusion $B\hookrightarrow Q$.
Suppose that the multiplication $m: Q\boxtimes_B Q \to Q$ is adjointable such that $(Q,m,i)$ is a Q-system in $\rCorr(B\to B)$.
\begin{enumerate}[label=(\arabic*)]
\item 
If $X\in \rCorr(A\to Q)$ and $Y\in \rCorr(Q\to C)$ 
satisfy \ref{M:separable},
then restricting the $Q$-actions to $B$-actions and endowing $X_B$ with the right inner product $\langle \xi|\eta\rangle_B^X := i^\dag(\langle \eta|\xi\rangle_Q^X)$,
we may view
$X\in \rCorr(A\to B)$ and $Y\in \rCorr(B\to C)$.
\item
The unitary coequalizer 
(which exists by Definition \ref{defn:QSysTensorProduct} and Remark \ref{rem:QSysCoequalizer})
$$
\begin{tikzcd}[column sep=4em, row sep=4em]
{X\boxtimes_{B}Q \boxtimes_{B} Y} 
\arrow[shift left =1]{r} 
\arrow[shift left =-1]{r} 
%\arrow["\mathbb{F}^{2}_{M,R,N}"', hookrightarrow]{d} 
&
{X \boxtimes_{B}Y}  
\arrow[twoheadrightarrow, "u_{X,Y}"]{r} 
%\arrow["T_{X,Y}"', hookrightarrow]{d} 
&
{X \xzq_Q  Y} 
%\arrow["\exists\,!\,\mu_{X,Y}",dashrightarrow ]{d}
\end{tikzcd}
$$
%exists in $\rCorr(A,C)$
%and 
is unitarily isomorphic to $X\boxtimes_Q Y$.
\end{enumerate}
\end{cor}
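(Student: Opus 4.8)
For part (1) I would record the routine verification. Restricting the right $Q$-action on ${}_AX_Q$ to the subalgebra $B\subset Q$ and equipping $X$ with $\langle\xi|\eta\rangle_B:=E_B(\langle\xi|\eta\rangle_Q^X)$ produces a right $A$--$B$ correspondence: positivity and faithfulness of $E_B$ give a positive-definite $B$-valued form, the left $A$-action stays adjointable, and completeness is inherited because the $B$- and $Q$-norms on $X$ are equivalent by the finite-index (Pimsner--Popa) bound $q\le c\,\iota(E_B(q))$ assumed on $E_B$. The same applies to ${}_QY_C$. The separability hypothesis \ref{M:separable} is precisely what guarantees that $X,Y$ are legitimate $1_A$--$Q$ and $Q$--$1_C$ bimodule objects of $\QSys(\rCorr)$, so that $X\xzq_Q Y$ is defined by Definition \ref{defn:QSysTensorProduct}.

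For part (2) the plan is to exhibit the canonical map from the $B$-balanced to the $Q$-balanced tensor product as a coisometry splitting the same projector $p_{X,Y}$ used to build $X\xzq_Q Y$. Define $W\colon X\boxtimes_B Y\to X\boxtimes_Q Y$ on elementary tensors by $\xi\boxtimes_B\eta\mapsto\xi\boxtimes_Q\eta$. Since $B\subset Q$, every $B$-balancing relation is a $Q$-balancing relation, so $W$ descends from the algebraic tensor product, and the $Q$-balancing identity $(\xi\lhd q)\boxtimes_Q\eta=\xi\boxtimes_Q(q\rhd\eta)$ shows $W$ coequalizes the pair $\id_X\xz\lambda_Y,\ \rho_X\xz\id_Y$ of Remark \ref{rem:QSysCoequalizer}; in particular $W\circ p_{X,Y}=W$.

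The crux is to produce an adjoint for $W$ and identify it. I would define a candidate $W^\sharp\colon X\boxtimes_Q Y\to X\boxtimes_B Y$ by $\xi\boxtimes_Q\eta\mapsto p_{X,Y}(\xi\boxtimes_B\eta)=(\id_X\xz\lambda_Y)(\rho_X^\dagger\xi\xz\eta)$, using the explicit adjoint action \ref{QSys:AdjointAction}, then check by the graphical calculus that $W^\sharp$ is independent of the chosen representative (this uses \ref{M:Frobenius} and \ref{M:separable}) and is genuinely adjoint to $W$ for the two $C$-valued inner products. The heart of the matter is the single identity, checked on elementary tensors,
\[
\langle W(\xi_1\boxtimes_B\eta_1)\,|\,W(\xi_2\boxtimes_B\eta_2)\rangle_C
=\langle\eta_1\,|\,\langle\xi_1|\xi_2\rangle_Q^X\rhd\eta_2\rangle_C^Y
=\langle \xi_1\boxtimes_B\eta_1\,|\,p_{X,Y}(\xi_2\boxtimes_B\eta_2)\rangle_C,
\]
whose right-hand equality unwinds $\rho_X^\dagger$ through a $Q_B$-basis (Lemma \ref{lem:FGP-dualizable}) and uses $i^\dagger=E_B$ to convert the $B$-valued pairing occurring in the $\boxtimes_B$ inner product back into the $Q$-valued pairing $\langle\xi_1|\xi_2\rangle_Q$. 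This yields simultaneously $W^\sharp=W^\dagger$, the relation $W^\dagger W=p_{X,Y}$, and (since $W$ is manifestly surjective and $Wp_{X,Y}=W$) $WW^\dagger=\id$, so $W$ is a coisometry splitting $p_{X,Y}$.

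To conclude, $u_{X,Y}\colon X\boxtimes_B Y\to X\xzq_Q Y$ and $W\colon X\boxtimes_B Y\to X\boxtimes_Q Y$ are two coisometries with $u_{X,Y}^\dagger u_{X,Y}=p_{X,Y}=W^\dagger W$, so $V:=W\circ u_{X,Y}^\dagger$ is an $A$--$C$ bimodular map, and the two-line checks $V^\dagger V=u_{X,Y}\,p_{X,Y}\,u_{X,Y}^\dagger=\id$ and $VV^\dagger=WW^\dagger=\id$ show $V$ is unitary with $Vu_{X,Y}=W$. Since by Remark \ref{rem:QSysCoequalizer} the coequalizer of $X\boxtimes_B Q\boxtimes_B Y\rightrightarrows X\boxtimes_B Y$ is $(X\xzq_Q Y,u_{X,Y})$, the unitary $V$ identifies it with $X\boxtimes_Q Y$, as claimed. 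I expect the main obstacle to be exactly the adjointability step: proving that $W$ is adjointable (not merely bounded) and pinning down the correct constant in $W^\dagger W=p_{X,Y}$, where the finite-index bound and the precise Q-system normalization of $(Q,m,i)$ with $i=E_B^\dagger$ must be reconciled.
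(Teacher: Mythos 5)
Your proof is correct, but it takes a genuinely different route from the paper's. The paper deduces the corollary in three lines from the realization machinery: via Construction \ref{construction:BoundedVectorModule} one identifies ${}_AX_B\cong {}_A|X|_B$, ${}_BY_C\cong {}_B|Y|_C$, ${}_AX\xzq_Q Y_C\cong {}_A|X\xzq_Q Y|_C$, and $Q\cong |Q|$ as unital $\rm C^*$-algebras, and then the desired unitary is the composite $X\boxtimes_Q Y\cong |X|\boxtimes_{|Q|}|Y|\xrightarrow{\mu_{X,Y}}|X\xzq_Q Y|\cong X\xzq_Q Y$, where $\mu_{X,Y}$ is the tensorator of the realization functor, already proven unitary. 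You instead build the isomorphism by hand: the comparison map $W\colon X\boxtimes_B Y\to X\boxtimes_Q Y$, $\xi\boxtimes_B\eta\mapsto \xi\boxtimes_Q\eta$, is shown to be an $A$-$C$ bimodular coisometry with $W^\dag W=p_{X,Y}$, so that $V:=W\circ u_{X,Y}^\dag$ is the required unitary. Your key identity does close as you predict: writing $m^\dag(1_Q)=\sum_\beta \beta\boxtimes\beta^*$ for a $Q_B$-basis $\{\beta\}$ (Lemma \ref{lem:FGP-dualizable}) and using $\sum_\beta E_B(q\beta)\beta^*=q$, one gets $\langle \xi_1\boxtimes_B\eta_1\,|\,p_{X,Y}(\xi_2\boxtimes_B\eta_2)\rangle_C=\langle \eta_1\,|\,\langle\xi_1|\xi_2\rangle_Q^X\rhd\eta_2\rangle_C^Y$ with constant exactly $1$, precisely because the hypotheses fix $i=E_B^\dag$ and $m$ un-renormalized. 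The adjointability step you flag as the main obstacle is also not a real one: the displayed identity shows $W$ restricted to $\operatorname{ran}(p_{X,Y})$ preserves $C$-valued inner products and has dense (hence closed, hence full) range, so it is unitary onto $X\boxtimes_Q Y$, and $W^\dag$ may simply be defined as its inverse followed by the inclusion $\operatorname{ran}(p_{X,Y})\hookrightarrow X\boxtimes_B Y$; alternatively, your $W^\sharp$ is well defined on $Q$-balanced elementary tensors because $m^\dag$, hence the separability element $\sum_\beta\beta\boxtimes\beta^*$, is a $Q$-$Q$ bimodule map (proved inside Lemma \ref{lem:FGP-dualizable}), so no extra Frobenius gymnastics are needed. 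As for what each approach buys: the paper's proof is short and exhibits the corollary as a formal consequence of realization being inverse to $\iota$, with the inner-product computation hidden inside the construction of $\mu_{X,Y}$ (cf.~\eqref{eq:FormulaForMu}); yours is self-contained, never invokes the realization functor, and identifies the unitary concretely as the canonical map collapsing $\boxtimes_B$ onto $\boxtimes_Q$, which is more informative about what the isomorphism does to vectors.
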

\begin{proof}
\item[(1)]
This is immediate.
\item[(2)]
By Construction \ref{construction:BoundedVectorModule}, ${}_AX_B\cong {}_A|X|_B$, ${}_BY_C \cong {}_B|Y|_C$, and ${}_BX\xzq_Q Y_C \cong {}_B|X\xzq_Q Y|_C$.
Moreover, observe that $Q\cong |Q|$ as unital $\rm C^*$-algebras.
This implies
\begin{equation*}
X\boxtimes_QY
\cong
|X|\boxtimes_{|Q|}|Y|
\xrightarrow{\mu_{X,Y}}
|X\xzq_Q Y|
\cong
X\xzq_QY.
\qedhere
\end{equation*}
\end{proof}

\begin{rem}
By the above corollary, there is a canonical coisometry $v_{X,Y}:|X|\boxtimes_B |Y| \to |X|\boxtimes_{|Q|}|Y|$
such that the following diagram commutes, where the first line is a coequalizer:
\begin{equation}
\label{eq:UniquenessOfTensoratorFromUniversalProperty}
\begin{tikzcd}[column sep=4em, row sep=4em]
{|X|\boxtimes_{B}|Q| \boxtimes_{B} |Y|} 
\arrow[shift left =1]{r} 
\arrow[shift left =-1]{r} 
%\arrow["\mathbb{F}^{2}_{M,R,N}"', hookrightarrow]{d} 
&
{|X| \boxtimes_{B}|Y|}  
\arrow["v_{X,Y}", twoheadrightarrow]{r} 
\arrow["T_{X,Y}"', hookrightarrow]{d} 
&
{|X| \boxtimes_{|Q|} |Y|} 
\arrow["\exists\,!\,\mu_{X,Y}",dashrightarrow ]{d}
\\
&
{|X\boxtimes_B Y|} 
\arrow["|u_{X,Y}|"', twoheadrightarrow]{r} 
&
{|X\xzq_Q Y|}
\end{tikzcd}
\end{equation}
Since $|u_{X,Y}|\circ T_{X,Y}$ coequalizes the two maps on the left hand side of \eqref{eq:UniquenessOfTensoratorFromUniversalProperty}, we see that $\mu_{X,Y}$ is then the unique map from the universal property of the coequalizer. 
We record for later use the following identity for $\mu_{X,Y}$ which follows by precomposing with the isometry $v^{\dag}_{X,Y}$:
\begin{equation}
    \label{eq:FormulaForMu}
    \mu_{X,Y} = |u_{X,Y}|\circ T_{X,Y} \circ v_{X,Y}^\dag.
\end{equation}
\end{rem}

\begin{prop}
The collection $\{\mu_{X,Y}\}_{X,Y}$ assembles into a unitary natural isomorphism.
\end{prop}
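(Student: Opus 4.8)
The plan is to establish the two properties packaged in the phrase \emph{unitary natural isomorphism}: that each component $\mu_{X,Y}$ is unitary, and that the family $\{\mu_{X,Y}\}$ is natural in both variables. The first property is essentially already in hand from the preceding Construction, where $\mu_{X,Y}$ was shown to be isometric via the inner-product computation and surjective because $\im(\mu_{X,Y}) \supseteq \im(|u_{X,Y}|\circ T_{X,Y})$ is dense (indeed all of $|X\xzq_Q Y|$), since $T_{X,Y}$ is unitary and $|u_{X,Y}|$ is a coisometry. Together with the verified $|P|-|R|$ bimodularity, each $\mu_{X,Y}$ is thus a bimodular unitary. So the real content of the Proposition is naturality.

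To prove naturality, I would first reduce to the two variables separately: by bifunctoriality of both $(X,Y)\mapsto |X|\boxtimes_{|Q|}|Y|$ and $(X,Y)\mapsto |X\xzq_Q Y|$, it suffices to check the naturality square for pairs $(f,\id_Y)$ with $f\in\QSys(\rCorr)(X\Rightarrow X')$ and for $(\id_X,g)$ with $g\in\QSys(\rCorr)(Y\Rightarrow Y')$, the general case being their composite. The key tool is the formula $\mu_{X,Y}=|u_{X,Y}|\circ T_{X,Y}\circ v_{X,Y}^\dag$ from \eqref{eq:FormulaForMu}, which I would combine with three facts: (i) realization commutes with composition and preserves adjoints, so $|g\circ f|=|g|\circ|f|$ and $|u_{X,Y}^\dag|=|u_{X,Y}|^\dag$; (ii) $T_{X,Y}$ is natural, which is immediate from its defining action on elementary tensors (stacking $\xi$ above $\eta$), giving $T_{X',Y}\circ(|f|\boxtimes_B\id_{|Y|})=|f\boxtimes_B\id_Y|\circ T_{X,Y}$; and (iii) the coisometries $v_{X,Y}$ are natural with respect to the $|Q|$-bimodular maps $|f|,|g|$, since these descend to the $|Q|$-balanced quotient.

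Assembling these, I would use the description of horizontal composition $f\xzq_Q\id_Y = u_{X',Y}\circ(f\boxtimes_B\id_Y)\circ u_{X,Y}^\dag$ from \eqref{eq:QSysHorizontalComposition2Morphisms1}, apply realization to get $|f\xzq_Q\id_Y| = |u_{X',Y}|\circ|f\boxtimes_B\id_Y|\circ|u_{X,Y}|^\dag$, and then chase the identities in \eqref{eq:FormulaForMu} to obtain $\mu_{X',Y}\circ(|f|\boxtimes_{|Q|}\id_{|Y|}) = |f\xzq_Q\id_Y|\circ\mu_{X,Y}$; the second variable is handled symmetrically. Alternatively, and perhaps more transparently, I would verify naturality directly on elementary tensors $\xi\boxtimes_{|Q|}\eta$ in the graphical calculus: applying $|f|$ stacks the intertwiner $f$ above $\xi$, and since $f$ is a $Q$-bimodule intertwiner it slides through the separability coisometry $u_{X,Y}$ (using that $u$ is bimodular together with the interchange relation $p_{X,Y}\circ(f\boxtimes_B\id_Y)=(f\boxtimes_B\id_Y)\circ p_{X,Y}$), matching the action of $|f\xzq_Q\id_Y|$ on $\mu_{X,Y}(\xi\boxtimes_{|Q|}\eta)$.

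The main obstacle is bookkeeping rather than conceptual: correctly matching the three distinct balanced tensor products ($\boxtimes_B$, $\boxtimes_{|Q|}$, and $\xzq_Q$) and the coisometries $T$, $v$, $u$ that translate between them, all while tracking that each map remains $|P|-|R|$ bimodular. Once \eqref{eq:FormulaForMu} and the naturality of $T$ and $v$ are established, the verification is a routine diagram chase requiring no new analytic input. (The associativity coherence of the tensorator, needed separately to conclude that $|\cdot|$ is a $\dag$ 2-functor, is not part of this unitarity-and-naturality statement and is checked by the analogous diagrammatic argument.)
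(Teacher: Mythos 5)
Your proposal is correct and follows essentially the same route as the paper: the paper also treats unitarity as already settled by the Construction and proves naturality by combining the identity $\mu_{X,Y}\circ v_{X,Y}=|u_{X,Y}|\circ T_{X,Y}$ with naturality of $T$ and of the coequalizer coisometries $u,v$, functoriality of $|\cdot|$ on 2-morphisms, and a final cancellation of the coisometry $v_{X,Y}$ by composing with $v_{X,Y}^\dag$. The only cosmetic differences are that the paper handles both variables $(\phi,\psi)$ in a single naturality square rather than splitting into $(f,\id)$ and $(\id,g)$, and it keeps $v_{X,Y}$ on the right throughout (cancelling it at the end) instead of starting from $\mu=|u|\circ T\circ v^\dag$, which in your version requires the small extra observation that naturality of $v$ passes to $v^\dag$ via adjoints.
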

\begin{proof}
It remains to prove naturality.
Consider bimodules ${}_{P}X'_{Q}$ and ${}_{Q}Y'_{R}$ and 
a $P-Q$ bimodular map $\phi\in\rCorr_{A-B}(X\Rightarrow X')$ and 
a $Q-R$ bimodular map $\psi\in \rCorr_{B-C}(Y\Rightarrow Y')$.
By naturality of the families of unitary isomorphisms $\{u_{X,Y}\}$ and $\{v_{X,Y}\}$ (as they are the canonical coequalizer maps), we have
\begin{align*}
    \mu_{X',Y'}\circ (|\phi|\boxtimes_{|Q|} |\psi|) \circ v_{X,Y}
    & = \mu_{X',Y'}\circ v_{|X'|, |Y'|} \circ (|\phi|\boxtimes_{B}|\psi|)\\
    & = |u_{X',Y'}| \circ T_{X',Y'} \circ (|\phi| \boxtimes_{B} |\psi|)\\
    & = |u_{X',Y'}| \circ |\phi\boxtimes_{B}\psi| \circ T_{X,Y} \\ 
    & = |u_{X',Y'} \circ (\phi\boxtimes_{B}\psi)| \circ T_{X,Y} \\ 
    & = |(\phi\xzq_Q \psi)\circ u_{X,Y}| \circ T_{X,Y} \\ 
    & = |\phi\xzq_Q \psi| \circ |u_{X,Y}| \circ T_{X,Y}\\
    & = |\phi\xzq_Q \psi| \circ \mu_{X,Y} \circ v_{X, Y}.
\end{align*}
We now precompose with the isometry $v^\dag_{X,Y}$ to obtain
\begin{equation*}
    |\phi\xzq_Q \psi| \circ \mu_{X,Y} 
    =  
    \mu_{X',Y'}\circ (|\phi|\boxtimes_{|Q|} |\psi|).
\qedhere
\end{equation*}
\end{proof}

Finally, we remark that associativity of $\mu=\{\mu_{X,Y}\}_{X,Y}$ follows similarly to that of $\QSys(F)^2$ from Construction \ref{construction:Qsys(F)}.
Indeed, the formula \eqref{eq:FormulaForMu} for $\mu$ is entirely similar to the definition \eqref{eq:DefOfQSysF2} of $\QSys(F)^2$.
We leave the straightforward details to the reader.

%%%%%%%%%%%%%%%%%%%%%%%%%%%%%%%%%%%%%%%%%%%%%%%
\subsection{Equivalence}

We now prove that the $\dag$ 2-functors 
$\iota:\rCorr \hookrightarrow \QSys(\rCorr)$
and
$|\cdot|: \QSys(\rCorr) \to \rCorr$
witness a 2-equivalence.
First, under the identifications
$$
A=
\End_{\bbC-A}({}_{\bbC}A_A)
\qquad\text{and}\qquad
{}_AX_B \underset{\text{Const.~\ref{construction:BoundedVectorModule}}}{\cong} \Hom_{\bbC-B}({}_{\bbC}B_B \to {}_{\bbC} A\boxtimes_A X_B)
$$
and naturality of the unitary isomorphism on the right hand side above from \eqref{eq:BoundedVectorNaturalIso},
we see
$|\cdot| \circ \iota$ is unitarily naturally isomorphic to the identity 2-functor on $\rCorr$.
Thus once we prove $\iota$ is an equivalence, it follows formally that $|\cdot|$ is an inverse $\dag$ 2-functor.
By Remark \ref{rem:QSysCompleteIffESon0}, $\iota$ is an equivalence if and only if it is essentially surjective on objects.
% \dave{i can't decide if essentially surjective on 1-morphisms between trivial Q-systems always holds.}

% Thus $|\cdot|$ will be inverse to $\iota$  \nn{is this assertion correct?} provided we prove $\iota$ is essentially surjective on objects 
% %and essentially surjective on 1-morphisms,
% since by \ref{iota:IotaESon1}, $\iota$ is 
% essentially surjective on 1-morphisms between trivial Q-systems 
% and by \ref{iota:IotaFFon2}, $\iota$ is fully faithful on 2-morphisms.
%So it remains to prove that every Q-system in $\rCorr$ is equivalent to a trivial Q-system.

\begin{prop}
\label{prop:IotaEssSurj}
The $\dag$ 2-functor
$\iota$ is essentially surjective on objects.
\end{prop}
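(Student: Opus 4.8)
The plan is to prove essential surjectivity by exhibiting, for every object of $\QSys(\rCorr)$, an explicit object of $\rCorr$ whose image under $\iota$ is equivalent to it. An object of $\QSys(\rCorr)$ is a Q-system $Q\in\rCorr(B\to B)$ for some unital C*-algebra $B$, and the candidate object of $\rCorr$ is the realization C*-algebra $|Q|$ of Construction \ref{construction:|Q|}, which is honestly a unital C*-algebra and hence an object of $\rCorr$; under $\iota$ it becomes the trivial Q-system $1_{|Q|}$. So it suffices to produce an equivalence $Q\cong 1_{|Q|}$ in $\QSys(\rCorr)$.

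First I would reduce $Q$ to its realization. By Proposition \ref{prop:RealizationIsQSystem} (and the corollary immediately following it), $Q$ is equivalent in $\QSys(\rCorr)$ to the Q-system ${}_B|Q|_B$ arising, as in Example \ref{ex:NonunitalRestrictByExpecation}, from the inclusion $B_0\hookrightarrow|Q|$ together with the finite-index faithful completely positive $B_0$-$B_0$ bimodular expectation $E_B\colon|Q|\to B_0$. Here $B_0=s_QBs_Q$ is the corner cut out by the support projection $s_Q$ of $Q$ from Fact \ref{Z:Support}, and $B=B_0\oplus B_1$. The genuine analytic input needed for this step is the finite Pimsner--Popa index of $E_B$ (Proposition \ref{prop:EBCPandFiniteIndex}) and the finite generation and projectivity of $|Q|_B$ (Proposition \ref{prop:RealizationIsQSystem}), both of which are already in hand, so nothing new must be proved at this stage.

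Next I would chain two equivalences of Q-systems. Example \ref{ex:EquivalenceOfRestrictByExpectationExamples}, applied with auxiliary algebra $A'=B_1$, shows that ${}_B|Q|_B={}_{B_0\oplus B_1}|Q|_{B_0\oplus B_1}$ is equivalent as a Q-system to ${}_{B_0}|Q|_{B_0}$, so that the expectation may be taken onto the full unital corner $B_0$. Then Example \ref{ex:EquivalentRestrictedQSystem}, applied to the unital finite-index inclusion $B_0\subset|Q|$ with conditional expectation $E_B$, gives an equivalence ${}_{B_0}|Q|_{B_0}\cong 1_{|Q|}$, implemented by the bimodule ${}_{B_0}|Q|_{|Q|}$ with (co)evaluations furnished by the renormalized multiplication and inclusion. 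Composing the three equivalences yields $Q\cong 1_{|Q|}=\iota(|Q|)$, which is exactly essential surjectivity on objects.

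The only place requiring care is the bookkeeping of the support projection $s_Q$: because $d_Q$ need not be invertible, the expectation $E_B$ lands not in all of $B$ but only in the corner $B_0=s_QBs_Q$, so the unital Example \ref{ex:RestrictByExpectation} does not apply directly. This is precisely why the argument routes through the non-unital Example \ref{ex:NonunitalRestrictByExpecation} and the direct-sum trick of Example \ref{ex:EquivalenceOfRestrictByExpectationExamples}, stripping off the complementary summand $B_1$ before identifying the remaining Q-system with $1_{|Q|}$. Once this corner is correctly isolated, the remaining steps are formal applications of the already-established examples, so the argument closes.
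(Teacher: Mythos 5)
Your proposal is correct and takes essentially the same route as the paper: realize $Q$ as the unital $\rm C^*$-algebra $|Q|$ via Proposition \ref{prop:RealizationIsQSystem}, pass to the corner $B_0=s_QBs_Q$, and identify ${}_{B_0}|Q|_{B_0}$ with the trivial Q-system $1_{|Q|}$, concatenating the three equivalences. (Incidentally, your attributions match the content of the examples—Example \ref{ex:EquivalenceOfRestrictByExpectationExamples} for the step ${}_B|Q|_B\cong{}_{B_0}|Q|_{B_0}$ and Example \ref{ex:EquivalentRestrictedQSystem} for ${}_{B_0}|Q|_{B_0}\cong 1_{|Q|}$—whereas the paper's proof cites these two examples in the opposite order, apparently a typo.)
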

\begin{proof}
Suppose $(Q,m,i)$ in $\rCorr(B\to B)$ is a Q-system.
We must show $Q$ is equivalent to a trivial Q-system.
By Prop.~\ref{prop:RealizationIsQSystem}, $(Q,m,i)$ is equivalent to the realized Q-system $(|Q|, |m|\circ T_{Q,Q}, |i|)$.
Let $B_0=s_QBs_Q$, where $s_Q$ is the range projection of $d_Q$ as in \ref{Z:Support}.
By Ex.~\ref{ex:EquivalentRestrictedQSystem}, the realized Q-system $(|Q|, |m|\circ T_{Q,Q}, |i|)$ is equivalent to ${}_{B_0}|Q|_{B_0}$, the same Q-system, but considered over $B_0$ rather than $B$.
Finally, by Ex.~\ref{ex:EquivalenceOfRestrictByExpectationExamples}, ${}_{B_0}|Q|_{B_0}$ is equivalent to the trivial Q-system $1_{|Q|}\in \rCorr(|Q|\to |Q|)$.
The result follows by concatenating equivalences of Q-systems in $\QSys(\rCorr)$:
\begin{equation*}
\begin{tikzcd}[column sep = 4.5em]
{}_BQ_B
\arrow[r, <->,"\cong","\text{Prop.~\ref{prop:RealizationIsQSystem}}"']
&
{}_B|Q|_B
\arrow[r, <->,"\cong","\text{Ex.~\ref{ex:EquivalentRestrictedQSystem}}"']
&
{}_{B_0}|Q|_{B_0}
\arrow[r, <->,"\cong","\text{Ex.~\ref{ex:EquivalenceOfRestrictByExpectationExamples}}"']
&
{}_{|Q|}(1_{|Q|})_{|Q|}
\end{tikzcd}.
\qedhere    
\end{equation*}
\end{proof}

% \begin{rem}
% It follows from the proof of Proposition \ref{prop:IotaEssSurj} above that $|\cdot|$ is indeed an inverse $\dag$ 2-functor to $\iota$.
% \end{rem}

This concludes the proof of Theorem \ref{thm:QSysComplete}.
\qed

%%%%%%%%%%%%%%%%%%%%%%%%%%%%%%%%%%%%%%%%%%%%%%%
%%%%%%%%%%%%%%%%%%%%%%%%%%%%%%%%%%%%%%%%%%%%%%%
%%%%%%%%%%%%%%%%%%%%%%%%%%%%%%%%%%%%%%%%%%%%%%%
\section{Induced actions on \texorpdfstring{$\rm C^*$}{C*} algebras}

%\todo{lead in test about actions of UFCs on $\rm C^*$-algebras}
We now describe an application of our constructions above. 
Recall an \text{action} of a unitary fusion category (UFC) $\cC$ on a (unital) $\rm C^*$-algebra $A$ is a unitary tensor functor $F:\cC\rightarrow \rCorr(A \to A)$.
Finding actions of UFCs on $\rm C^*$-algebras is generally a difficult task. 
Most known examples use $\rm C^*$-analogues of subfactor constructions, e.g.~\cite{MR1604162,MR4139893}.
In \cite{MR4328058}, the author constructed actions of pointed fusion categories $\Hilb(G, \omega)$\footnote{Here, $G$ is a finite group and $\omega$ is some normalized representative for a class $[\omega]\in H^3(G, U(1)).$} on $C(X),$ where $X$ is a closed connected manifold. 
Using these types of constructions as input, we can apply the general theory we have developed to actions of dual fusion categories on Q-system extensions. We call such actions \textit{induced actions}.

We divide this section into two subsections. 
In \S\ref{sec:K_0obstructions}, we observe that an action of a UFC on a unital, stably finite $\rm C^*$-algebra $A$ equips the abelian group $K_{0}(A)$ with the structure of a right module over the fusion ring $K_{0}(\cC)$. 
Furthermore, this module structure plays nicely with the order structure on $K_{0}(A)$, which leads naturally to $K$-theoretical obstructions for the existence of actions.
Using these properties, we conclude in Corollary \ref{non-integral} that non-integral UFCs do not admit actions on any continuous trace $\rm C^*$ algebra with connected spectrum.

In \S\ref{sec:gptheoreticalactions} we address the existence question of which integral UFCs act on such $\rm C^*$ algebras. 
Given a unitary tensor functor $F:\Hilb(G,\omega)\to\rCorr(C(X)\to C(X)),$ 
Q-system completion yields induced actions of group theoretical UFCs, a.k.a.~UFCs unitarily Morita equivalent to $\Hilb(G, \omega)$ (see Definition \ref{dfn:UMoritaEq}), on Q-system extensions of $C(X)$, which are continuous trace $\rm C^*$ algebras whose spectrum is some closed connected $n$-manifold for $n\geq 2$ (Corollary \ref{cor:MainC}). 
This construction therefore produces larger families of actions of integral UFCs on continuous trace $\rm C^*$ algebras over connected spectra.

%\nn{Notation: $\Bim(A) := \rCorr(A,A)$;
%define $\Mod(A)\subset \Bim(A)$ where the left action is the right action, which is a tensor category!}

\subsection{\texorpdfstring{$K_0$}{K0}-obstructions to actions of unitary fusion categories}\label{sec:K_0obstructions}

Recall that if $A$ is \textit{stably finite} (the identities of $M_{n}(A)$ are not equivalent to proper sub-projections for all $n$), then $K_{0}(A)$ naturally acquires the structure of an ordered abelian group, with order unit $[1_{A}]$, whose positive cone $K^{+}_{0}(A)$ is given by the image of $[X]$, where $X\in \fgprCorr(\bbC\to A)$ is a finitely generated projective right Hilbert $A$-module (e.g. \cite[Chapter III.6]{MR1656031}).
A \textit{state} is a homomorphism of ordered groups $\phi:K_{0}(A)\rightarrow (\mathbb{R},+)$ such that $\phi([1_A])=1$.

Let $\cC$ be a unitary tensor category. 
Given a unitary tensor functor $F:\cC\rightarrow \fgprCorr(A\to A)$, by the universal property of the Grothendieck construction, we may canonically endow $K_{0}(A)$ with a the structure of a right $K_{0}(\cC)$ module. 
If $X\in \fgprCorr(\bbC\to A)$ and $a\in \cC$, the action is induced by  $[X]\triangleleft [a]:=X\boxtimes_{A} F(a)$. 
By definition, each $[a]$ acts by a \textit{positive} endomorphism, namely $ K^{+}_{0}(A)\triangleleft [a]\subseteq K^{+}_{0}(A)$.

Now if $\cC$ is a fusion category, we can take a state on $K_{0}$ and `average it' over the action of the fusion category. 
Using the Perron-Frobenius theorem, this allows us to produce an `eigenstate' for the action. 
As a consequence, we obtain no-go theorems for actions of fusion categories on large classes of $\rm C^*$-algebras.

\begin{lem} 
Let $A$ be a stably finite $\rm C^*$-algebra and $F: \cC\rightarrow \fgprCorr(A\to A)$ an action. 
For any object $a\in \cC$,  the endomorphism $-\triangleleft [a]\in \End(K^{+}_{0}(A))$ is not the zero operator.
\end{lem}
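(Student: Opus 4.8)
The plan is to show that the action $-\triangleleft[a]$ sends the order unit $[1_A]$ to a nonzero element. Since $F(a)\in\fgprCorr(A\to A)$ is a finitely generated projective right correspondence, the module $[1_A]\triangleleft[a] = [{}_AA_A\boxtimes_A F(a)] = [F(a)]$ is just the class of the underlying right Hilbert $A$-module of $F(a)$ in $K_0(A)$. So it suffices to prove that $[F(a)]\neq 0$ in $K_0(A)$, and for this the key leverage is duality: every object $a$ in a fusion category is dualizable, hence $F(a)$ is a dualizable $1$-morphism in $\fgprCorr(A\to A)$ with a dual $F(a^\vee)$.

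First I would use the evaluation/coevaluation maps for $F(a)$. Dualizability gives $\coev: {}_AA_A \Rightarrow F(a)\boxtimes_A F(a^\vee)$ and $\ev: F(a^\vee)\boxtimes_A F(a) \Rightarrow {}_AA_A$ satisfying the zig-zag relations, which in particular force $\coev$ to be a (nonzero) isometry-like inclusion onto a direct summand after suitable normalization; more concretely, the snake equations imply $1_A$ is a sub-correspondence of $F(a)\boxtimes_A F(a^\vee)$, so there is a projection $p$ in $\End(F(a)\boxtimes_A F(a^\vee))$ whose image is unitarily isomorphic to ${}_AA_A$. Passing to $K_0$, this yields $[1_A] \leq [F(a)\boxtimes_A F(a^\vee)] = [F(a)]\triangleleft[a^\vee]$ in the positive cone, i.e.\ $[1_A]$ is dominated by an element obtained from $[F(a)]$ by a further positive action.

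The crucial step is then the order structure. Since $A$ is stably finite, $K_0(A)$ is an ordered abelian group with order unit $[1_A]$, and $[1_A]\neq 0$ (as $A$ is unital and stably finite, the unit is not stably equivalent to $0$). The inequality $0 < [1_A] \leq [F(a)]\triangleleft[a^\vee]$ shows that $[F(a)]\triangleleft[a^\vee]\neq 0$; since $-\triangleleft[a^\vee]$ is an \emph{endomorphism} of $K_0^+(A)$, if $[F(a)]=[1_A]\triangleleft[a]$ were zero then $[F(a)]\triangleleft[a^\vee]$ would also be zero, a contradiction. Hence $[1_A]\triangleleft[a]=[F(a)]\neq 0$, so $-\triangleleft[a]$ is not the zero operator.

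The main obstacle I anticipate is making precise the claim that dualizability of $F(a)$ forces $[1_A]$ to lie below $[F(a)]\triangleleft[a^\vee]$ in the positive cone, rather than merely that these classes are related by a trace computation. The honest way is to observe that $\coev_{F(a)}^\dag\circ\coev_{F(a)}$ is a positive invertible (or at least nonzero) element of $\End({}_AA_A)=Z(A)$, so after splitting the associated range projection $p=\coev_{F(a)}\coev_{F(a)}^\dag$ one gets ${}_AA_A$ as an orthogonal summand of $F(a)\boxtimes_A F(a^\vee)$ via an honest isometry; this is exactly the unitary idempotent splitting guaranteed by the standing Cauchy completeness assumption on hom categories. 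Once that summand is exhibited, the order-theoretic inequality in $K_0$ is immediate, and the rest of the argument is purely formal.
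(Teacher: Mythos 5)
Your proof is correct, but it takes a genuinely different route from the paper's. The paper argues by contradiction purely at the level of the ordered $K_0(\cC)$-module structure: assuming $[H]\triangleleft [a]=0$ for \emph{every} $[H]\in K_0^+(A)$, it applies this hypothesis to the positive class $[H]\triangleleft[\overline{a}]$ and uses the fusion rule $N^{1_\cC}_{\overline{a},a}=1$ to get $0=[H]\triangleleft[\overline{a}\otimes a]=[H]+(\text{positive})$, whence $[H]\le 0$; properness of the order on $K_0$ of a stably finite algebra then forces $[H]=0$ for all $[H]$, a contradiction. You instead work at the level of correspondences: exhibiting ${}_AA_A$ as an orthogonal summand of $F(a)\boxtimes_A F(a^\vee)$ via the (normalized) coevaluation, deducing $0<[1_A]\le [F(a)]\triangleleft[a^\vee]$, and concluding $[1_A]\triangleleft[a]=[F(a)]\neq 0$. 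Both proofs hinge on the same categorical fact (the unit is a subobject of $a\otimes\overline{a}$, resp.\ $\overline{a}\otimes a$) plus properness of the $K_0$-order, but the trade-off is real: the paper's argument is purely formal (no Hilbert-module analysis, only fusion coefficients and positivity) and needs the vanishing hypothesis on all of $K_0^+(A)$, concluding only that the operator is nonzero \emph{somewhere}; yours is more geometric and proves the sharper statement that the operator is already nonzero on the order unit $[1_A]$.

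One point needs tightening. Invertibility of $\coev^\dag\circ\coev$ is \emph{not} automatic for arbitrary duality data in $\fgprCorr(A\to A)$, because $\End({}_AA_A)=Z(A)$ need not be simple: over $A=\bbC\oplus\bbC$, the correspondence $\bbC$ supported on the first corner is dualizable with $\coev^\dag\circ\coev=(1,0)$, which is not invertible. What rescues your argument is that your $\ev,\coev$ are transported through $F$ from solutions to the conjugate equations in $\cC$: since $\End(1_\cC)=\bbC$ and the coherences $F^1,F^2$ are unitary, one gets $\coev_{F(a)}^\dag\circ\coev_{F(a)}=F(\coev_a^\dag\circ\coev_a)=\lambda\cdot 1_A$ for a scalar $\lambda>0$, so $\lambda^{-1/2}\coev_{F(a)}$ is an honest bimodular isometry and the splitting is immediate. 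Your fallback ``or at least nonzero'' would not by itself give all of ${}_AA_A$ as a summand, only $sA$ for the support projection $s$ of $\coev^\dag\circ\coev$ (which exists because zero is isolated in the spectrum, the paper's fact (Z3)); even then the argument closes, since stable finiteness gives $[sA]\neq 0$ whenever $s\neq 0$. With the invertibility justified as above, your proof is complete.
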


\begin{proof}
Suppose for contradiction that $[H]\triangleleft [a]=0$ for all $[H]\in  K^{+}_{0}(A)$. 
Then since $N^{1_\cC}_{\overline{a}, a}=1$,
\begin{align*}
0
&=
( [H]\triangleleft [\overline{a}])\triangleleft [a]
= 
[H]\triangleleft[\overline{a}\xzq a]=\sum_{b\in \Irr(\cC)} N^{b}_{\overline{a}, a}[H]\triangleleft [b]
=[H]+\sum_{b\ne 1_\cC} N^{b}_{\overline{a}, a}[H]\triangleleft [b].
\end{align*}
But $\sum_{b\ne 1_\cC} N^{b}_{\overline{a}, a}[H]\triangleleft [b] \ge 0$. 
Thus $[H]\le 0$, and since this group is ordered, this implies $[H]= 0$ for all $H$, a contradiction.
\end{proof}

The following proposition should be compared with \cite[Thm.~5.9]{MR1624182}.

\begin{prop}
\label{prop:ExistsFPState}
If $A$ is a unital stably finite $\rm C^*$-algebra, $\cC$ is a unitary fusion category, and $F:\cC\rightarrow \fgprCorr(A\to A) $ is a unitary tensor functor, there exists a state $\phi$ of $K_{0}(A)$ such that $\phi(- \triangleleft [a])=\FPdim(a)\phi(-)$ for all objects $a\in \cC$.
\end{prop}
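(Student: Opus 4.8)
The plan is to construct the desired state as a Perron--Frobenius eigenvector for the averaged action operator. First I would fix a faithful state $\phi_0$ on the ordered group $K_0(A)$ (which exists since $[1_A]$ is an order unit on a stably finite $\rm C^*$-algebra), and consider the finite-dimensional real vector space $V := \Hom_{\mathbb Z}(K_0(A),\mathbb R)$ of group homomorphisms. On $V$ the fusion ring $K_0(\cC)$ acts on the right by precomposition: for $[a]\in K_0(\cC)$, define the transpose operator $(\phi\triangleleft[a])(-) := \phi(-\triangleleft[a])$. The key observation is that the total object $S := \sum_{b\in\Irr(\cC)}\FPdim(b)\,[b]\in K_0(\cC)$ satisfies $[a]\cdot S = \FPdim(a)\,S$ for every object $a$, because $\FPdim$ is a ring homomorphism $K_0(\cC)\to\mathbb R$ realizing the Perron--Frobenius eigenvalue of each fusion matrix.

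The heart of the argument is to produce a state $\phi$ that is simultaneously an eigenvector: $\phi\triangleleft[a] = \FPdim(a)\,\phi$ for all $a$. I would do this by restricting attention to the convex cone of positive functionals and invoking Perron--Frobenius. Concretely, let $T$ denote the single operator on $V$ given by right multiplication by $S/\FPdim(\cC)$, where $\FPdim(\cC) = \sum_b \FPdim(b)^2$ is the global dimension. Since each $-\triangleleft[b]$ preserves the positive cone $K_0^+(A)$ and, by the preceding Lemma, is nonzero, the adjoint operators preserve the dual cone of positive functionals, and $T$ maps states into (scalar multiples of) positive functionals. Applying the Perron--Frobenius theorem to $T$ restricted to the finite-dimensional cone of positive functionals yields a nonzero positive eigenfunctional $\phi$. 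The eigenvalue is forced to be $1$ after normalization because $T$ is an average weighted by $\FPdim$ and $S$ has Perron eigenvalue $\FPdim(a)$ under each $[a]$; normalizing so that $\phi([1_A])=1$ gives an honest state.

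The main obstacle, and the step requiring the most care, is verifying that the eigenfunctional $\phi$ produced by Perron--Frobenius is genuinely an eigenvector for \emph{every} $[a]$ simultaneously, not merely for the averaging operator $T$. The clean way to see this is to use that $S$ is a common eigenvector: since $\phi\triangleleft S = \FPdim(\cC)^{1/2}$-type scalar times $\phi$ (up to the chosen normalization) and $S\cdot[a] = [a]\cdot S = \FPdim(a)S$, one computes
\[
(\phi\triangleleft[a])\triangleleft S = \phi\triangleleft([a]\cdot S) = \FPdim(a)\,(\phi\triangleleft S) = \FPdim(a)\,\mu\,\phi,
\]
where $\mu$ is the scalar with $\phi\triangleleft S = \mu\phi$. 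Thus both $\phi\triangleleft[a]$ and $\FPdim(a)\phi$ are eigenvectors of $\triangleleft S$ with the same eigenvalue $\mu\FPdim(a)$; provided the Perron eigenspace of $\triangleleft S$ is one-dimensional (which follows from irreducibility of the fusion matrix of $S$, i.e.\ from the fact that $\cC$ is a fusion category so that every object is a subobject of some power of $S$), these two positive functionals must coincide, giving $\phi\triangleleft[a] = \FPdim(a)\phi$. The only remaining point is to confirm positivity and one-dimensionality of the relevant eigenspace, which is where the structure of the order on $K_0(A)$ and the nonvanishing Lemma are used.
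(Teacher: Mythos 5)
Your proposal has two genuine gaps, both located in the Perron--Frobenius step. First, the space $V=\Hom_{\bbZ}(K_0(A),\bbR)$ is not finite-dimensional in general: $K_0(A)$ of a unital stably finite $\rm C^*$-algebra need not be finitely generated (already $A=C(X)$ for $X$ the Cantor set has $K_0(A)\cong C(X,\bbZ)$, whose space of positive functionals is the infinite-dimensional space of measures on $X$), so neither the finite-dimensional Perron--Frobenius theorem nor a ``finite-dimensional cone of positive functionals'' is available; producing an eigenfunctional of $T$ would instead require a fixed-point theorem on the compact convex state space (e.g.\ Schauder--Tychonoff), which you do not invoke. Second, the uniqueness argument fails: (i) one-dimensionality of the Perron eigenspace of the fusion matrix of $S$ concerns its action on $K_0(\cC)\otimes_{\bbZ}\bbR$, not the operator $\triangleleft S$ on $V$ --- for the trivial category $\cC=\fdHilb$ one has $S=[1_\cC]$ and $\triangleleft S=\mathrm{id}_V$, whose fixed space is all of $V$; and (ii) your computation $(\phi\triangleleft[a])\triangleleft S=\FPdim(a)\,\mu\,\phi$ only says that $\triangleleft S$ \emph{sends} $\phi\triangleleft[a]$ to a multiple of $\phi$; it does not exhibit $\phi\triangleleft[a]$ as an eigenvector of $\triangleleft S$ (and rescaling $\phi$ by $\FPdim(a)$ does not change its eigenvalue, which remains $\mu$, not $\mu\FPdim(a)$), so even granting one-dimensionality the conclusion $\phi\triangleleft[a]=\FPdim(a)\phi$ does not follow as written.

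The irony is that the correct form of your own computation makes the Perron--Frobenius apparatus unnecessary and collapses to the paper's proof. Since $[a]\cdot S=S\cdot[a]=\FPdim(a)\,S$, for \emph{any} functional $\psi$ one has
\[
(\psi\triangleleft S)\triangleleft[a]\;=\;\psi\triangleleft([a]\cdot S)\;=\;\FPdim(a)\,(\psi\triangleleft S),
\]
i.e.\ everything in the image of $\triangleleft S$ is already a simultaneous eigenvector for all $[a]$. So one takes any state $\psi$ on $K_0(A)$ (which exists because $[1_A]$ is an order unit), sets $\varphi:=\psi\triangleleft S=\sum_{a\in\Irr(\cC)}\FPdim(a)\,\psi(-\triangleleft[a])$, observes that $\varphi([1_A])\geq\psi([1_A])=1>0$ because the term $a=1_\cC$ contributes $\psi([1_A])$ and all terms are nonnegative, and normalizes $\phi:=\varphi([1_A])^{-1}\varphi$. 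This is exactly the paper's argument, where the eigenvector identity is verified by the fusion-rule identity $\sum_{a}\FPdim(a)N^{\bar a}_{\bar c b}=\FPdim(b)\FPdim(c)$, which is equivalent to $[b]\cdot S=\FPdim(b)\,S$. If you wish to keep a fixed-point formulation, it can be repaired along the same lines: any fixed point $\phi=\mu^{-1}\phi\triangleleft S$ of the normalized operator lies in the image of $\triangleleft S$ and is therefore automatically a common eigenvector, with no eigenspace-uniqueness needed --- but you would still owe an infinite-dimensional fixed-point theorem for its existence, which the direct averaging renders moot.
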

\begin{proof}
By \cite[Cor.~3.3]{MR389965}, any ordered abelian group with an order unit admits a state $\psi$. Consider the linear functional $\varphi(-):=\sum_{a\in \Irr(\cC)} \FPdim(a)\cdot\psi(- \triangleleft [a])$. 
Then $\varphi([H])\ge 0$. We compute 
\begin{align*}
\varphi([H]\triangleleft [b])
&=
\sum_{a\in \Irr(\cC)} \FPdim(a)\psi([H]\triangleleft [b\xzq a])
\\&=
\sum_{a,c\in \Irr(\cC)}\FPdim(a)N^{c}_{ba}\psi([H]\triangleleft [c])
\\&=
\sum_{c\in \Irr(\cC)} \left(\sum_{a\in \Irr(\cC)}\FPdim(a)N^{\bar{a}}_{\bar{c}b}\right)\phi([H]\triangleleft [c])
\\&=
\sum_{c\in \Irr(\cC)}\FPdim(b)\FPdim(c)\psi( [H]\triangleleft [c]) 
\\&=
\FPdim(b) \varphi([H])
\end{align*}
We claim that $\varphi([1_{A}])\ne 0$. 
To see this, suppose for contradiction that
$$
0=\varphi( [1_{A}])=\sum_{a\in \Irr(\cC)} \FPdim(a)\psi( [1_{A}]\triangleleft [a]).
$$
Then by positivity, for all $a$, $\FPdim(a)\psi( [1_{A}] \triangleleft [a])=0$.
In particular, for $a=1_\cC$, $\psi([1_{A}])=0$, contradicting the assumption that $\psi$ is a state.
Hence the state
$\phi:=\varphi([1_A])^{-1}\cdot \varphi$ satisfies the desired property.
\end{proof}

Proposition \ref{prop:ExistsFPState} provides a particularly useful obstruction to the existence of actions of fusion categories on stably finite $\rm C^*$-algebras whose K-theory has a unique state $\phi$. 
In this case, the unique state is necessarily an `eigenstate' for the fusion category action in the sense of the above proposition. 
Thus for a given fusion category to act on $A$, it is necessarily the case that $\phi(K_{0}(A))\subseteq \bbR$ is closed under multiplication by the dimensions of $\cC$.

One class of $\rm C^*$-algebras that are closely related to classical topology are the \textit{continuous trace} $\rm C^*$-algebras. 
The unital continuous trace algebras are essentially locally trivial bundles of matrix algebras over their spectrum (which happens to be a compact Hausdorff space).
In particular, unital $\rm C^*$-algebras Morita equivalent to $C(X)$ are continuous trace, but there are many examples not of this form. 
However, unital continuous trace algebras are always stably finite, nuclear $\rm C^*$-algebras. 
It is natural to ask whether fusion categories admit actions on these algebras, in particular when the spectrum is a `nice' space, for example some sort of manifold. 
We have the following corollary, which greatly restricts the possibilities.

\begin{cor}\label{non-integral}
If a fusion category contains an object with a non-integral dimension, there is no action on any unital continuous trace $\rm C^*$-algebra with connected spectrum.
 \end{cor}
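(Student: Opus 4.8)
The plan is to argue by contradiction. Suppose $\cC$ has an object of non-integral Frobenius--Perron dimension yet admits an action $F\colon\cC\to\rCorr(A\to A)$ on a unital continuous trace $\rm C^*$-algebra $A$ with connected spectrum $X$. Since every object of a fusion category is dualizable and $F$ is a unitary tensor functor, $F$ carries objects to dualizable (equivalently, finitely generated projective) correspondences, so I may regard $F\colon\cC\to\fgprCorr(A\to A)$ and invoke the right $K_0(\cC)$-module structure on $K_0(A)$ from \S\ref{sec:K_0obstructions}. As $A$ is type I, hence nuclear and stably finite, $(K_0(A),K_0^+(A),[1_A])$ is an ordered abelian group with order unit, so Proposition \ref{prop:ExistsFPState} applies and produces a state $\phi$ with $\phi(-\triangleleft[a])=\FPdim(a)\,\phi(-)$ for all $a\in\cC$. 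The entire argument then reduces to pinning down the possible values of $\phi$.

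The key input is that $K_0(A)$ has a \emph{unique} state, namely the normalized rank. Because $X$ is connected, all irreducible representations of $A$ have a common finite dimension $n\ge 1$, and a projection $p$ in any matrix amplification $M_k(A)$ has $x\mapsto\operatorname{rank}p(x)$ continuous and $\bbZ$-valued, hence constant; this yields a rank homomorphism $\rho\colon K_0(A)\to\bbZ$, and $\phi_0:=n^{-1}\rho$ is a state with $\phi_0([1_A])=1$ and $\phi_0(K_0(A))\subseteq n^{-1}\bbZ\subseteq\bbQ$. The main obstacle is proving uniqueness, which I would obtain by showing every state factors through $\rho$. I would deduce this from the fact that, for unital exact (here nuclear) algebras, every state on $K_0(A)$ is induced by a tracial state of $A$ (via the Blackadar--Rørdam realization of states by quasitraces, together with Haagerup's theorem that quasitraces on exact algebras are traces), combined with the observation that \emph{every} tracial state $\tau$ of $A$ induces $\phi_0$ on $K_0$: the unnormalized fiber trace of a projection equals its rank $\rho$, which is constant on the connected spectrum, so $K_0(\tau)=\phi_0$ independently of $\tau$. (Alternatively, when $X$ has finite covering dimension one can argue directly: for $g\in\ker\rho$ and $N$ large, bundle stabilization places $N[1_A]\pm kg$ in $K_0^+(A)$ for every $k\in\bbZ$, whence $N\pm k\,\psi(g)\ge 0$ for any state $\psi$ and all $k$, forcing $\psi(g)=0$.) Either way, a state kills $\ker\rho$ and is normalized by its value on $[1_A]$, so it must equal $\phi_0$.

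With uniqueness in hand, the state $\phi$ from Proposition \ref{prop:ExistsFPState} equals $\phi_0$. Evaluating the eigenstate identity at the order unit and using $[1_A]\triangleleft[a]=[F(a)]$ as a right $A$-module gives, for every object $a\in\cC$,
\[
\FPdim(a)=\FPdim(a)\,\phi([1_A])=\phi([1_A]\triangleleft[a])=\phi_0([F(a)])=\tfrac{1}{n}\,\rho(F(a))\in\bbQ .
\]
Since $\FPdim(a)$ is a root of the characteristic polynomial of a nonnegative integer fusion matrix it is an algebraic integer, and a rational algebraic integer is an ordinary integer; hence $\FPdim(a)\in\bbZ$ for all $a$. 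This contradicts the existence of an object of non-integral dimension, which is exactly the assertion of Corollary \ref{non-integral}; equivalently, any fusion category acting on such an $A$ must be integral.
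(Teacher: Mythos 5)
Your proof is correct and takes essentially the same route as the paper: uniqueness of the state on $K_0(A)$ (states arise from quasitraces by Blackadar--R{\o}rdam, quasitraces are traces by nuclearity/Haagerup, and every trace induces the normalized rank because the fiber rank of a projection is constant on the connected spectrum), combined with the eigenstate of Proposition \ref{prop:ExistsFPState} and the fact that rational algebraic integers are integers. The one step you assert tersely --- that \emph{every} tracial state induces the normalized-rank state on $K_0$ --- is exactly where the paper cites Dixmier's correspondence between traces on a postliminal algebra and probability measures on its spectrum; with that reference supplied, the arguments coincide.
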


\begin{proof}
Let $A$ be a unital continuous trace $\rm C^*$-algebra with connected spectrum $\widehat{A}$. For a projection $p\in M_{\infty}(A)$, the function $x \mapsto \tr(x(p))\in \mathbbm{Z}$ for $x\in \widehat{A}$ is continuous, and since $\widehat{A}$ is connected, is some constant value we call $\widehat{p}$. Every state on $K_{0}(A)$ arises from a quasitrace on $A$ \cite{MR1190414}. Since $A$ is nuclear 
by \cite[B.44]{MR1634408},
all states on $K_{0}$ arise then arise from traces on $A$ \cite[Thm~5.11]{MR3241179}.
But since $A$ is continuous trace (hence postliminal), traces on $A$ correspond to probability measures on the spectrum $\widehat{A}$ \cite[\S8.8]{MR0458185}, and thus any trace $\tau$ on $A$ will satisfy 
$$
\tau(p)
=
\frac{1}{\widehat{1}_{A}}\int_{\widehat{A}}\tr(x(p)) d\mu
=
\int_{\widehat{A}}d\mu \frac{\widehat{p}}{\widehat{1}_{A}}
=
\frac{\widehat{p}}{\widehat{1}_{A}}.
$$ 

We conclude there is a unique state on $K_{0}(A)$, defined by $p\mapsto \frac{\widehat{p}}{\widehat{1}_{A}}\in \bbQ$. In particular, this implies that if a fusion category admits an action on $A$, then $\FPdim(a)\in \bbQ$ for all objects $a$. 
But these dimensions are algebraic integers, and rational algebraic integers are integers.
\end{proof}

%%%%%%%%%%%%%%%%%%%%%%%%%%%%%%%%%%%%%%%%%%%%%%%
\subsection{Group theoretical fusion categories act on continuous trace \texorpdfstring{$\rm C^*$}{C*}-algebras}\label{sec:gptheoreticalactions}

Corollary \ref{non-integral} in the previous section implies that only integral fusion categories admit actions on continuous trace $\rm C^*$-algebras. 
This motivates the question of which integral fusion categories admit such an action? 
A large class of integral fusion categories are given by \textit{group-theoretical} fusion categories. 
We recall the definition below.

\begin{defn}\label{dfn:UMoritaEq}
Two unitary fusion categories $\cC,\cD$ are called \emph{unitarily Morita equivalent} if there exists a connected Q-system $Q\in \cC$ such that $\cD\cong \QSys(\rmB\cC)(Q\to Q)$.\footnote{This definition is equivalent to the definitions given in \cite{MR1966524,2004.08271} using \cite[Thm.~A.1]{MR3933035}.}
A unitary fusion category $\cC$ is called \textit{group theoretical} if it is unitarily Morita equivalent to $\fdHilb(G, \omega)$ for some finite group $G$ and $[\omega]\in H^{3}(G, U(1))$. 
\end{defn}

Using our above results, we can build actions of group theoretical fusion categories on $\rm C^*$-algebras by finding actions of $\fdHilb(G, \omega)$ on algebras, and inducing actions on the corresponding Q-system extensions.
By \cite{MR4328058}, every $\fdHilb(G, \omega)$ admits an action on $C(X)$ where $X$ is a `nice'
compact Hausdorff space (e.g. closed connected $n$-manifold for $n\ge 2$). 
This implies we can build arbitrary group theoretical fusion category actions on Q-system extensions of these $C(X)$. 
We would like to characterize the $\rm C^*$-algebras that arise this way.

By \cite[Thm.~3.1]{MR1976233}, an irreducible Q-system in $\fdHilb(G, \omega)$ is given by a pair $(H, \mu)$ where $H\le G$ and $\mu: H\times H\rightarrow U(1)$ satisfies $d\mu=\omega|_{H}$. Equivalently, we have 
$$
\mu(gh,k)\mu(g,h)=\omega(g,h,k)\mu(g,hk)\mu(h,k)
\qquad\qquad
\forall\, g,h,k\in G.
$$
%Given an action of $\fdHilb(G, \omega)$ on $C(X)$ which is free, the realization $|(H, \mu)|$ is a Q-system over its center, which is $C(X/H)$.

Following \cite[Example 5.5]{2105.05587}, for any (unital) $\rm C^*$-algebra $A$ and an automorphism $\alpha \in \text{Aut}(A)$, we can define a bimodule $A_{\alpha}\in \rCorr(A\rightarrow A)$ by setting 
$A_{\alpha}:=A$ as a vector space with $A$ actions and $A$-valued inner product defined by
$$
a\triangleright \eta \triangleleft b:=a\eta\alpha(b)
\qquad\quad
\langle \eta\ |\ \xi\rangle_{A}:=\alpha^{-1}(\eta^{*}\xi).
$$
For any two automorphisms $\alpha,\beta\in \text{Aut}(A)$, the intertwiner $J_{\alpha,\beta}:A_{\alpha}\boxtimes A_{\beta}\rightarrow A_{\alpha\circ \beta}$ given by the extension of $\xi\boxtimes \eta\mapsto \xi\alpha(\eta)$ is a unitary isomorphism in $\rCorr(A\rightarrow A)$. 
Furthermore, if 
%$\alpha=\Ad(u)\circ \beta$,
$\alpha(\,\cdot\,)=u\beta(\,\cdot\,)u^{*}$, 
then the map $\eta\mapsto \eta u$ extends to a unitary isomorphism $A_{\alpha}\rightarrow A_{\beta}$ in $\rCorr(A\rightarrow A)$. 
Such unitaries $u$ form a torsor over the $U(Z(A))$, and comprise all unitary $A-A$ bimodule isomorphisms $A_\alpha \to A_\beta$. 

An action $\text{Hilb}(G, \omega)\rightarrow \rCorr(A\rightarrow A)$ is called \textit{automorphic} if for each $g\in G$, the image of the corresponding simple object is isomorphic to $A_{\alpha}$ for some $\alpha\in \text{Aut}(A)$. This yields an assignment $g\mapsto \alpha_{g}\in \text{Aut}(A)$. Combining the observations above, our tensorators must have the form 
$$
m_{g,h}(\eta\otimes \xi):=J_{\alpha_g, \alpha_h}(\eta\otimes \xi)u_{g,h}
$$ 
where the unitaries $u_{g,h}$ satisfy $\alpha_{g}\circ \alpha_h(\,\cdot\,)=u_{g,h}\alpha_{gh}(\,\cdot\,)u^{*}_{g,h}$. 
That this data defines a monoidal functor is equivalent to the conditions
$$
u_{g,h}\alpha_{g}\circ \alpha_{h}(\,\cdot\,)=\alpha_{gh}(\,\cdot\,) u_{gh}
\qquad\text{and}\qquad
u_{g,hk}\alpha_{g}(u_{h,k})=\omega(g,h,k)u_{gh,k}u_{g,h}.
$$
Taking the $*$ of both sides, this is precisely the same data as an $\overline{\omega}$-anomalous action of $G$, where $\overline{\omega}$ is the complex conjugate 3-cocycle (c.f.  \cite[Example 5.5, Proposition 5.6]{2105.05587}, \cite[Remark 2.5]{MR4328058}).

\begin{lem}
Let $A$ be a unital continuous trace $\rm C^*$-algebra and $F$ an automorphic action of $\fdHilb(G,\omega)$ on $A$ which induces a free action of $G$ on the spectrum $\widehat{A}$. Then for any Q system $(H,\mu)\in \fdHilb(G,\omega)$, the realization $|(H,\mu)|_F$ is continuous trace, with spectrum $X/H$. 
\end{lem}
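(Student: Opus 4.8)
The plan is to compute the realization $|(H,\mu)|_F$ explicitly using Construction \ref{construction:|Q|} and identify it with a continuous trace algebra over $X/H$. First I would unpack the Q-system $(H,\mu)\in\fdHilb(G,\omega)$: under the automorphic action $F$, the underlying bimodule of $(H,\mu)$ is $\bigoplus_{h\in H} A_{\alpha_h}$, with multiplication assembled from the $J_{\alpha_g,\alpha_h}$ and the scalars $u_{g,h}$ twisted by $\mu$. Since $|Q|=\Hom_{\bbC-A}({}_\bbC A_A \to {}_\bbC A\boxtimes_A Q_A)$ is by Construction \ref{construction:BoundedVectorModule} just the underlying right module with its realized $*$-algebra structure, I would show that as a vector space $|(H,\mu)|_F \cong \bigoplus_{h\in H} A$, and that the product is the $\mu$-twisted crossed-product-type multiplication $(a_g)\cdot(b_h) = \big(\sum_{gh=k} a_g\,\alpha_g(b_h)\,u_{g,h}\mu(g,h)\big)_k$. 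In other words, $|(H,\mu)|_F$ is the twisted crossed product $A\rtimes_{\alpha,(u,\mu)} H$ of $A$ by the (anomalous, but genuine on $H$ since $d\mu=\omega|_H$) action of the finite group $H$.

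Next I would verify the two structural claims: that this crossed product is again continuous trace, and that its spectrum is $X/H$. The key input is that $F$ induces a \emph{free} action of $G$ on $\widehat{A}=X$, hence a free action of the finite group $H$. For a free action of a finite group on a compact Hausdorff space, the quotient map $X\to X/H$ is a principal $H$-bundle, and the twisted crossed product $A\rtimes_{\alpha,(u,\mu)} H$ of a continuous trace algebra by such a free finite-group action is itself continuous trace with spectrum $X/H$. I would cite the standard structure theory for crossed products of continuous trace algebras by finite (freely acting) groups — this is where one invokes that freeness makes the action proper and the crossed product postliminal, and that $\widehat{A\rtimes H}\cong \widehat{A}/H$ when the action on $\widehat{A}$ is free. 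The $\mu$-twist only alters the algebra by a cocycle supported on $H$ and does not affect the spectrum or the continuous-trace property, since twisting by a (pointwise unitary) $2$-cocycle on a finite group is an inner perturbation at the level of fibers.

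Concretely, the steps in order are: (i) identify the underlying module of $(H,\mu)$ as $\bigoplus_{h\in H}A_{\alpha_h}$ and compute $d_Q, s_Q$ to see $Q$ is non-degenerate so $s_Q=1$; (ii) apply Construction \ref{construction:|Q|} to write $|(H,\mu)|_F$ as $\bigoplus_{h\in H}A$ with the twisted convolution product above, recognizing it as $A\rtimes_{\alpha,(u,\mu)}H$; (iii) observe via Proposition \ref{prop:EBCPandFiniteIndex} that $E_B\colon|Q|\to A$ is the canonical faithful finite-index conditional expectation onto $A$, exhibiting $A\subseteq |(H,\mu)|_F$ as a finite-index inclusion; (iv) invoke freeness of the $H$-action on $X$ to conclude the crossed product is continuous trace with spectrum $X/H$.

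I expect the main obstacle to be step (iv), namely rigorously pinning down that the \emph{twisted} crossed product by a freely acting finite group stays continuous trace with the expected spectrum. The freeness hypothesis is exactly what rescues this: without it the crossed product can fail to be continuous trace (the isotropy would contribute representation theory of stabilizers to the fibers). I would therefore emphasize that freeness gives a principal $H$-bundle $X\to X/H$, so that locally over $X/H$ the crossed product looks like $M_{|H|}(\mathbb{C})\otimes(\text{matrix algebra fiber})$, manifestly continuous trace; the global statement then follows by a partition-of-unity / local-triviality argument, with the cocycle twist $\mu$ absorbed into the local trivializations. The identification of the spectrum with $X/H$ then amounts to checking that irreducible representations of $|(H,\mu)|_F$ are induced from $H$-orbits in $X$, which is a standard Mackey-type analysis simplified by freeness.
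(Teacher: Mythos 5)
Your proposal follows the paper's proof exactly in its first half and then diverges in the second. Like the paper, you identify $|(H,\mu)|_F$ with a Busby--Smith (cocycle) twisted crossed product: the paper sets $w_{g,h}:=\mu_{g,h}u_{g,h}$, checks that $w$ satisfies the \emph{untwisted} cocycle identity because $d\mu=\omega|_H$ cancels the anomaly, and recognizes the realized multiplication $(a\delta_g)(b\delta_h)=a\alpha_g(b)w_{g,h}\delta_{gh}$ as the twisted convolution product --- exactly your steps (i)--(ii). (Your step (iii), the finite-index expectation, plays no role in either argument.) The divergence is in how one concludes continuous trace and computes the spectrum. The paper invokes the Packer--Raeburn stabilization trick \cite{MR1002543}: after tensoring with $B(\ell^2(H))$, the twisted crossed product becomes an honest untwisted crossed product by an action of $H$ inducing the same free action on $\widehat{A}$, so that \cite[Thm.~1.1]{MR920145} applies verbatim to give continuous trace with spectrum $\widehat{A}/H$; since both the continuous-trace property and the spectrum are stable invariants, this passes back to $|(H,\mu)|_F$. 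You instead propose a direct local-triviality argument over the principal bundle $X\to X/H$ together with a Mackey-type analysis of the irreducible representations. That route can be completed (freeness makes the local model a matrix amplification of the fiber, with the twist absorbed), but it is precisely where the real work lies, and the stabilization trick is the device that renders all of it a citation.

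One caveat on your stated justification: the claim that ``twisting by a (pointwise unitary) $2$-cocycle on a finite group is an inner perturbation at the level of fibers'' is false as a general principle --- for instance the twisted group algebra of $\bbZ/2\times\bbZ/2$ by a nontrivial cocycle is $M_2(\bbC)$ rather than $\bbC^4$, so over a \emph{fixed} point the fiber genuinely changes. What your argument actually needs, and what is true, is that freeness eliminates isotropy, so the cocycle never enters the fiberwise representation theory; equivalently, the twisted crossed product of a simply transitive permutation action is a full matrix algebra regardless of the twist. You do gesture at this (isotropy contributing stabilizer representation theory), so this is a flaw in the stated reason rather than in the strategy, but as written that sentence would not survive refereeing; either make the freeness-based absorption argument explicit or, as the paper does, stabilize the twist away first.
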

\begin{proof}
First we claim the realization $|(H,\mu)|_F$ is isomorphic to a Busby-Smith twisted crossed product (also called a cocycle crossed product) $A\rtimes_{F,u} H$ (see \cite{MR1798596} for an overview). To see this, suppose we have an automorphic action of $\text{Hilb}(G, \omega)$ with $g\mapsto \alpha_{g}\in \text{Aut}(A)$ and unitaries $u_{g,h}$ as above.
Restricting to $H$, if we set $w_{g,h}=\mu_{g,h} u_{g,h}\in A$. We notice that $w_{g,h}$ satisfies the `non-anomalous' cocycle equation, namely
\begin{align*}
w_{g,hk}\alpha_{g}(w_{h,k})=(\mu_{g,hk}\mu_{h,k})(u_{g,hk}\alpha_{g}(u_{h,k}))
&=
\omega^{-1}(g,h,k)\omega(g,h,k)(\mu_{gh,k}\mu_{g,h})(u_{gh,k}u_{g,h})
\\&=
w_{gh,k}w_{g,h}.
\end{align*}
The realization of the $Q$-system $(H, \mu)$ as a bimodule can be written 
$$
|(H,\mu)|_F=\bigoplus_{g\in G} A_{\alpha_{g}}.
$$
Writing an element $a\in A_{\alpha_g}$ as $a\delta_{g}$, we then see the formula for the product on the realization is $(a \delta_{g})(b\delta_{h})=a\alpha_{g}(b)w_{g,h} \delta_{gh}$. This is exactly the definition for the product of Fourier coefficients for the cocycle crossed product with cocycle $\{w_{g,h}\}_{g,h}$. Since our groups are finite, this yields an isomorphism of $\rm C^*$-algebras.

Now, by the Packer-Raeburn stabilization trick \cite{MR1002543}, $|(H,\mu)|_F\otimes B(\ell^{2}(H))\cong (A\otimes B(\ell^{2}(H)))\rtimes_{\beta} H$, where $\beta$ is an ordinary (untwisted) action of $H$ on $A\otimes B(\ell^{2}(H))$ which induces the same action of $H$ on $Z(A\otimes B(\ell^{2}(H)))\cong Z(A)$ (see \cite[Formula 3.1]{MR1002543}), 
and thus also on $(A\otimes B(\ell^{2}(H)))^{\widehat{\,\,}}\cong \widehat{A}$. 
Since the action of $H$ is free on the spectrum, by \cite[Theorem 1.1]{MR920145}, $A\otimes B(\ell^{2}(H))$ is continuous trace with spectrum $\widehat{A}/H$. 
\end{proof}

\begin{cor*}[Corollary \ref{cor:MainC}]
Let $\mathcal{C}$ be a group theoretical fusion category, and $n\ge 2$. Then there exists a closed, connected manifold $X$ of dimension $n$ and an action of $\mathcal{C}$ on a unital continuous trace $\rm C^*$-algebra with spectrum $X$. 
\end{cor*}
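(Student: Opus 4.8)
The plan is to prove Corollary \ref{cor:MainC} by assembling three ingredients already developed in the paper, together with the existence result for $\fdHilb(G,\omega)$ actions from \cite{2011.13898} and the preceding lemma identifying realizations with twisted crossed products. Since $\cC$ is group theoretical, by Definition \ref{dfn:UMoritaEq} there is a finite group $G$, a normalized $3$-cocycle $\omega$, and a connected Q-system $Q=(H,\mu)\in\fdHilb(G,\omega)$ with $\cC\cong\QSys(\rmB(\fdHilb(G,\omega)))(Q\to Q)$. First I would invoke \cite{2011.13898} to fix an automorphic action $F:\fdHilb(G,\omega)\to\rCorr(C(X)\to C(X))$ on a closed connected $n$-manifold $X$ of the prescribed dimension $n\ge 2$; the key point I need to verify (or arrange by choice of $X$) is that the induced $G$-action on the spectrum $\widehat{C(X)}=X$ is \emph{free}, so that the previous lemma applies.

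The core of the argument is the composite $\dag$ $2$-functor from Corollary \ref{cor: induced action}. Applying $\QSys(F)$ (Construction \ref{construction:Qsys(F)}) to the $\dag$-tensor functor $F$ and then the realization functor $|\cdot|:\QSys(\rCorr)\to\rCorr$, I obtain a composite $\dag$ $2$-functor
$$
\QSys(\fdHilb(G,\omega))\xrightarrow{\QSys(F)}\QSys(\rCorr(C(X)\to C(X)))\xrightarrow{|\,\cdot\,|}\rCorr.
$$
Restricting this to the endomorphism category of the Q-system $Q$ yields a $\dag$-tensor functor out of $\QSys(\fdHilb(G,\omega))(Q\to Q)\cong\cC$ landing in $\rCorr(|Q|_F\to |Q|_F)$, where $|Q|_F$ is the realization of $Q$ along $F$. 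This is precisely an action of $\cC$ on the unital $\rm C^*$-algebra $|Q|_F$. Because $F$ is fully faithful (as a tensor functor realizing a genuine action) and $\QSys(F)$ preserves full faithfulness on $2$-morphisms by Remark \ref{rem:2FullyFaithful}, the resulting functor is faithful, so it is a bona fide action rather than a degenerate one.

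It then remains only to identify the target algebra $|Q|_F$ and its spectrum. Here I would apply the preceding lemma: since $F$ is automorphic and induces a free $G$-action (hence a free $H$-action) on $X$, the realization $|(H,\mu)|_F$ is a continuous trace $\rm C^*$-algebra with spectrum $X/H$. Because $X$ is a closed connected $n$-manifold and $H$ acts freely and continuously (through homeomorphisms, as it does through automorphisms of $C(X)$), the quotient $X/H$ is again a closed connected manifold of the same dimension $n$. Thus $|Q|_F$ is a unital continuous trace algebra whose spectrum $X/H$ is a closed connected $n$-manifold, and $\cC$ acts on it.

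The main obstacle I anticipate is the freeness requirement: the lemma needs the $G$-action on $X$ to be free, but the existence result of \cite{2011.13898} only guarantees \emph{some} nice compact Hausdorff $X$ (indeed a closed connected manifold) with an automorphic action. I would need to check that one can always arrange the action to be free on the spectrum while keeping $X$ a closed connected manifold of arbitrary dimension $n\ge 2$ — for instance by taking $X$ to be a principal-type model such as a product or quotient on which $G$ acts freely, or by passing to a suitable covering/induced manifold. Verifying that $X/H$ remains a closed connected manifold (smoothness and Hausdorffness of the quotient under a free finite-group action) is standard but must be stated; the genuinely delicate step is exhibiting, for each $n\ge 2$, a closed connected $n$-manifold carrying a free automorphic $\fdHilb(G,\omega)$-action, which I expect to extract directly from the construction in \cite{2011.13898} rather than reprove here.
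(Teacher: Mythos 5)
Your proposal is correct and takes essentially the same route as the paper: both pass to the Q-system $(H,\mu)\in\fdHilb(G,\omega)$ witnessing the Morita equivalence, take the automorphic action of $\fdHilb(G,\omega)$ from \cite{2011.13898} (whose construction yields the required free $G$-action on the spectrum, a point the paper likewise settles by appeal to that construction), identify the realization $|(H,\mu)|_F$ as a unital continuous trace algebra with spectrum $X/H$ via the preceding lemma, and produce the $\cC$-action from the induced-action machinery --- your use of Corollary \ref{cor: induced action} is exactly how the paper's appeal to Theorem \ref{thm:QSysComplete} is packaged in the introduction. The only blemish is your full-faithfulness aside: it is unnecessary (an action is by definition just a unitary tensor functor, and tensor functors out of fusion categories are automatically faithful), and its premise is in fact false for automorphic actions, since $\End_{\rCorr(C(X)\to C(X))}(C(X)_{\alpha_g})\cong C(X)\neq\bbC$ shows $F$ is never full when $X$ is not a point --- but nothing in your argument depends on it.
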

\begin{proof}
Let $\mathcal{C}$ be a group theoretical unitary fusion category Morita equivalent to $\fdHilb(G, \omega)$ via a Q-system $(H,\mu)\in \fdHilb(G,\omega)$. 
Let $n\ge 2$. The proof of \cite[Thm.~1.3]{MR4328058} shows there exists an $\overline{\omega}$-anomalous action on a unital $\rm C^*$-algebra $A$ which is Morita equivalent to $C(M)$, where $M$ is a closed, connected $n$-manifold, which is thus continuous trace. 
By \cite[Proposition 5.6]{2105.05587}, this yields an automorphic action of $\fdHilb(G,\omega)$ on the continuous trace $\rm C^*$-algebra $A$ whose spectrum is $M$.
From the construction it is clear that the action of $G$ on $X$ is free. 
Applying the above lemma, $|(H,\mu)|_{F}$ is continuous trace with spectrum $X=M/H$, which is again a closed connected manifold of dimension $n$. 
By Theorem \ref{thm:QSysComplete}, $\mathcal{C}$ acts on $|(H,\mu)|_{F}$.
\end{proof}

We end this article by posing some questions about Q-systems, realization, and actions of UFCs on $\rm C^*$-algebras.

\begin{quest}
\label{quest:ReplaceCTwithC}
Can one replace `continuous trace' with `commutative' in Corollary \ref{cor:MainC}? 
\end{quest}

\begin{rem}
For pointed fusion categories, we have a positive answer to Question \ref{quest:ReplaceCTwithC} by \cite[Thm.~1.3]{MR4328058} (indeed we used this result as input to obtain the corollary). 
In general, the construction outlined above will produce actions on continuous trace $\rm C^*$-algebras which have non-trivial Dixmier-Douady invariant, valued in $H^{3}(X/H, \mathbbm{Z})$. 
These invariants are precisely obstructions to being Morita equivalent with $C(X/H)$. 
The value of this invariant for $|(H,\mu)|_{F}$ can be computed with the tools developed in \cite{MR1446378}. 
\end{rem}

%%%%%%%%%%%%%%%%%%%%%%%%%%%%%%%%%%%%%%%%%%%%%%%
%\subsection{Questions}

\begin{quest}
What are the Q-systems in $\rCorr(C(X)\to C(X))$?
\end{quest}

Note that if we look at the full subcategory $\Mod(C(X))$, \cite{MR3828898} tells us the Q-systems are bundles of matrix algebras over $X$.

\begin{quest}
What are the connected components of $\QSys(\rCorr)$?
\end{quest}

% \begin{quest}
% Suppose $\cC$ is a unitary tensor category with real (symmetrically self-dual) generator $X\in \cC$, and let $A$ be the GJS $\rm C^*$-algebra constructed from $(\cC,X)$ as in \nn{}, and let $F: \cC \to \Bim(A)$ be the embedding constructed in \nn{}.
% Let $Q\in \cC$ be a Q-system.
% Is $|Q|_F$ another GJS $\rm C^*$-algebra?
% \end{quest}

\begin{quest}
Suppose $Q$ is a Q-system in a unitary tensor category $\cC$ acting on a unital $\rm C^*$-algebra $A$ via $F: \cC \to \fgprCorr(A\to A)$.
Can one compute the $K$-theory of $|F(Q)|$ in terms of the $K$-theory of $A$ and categorical data from $\cC$?
\end{quest}

\bibliographystyle{alpha}
{\footnotesize{
\bibliography{bibliography}
%\bibliography{../../../Documents/research/penneys/bibliography}
}}
% \end{document}

\end{document}